\theoremstyle{definition}
\numberwithin{equation}{section}
\newtheorem{thm}{Theorem}[section]
\newtheorem{prop}[thm]{Proposition}
\newtheorem{cor}[thm]{Corollary}
\newtheorem{lem}[thm]{Lemma}
\newtheorem{conj}[thm]{Conjecture}
\newtheorem{rem}[thm]{Remark}
\title{A cohomological interpretation of archimedean zeta integrals  for ${\rm GL}_3\times {\rm GL}_2$}
\author[T. Hara and K. Namikawa]{Takashi Hara and Kenichi Namikawa}
\address{ Department of Mathematics, College of Liberal Arts,  Tsuda University, 2-1-1 Tsuda-machi, Kodaira City, Tokyo 187-8577, Japan }
\email{t-hara@tsuda.ac.jp}
\address{ Faculty of Mathematics, Kyushu University, 744 Motooka, Nishi-Ku, Fukuoka, 819-0395, Japan }
\email{namikawa@math.kyushu-u.ac.jp}
\subjclass[2010]{Primary 11F67,  Secondary 11F75, 11F70}
\begin{document}

\begin{abstract}
By studying an explicit form of the Eichler--Shimura map for ${\rm GL}_3$,   
    we describe a precise relation between critical values 
    of the complete $L$-function for the Rankin--Selberg convolution ${\rm GL}_3 \times {\rm GL}_2$ 
    and the cohomological cup product 
    of certain rational cohomology classes which are uniquely determined up to rational scalar multiples 
    from the cuspidal automorphic representations 
    under consideration. This refines rationality results on critical values due to Raghuram et al. 
\end{abstract}

\maketitle

\tableofcontents

\section{Introduction}

The relation between certain cohomology classes and special values of $L$-functions 
   is a fascinating subject in number theory.  
Some of special values of $L$-functions can be interpreted in terms of certain cohomology classes, 
    and rationality of the special values is often deduced from rationality of those classes.     
In \cite{del79}, Deligne introduces the notion of {\em periods} associated to pure motives by using the comparison isomorphisms between their Betti and de Rham realizations, 
    and he makes a conjecture which yields that critical values of the $L$-functions associated to them coincide with his periods up to multiplication of algebraic numbers. Meanwhile, Clozel conjectures the existence of motives associated to   cohomological cuspical automorphic representations of ${\rm GL}_n$ in \cite[Conjecture 4.5]{clo90}, which provides us with motivic interpretation of automorphic $L$-functions concerning the general linear groups.
In the present article, we focus on the critical values of the Rankin--Selberg $L$-functions associated to cohomological cuspidal automorphic representations of ${\rm GL}_3 \times {\rm GL}_2$, and provide their precise cohomological interpretation.

Rationality of critical values of the Rankin--Selberg $L$-functions is firstly studied by Manin \cite{man72} and Shimura \cite[Theorems 1 and 4]{shi76} for ${\rm GL}_{2}\times {\rm GL}_1$. 
Generalizing these classical works, 
Mahnkopf  considers in \cite{mah05} rationality of critical values of the Rankin--Selberg $L$-functions of ${\rm GL}_{n+1}\times {\rm GL}_n$ for general $n$, 
    based on the {\em generalized modular symbol method} proposed by Kazhdan, Mazur and Schmidt in \cite{kms00}.    
Meanwhile, Raghuram and Shahidi introduce the notion of {\em Whittaker periods} in \cite[Definition/Proposition 3.3]{rs08} for irreducible cuspidal automorphic representations $\pi$ of ${\rm GL}_n(F_{\mathbf A})$, where $F$ is a number field.    
They define their periods by using the comparison isomorphisms between 
       the Betti and de Rham cohomology groups of the symmetric spaces attached to ${\rm GL}_n$, which are regarded as substitutes of Clozel's conjectural motives associated to $\pi$.
Furthermore, Raghuram shows that the critical values of the Rankin--Selberg $L$-functions for ${\rm GL}_{n+1}\times {\rm GL}_n$ 
              coincide with, up to multiplication of algebraic numbers, the product of Raghuram--Shahidi's Whittaker periods and the inverse of certain archimedean zeta integrals  (see \cite[Theorem 2.50]{rag16} for details). 
This implies that one should precisely study the archimedean zeta integrals appearing as the equality \cite[(2.47)]{rag16} in order to discuss rationality of critical values of the Rankin--Selberg $L$-functions with respect to Raghuram--Shahidi's periods.  Recently Sun has proved in \cite{sun17} that these archimedean zeta integrals do not vanish, and Januszewski has studied in \cite{jan19} their behaviors under Tate twists. However, as far as the authors know, what is known on these archimedean zeta integrals is fairly little except for Sun and Januszewski's results. 
The explicit study of the archimedean zeta integrals is essentially important even for the study of the $p$-adic Rankin--Selberg $L$-functions for ${\rm GL}_{n+1} \times {\rm GL}_n$  made in \cite{jan},  
since it is necessary to establish certain Kummer-type congruences (which are introduced as the {\em Manin congruences} in \cite{jan}). 
In this article, based on the recent precise study of archimedean zeta integrals for $\mathrm{GL}_3\times \mathrm{GL}_2$ made by Hirano, Ishii and Miyazaki in \cite{him}, we show that the archimedean zeta integrals under consideration coincide with the multiples of the desired $\Gamma$-factors with certain subtle but explicit constants when $n$ equals $2$. Our explicit study of the archimedean zeta integrals should be considered as a refinement of Raghuram, Sun and Januszewski's works, and we expect that it will be useful for the study of the $p$-adic Rankin--Selberg $L$-functions for ${\rm GL}_3 \times {\rm GL}_2$.

In order to write down the precise statement of the main theorem in this article, we first review the critical values of the Rankin--Selberg $L$-functions for $\mathrm{GL}_3\times \mathrm{GL}_2$.  Put $\Sigma_{\mathbf Q}$ as the set of all places of ${\mathbf Q}$, and denote by $\infty$ its unique infinite place. 
For $n=2$ or $3$, let $\pi^{(n)} = \bigotimes^\prime_{v\in \Sigma_{\mathbf Q}} \pi^{(n)}_v$ denote a cohomological irreducible cuspidal automorphic representation of ${\rm GL}_{n}({\mathbf Q}_{\mathbf A})$.
Since $\pi^{(2)}$ is cohomological, the archimedean part
  $\pi^{(2)}_{\infty}$ of $\pi^{(2)}$ is a discrete series representation $D_{\nu_2, l_2 }$ of ${\rm GL}_2({\mathbf R})$, and (the archimedean part of) the central character $\omega_{\pi^{(2)}}$ of $\pi^{(2)}$
     satisfies $\omega_{\pi^{(2)}, \infty}(t) = t^{2\nu_2}$ for every $t\in {\mathbf R}^\times$ with $t>0$.
We further find that, since $\pi^{(3)}$ is also cohomological, the archimedean part $\pi^{(3)}_{\infty}$ of $\pi^{(3)}$ is a generalized principal series representation of ${\rm GL}_3({\mathbf R})$ parabolically induced from the direct product of a discrete series representation $D_{\nu_3, l_3}$ of ${\rm GL}_2({\mathbf R})$ and a character $\chi_{\nu_3}^{\delta}$ of ${\mathbf R}^\times$.
See Section \ref{sec:auto} for details on the parameters of these representations.  
Since our method used in this article is deeply based upon the modular symbol method adopted in \cite{mah05},  
  we always have to assume that the inequality $(1\leq)\, l_2 < l_3$ holds.

We prefer to use the language of the motive attached to $\pi^{(n)}$, 
   since it makes expositions concise. 
We let ${\mathcal M}[\pi^{(n)}]$ denote the (conjectural) motive over ${\mathbf Q}$ attached to $\pi^{(n)}$  
   satisfying 
\begin{align*}
  L(s, {\mathcal M}[\pi^{(n)}])
     =    L\left(s-\frac{n-1}{2}, \pi^{(n)}\right),   
\end{align*}   
although it is not yet constructed except for the case $n=2$.   
The expected properties of  ${\mathcal M}[\pi^{(n)}]$ is precisely written down in \cite{clo90}, 
     and we briefly summarize them in Section \ref{sec:auto,mot}. 
Note that our main result (Theorem~\ref{thm:mainIntro}) does not depend on conjectural hypotheses such as the existence of $\mathcal{M}[\pi^{(3)}]$.  
Define ${\mathcal M}(\pi^{(3)} \times \pi^{(2)}    )$ to be the tensor product
${\mathcal M}[\pi^{(3)}] \otimes {\mathcal M}[\pi^{(2)}]$ of motives. Then the complete Rankin--Selberg $L$-function $L(s,\pi^{(3)}\times \pi^{(2)})$ is interpreted as the (complete) $L$-function associated to the motive $\mathcal{M}(\pi^{(3)}\times \pi^{(2)})$; namely we obtain the identity of the $L$-functions  
\begin{align*}
   L( s,  {\mathcal M}(\pi^{(3)} \times \pi^{(2)})    )
   =  L\left(s-\frac{3}{2}, \pi^{(3)} \times  \pi^{(2)} \right).  
\end{align*}
Let us normalize the parameter $\nu_n$ as $\nu_n = -\frac{l_n}{2}  + \frac{n-1}{2}$, 
so that ${\mathcal M}(\pi^{(3)} \times \pi^{(2)})$ is pure of weight $l_2 + l_3$. 
Then, by studying the Hodge decomposition of the motive $\mathcal{M}(\pi^{(3)}\times \pi^{(2)})$, we find that $m\in \mathbf{Z}$ is a critical point of $L(s, {\mathcal M}(\pi^{(3)} \times \pi^{(2)}) )$ if and only if it satisfies the following inequalities
\begin{align*}
         \begin{cases}
             \dfrac{l_3}{2}   + 1 \leq m \leq \dfrac{l_3}{2} + l_2   &  \text{when }l_2 \leq  \dfrac{l_3}{2} \text{ holds},  \\ 
             l_2   + 1 \leq m \leq l_3, &    \text{when } \dfrac{l_3}{2} < l_2 < l_3\text{ holds}. 
         \end{cases}  
\end{align*}   
See also the argument at the end of Section~\ref{sec:motives}. In the rest of Introduction, we always suppose that $m$ satisfies the above condition.

Now let us briefly recall the strategy to study critical values $L(m, {\mathcal M}(\pi^{(3)} \times \pi^{(2)}) )$ via the (generalized) modular symbol method utilized in \cite{kms00,mah05,rag16} and \cite{jan}.   
Let ${\mathcal K}_n$ be an open compact subgroup of ${\rm GL}_n(\widehat{\mathbf Z})$ 
   and $Y^{(n)}_{\mathcal K_n}$ the corresponding symmetric space   
   ${\rm GL}_n({\mathbf Q})  \backslash  {\rm GL}_n(  {\mathbf Q}_{\mathbf A}  ) / {\mathbf R}^\times_{>0}{\rm SO}_n({\mathbf R}) {\mathcal K}_n$.    
Suppose that ${\mathcal K}_2 = {\rm GL}_2(\widehat{\mathbf Z})$  
   and that ${\mathcal K}_3$ is the mirabolic subgroup of  ${\rm GL}_3(\widehat{\mathbf Z})$ consisting of matrices 
   whose bottom rows are congruent to $(0, 0, 1)$ modulo an ideal ${\mathfrak N}$ of $\widehat{\mathbf Z}$ (see also Remark~\ref{rem:tameint} on the assumption $\mathcal{K}_2=\mathrm{GL}_2(\widehat{\mathbf{Z}})$).                   
We also assume that $\pi^{(2)}$ has a ${\mathcal{K}}_2$-fixed vector and that $\mathfrak{N}$ is minimal among ideals of $\widehat{\mathbf{Z}}$ such that $\pi^{(3)}$ has a ${\mathcal K}_3$-fixed vector. Since $\pi^{(n)}$ is cohomological, we can define cohomology classes $\eta_{\pi^{(3)}}$ and $\eta^\pm_{\pi^{(2)}}$ 
    in certain cohomology groups of rational local systems on $Y^{(3)}_{\mathcal K_3}$ and $Y^{(2)}_{\mathcal K_2}$ respectively (see Section~\ref{sec:per} for the definition of these classes).
Raghuram and Shahidi define their periods $\Omega_{\pi^{(3)}}$ and $\Omega^\pm_{ \pi^{(2)} }$ attached to $\pi^{(3)}$ and $\pi^{(2)}$ in \cite{rs08}  
   as the ratios of $\eta_{\pi^{(3)}}$ and $\eta^\pm_{\pi^{(2)}}$ to the images of appropriate cusp forms of $\pi^{(3)}$ and $\pi^{(2)}$ under the Eichler--Shimura maps $\delta^{(3)}$ and $\delta^{(2)}$ respectively. One of the notable features in this article is that we give an explicit construction of the Eichler--Shimura map $\delta^{(3)}$ for $\mathrm{GL}_3$ (see Section~\ref{sec:ES} and Appendix~\ref{sec:AppA} for details), which enables us to proceed computation of the archimedean local zeta integrals.  
Furthermore, let us consider the natural projection
\begin{align*}
 {\rm p}_n  & : {\mathcal Y}^{(n)}_{\mathcal K_n} := {\rm GL}_n({\mathbf Q})  \backslash  {\rm GL}_n(  {\mathbf Q}_{\mathbf A}  ) /   {\rm SO}_n({\mathbf R}) {\mathcal K}_n
                           \longrightarrow 
                        Y^{(n)}_{\mathcal K_n}.   
\end{align*}
By considering ${\rm GL}_2$ as a subgroup of ${\rm GL}_3$ via the embedding $g\mapsto 
\begin{pmatrix}
 g & \\ & 1
\end{pmatrix}$,   
the branching rule for irreducible algebraic representations of ${\rm GL}_3$ and ${\rm GL}_2$ induces  maps from local systems on ${\mathcal Y}^{(3)}_{\mathcal K_3}$ to those on ${\mathcal Y}^{(2)}_{\mathcal K_2}$; 
therefore, using the branching rule, we can define cohomology classes $\nabla^{\boldsymbol{n}_m} \iota^* {\rm p}^{\ast}_3 \eta_{\pi^{(3)}}$ 
   for $\boldsymbol{n}_m=(l_2-1,m-l_2-1)$.   
See Section \ref{sec:branch} for the definition of the map $\nabla^{\boldsymbol{n}}$ for $\boldsymbol{n}=(n_1,n_2)$ coming from the branching rule. 
Then the rationality of critical values $L(m, {\mathcal M}(\pi^{(3)} \times \pi^{(2)}) )$ is verified by a study of the cup product of two cohomology classes $\nabla^{\boldsymbol{n}_m} \iota^* {\rm p}^{\ast}_3 \eta_{\pi^{(3)}}$ and ${\rm p}^{\ast}_2 \eta_{\pi^{(2)}}$.

One of the difficulties concerning the modular symbol method 
   is to study the archimedean zeta integrals appearing in the unfolding 
     of the cup product  $I(m, \pi^{(3)}, \pi^{(2)}   )$ 
     of  $\nabla^{\boldsymbol{n}_m} \iota^* {\rm p}^{\ast}_3 \eta_{\pi^{(3)}}$  and ${\rm p}^{\ast}_2 \eta^\pm_{\pi^{(2)}}$; see the arguments in \cite[Section 2.5.3.6]{rag16}. 
In this article, by giving an effective form of the Eichler--Shimura maps for ${\rm GL}_3$      
     and using a detailed study of archimedean Whittaker functions of $\pi^{(n)}$ made in \cite{him}, 
  we refine the definition of Raghuram and Shahidi's periods $\Omega_{\pi^{(3)}}$ and  $\Omega^\pm_{ \pi^{(2)} }$, 
  and then give an explicit formula for the archimedean zeta integral appearing in the computation of the cup product pairing.   
The main result in this article is as follows:

\begin{thm}\label{thm:mainIntro}(=Theorem \ref{thm:main})
{\itshape
Suppose that $   (-1)^{m + \delta  + \frac{l_3}{2}}
                         = \pm 1$ holds.
Then we have 
\begin{align*}
       I(m, \pi^{(3)}, \pi^{(2)}   )
     =  (-1)^{\delta} 
        \sqrt{-1}^{ \frac{l_3}{2} -m +1}    
        \binom{ \frac{l_3}{2} - 1  }{ m-\frac{l_3}{2}-1 }  
                 \binom{  \frac{l_3}{2} - 1  }{  \frac{l_3}{2}+l_2-m }
          \frac{  L(m, {\mathcal M}(\pi^{(3)} \times \pi^{(2)})   ) }{ \Omega_{\pi^{(3)}} \Omega^\pm_{\pi^{(2)}} }
\end{align*}
where $\binom{a}{b}$ denotes the binomial coefficient $\frac{a!}{b!(a-b)!}$. 
If $  (-1)^{m + \delta  + \frac{l_3}{2} }
      \neq \pm 1$ holds, 
      we have  $I(m, \pi^{(3)}, \pi^{(2)}   )=0$.
}
\end{thm}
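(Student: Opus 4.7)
My plan is to evaluate the cup product $I(m,\pi^{(3)},\pi^{(2)})$ by unfolding it into a Rankin--Selberg global zeta integral, factoring the result into local zeta integrals, and matching each local factor against the corresponding local $L$-factor. The real work will be concentrated at the archimedean place.

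The first step translates the cohomological statement into integrals of Whittaker functions. Using the definitions of the Whittaker periods $\Omega_{\pi^{(3)}}$ and $\Omega^{\pm}_{\pi^{(2)}}$ as the ratios between the rational classes $\eta_{\pi^{(3)}}$, $\eta^{\pm}_{\pi^{(2)}}$ and the images $\delta^{(3)}(\phi^{(3)})$, $\delta^{(2)}(\phi^{(2)})$ of chosen cusp forms under the Eichler--Shimura maps, I rewrite $I(m,\pi^{(3)},\pi^{(2)})$ as $(\Omega_{\pi^{(3)}}\Omega^{\pm}_{\pi^{(2)}})^{-1}$ times the cup product of $\nabla^{\boldsymbol n_m}\iota^{*}{\rm p}_3^{*}\delta^{(3)}(\phi^{(3)})$ with ${\rm p}_2^{*}\delta^{(2)}(\phi^{(2)})$. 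Evaluating this pairing on $Y^{(2)}_{\mathcal K_2}$ and applying the classical Rankin--Selberg unfolding for $\mathrm{GL}_3\times\mathrm{GL}_2$ produces a global zeta integral that factors as $Z(m)=Z_{\infty}(m)\cdot\prod_{v<\infty}Z_v(m)$. The hypotheses on $\mathcal K_n$, in particular that $\mathcal K_3$ is the mirabolic subgroup of minimal level for $\pi^{(3)}$, together with local new-vector theory ensure that $\prod_{v<\infty}Z_v(m)$ coincides with the finite part of the completed Rankin--Selberg $L$-function.

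The main task is then to evaluate $Z_{\infty}(m)$ with the archimedean Whittaker data produced by our explicit form of the Eichler--Shimura map $\delta^{(3)}$ for $\mathrm{GL}_3$ (constructed in Section~\ref{sec:ES}). After applying the branching operator $\nabla^{\boldsymbol n_m}$, the minimal $K_{\infty}$-type vector of $D_{\nu_3,l_3}\boxtimes\chi^{\delta}_{\nu_3}$ expands as a linear combination of $\mathrm{GL}_2(\mathbf R)$-weight vectors, with coefficients governed by the $\mathrm{GL}_3\downarrow\mathrm{GL}_2$ Clebsch--Gordan rule. Pairing against the minimal $K_{\infty}$-type of $D_{\nu_2,l_2}$ kills all but a few terms, and substituting into each surviving term the closed-form archimedean Whittaker integral of Hirano--Ishii--Miyazaki from \cite{him} collapses the sum to the two binomial coefficients $\binom{\frac{l_3}{2}-1}{m-\frac{l_3}{2}-1}\binom{\frac{l_3}{2}-1}{\frac{l_3}{2}+l_2-m}$ together with the sign $(-1)^{\delta}$ (from the parity of $\chi^{\delta}_{\nu_3}$) and the power $\sqrt{-1}^{\frac{l_3}{2}-m+1}$ (from the normalisation of the minimal $K_{\infty}$-type basis) times the archimedean $\Gamma$-factor $L(m,\pi^{(3)}_{\infty}\times\pi^{(2)}_{\infty})$. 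Recombining with the finite $L$-factor gives the formula claimed in the theorem.

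The dichotomy governed by $(-1)^{m+\delta+\frac{l_3}{2}}=\pm 1$ is a parity check: conjugation by $\mathrm{diag}(-1,1)\in O(2)\subset\mathrm{GL}_2(\mathbf R)$ acts on ${\rm p}_2^{*}\eta^{\pm}_{\pi^{(2)}}$ by the sign $\pm$, and on $\nabla^{\boldsymbol n_m}\iota^{*}{\rm p}_3^{*}\eta_{\pi^{(3)}}$ by $(-1)^{m+\delta+\frac{l_3}{2}}$; when the two signs disagree the cup product must vanish by orientation, and when they agree the explicit computation above yields the stated formula. The principal technical obstacle is the archimedean step: one must carry $\eta_{\pi^{(3)}}$ through the explicit Eichler--Shimura map, track the exact coefficients produced by $\nabla^{\boldsymbol n_m}$, and verify that only the two advertised binomial coefficients survive after substitution of the Hirano--Ishii--Miyazaki formula, rather than a more elaborate combinatorial sum that collapses only after additional identities.
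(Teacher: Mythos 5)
Your plan is essentially the paper's proof: define the periods via the Eichler--Shimura images of Whittaker-normalized cusp forms, evaluate the cup product as a global Rankin--Selberg integral, localize, invoke the essential-vector theorem at finite places, and invoke Hirano--Ishii--Miyazaki's closed form at the archimedean place, with the parity dichotomy coming from the action of $\epsilon=\mathrm{diag}(-1,1)$. One bookkeeping point is misattributed, though it does not affect the validity of the outline: in the paper the two binomial coefficients $\binom{\frac{l_3}{2}-1}{m-\frac{l_3}{2}-1}\binom{\frac{l_3}{2}-1}{\frac{l_3}{2}+l_2-m}$ and the power of $\sqrt{-1}$ arise \emph{before} the archimedean Whittaker integral is touched --- they come from the pairing $[\nabla^{\boldsymbol{n}_m}\mathcal{P}_{\alpha,\mp 2}, (\mp X+\sqrt{-1}Y)^{n_{\boldsymbol{\lambda}}}]_{\boldsymbol{n}_m}$ on the finite-dimensional coefficient system, carried out in Lemma~\ref{lem:NabPab} and Proposition~\ref{prop:intsign}, which also isolates a \emph{single} surviving component $f^{\boldsymbol{\lambda}}_{-\lambda_2}$ (via $\alpha=-\lambda_2$, $\beta=-2$), not ``a few.'' The archimedean zeta integral itself, once unfolded, contributes only $(-1)^{\lambda_2+\delta}\cdot L_\infty(m,\mathcal{M}(\pi^{(3)}\times\pi^{(2)}))$ via \cite[Corollary~9.7]{him} and the $(\mathrm{O}_3,\mathrm{O}_2)$-branching pairing $\langle \mathrm{P}_{\boldsymbol{\lambda}}(v^{\boldsymbol{\lambda}^{(3)}}_{-\lambda_2}),v^{\boldsymbol{\lambda}^{(2)}}_{\lambda_2}\rangle=(-1)^\delta$. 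So the ``Clebsch--Gordan collapse'' you invoke happens in the cohomological cup-product step on the coefficient local system, not in the archimedean Whittaker evaluation; if you try to make the binomials fall out of the latter you will find HIM's formula already hands you the bare $\Gamma$-factor with a sign and nothing more.
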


Theorem \ref{thm:mainIntro} is derived from explicit evaluations of archimedean zeta integrals in \cite[(2.49)]{rag16},     
  and hence this refines a formula proposed in \cite[Theorem 2.50]{rag16}.    
In particular, 
  Deligne's conjecture implies that $\Omega_{\pi^{(3)}} \Omega^\pm_{\pi^{(2)}}$ coincides with Deligne's period $c^+( {\mathcal M}(\pi^{(3)} \times \pi^{(2)}) (m)  )$ up to multiplication of an algebraic number, 
  and thus Theorem \ref{thm:mainIntro} clarifies a motivic background of Raghuram--Shahidi's  Whittaker period $\Omega_{\pi^{(3)}} \Omega^\pm_{\pi^{(2)}}$.           
See Remarks \ref{rem:relCP} and \ref{rem:jan19} for more details on the result in \cite{rag16} and the compatibility with Deligne's conjecture.  
See also Remark \ref{rem:RelDelPer} (and \cite{yos01}) on the motivic interpretation of each of $\Omega_{\pi^{(3)}}$ and $\Omega^\pm_{\pi^{(2)}}$.

We would also like to explain a motivation of Theorem \ref{thm:mainIntro} from a viewpoint of the study of the $p$-adic $L$-functions.
In a series of his papers, Januszewski constructs the $p$-adic Rankin--Selberg $L$-functions for ${\rm GL}_{n+1} \times {\rm GL}_n$ (see \cite{jan} and references therein for details).  
However, some of expected important properties proposed in Coates and Perrin-Riou's conjecture \cite{cp89}, \cite{coa89} 
    have not been  established yet.     
Among such kinds of properties, we take up problems on effects of choices of periods on the Manin conguruences. Indeed, the periods which  Januszewski introduces in 
\cite[Theorem A]{jan} are defined critical pointwisely, and they a priori depend on the choice of critical points $m$. Such dependence of periods on critical points also appears in \cite[Theorem~1.1]{rag16}  for the completely same reason.   
Meanwhile, 
it is important to determine  
   an explicit relation between cup product pairings and critical values 
      including the signature, powers of $\sqrt{-1}$ and so on,     
   to establish congruences between the critical values at different critical points. 
Theorem \ref{thm:mainIntro} exhibits this explicit relation and hence we expect that the formula in Theorem \ref{thm:mainIntro} will be useful for the further study of the $p$-adic Rankin--Selberg $L$-functions.   
We also mention that this point is also a motivation of Januszewski's study on {\em period relations} made in \cite{jan19},  
   and 
   Theorem \ref{thm:mainIntro} refines the main result in \cite{jan19} in the case of ${\rm GL}_3 \times {\rm GL}_2$.  
See Remark \ref{rem:jan19} for the relation between Januszewski's works and ours.

\

The organization of this article is as follows.
Section \ref{sec:not} is devoted to a preparation of necessary general notation. Section~\ref{sec:findim} is a summary on finite dimensional representations which we use throughout this article. We remark that here we propose not only general theory but also explicit models of representations under consideration. Moreover, we give a branching rule for the pair $({\rm GL}_3, {\rm GL}_2)$ in an explicit manner (see Section~\ref{sec:branch} for details), which is one of the keys in the present article.  In Section~\ref{sec:auto,mot}, we introduce our setting on automorphic representations for which we consider the Rankin--Selberg $L$-function, and then survey the theory of motives associated with $\mathrm{GL}_3\times \mathrm{GL}_2$ according to \cite{clo90}. 
The fundamental tool in this article is to give an appropriate form of the Eichler--Shimura map for ${\rm GL}_3$, 
    which is a map from the space of cusp forms in $\pi^{(3)}$ to a cohomology group of $Y^{(3)}_{\mathcal K_3}$ of degree 2 with certain local coefficients.   
In Section \ref{sec:ESmap}, we introduce notion of cusp forms in this article and the Eichler--Shimura maps for ${\rm GL}_2$ and ${\rm GL}_3$.
Although the Eichler--Shimura map for ${\rm GL}_3$ (resp.\ for ${\rm GL}_2$) to a cohomology group of degree $2$ (resp.\ of degree $1$) is sufficient for the purpose of this article,   
   we can also consider the Eichler--Shimura map for ${\rm GL}_3$ to a cohomology group of degree $3$.    
We shall sketch the construction of the Eichler--Shimura maps for $\mathrm{GL}_3$ into not only the cohomology groups of degree $2$ but also those of degree $3$ in Appendix \ref{sec:AppA}, because it has independent significance in itself beyond the scope of this article,  
We then summarize explicit formulas on differential forms and their connections to the global integrals in Section \ref{sec:zetaint}.
Some of the formulas are deduced from longsome but straightforward computations, which we postpone to Appendix \ref{sec:AppB} for the sake of readability of this article.
We finally propose the definition of periods $\Omega_{\pi^{(3)}}$ and $\Omega_{\pi^{(2)}}^\pm$, and complete the proof of Theorem \ref{thm:mainIntro} in Section \ref{sec:permain}. We also discuss a motivic interpretation of our periods and compare our results with the previous researches.

\

We end the introduction with a remark on the base field.   
The method in this article is given in the ad\`elic language, and 
    hence the main theorem (Theorem \ref{thm:mainIntro}) is generalized to the case where the base field is totally real in a straightforward way.   
We leave it to the reader to make the notation and the argument concise.

\section{Notation}\label{sec:not}

Denote by ${\mathbf Q}$ (resp.\ ${\mathbf Z}$, ${\mathbf R}$, ${\mathbf C}$) the rational number field 
   (resp.\ the ring of the rational integers, the real number field, the complex number field).     
Let ${\mathbf Q}_{\mathbf A}$ be the ring of ad\`eles of $\mathbf{Q}$
      and ${\mathbf Q}_{\mathbf A, {\rm fin}}$ the subring of finite ad\`eles.  
Write the set of all places of ${\mathbf Q}$ as $\Sigma_{\mathbf Q}$ and denote by $\infty$ its unique infinite place.   
For each $x=(x_v)_{v\in \Sigma_{\mathbf Q}} \in {\mathbf Q}_{\mathbf A}$, we put
$x_{\rm fin} = (x_v)_{v\in \Sigma_{\mathbf Q} \setminus \{\infty\} } \in {\mathbf Q}_{\mathbf A, {\rm fin}}$. 
Let $G$ be an algebraic group $G$ over ${\mathbf Q}$.    
Then, for each ad\`elic point $g\in G({\mathbf Q}_{\mathbf A})$, 
   we write the infinite (resp.\ finite) part of $g$ as $g_\infty$ (resp.\ $g_{\rm fin}$).      
We denote the $v$-th component of $g$ by $g_v$ for each $v\in \Sigma_{\mathbf Q}$. 
We always use similar notation for each ad\`elic object.
    
In this article, we use the terminology ``$L$-function'' to denote a complete one. For instance, for each motive ${\mathcal M}$ over ${\mathbf Q}$,   
the $L$-function $L(s, {\mathcal M})$ is defined to be the product of the $\Gamma$-factor $L_\infty(s, {\mathcal M})$ and the finite part $L_{\rm fin}(s, {\mathcal M} )$.

\section{Models of finite dimensional representations}\label{sec:findim}

In this section, we summarize classifications and models of finite dimensional representations appearing in this article. We also introduce isomorphisms inducing so-called ``the branching rules'' in explicit forms.
In particular,  
     we follow \cite[Section 3.1, 4.1]{him} on the descriptions of representation spaces and the branching rule for the orthogonal groups introduced in Sections~\ref{sec:repSO2}, \ref{sec:repSO3} and \ref{sec:branch_orth}.

\subsection{Irreducible representations of ${\rm GL}_2$}\label{sec:repGL2}

Let $\mathcal{A}$ be a (commutative) integral domain of characteristic $0$. For ${\boldsymbol n}=(n_1,n_2)$ with $n_1 \in {\mathbf Z}, n_1\geq 0$ and $n_2\in {\mathbf Z}$, let $L^{(2)}( {\boldsymbol n}; {\mathcal A}) = {\mathcal A}[X,Y]_{n_1}$ be the set of homogenous polynomials of degree $n_1$ in variables $X$ and $Y$ with coefficients in $\mathcal{A}$. 
Define an action $\varrho^{(2)}_{{\boldsymbol n}}$ of ${\rm GL}_2({\mathcal A})$ on $L^{(2)}( {\boldsymbol n}; {\mathcal A})$ as follows: 
\begin{align*}
   \varrho^{(2)}_{{\boldsymbol n}}(g) P( X, Y  )    &= (\det g)^{n_2} P( (X,Y) g ) & \text{for } P\in L^{(2)}( {\boldsymbol n}; {\mathcal A}) \text{ and } g\in {\rm GL}_2({\mathcal A}).    
\end{align*}

When $\mathcal{A}$ is a field, it is well known that $(\varrho^{(2)}_{{\boldsymbol n}}, L^{(2)}( {\boldsymbol n}; {\mathcal A}) )$ gives an irreducible algebraic representation of ${\rm GL}_2({\mathcal A})$ of highest weight $(n_1+n_2, n_2)$. We finally define the central character $\omega_{\boldsymbol{n}} \colon \mathcal{A}^\times \rightarrow \mathcal{A}^\times$ of $L^{(2)}(\boldsymbol{n};\mathcal{A})$ so that $\varrho^{(2)}_{\boldsymbol{n}}(x1_2)P(X,Y)=\omega_{\boldsymbol{n}}(x)P(X,Y)$ holds for each $x\in \mathcal{A}^\times$ and $P(X,Y)\in L^{(2)}(\boldsymbol{n};\mathcal{A})$.

\subsection{Irreducible representations of ${\rm GL}_3$}\label{sec:repGL3}

Following \cite[Section 12]{fh13}, we can explicitly construct an irreducible algebraic representation $L^{(3)}( \boldsymbol{w}; {\mathbf C})$
of ${\rm GL}_3({\mathbf R})$ for ${\boldsymbol w}=(w_1^+, w_1^-, w_2)$, with $w_1^+, w_1^-, w_2\in {\mathbf Z}$ and $w_1^+, w_1^- \geq 0$.   
The highest weight of $L^{(3)}( {\boldsymbol w}; {\mathbf C})$ will be given by $(w_1^+ +w_2, w_2, -w_1^-+w_2)$. Later we mainly use this representation for the case where $w_1^+=w_1^-=w_2$ holds. Since the construction is purely algebraic, we construct $L^{(3)}({\boldsymbol w};\mathcal{A})$ for a general integral domain $\mathcal{A}$ of characteristic $0$.

Let $\mathcal{A}$ be a (commutative) integral domain of characteristic $0$ and set ${\boldsymbol w}=(w_1^+,w_1^-,w_2)$ with $w_1^+,w_1^-,w_2\in \mathbf{Z}$ and $w_1^+,w_1^-\geq 0$. Let ${\rm Std}_3 = {\mathcal A}^3$ be the standard representarion of ${\rm GL}_3({\mathcal A})$, 
 ${\rm Symm}^{w_1^+}{\rm Std}_3$ the $w_1^+$-th symmetric tensor product of ${\rm Std}_3$ 
 and ${\rm Symm}^{w_1^-}{\rm Std}^\vee_3$ the $w_1^-$-th symmetric tensor product of the contragradient of $\mathrm{Std}_3$. We identify ${\rm Symm}^{w_1^+}{\rm Std}_3 \otimes_{\mathcal A} {\rm Symm}^{w_1^-} {\rm Std}^\vee_3$ 
with the set ${\mathcal A}[X, Y,Z; A, B,C]_{w_1^+,w_1^-}$ of homogenous polynomials of degree $w_1^+$ in variables $X, Y, Z$ 
and of degree $w_1^-$ in variables $A, B, C$. 
Define an action $\varrho^{(3)}_{{\boldsymbol w}}$ of ${\rm GL}_3({\mathcal A})$ on ${\mathcal A}[X, Y,Z; A, B,C]_{w_1^+, w_1^-}$ as follows: 
\begin{align*}
   \varrho^{(3)}_{{\boldsymbol w}}(g) P( X, Y,Z; A, B, C  )    
      &=    (\det g)^{w_2} P( (X,Y, Z) g; (A,B,C){}^{\rm t}\!g^{-1} )  \\ 
         &  \qquad \qquad \text{for }P\in {\mathcal A}[X, Y,Z; A, B,C]_{w_1^+, w_1^-} \text{ and } g\in {\rm GL}_3({\mathcal A}).    
\end{align*}
We also define a differential operator $\iota_{w_1^+, w_1^-}$ to be 
\begin{align*}
\iota_{w_1^+, w_1^-}  = \frac{\partial^2}{ \partial X \partial A} + \frac{\partial^2}{ \partial Y \partial B} + \frac{\partial^2}{ \partial Z \partial C}  :
      {\mathcal A}[X, Y, Z; A, B, C]_{w_1^+,w_1^-} \to  {\mathcal A}[X, Y, Z; A, B, C]_{w_1^+-1,w_1^--1}.    
\end{align*}
We find that $\iota_{w_1^+,w_1^-}$ is equivariant with respect to the actions $\varrho_{{\boldsymbol w}}^{(3)}$ and $\varrho^{(3)}_{\boldsymbol{w}-(1,1,0)}$ of ${\rm GL}_3({\mathcal A})$. 
Then define $L^{(3)}( {\boldsymbol w}; {\mathcal A})$ as the kernel of $\iota_{w_1^+, w_1^-}$. If $\mathcal{A}$ is a field, we may readily verify that $\iota_{w_1^+,w_1^-}$ is surjective, and thus the dimension of $L^{(3)}({\boldsymbol w}; \mathcal{A})$ is calculated as follows:  
\begin{align}\label{eq:Vabdim}
\begin{aligned}
  {\rm dim}_{\mathcal A} L^{(3)}({\boldsymbol w}; {\mathcal A}) 
  =& {\rm dim}_{\mathcal A}{\mathcal A}[X,Y,Z; A, B, C]_{w_1^+, w_1^-} 
     - {\rm dim}_{\mathcal A}{\mathcal A}[X,Y,Z; A, B, C]_{w_1^+-1, w_1^- -1}     \\
 =& \binom{w_1^+ +2}{2} \binom{w_1^- +2}{2}  - \binom{w_1^+ +1}{2} \binom{w_1^- +1}{2}  = \frac{1}{2} (w_1^+ +1) (w_1^- +1) (w_1^+ +w_1^- +2).   
\end{aligned}
\end{align}
It is known that $L^{(3)}({\boldsymbol w};\mathcal{A})$ is an irreducible algebraic representation of ${\rm GL}_3({\mathcal A})$ when $\mathcal{A}$ is a field (see \cite[Section~13.2]{fh13} for details). We finally define the central character $\omega_{\boldsymbol{w}} \colon \mathcal{A}^\times \rightarrow \mathcal{A}^\times$ of $L^{(3)}(\boldsymbol{w};\mathcal{A})$ so that 
\begin{align} \label{eq:central_L3}
 \varrho^{(3)}_{\boldsymbol{w}}(x1_3)P(X,Y,Z;A,B,C)=\omega_{\boldsymbol{w}}(x)P(X,Y,Z;A,B,C)
\end{align}
holds for each $x\in \mathcal{A}^\times$ and $P(X,Y,Z;A,B,C)\in L^{(3)}(\boldsymbol{w};\mathcal{A})$.

\subsection{An explicit branching rule for $(\mathrm{GL}_3,\mathrm{GL}_2)$} \label{sec:branch}

Let us regard ${\rm GL}_2$ as a subgroup of ${\rm GL}_3$ via the fixed embedding 
\begin{align*}
    g\mapsto \iota(g) := \begin{pmatrix}   g & \\  & 1 \end{pmatrix}.    
\end{align*}
Then, it is well known that, for $w_1^+, w_1^-\in \mathbf{Z}$ with $w_1^+, w_1^-\geq 0$, 
an irreducible representation $L^{(3)}(w_1^+,w_1^-,0\, ; {\mathbf{C}})$ of $\mathrm{GL}_3(\mathbf{R})$ 
of highest weight $(w_1^+,0,-w_1^-)$ is decomposed as a $\mathrm{GL}_2({\mathbf{R}})$-representation as follows (see, for example, \cite[Theorem~8.1.1]{gw99}): 
\begin{align*}
L^{(3)}(w_1^+,w_1^-,0\,;\mathbf{C})\vert_{\mathrm{GL}_2(\mathbf{R})}
      \cong 
\bigoplus_{  -w_1^- \leq -j \leq 0 \leq i \leq w_1^+ } L^{(2)}(i+j,-j;\mathbf{C}).
\end{align*}
Note that $L^{(2)}(i+j,-j;\mathbf{C})$ is an algebraic irreducible representation of $\mathrm{GL}_2(\mathbf{R})$ of highest weight $(i,-j)$. Such a decomposition is called the {\em branching rule} for the pair $(\mathrm{GL}_3,\mathrm{GL}_2)$. 
 In this subsection, we will construct an explicit  $\mathrm{GL}_2$-equivariant isomorphism as above in terms of the models introduced in Sections~\ref{sec:repGL2} and \ref{sec:repGL3}, which would induce a decomposition of local systems pulled back to the symmetric space ${\mathcal Y}^{(2)}_{\mathcal K_2}=\mathrm{GL}_2(\mathbf{Q})\backslash \mathrm{GL}_2(\mathbf{Q_A})/\mathrm{SO}_2(\mathbf{R})\mathcal{K}_2$; see Section~\ref{sec:zetaint} for details.

Continuing from the previous subsection, let ${\mathcal A}$ be a (commutative) integral domain of characteristic $0$, and assume that $w_1^+!\, w_1^-!$ is invertible in ${\mathcal A}$.
For each $0\leq k \leq w_1^+$,  $0\leq l \leq w_1^-$ and $P\in {\mathcal A}[X,Y,Z; A, B,C]_{w_1^+, w_1^-}$, set 
\begin{align}\label{eq:defnab}
   (\nabla_{k,l} P) (X,Y)  = \frac{1}{k! \, l!} \frac{\partial^{k+l} P }{ \partial Z^k \partial C^l} (X, Y, 0\, ; -Y, X, 0).   
\end{align}
Then one sees that $\nabla_{k,l} P$ is a polynomial in variables $X$ and $Y$, which is homogenous of degree $w_1^+ + w_1^- -k-l$. 
The following lemma shows that $\nabla_{k,l} $ is a ${\rm GL}_2({\mathcal A})$-equivariant map: 

\begin{lem}\label{lem:nabkl}
{\itshape 
For each $g \in {\rm GL}_2({\mathcal A})$, we have 
\begin{align*}
    \left(  \nabla_{k,l}  \left(( \varrho^{(3)}_{w_1^+,w_1^-,0}(\iota(g))  P\right)  \right)(X, Y)    
    =   (\det g)^{-(w_1^--l)} \left( \nabla_{k,l} P    \right) ((X,Y)g).   
\end{align*}
In particular, $\nabla_{k,l}$ induces a ${\rm GL}_2({\mathcal A})$-equivariant homomorphism 
\begin{align*}
    \nabla_{k,l} \colon L^{(3)}(w_1^+,w_1^-,0\,;\mathcal{A})|_{\rm GL_2({\mathcal A})}  \longrightarrow L^{(2)}(w_1^+ +w_1^- -k-l,-(w_1^- -l); \mathcal{A}).
\end{align*}
}
\end{lem}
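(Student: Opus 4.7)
The plan is to unwind the definition of $\nabla_{k,l}$ and exploit two structural features of the embedding $\iota$: first, that $\iota(g)$ acts trivially on the third coordinate of $(X,Y,Z)$ and, dually, that ${}^{\mathrm t}\!\iota(g)^{-1}=\iota({}^{\mathrm t}\!g^{-1})$ acts trivially on the third coordinate of $(A,B,C)$; second, that the evaluation point $(X,Y,0;-Y,X,0)$ has a clean transformation law under $\iota(g)$.

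Concretely, I would first observe that because the variables $Z$ and $C$ are fixed by $\iota(g)$, the differential operator $\partial_Z^k\partial_C^l$ commutes with the action $\varrho^{(3)}_{w_1^+,w_1^-,0}(\iota(g))$; hence, writing $R:=\tfrac{\partial^{k+l}P}{\partial Z^k\partial C^l}\in\mathcal A[X,Y,Z;A,B,C]_{w_1^+-k,w_1^--l}$, the key identity
\[
\tfrac{\partial^{k+l}}{\partial Z^k\partial C^l}\bigl(\varrho^{(3)}_{w_1^+,w_1^-,0}(\iota(g))P\bigr)(X,Y,Z;A,B,C)
=R\bigl((X,Y,Z)\iota(g);(A,B,C){}^{\mathrm t}\!\iota(g)^{-1}\bigr)
\]
holds on the nose (recall $w_2=0$, so no determinant factor enters). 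Next, writing $(X',Y'):=(X,Y)g$, I would compute directly that $(X,Y,0)\iota(g)=(X',Y',0)$, while $(-Y,X,0){}^{\mathrm t}\!\iota(g)^{-1}=(\det g)^{-1}(-Y',X',0)$ — this is the one little calculation that really carries the lemma. Substituting and using that $R$ is homogeneous of degree $w_1^--l$ in the variables $(A,B,C)$, the scalar $(\det g)^{-1}$ factors out as $(\det g)^{-(w_1^--l)}$, producing
\[
\tfrac{1}{k!\,l!}R\bigl(X',Y',0;-Y',X',0\bigr)(\det g)^{-(w_1^--l)}=(\det g)^{-(w_1^--l)}(\nabla_{k,l}P)((X,Y)g),
\]
which is exactly the claimed equivariance identity.

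For the second assertion, I would simply match parameters against the definition of $\varrho^{(2)}_{\boldsymbol n}$ in Section~\ref{sec:repGL2}: the polynomial $\nabla_{k,l}P$ is homogeneous of degree $w_1^++w_1^--k-l$ in $(X,Y)$, and the transformation law just derived is precisely the action of $\varrho^{(2)}_{(w_1^++w_1^--k-l,\,-(w_1^--l))}(g)$. Since $L^{(3)}(w_1^+,w_1^-,0;\mathcal A)$ is by definition a $\mathrm{GL}_3(\mathcal A)$-stable submodule, the restriction of $\nabla_{k,l}$ to it is automatically $\mathrm{GL}_2(\mathcal A)$-equivariant with values in $L^{(2)}(w_1^++w_1^--k-l,-(w_1^--l);\mathcal A)$.

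I do not foresee a serious obstacle: the argument is a bookkeeping computation, and the only step that requires genuine care is tracking the determinant exponent — verifying that the factor $(\det g)^{-1}$ arising from ${}^{\mathrm t}\!g^{-1}$ in the second triple is raised exactly to the homogeneity degree $w_1^--l$ of $R$ in $(A,B,C)$, rather than to $w_1^-$ or $w_1^-+l$. Once that exponent is pinned down, everything else is a substitution of coordinates.
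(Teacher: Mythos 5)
Your proof is correct and follows essentially the same chain-rule computation as the paper, just with a cleaner bookkeeping: you differentiate first (observing $\partial_Z^k\partial_C^l$ commutes with $\varrho^{(3)}_{w_1^+,w_1^-,0}(\iota(g))$ because $\iota(g)$ and ${}^{\mathrm t}\iota(g)^{-1}$ fix the coordinates $Z$ and $C$ while the remaining coordinates are independent of them), then extract the determinant power all at once via the homogeneity of $R$ in $(A,B,C)$. The paper instead expands $\varrho^{(3)}(\iota(g))P$ first, pulling out $(\det g)^{-w_1^-}$ at the cost of introducing a $(\det g)C$ inside, so that the chain rule later produces a compensating $(\det g)^{l}$; the two routes compute the exponent $-(w_1^--l)$ as $-(w_1^--l)$ versus $-w_1^-+l$, which is the same arithmetic.
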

\begin{proof}
For $g=\begin{pmatrix} a & b \\ c & d  \end{pmatrix}\in {\rm GL}_2({\mathcal A})$, we find that 
\begin{align*}
   {}^{\rm t} \iota(g)^{-1} = (\det g)^{-1}  \begin{pmatrix}  d & -c   & 0 \\ -b & a & 0 \\ 0 & 0 & \det g  \end{pmatrix}. 
\end{align*}
Hence, by definition, we obtain the following equality: 
\begin{align*}
  \left( \varrho^{(3)}_{w_1^+,w_1^-,0} ( \iota(g) ) P  \right) (X,Y,Z;  A, B, C) 
    &= P( (X,Y,Z) \iota(g) ; (A,B,C){}^{\rm t}\iota(g)^{-1} )  \\
    &= (\det g)^{-w_1^-} P(aX+cY, bX+dY, Z  ;   dA-bB, -cA+aB, (\det g)C).   
\end{align*}
Using the chain rule, we can calculate the derivatives of $\varrho^{(3)}_{w_1^+,w_1^-,0} ( \iota(g) ) P$ as 
\begin{multline*}
       \frac{1}{k!\, l!}  \frac{\partial^{k+l}   }{ \partial Z^k \partial C^l   }     \left(    \left( \varrho^{(3)}_{w_1^+,w_1^-,0} ( \iota(g) ) P  \right) (X,Y,Z;  A, B, C)     \right)  \\
  =       \frac{1}{k!\,l!}  (\det g)^{-w_1^- +l}  \frac{\partial^{k+l}  P }{ \partial Z^k \partial C^l   }  (aX+cY, bX+dY, Z  ;   dA-bB, -cA+aB, (\det g)C).  
\end{multline*}
Substituting $A=-Y, B=X$ and $C=Z=0$, we finally obtain the desired equality:  
\begin{align*}
       \left(  \nabla_{k,l}  \left(( \varrho^{(3)}_{w_1^+,w_1^-,0}(\iota(g))  P\right)  \right)(X, Y)    
   = &     \frac{1}{k!\, l!}  (\det g)^{-w_1^- +l}  \frac{\partial^{k+l}  P }{ \partial Z^k \partial C^l   }  (aX+cY, bX+dY, 0  ;   -(bX+dY), aX+cY, 0 )  \\
   = &  (\det g)^{- (w_1^- -l)} (\nabla_{k,l} P) (aX+cY, bX+dY) \\
   = &  (\det g)^{-(w_1^- -l)}  (\nabla_{k,l} P) ( (X,Y) g ).   \qedhere
\end{align*}
\end{proof}

By Lemma \ref{lem:nabkl}, we can define a ${\rm GL}_2({\mathcal A})$-equivariant homomorphism
\begin{align*}
   {\boldsymbol \nabla}  = (\nabla_{k,l})_{\substack{0\leq k\leq w_1^+ \\ 0\leq l\leq w_1^-} }:  
       L^{(3)}(w_1^+,w_1^-,0\,; \mathcal{A}) |_{{\rm GL}_2( {\mathcal A} )  }   \longrightarrow \bigoplus^{w_1^+}_{k=0}   \bigoplus^{w_1^-}_{l=0} L^{(2)}(w_1^+ +w_1^- -k-l, -(w_1^- -l); \mathcal{A}).
\end{align*}

\begin{prop}
{\itshape  
Suppose that ${\mathcal A}$ is a field.
Then the homomorphism ${\boldsymbol \nabla}$ is an isomorphism of $\mathrm{GL}_2(\mathcal{A})$-modules.
}
\end{prop}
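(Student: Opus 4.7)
Since $\boldsymbol{\nabla}$ is already a $\mathrm{GL}_2(\mathcal{A})$-equivariant $\mathcal{A}$-linear map by Lemma~\ref{lem:nabkl}, it suffices to verify that it is bijective as an $\mathcal{A}$-linear map. A direct computation yields
\begin{align*}
\sum_{k=0}^{w_1^+}\sum_{l=0}^{w_1^-} (w_1^+ + w_1^- - k - l + 1) = \tfrac{1}{2}(w_1^+ + 1)(w_1^- + 1)(w_1^+ + w_1^- + 2),
\end{align*}
which by (\ref{eq:Vabdim}) equals $\dim_{\mathcal{A}} L^{(3)}(w_1^+, w_1^-, 0; \mathcal{A})$, so it is enough to prove that $\boldsymbol{\nabla}$ is injective.

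The plan is to extend $\boldsymbol{\nabla}$ to the ambient space $\mathcal{A}[X, Y, Z; A, B, C]_{w_1^+, w_1^-}$ via the same formula (\ref{eq:defnab}) and call this extension $\Psi$. Writing each $P$ as a polynomial in $Z, C$ with coefficients in $\mathcal{A}[X, Y; A, B]$, the condition $\Psi(P) = 0$ amounts to the vanishing of every coefficient under the substitution $(A, B) \mapsto (-Y, X)$. The kernel of this substitution on $\mathcal{A}[X, Y; A, B]_{a, b}$ is the principal ideal $(XA + YB)\cdot\mathcal{A}[X, Y; A, B]_{a - 1, b - 1}$ (the inclusion is clear, and both sides have $\mathcal{A}$-dimension $ab$), whence
\begin{align*}
\ker \Psi = (XA + YB)\cdot\mathcal{A}[X, Y, Z; A, B, C]_{w_1^+ - 1, w_1^- - 1}.
\end{align*}

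It then remains to show $\ker \iota_{w_1^+, w_1^-}\cap \ker \Psi = \{0\}$. For $Q \in \mathcal{A}[X, Y, Z; A, B, C]_{w_1^+ - 1, w_1^- - 1}$, the product rule together with $\iota(XA + YB) = 2$ gives the identity
\begin{align*}
\iota_{w_1^+, w_1^-}((XA + YB)Q) = (w_1^+ + w_1^- - Z\partial_Z - C\partial_C)Q + (XA + YB)\,\iota_{w_1^+ - 1, w_1^- - 1}(Q).
\end{align*}
Expanding $Q = \sum_{c, r} Z^c C^r Q_{c, r}$ and matching powers of $Z^c C^r$, the vanishing $\iota_{w_1^+, w_1^-}((XA + YB)Q) = 0$ turns into the coupled system
\begin{align*}
(w_1^+ + w_1^- - c - r) Q_{c, r} + (XA + YB)\iota'(Q_{c, r}) + (c + 1)(r + 1)(XA + YB)Q_{c + 1, r + 1} = 0
\end{align*}
with $\iota' = \partial_X\partial_A + \partial_Y\partial_B$ (setting $Q_{c + 1, r + 1} = 0$ outside the index range). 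A decreasing induction on $c + r$ starting from $(c, r) = (w_1^+ - 1, w_1^- - 1)$, where the equation collapses to $2 Q_{c, r} = 0$, reduces the inductive step to $\alpha Q_{c, r} = -(XA + YB)\iota'(Q_{c, r})$ with $\alpha := w_1^+ + w_1^- - c - r \geq 2$. Iterating this relation by writing successively $Q_{c, r} = (XA + YB)^n R_n$ and computing, via repeated application of the same product rule, that the scalar appearing at the $n$th stage is $\beta_n = (n + 1)(\alpha - n)$, one forces $Q_{c, r}$ to be divisible by $(XA + YB)^{n + 1}$ for all $n$ within the allowed bi-degrees; a bi-degree comparison in $\mathcal{A}[X, Y; A, B]$ then yields $Q_{c, r} = 0$, completing the induction and hence the proof.

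The main obstacle is verifying that the scalar factors $\beta_n = (n + 1)(\alpha - n)$ never vanish during the iteration, which hinges on the estimate $\alpha - N \geq 2$ with $N = \min(w_1^+ - 1 - c, w_1^- - 1 - r)$ being the maximal number of iterations allowed by the bi-degree of $Q_{c, r}$; careful bookkeeping of bi-degrees through the recursion is what makes the argument work.
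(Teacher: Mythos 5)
Your proof is correct, but it takes a genuinely different and more self-contained route than the paper's. The paper's own proof is a two-liner: it observes that $\boldsymbol{\nabla}$ is $\mathrm{GL}_2(\mathcal{A})$-equivariant (Lemma~\ref{lem:nabkl}) and that source and target have equal $\mathcal{A}$-dimension, implicitly leaning on the classical branching rule quoted just above the proposition (so that both sides are multiplicity-free semisimple modules with the same irreducible constituents, and each component $\nabla^{\boldsymbol{n}}$ is visibly nonzero). Your argument instead proves injectivity directly without invoking the known branching rule: you identify the kernel of the extension $\Psi$ on the ambient polynomial space as the bi-degree $(w_1^+ - 1, w_1^- - 1)$ piece of the ideal $(XA+YB)$ via the dimension count $(a+1)(b+1)-(a+b+1)=ab$, and then rule out $\ker\iota_{w_1^+,w_1^-}\cap\ker\Psi\neq 0$ by expanding the Euler/product-rule identity $\iota\bigl((XA+YB)Q\bigr)=(w_1^+ +w_1^- -Z\partial_Z-C\partial_C)Q+(XA+YB)\iota(Q)$ into a decreasing induction on $c+r$ together with the recursion $\beta_{n+1}=\beta_n+(a+b-2n)$, whose closed form $\beta_n=(n+1)(\alpha-n)$ you state correctly. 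I verified that no $\beta_n$ vanishes on $0\le n\le N=\min(a,b)$ precisely because $\alpha-N=\max(a,b)+2\ge 2$, and that both the base case $(c,r)=(w_1^+ - 1,w_1^- - 1)$ and the terminal bi-degree comparison go through. The paper's route buys brevity and stays at the conceptual level of Schur's lemma; yours buys a completely explicit, self-contained injectivity argument that in effect reproves the branching rule from scratch, and in passing identifies $\ker\Psi$ as a concrete ideal-theoretic complement of $L^{(3)}(w_1^+,w_1^-,0;\mathcal{A})$ inside the full polynomial algebra.
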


\begin{proof}
 Since we have shown the ${\rm GL}_2({\mathcal A})$-equivariance of ${\boldsymbol \nabla}$ in Lemma \ref{lem:nabkl}, 
 it suffices to show that the ${\mathcal A}$-dimension of the source of ${\boldsymbol \nabla}$ equals the ${\mathcal A}$-dimension of the target of ${\boldsymbol \nabla}$. 
The ${\mathcal A}$-dimensiton of the source has been already calculated in (\ref{eq:Vabdim}).  
 It is also easy to calculate the ${\mathcal A}$-dimension of the target as 
 \begin{multline*}
        \sum^{w_1^+}_{k=0}  \sum^{w_1^-}_{l=0}  {\rm dim}_{\mathcal A} L^{(3)}(w_1^+ +w_1^- -k-l, -(w_1^- -l); {\mathcal A})    \\[-.5em]
\begin{aligned}
   &=   \sum^{w_1^+}_{k=0}  \sum^{w_1^-}_{l=0}(w_1^+ +w_1^- -k-l+1)  \\
   &=   (w_1^+ +w_1^- +1)(w_1^+ +1)(w_1^- +1)  - \frac{1}{2} w_1^+ (w_1^+ +1)(w_1^- +1)   - (w_1^+ +1) \cdot  \frac{1}{2} w_1^- (w_1^-+1)  \\
   &= \frac{1}{2} (w_1^+ +1) (w_1^- +1) (w_1^+ +w_1^- +2).   
\end{aligned}
 \end{multline*}
 This completes the proof.   
\end{proof}

Taking the ($w_2$-th) Tate twists for $w_2\in \mathbf{Z}$ into accounts, we finally obtain the explicit decomposition 
\begin{align*}
   {\boldsymbol \nabla}  = (\nabla_{k,l})_{\substack{0\leq k\leq w_1^+ \\ 0\leq l\leq w_1^-} }:  
       L^{(3)}(w_1^+,w_1^-,w_2; \mathcal{A}) |_{{\rm GL}_2( {\mathcal A} )  }   \longrightarrow \bigoplus^{w_1^+}_{k=0}   \bigoplus^{w_1^-}_{l=0} L^{(2)}(w_1^+ +w_1^- -k-l, -(w_1^- -l)+w_2; \mathcal{A}).
\end{align*}

To lighten the notation, let us set ${\boldsymbol n}=(n_1,n_2)=(w_1^+ +w_1^- -k-l, -(w_1^- -l)+w_2)$ and 
\begin{align} \label{eq:nabkl}
 \nabla^{\boldsymbol{n}}=\nabla^{n_1,n_2}=\nabla_{w_1^+ +w_2 -n_1-n_2, w_1^- -w_2+n_2}.
\end{align}
Then the above decomposition is described as 
\begin{align} \label{eq:branch}
  {\boldsymbol \nabla}  = (\nabla^{{\boldsymbol n}})_{\boldsymbol{n}\in \Xi_2(\boldsymbol{w})}:  
       L^{(3)}(\boldsymbol{w}; \mathcal{A}) |_{{\rm GL}_2( {\mathcal A} )  }   \longrightarrow \bigoplus_{\boldsymbol{n}\in \Xi_2(\boldsymbol{w})} L^{(2)}(\boldsymbol{n}; \mathcal{A}),
\end{align}
where  
\begin{align*}
   \Xi_2(\boldsymbol{w}) 
  =& \left\{ (n_1, n_2) \in {\mathbf Z}  \mid    (n_1,n_2)
                  =   (w_1^++w_1^- -k-l,-(w_1^- -l)+w_2)  \text{\; for } 0\leq {}^\exists k\leq w_1^+,\,  0\leq {}^\exists l \leq w_1^-   
            \right\} \\
  =& \left\{  (n_1, n_2) \in {\mathbf Z} \mid     w_2 \leq n_1+n_2 \leq w_1^+ +w_2,\, w_2-w_1^- \leq n_2\leq w_2  \right\}.    
\end{align*}

\subsection{Irreducible representation of ${\rm O}_2$}\label{sec:repSO2}

Set $\Lambda_2 =\{  (0, 1) \} \cup \{ (\lambda, 0) \mid  \lambda \in {\mathbf Z}, \lambda \geq 0  \}$
and let $\boldsymbol{\lambda} = (\lambda, \delta)$ be an element of $\Lambda_2$. 
We define an action $\tau^{(2)}_{\boldsymbol{\lambda}}$ of $\mathrm{O}_2(\mathbf{R})$ on $\mathbf{C}[X,Y]_{\lambda}$ by $\tau^{(2)}_{\boldsymbol{\lambda}}=\varrho^{(2)}_{\boldsymbol{\lambda}} \vert_{\mathrm{O}_2(\mathbf{R})}$, and define 
$v^{(2),\boldsymbol{\lambda}}_{\pm \lambda}\in \mathbf{C}[X,Y]_{\lambda}$ as\footnote{Note that both $v^{(2),(0,0)}_0$ and $v^{(2),(0,1)}_0$ equal $1$ as an element of $ V^{(2)}_{(0,0)}=V^{(2)}_{(0,1)}=\mathbf{C}$, but ${\mathrm O}_2(\mathbf{R})$ acts on them in different manners.} 
\begin{align*}
   v^{(2), \boldsymbol{\lambda}}_{\pm \lambda}
&= (\pm X+\sqrt{-1} Y)^{\lambda}  &  \text{(double sign in the same order).}
\end{align*}
Then the action of $\mathrm{O}_2(\mathbf{R})$ on $v^{(2),\boldsymbol{\lambda}}_{\pm \lambda}$ is described as follows: 
\begin{align*}
        \tau^{(2)}_{\boldsymbol{\lambda}} \left( \begin{pmatrix} \cos\theta & \sin \theta \\ - \sin \theta & \cos \theta \end{pmatrix} \right)  
           v^{(2), \boldsymbol{\lambda}}_{\pm \lambda}  
    &=  e^{\pm\sqrt{-1} \lambda \theta}   
           v^{(2), \boldsymbol{\lambda}}_{\pm  \lambda},    &  
        \tau^{(2)}_{\boldsymbol{\lambda}} \left( \begin{pmatrix} -1 & 0 \\ 0 & 1 \end{pmatrix} \right) 
           v^{(2),\boldsymbol{\lambda}}_{\pm\lambda} 
    &=  (-1)^{ \delta}  v^{(2), \boldsymbol{\lambda}}_{\mp\lambda}. 
\end{align*}
Hence the subspace $V^{(2)}_{\boldsymbol{\lambda}} := \langle v^{(2),\boldsymbol{\lambda}}_{\lambda}, v^{(2),\boldsymbol{\lambda}}_{-\lambda}  \rangle_{\mathbf{C}}$ of $\mathbf{C}[X,Y]_{\lambda}$  
      gives an irreducible representation of ${\rm O}_2({\mathbf R})$, 
and hereafter we write $\tau^{(2)}_{\boldsymbol{\lambda}}$ to denote the action of ${\rm O}_2({\mathbf R})$ on this space. 
Then each irreducible representation of ${\rm O}_2({\mathbf R})$ is classified as 
  $\tau^{(2)}_{\boldsymbol{\lambda}}$ for some $\boldsymbol{\lambda} \in \Lambda_2$.

\subsection{Irreducible representation of ${\rm O}_3$}\label{sec:repSO3}

Set $\Lambda_3 = \{ (\lambda, \delta)   \mid   \lambda\in {\mathbf Z},\,  \lambda \geq 0,\, \delta \in \mathbf{Z}/2\mathbf{Z} \}$
 and let $\boldsymbol{\lambda}=(\lambda, \delta)$ be an element of $\Lambda_3$. 
We define an action $\tau^{(3)}_{\boldsymbol{\lambda}}$ of ${\rm O}_3({\mathbf R})$ on ${\mathbf C}[z_1,z_2,z_3]_{\lambda}$ by 
\begin{align*}
  \tau^{(3)}_{\boldsymbol{\lambda}} (u) P(z_1, z_2, z_3) =  (\det u)^{\delta} P( (z_1, z_2, z_3) u )
\end{align*}
for each  $u \in {\rm O}_3({\mathbf R})$ and $P\in {\mathbf C}[z_1, z_2, z_3]_{\lambda}$.
Then the subspace ${\mathcal V}_{\boldsymbol{\lambda}}$ defined as ${\mathcal V}_{\boldsymbol{\lambda}} = (z^2_1 + z^2_2 + z^2_3){\mathbf C}[z_1, z_2, z_3]_{\lambda -2}$ when $\lambda \geq 2$ and ${\mathcal V}_{\boldsymbol{\lambda}} = 0$ otherwise is stable under the action $\tau^{(3)}_{\boldsymbol{\lambda}}$ of ${\rm O}_3({\mathbf R})$. One may verifiy that 
         the quotient space $V^{(3)}_{\boldsymbol{\lambda}} = {\mathbf C}[z_1, z_2, z_3]_{\lambda} / {\mathcal V}_{\boldsymbol{\lambda}}$ gives rise to
         an irreducible representation of ${\rm O}_3({\mathbf R})$, which we also denote by $\tau^{(3)}_{\boldsymbol{\lambda}}$ (see also \cite[Section~4.1]{him}).   
Then each irreducible representation of ${\rm O}_3({\mathbf R})$ is classified as $\tau^{(3)}_{\boldsymbol{\lambda}}$ for some $\boldsymbol{\lambda}\in \Lambda_3$. It is easy to see that the dimension of $V^{(3)}_{\boldsymbol{\lambda}}$ is equal to $2\lambda+1$. 

Next we construct an explicit basis of the representation space $V^{(3)}_{\boldsymbol{\lambda}}$. For every  $\mu \in{\mathbf Z}$ with $ 0 \leq \mu \leq \lambda$, let $v^{(3),\boldsymbol{\lambda}}_{\pm \mu}$ be elements of $V^{(3)}_{\boldsymbol{\lambda}}$ defined as 
\begin{align*}
    v^{(3), \boldsymbol{\lambda}}_{\pm \mu}
    &=  ( \pm z_1+\sqrt{-1} z_2 )^{\mu}  z^{\lambda - \mu}_3
            \mod {\mathcal V}_{\boldsymbol{\lambda}} & \text{(double sign in the same order)}. 
\end{align*}
We abbreviate $  v^{(3), \boldsymbol{\lambda}}_{\pm \mu}$ to   $v^{ \boldsymbol{\lambda}}_{\pm \mu}$ if no confusion likely occurs. 
Then $\{v^{(3),\boldsymbol{\lambda}}_{\pm \mu} \mid 0\leq \mu\leq \lambda \}$ forms a basis of $V^{(3)}_{\boldsymbol{\lambda}}$. The multiplicativity among these elements
\begin{align} \label{eq:vmult}
 v^{(3),\boldsymbol{\lambda}}_{\pm \mu}\times  v^{(3),\boldsymbol{\lambda}^\prime}_{\pm \mu^\prime} &=  v^{(3),\boldsymbol{\lambda}+\boldsymbol{\lambda}^\prime}_{\pm\mu\pm\mu^\prime} & \text{for }\boldsymbol{\lambda}=(\lambda,\delta),\, \boldsymbol{\lambda}'=(\lambda',\delta')\in \Lambda_3 \text{ and } 0\leq \mu\leq \lambda,\,  0\leq \mu'\leq \lambda' 
\end{align}
is immediately deduced from their definition. 

For later use, we here introduce the matrix representation $M_{\boldsymbol{\lambda}}(u) \in {\rm GL}_{2\lambda+1}({\mathbf C})$ of the action of $u\in {\rm O}_3({\mathbf R})$ on $V^{(3)}_{\boldsymbol{\lambda}}$ with respect to the above specific basis; namely we set  
\begin{align}\label{eq:matM}
  \begin{pmatrix}
   \tau^{(3)}_{\boldsymbol{\lambda}} (u) v^{(3),\boldsymbol{\lambda}}_{\lambda} & \tau^{(3)}_{\boldsymbol{\lambda}}  (u)v^{(3),\boldsymbol{\lambda}}_{\lambda-1} & 
     \dotsc &
     \tau^{(3)}_{\boldsymbol{\lambda}}  (u)v^{(3),\boldsymbol{\lambda}}_{-\lambda}  
 \end{pmatrix}  
   = 
\begin{pmatrix}
 v^{(3),\boldsymbol{\lambda}}_{\lambda}& v^{(3),\boldsymbol{\lambda}}_{\lambda-1} & \dotsc & v^{(3),\boldsymbol{\lambda}}_{-\lambda}
\end{pmatrix}  M_{\lambda}(u).
\end{align}

\subsection{An explicit branching rule for $({\mathrm{O}_3,\mathrm{O}_2})$} \label{sec:branch_orth}

We finally emphasize that, by using the specific bases introduced in the previous subsections, we can explicitly describe the branching rule for $({\mathrm O}_3(\mathbf{R}), {\mathrm O}_2(\mathbf{R}))$ with respect to the fixed inclusion ${\mathrm O}_2(\mathbf{R})\hookrightarrow {\mathrm O}_3(\mathbf{R}); u \mapsto 
\begin{pmatrix}
 u & \\ & 1
\end{pmatrix}$ as follows. For each $\boldsymbol{\lambda}=(\lambda,\delta)\in \Lambda_3$, define a subset $\Sigma(\boldsymbol{\lambda})$ of $\Lambda_2$ as 
\begin{align*}
  \Sigma(\boldsymbol{\lambda}) = \left\{  (0, \delta)   \right\}  
                                  \cup \left\{ (\mu, 0)  
                                                        \mid \mu \in {\mathbf Z}, 1\leq \mu \leq \lambda   \right\} \quad \subseteq \Lambda_2.  
\end{align*}

For each $\boldsymbol{\mu}=(\mu,\delta) \in \Sigma(\boldsymbol{\lambda})$, the correspondence $v^{(2), \boldsymbol{\mu}}_{\pm \mu} \mapsto  (\pm 1)^{\delta} v^{(3), \boldsymbol{\lambda}}_{\pm \mu}$ defines an ${\mathrm O}_2(\mathbf{R})$-equivariant homomorphism $\iota^{\boldsymbol{\lambda}}_{\boldsymbol{\mu}} : V^{(2)}_{\boldsymbol{\mu}} \to V^{(3)}_{\boldsymbol{\lambda}}$  (see also \cite[Lemma 9.3]{him}), which induces an isomorphism of ${\mathrm O}_2(\mathbf{R})$-representations    
\begin{align*}
 \bigoplus_{ \boldsymbol{\mu} \in \Sigma(\boldsymbol{\lambda}) } V^{(2)}_{\boldsymbol{\mu}}  \stackrel{\sim}{\longrightarrow}   V^{(3)}_{\boldsymbol{\lambda}} \vert_{{\mathrm O}_2(\mathbf{R})}.  
\end{align*}
Obviously there also exists an ${\rm O}_2(\mathbf{R})$-homomorphism 
  ${\mathrm{P}}^{\boldsymbol{\lambda}}_{\boldsymbol{\mu}} \colon V^{(3)}_{\boldsymbol{\lambda}} \vert_{{\mathrm O}_2(\mathbf{R})} \to V^{(2)}_{\boldsymbol{\mu}}$ satisfying ${\rm P}^{\boldsymbol{\lambda}}_{\boldsymbol{\mu}} \circ \iota^{\boldsymbol{\lambda}}_{\boldsymbol{\mu}} =  \mathrm{id}_{V_{\boldsymbol{\mu}}^{(2)}}$.

\section{Motives associated to the Rankin--Selberg convolution ${\rm GL}_3 \times {\rm GL}_2$}\label{sec:auto,mot}

After briefly reviewing  the notion of cohomological automorphic representations of $\mathrm{GL}_n(\mathbf{Q}_{\mathbf{A}})$, we summarize in this section the  settings on the automorphic representations of ${\rm GL}_3({\mathbf Q}_{\mathbf A})$, ${\rm GL}_2({\mathbf Q}_{\mathbf A})$ and their Rankin--Selberg convolution, which we consider throughout this article. We also introduce the notion of their archimedean $L$-fuctors and critical points,  and discuss their relations to the (conjectural) associated motives, according to \cite{del79} and \cite{clo90}.
Some of materials are also dealt with in \cite{mah05} and \cite{rag16}. 
Beyond these generalities, we would emphasize here explicit models and parameters of the representations under consideration, which are fundamental tools in this article. We always consider that all the automorphic $L$-functions are normalized so that they satisfy the functional equations between $L(s,-)$ and $L(1-s,-^\vee)$ throughout this article (where $^\vee$ denotes the contragredient).

\subsection{Generalities on automorphic representations of ${\rm GL}_n$}\label{sec:auto}

In this subsection, we briefly review general theory on cohomological automorphic representations of ${\rm GL}_n$. 
Let $\pi^{(n)}$ be an irreducible cuspidal automorphic representation of ${\rm GL}_n({\mathbf Q}_{\mathbf A})$.    
We first recall the notion of the Langlands parameter of $\pi^{(n)}$ according to \cite[Section~2]{kna94} and \cite{clo90}. 
For this purpose, let us introduce several notation on representations of the Weil group $W_{\mathbf R}$ of the real number field ${\mathbf R}$. 

Here we realize the Weil group $W_{\mathbf R}$ as the disjoint union ${\mathbf C}^\times \sqcup ({\mathbf C}^\times j)$ where $j$ satisfies $j^{2}=-1$ and $jzj = -z^c$ for arbitrary $z \in {\mathbf C}^\times$, where $z^c$ denotes the complex conjugate of $z$.
Then, for every $\nu \in {\mathbf C}$, $\delta\in \{0, 1\}$ and $l\in {\mathbf Z}$ with $l\geq 0$, 
define a $1$-dimensional representation $\phi^\delta_\nu: W_{\mathbf R} \to {\mathbf C}^\times$ 
and a $2$-dimensional representation $\phi_{\nu, l} : W_{\mathbf R} \to {\rm GL}_2({\mathbf C})$ of $W_{\mathbf{R}}$ as follows:
\begin{align*}
     \phi^\delta_\nu (z) &= (zz^c)^{\nu} \hphantom{\begin{pmatrix}  (z^c/z)^{\frac{l}{2}}  &  0 \\  0 &  (z/z^c)^{\frac{l}{2}}  \end{pmatrix}} \;\quad   \text{for }z\in {\mathbf C}^\times, &
                                       \phi^\delta_\nu(j) &= (-1)^\delta,   \\   
     \phi_{\nu, l} (z)  &=  (zz^c)^\nu \begin{pmatrix}   (z^c/z)^{\frac{l}{2}}  &  0 \\  0 &  (z/z^c)^{\frac{l}{2}}  \end{pmatrix} \quad  \text{for }z \in {\mathbf C}^\times, & 
                                        \phi_{\nu, l} (j) &=  \begin{pmatrix}  0 & (-1)^l  \\ 1 & 0 \end{pmatrix}. 
\end{align*} 
Let $\chi^\delta_\nu : {\mathbf R}^\times \to {\mathbf C}^\times$  be a character of $\mathbf{R}^\times$ defined by $\chi^\delta_\nu(t) = {\rm sgn}(t)^\delta |t|^\nu_\infty$, and  $D_{\nu, l}$ an irredicible representation of $\mathrm{GL}_2(\mathbf{R})$ satisfying  
\begin{align*}
  D_{\nu, l}(t1_2) &= t^{2\nu} \quad \text{for } t \in {\mathbf R}^\times \text{ with } t>0, & 
  D_{\nu, l}|_{ {\rm SL}_2({\mathbf R}) }  &\cong D^+_l \oplus D^-_l,   
\end{align*}
where $D^+_l$ (resp.\ $D^-_l$) denotes the holomorphic (resp.\ anti-holomorphic) discrete series representation of ${\rm SL}_2({\mathbf R})$  of lowest weight $l+1$ (resp.\ of highest weight $-l-1$). 
Then $\phi^\delta_\nu$ (resp.\ $\phi_{\nu, l}$) corresponds to $\chi^\delta_\nu$ (resp.\ $D_{\nu, l}$) via the (archimedean) local Langlands correspondence; see \cite[Theorem~2]{kna94} for details.

It is well known that, when the automorphic representation $\pi^{(n)}$ under consideration is {\em cohomological} (or {\em regular algebraic} in some literature), strong constraints are imposed on the Langlands parameter of its archimedean part. We do not review here the precise definition of cohomological automorphic representations, and just quote the following classification results which are necessary later. For details on cohomological automorphic representations, see Clozel's exposition \cite[Section~3.5]{clo90} (refer also to \cite[Section~3.1.1]{mah05} and \cite[Section~2.3]{gr14} for brief summaries). 

\begin{prop}\label{prop:CloCoh}(\cite[
Lemme de puret\'{e} 4.9]{clo90})
{\itshape 
Let $\pi^{(n)}$ be a cohomological irreducible cuspidal automorphic representation of $\mathrm{GL}_n(\mathbf{Q}_{\mathbf{A}})$. 
Then the Langlands parameter of the archimedean part $\pi^{(n)}_\infty$ of $\pi^{(n)}$ is described as follows$:$
\begin{enumerate}[label={\rm (\roman*)}]
 \item  When $n$ is even, set $n=2m$.    
             Then the Langlands parameter of $\pi^{(n)}$ is given by 
             \begin{align*}
               \bigoplus^m_{i=1} \phi_{\nu_i,l_i} \quad \text{with } l_1> l_2>\cdots > l_m              
             \end{align*}
such that  $\nu_i  \pm \dfrac{l_i}{2}-\dfrac{n-1}{2}$ are rational integers for each $1\leq i\leq m$. Furthermore, the purity implies that 
                      \begin{align*}
            w:= 2\nu_i-n+1  \quad \left(= \left(  \nu_i-\frac{l_i}{2} -\frac{n-1}{2}  \right)  +  \left( \nu_i + \frac{l_i}{2} -\frac{n-1}{2} \right)\right)
         \end{align*}
         is independent of $i;$ namely, the values of $\nu_1,\nu_2,\dotsc,\nu_m$ coincide with one another.
 \item     When $n$ is odd, set $n=2m+1$.    
             Then the Langlands parameter of $\pi^{(n)}$ is given by 
             \begin{align*}
                \phi^\delta_\nu  \oplus \bigoplus^m_{i=1} \phi_{\nu_i,l_i}  \quad  \text{with } l_1> l_2>\cdots > l_m 
             \end{align*} 
such that both $\nu-\dfrac{n-1}{2}$ and $\nu_i \pm \dfrac{l_i}{2}-\dfrac{n-1}{2}$ for each $1\leq i\leq m$ are rational integers. 
             Furthermore, the purity implies that 
            \begin{align*}
                 w  &:= 2\nu-n+1 = 2\nu_i-n+1   \\
& \left(=2\left(\nu-\dfrac{n-1}{2}\right) = \left(  \nu_i-\frac{l_i}{2} -\frac{n-1}{2}  \right)  +  \left( \nu_i + \frac{l_i}{2} -\frac{n-1}{2} \right)\right)
            \end{align*}
         is independent of $i;$ namely, the values of $\nu, \nu_1,\nu_2,\dotsc,\nu_m$ coincide with one another.
\end{enumerate}
In both cases, we say that $\pi^{(n)}$ is {\em pure of weight $w$}.
}
\end{prop}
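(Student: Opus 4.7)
The plan is to combine Wigner's lemma on infinitesimal characters, the Vogan--Zuckerman classification of irreducible unitary representations of $\mathrm{GL}_n(\mathbf{R})$ with nonzero relative Lie algebra cohomology, and the consequences of global cuspidality to extract the purity.

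First I would unwind the cohomological hypothesis: $\pi^{(n)}_\infty$ has nonzero $(\mathfrak{g}, K_\infty)$-cohomology with coefficients in some irreducible algebraic representation $V_\mu$ of $\mathrm{GL}_n(\mathbf{R})$ of highest weight $\mu = (\mu_1, \dotsc, \mu_n)$ with $\mu_1 \geq \dotsb \geq \mu_n$. By Wigner's lemma the infinitesimal character of $\pi^{(n)}_\infty$ equals that of $V_\mu^\vee$, which via the Harish-Chandra isomorphism is the multiset $\{\mu_j + (n+1-2j)/2 \mid 1 \leq j \leq n\}$ (up to Weyl conjugacy); in particular, it consists of $n$ pairwise distinct numbers in $\mathbf{Z} + (n-1)/2$. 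Next I would invoke the Vogan--Zuckerman classification specialized to $\mathrm{GL}_n(\mathbf{R})$: every such irreducible unitary representation is cohomologically induced from a character $\lambda$ of the Levi of a $\theta$-stable parabolic $\mathfrak{q}$, and the relevant Levis are necessarily of the form $\mathrm{GL}_2(\mathbf{R})^m$ (for $n = 2m$) or $\mathrm{GL}_2(\mathbf{R})^m \times \mathrm{GL}_1(\mathbf{R})$ (for $n = 2m+1$). Translating the resulting $A_\mathfrak{q}(\lambda)$-module to its archimedean Langlands parameter via the local Langlands correspondence produces the form $\bigoplus_{i=1}^m \phi_{\nu_i, l_i}$ (adjoined with $\phi^\delta_\nu$ in the odd case), with the $l_i$'s pairwise distinct by the regularity of the infinitesimal character of $V_\mu$; matching the multiset $\bigcup_i \{\nu_i \pm l_i/2\}$ (with $\nu$ adjoined in the odd case) against $\{\mu_j + (n+1-2j)/2\}$ then yields the integrality conditions $\nu_i \pm l_i/2 - (n-1)/2 \in \mathbf{Z}$ and $\nu - (n-1)/2 \in \mathbf{Z}$.

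For the purity I would use that a cuspidal automorphic representation $\pi^{(n)}$ of $\mathrm{GL}_n(\mathbf{Q}_{\mathbf{A}})$ has a unitary central character $\omega_{\pi^{(n)}}$ and satisfies the Jacquet--Shalika functional equation $L(s, \pi^{(n)}) = \varepsilon(s, \pi^{(n)}) L(1 - s, (\pi^{(n)})^\vee)$; transcribing these constraints at infinity via the local Langlands correspondence, and noting that $(\pi^{(n)})^\vee_\infty$ has Langlands parameter $\bigoplus_i \phi_{-\nu_i, l_i}$ (adjoined with $\phi^\delta_{-\nu}$), one obtains a symmetry forcing the multiset $\bigcup_i \{\nu_i \pm l_i/2\}$ to be symmetric about a common center of the form $w/2 + (n-1)/2$. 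Since the $l_i$ are pairwise distinct, this symmetry must fix each pair $\{\nu_i + l_i/2, \nu_i - l_i/2\}$ as a set, so each pair is centered on the common axis, and hence $\nu_i = w/2 + (n-1)/2$ is independent of $i$ (and equals $\nu$ in the odd case) --- this is exactly the purity statement $w = 2\nu_i - n + 1 = 2\nu - n + 1$. The main obstacle I anticipate is this purity step: the Vogan--Zuckerman classification alone leaves the individual $\nu_i$'s with genuine freedom compatible with $V_\mu$, and extracting the common-center condition crucially requires the global cuspidality input together with a careful tracking of the normalizations of the local Langlands correspondence (in particular the shift by $(n-1)/2$ between unitary and algebraic normalizations), which is the technical heart of Clozel's purity lemma.
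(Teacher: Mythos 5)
The paper does not give a proof of this proposition; it is quoted directly from Clozel's \emph{Lemme de puret\'e} 4.9 in \cite{clo90}, so you are attempting to reconstruct an argument the paper itself takes as a citation. Your first two paragraphs (Wigner's lemma, the Vogan--Zuckerman classification of cohomological $A_{\mathfrak q}(\lambda)$-modules with Levi $\mathrm{GL}_2^m$ or $\mathrm{GL}_2^m\times\mathrm{GL}_1$, and matching infinitesimal characters to get the integrality of $\nu_i\pm l_i/2-(n-1)/2$) are a faithful sketch of the structural part of Clozel's argument.

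The purity step, however, has a genuine gap. The Jacquet--Shalika functional equation $L(s,\pi)=\varepsilon(s,\pi)L(1-s,\pi^\vee)$ is a relation between $\pi$ and its \emph{contragredient}, not a self-symmetry of $\pi$. Transcribed at $\infty$ it compares the archimedean $\Gamma$-factor of $\bigoplus_i\phi_{\nu_i,l_i}$ with that of $\bigoplus_i\phi_{-\nu_i,l_i}$, which are determined by different multisets; it does not by itself force the multiset $\bigcup_i\{\nu_i\pm l_i/2\}$ to be symmetric about a common center (and indeed it is not, unless $\pi$ happens to be essentially self-dual). Likewise, unitarity of the central character alone is far too weak to pin down each $\nu_i$. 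The global input Clozel actually exploits is the \emph{unitarity of $\pi$ itself} (after a twist by $|\det|^t$; this is the theorem of Jacquet--Shalika on cuspidal representations), combined with genericity (cuspidal $\Rightarrow$ globally, hence locally, generic). Genericity makes $\pi_\infty$ the \emph{full irreducible} induction from $\bigotimes_i D_{\nu_i,l_i}$ (times a character in the odd case), and unitarity of this induction, given that the $l_i$ are pairwise distinct by the regularity of the infinitesimal character, forces the inducing discrete series to share a common central-character twist --- there are no complementary series with distinct $l_i$ available. This is what makes all the $\nu_i$ (and $\nu$) coincide. You correctly identified the purity step as the technical heart of the lemma, but the mechanism you propose (functional-equation symmetry) does not deliver it; unitarity of $\pi$, not merely of $\omega_\pi$, is the essential global ingredient.
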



\begin{rem} \label{rem:purity}
When $n$ is odd, Proposition~\ref{prop:CloCoh} (ii) implies that $\nu$ is an integer, and we readily see from purity that every $\nu_i$ is also an integer. Therefore, due to the same statement, we observe that every $\dfrac{l_i}{2}$ is an integer, or in other words, every weight parameter $l_i$ should be {\em even} when $n$ is odd.    
\end{rem}

We end this subsection by introducing Clozel's conjecture for the existence of motives associated to cohomological irreducible cuspidal automorphic representations of $\mathrm{GL}_n(\mathbf{Q}_{\mathbf{A}})$ according to \cite[Section~4]{clo90}. Let $L(s,\pi^{(n)})$ denote the (complete) automorphic $L$-function associated to $\mathrm{GL}_n$, which is constructed by Godement and Jacquet in \cite{gj72}. 

\begin{conj}(\cite[Conjecture 4.5]{clo90})
{\itshape
Let $\pi^{(n)}$ be a cohomological irreducible cuspidal automorphic representation of $\mathrm{GL}_n(\mathbf{Q}_{\mathbf{A}})$ which is pure of weight $w$ in the sense of Proposition~$\ref{prop:CloCoh}$. 
Then there exists a pure motive  ${\mathcal M}[\pi^{(n)}]$ of rank $n$ and  of weight $-w$ which satisfies
\begin{align*}
   L(s, {\mathcal M}[\pi^{(n)} ])=L\left(s-\frac{n-1}{2}, \pi^{(n)}\right).  
\end{align*}
}
\end{conj}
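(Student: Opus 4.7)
The plan is to follow the Langlands--Kottwitz--Clozel strategy: first construct a compatible system of $n$-dimensional $\ell$-adic Galois representations $\{\rho_{\pi^{(n)},\ell}\}_\ell$ of $\mathrm{Gal}(\overline{\mathbf{Q}}/\mathbf{Q})$ attached to $\pi^{(n)}$, and then attempt to realize this system as the $\ell$-adic realization of an actual motive over $\mathbf{Q}$ of the prescribed rank and weight.

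For the first step, when $\pi^{(n)}$ is essentially self-dual one realizes $\rho_{\pi^{(n)},\ell}$ inside the $\ell$-adic intersection cohomology of an appropriate PEL Shimura variety attached to a unitary group, following Kottwitz, Clozel, Harris--Taylor, Shin, and Chenevier--Harris; in the general (not necessarily self-dual) case one appeals to the $p$-adic deformation and patching technique of Harris--Lan--Taylor--Thorne and subsequent refinements, which produces a compatible system without a direct geometric incarnation. In either situation one obtains a representation that is pure of weight $-w$ (by Deligne's theorem on the Weil conjectures applied to Shimura-variety cohomology, or its patching analogue) and whose Frobenius eigenvalues at the unramified primes match the Satake parameters of $\pi^{(n)}$ by local--global compatibility.

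For the second step, I would cut out, by means of Hecke correspondences, a direct summand of the cohomology of a suitable Shimura variety with coefficients in a local system corresponding to the highest weight dictated by Proposition~\ref{prop:CloCoh}, and equip it with Betti, de Rham, and $\ell$-adic realizations intertwined by the classical comparison isomorphisms. The Hodge numbers of the putative motive are then prescribed by the archimedean $L$-parameter, and the $L$-function identity $L(s,\mathcal{M}[\pi^{(n)}])=L\!\left(s-\tfrac{n-1}{2},\pi^{(n)}\right)$ is verified place by place using local--global compatibility of the Langlands correspondence at the finite places and the explicit $\Gamma$-factor recipe for $\phi_{\nu_i,l_i}$ and $\phi^\delta_\nu$ at infinity.

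The principal obstacle is the motivic descent: producing an honest motive, rather than merely a realization-compatible collection of cohomology groups, requires knowing that the comparison isomorphisms are induced by algebraic correspondences, which is essentially the content of the standard conjectures on algebraic cycles and remains out of reach by present methods. A secondary difficulty is that in the non-self-dual case the $\ell$-adic Galois representation is only constructed by $p$-adic interpolation, so even its Hodge--Tate weights and de Rham filtration require separate highly nontrivial arguments. For these reasons the conjecture is expected to remain open in its full generality, and known cases---for instance Scholl's construction for $n=2$ and weight $\geq 2$ modular forms---exploit the particularly rich geometry of the relevant Shimura variety in a way that does not obviously scale to larger $n$.
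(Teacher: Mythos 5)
This statement is a \emph{conjecture} --- Clozel's Conjecture 4.5 from \cite{clo90} --- not a theorem, and the paper offers no proof; it is quoted purely as background motivation for the period constructions that follow, and the authors explicitly note that the main theorem of the paper does not depend on it (``our main result (Theorem~\ref{thm:mainIntro}) does not depend on conjectural hypotheses such as the existence of $\mathcal{M}[\pi^{(3)}]$''). There is therefore nothing in the paper to compare your argument against.

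Your write-up is, in effect, an accurate survey of what is known and what obstructs a proof: the construction of the compatible system $\{\rho_{\pi^{(n)},\ell}\}$ via Shimura-variety cohomology in the essentially self-dual case (Kottwitz, Clozel, Harris--Taylor, Shin, Chenevier--Harris) and via $p$-adic interpolation in general (Harris--Lan--Taylor--Thorne, Scholze), plus the observation that passing from a realization-compatible system to an honest motive would require something like the standard conjectures. You also correctly flag that Hodge--Tate weights and de Rham comparison are genuinely delicate in the non-self-dual case, and that Scholl's $n=2$ construction does not transparently generalize. All of this is sound as exposition. But it is not a proof, and you acknowledge as much in your final paragraph. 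The one thing to fix is framing: rather than presenting this as a ``proof proposal'' and then conceding the conjecture is open, you should recognize from the start that the labeled environment is \verb|\begin{conj}...\end{conj}| and that the task is not to prove it but to understand why it is cited. Flagging the status of a statement before attempting a proof prevents exactly this kind of category error.
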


\subsection{Critical values  and motives for $\mathrm{GL}_3\times \mathrm{GL}_2$}\label{sec:motives}

In this subsection, we focus on the cases where $n=2$ and $3$, and study the Rankin--Selberg $L$-function $L(s,\pi^{(3)}\times \pi^{(2)})$ constructed by Jacquet, Piatetski-Shapiro and Shalika in \cite{jpss83}. We especially discuss the critical points of $L(s,\pi^{(3)}\times \pi^{(2)})$ and their motivic interpretations.

We retain the notations in Section \ref{sec:auto}; in particular, let $\pi^{(3)}$ and $\pi^{(2)}$ be cohomological irreducible cuspidal automorphic representations of $\mathrm{GL}_3(\mathbf{Q}_{\mathbf{A}})$ and $\mathrm{GL}_2(\mathbf{Q}_{\mathbf{A}})$ respectively. Then Proposition~\ref{prop:CloCoh} and Remark~\ref{rem:purity} yield that the Langlands parameters of $\pi^{(2)}_\infty$ and $\pi^{(3)}_\infty$ are described as $\phi_{\nu_2,l_2}$ and $\phi_{\nu_3}^{\delta}\oplus \phi_{\nu_3,l_3}$ respectively, where  $\nu_2\in \frac{1}{2}\mathbf{Z}$, $\nu_3 \in {\mathbf Z}$, $\delta \in \mathbf{Z}/2\mathbf{Z
}$, $l_2 \in {\mathbf Z}$ with $l_2\geq 1$ and  $l_3\in 2\mathbf{Z}$ with $l_3\geq 2$. Thus there exist isomorphisms of $(\mathfrak{gl}_n(\mathbf {R}), {\mathrm O}_n(\mathbf{R}))$-modules
\begin{align*}
   \pi^{(2)}_\infty \cong  D_{ \nu_2, l_2 }, \quad    
   \pi^{(3)}_\infty \cong  {\rm Ind}^{\rm GL_3({\mathbf R})}_{{\rm P}_{2,1}({\mathbf R})  } (   D_{ \nu_3, l_3 }  \boxtimes  \chi_{\nu_3}^{\delta}  )
\end{align*}
due to the Langlands classification (refer to \cite[Section~2]{kna94}). Here ${\rm Ind}^{\rm GL_3({\mathbf R})}_{{\rm P}_{2,1}({\mathbf R})  } (   D_{ \nu_3, l_3 }  \boxtimes  \chi_{\nu_3}^{\delta}  )$ denotes the generalized principal series representation parabolically induced from $D_{\nu_3,l_3}$ and $\chi_{\nu_3}^{\delta}$. From this observation, we see that the minimal ${\rm O}_2({\mathbf R})$-type of $\pi^{(2)}_\infty$ is given by $\tau^{(2)}_{(l_2+1, 0)}$, and thus 
\begin{align*}
  \Lambda^{\rm coh}_2 := \left\{ (\lambda_2, 0) \mid    \lambda_2 \in {\mathbf Z},\,  \lambda_1 \geq 2   \right\}
\end{align*}
classifies the isomorphism classes of minimal ${\rm O}_2({\mathbf R})$-types of (the archimedean parts of) cohomological irreducible cuspidal automorphic representations of ${\rm GL}_2({\mathbf Q}_{\mathbf{A}})$.   
Similarly,  
the minimal ${\rm O}_3({\mathbf R})$-type of $\pi^{(3)}_\infty$ is given by 
$\tau^{(3)}_{(l_3+1, \delta)}$, and thus 
\begin{align*}
  \Lambda^{\rm coh}_3  := \left\{ (\lambda_3, \delta)   \mid   \lambda_3 \in 1+2{\mathbf Z},\,  \lambda_3 \geq 3,\,  \delta \in \mathbf{Z}/2\mathbf{Z}    \right\}
\end{align*}
classifies the isomorphism classes of minimal ${\rm O}_3({\mathbf R})$-types of (the archimedean parts of) cohomological irreducible cuspidal automorphic representations of ${\rm GL}_3({\mathbf Q}_{\mathbf{A}})$.

From now on we normalize  the  action of the center of $\mathrm{GL}_n$  on $\pi^{(n)}$ for $n=2$ and $3$ so that the parameters $\nu_2$ and $\nu_3$ satisfy the  following equalities:
\begin{align}\label{eq:NormNu}
    \nu_2  &=  -\frac{l_2}{2}  + \frac{1}{2}, &  \nu_3 &= - \frac{l_3}{2} + 1. 
\end{align}
These normalizations simplify the description of the Hodge filtrations of the motives $\mathcal{M}[\pi^{(2)}]$ and $\mathcal{M}[\pi^{(3)}]$; see (\ref{eq:Hodge_fil}) below. Furthermore we always assume that the inequality 
\begin{align} \label{eq:l2<l3}
 l_2 < l_3
\end{align}
holds, since we can study the special values of $L(s, \pi^{(3)} \times \pi^{(2)})$ only under this condition by using the modular symbol method. Now let us recall the definitions of the $L$-factor $L_\infty(s,\pi^{(3)}\times\pi^{(2)})$ and the $\epsilon$-factor $\epsilon_\infty(s,\pi^{(3)}\times \pi^{(2)},\psi_\infty)$ at the infinite place. Firstly, for $\pi^{(n)}$ with $n=2$ or $3$, the local $L$-factors and the local $\epsilon$-factors are defined as
\begin{align*}
  L_\infty(s, \pi^{(n)} )  = \begin{cases} \Gamma_{\mathbf C}\left( s + \nu_2 + \dfrac{l_2}{2} \right)   &  \text{for }n=2,  \\
                                                              \Gamma_{\mathbf C}\left( s + \nu_3 + \dfrac{l_3}{2}  \right) \Gamma_{\mathbf R}\left( s + \nu_3 + \delta \right)       &  \text{for }n=3,  \end{cases}    \quad 
  \epsilon_\infty(s, \pi^{(n)}, \psi_\infty )  = \begin{cases} \sqrt{-1}^{ l_2 + 1 }   &  \text{for }n=2,  \\
                                                                        \sqrt{-1}^{ l_3+1 + \delta }       &  \text{for }n=3,  \end{cases}  
\end{align*}
where $\Gamma_{\mathbf{R}}(s)$ and $\Gamma_{\mathbf{C}}(s)$ denote $\pi^{-s/2}\Gamma(s/2)$ and $2(2\pi)^{-s}\Gamma(s)$ respectively, and $\psi_\infty \colon \mathbf{R}\rightarrow \mathbf{C}^\times$ is the standard additive character defined as $\psi_\infty(x)=\exp(2\pi \sqrt{-1}x)$. Since the tensor product of the Langlands parameters associated to $\pi^{(3)}$ and $\pi^{(2)}$ is easily calculated as 
\begin{align*}
   \left(  \phi_{\nu_3, l_3} \oplus \phi^{\delta}_{\nu_3}  \right) \otimes \phi_{\nu_2, l_2}
   = \phi_{ \nu_3+\nu_2, l_3+l_2 } \oplus \phi_{\nu_3+\nu_2, | l_3-l_2 | } \oplus  \phi_{\nu_3+\nu_2, l_2},   
 \end{align*}
we also find that the local $L$-factor and the local $\epsilon$-factor of the Rankin--Selberg convolution $\pi^{(3)} \times \pi^{(2)}$ at the infinite place are given as 
\begin{align*}
   L_\infty(s, \pi^{(3)} \times  \pi^{(2)} )  
   &= \Gamma_{\mathbf C} \left(  s + \nu_3 + \nu_2 + \frac{  l_3 + l_2 }{2}  \right)
      \Gamma_{\mathbf C} \left(  s + \nu_3 + \nu_2 + \frac{   l_3 - l_2  }{2}  \right)
      \Gamma_{\mathbf C} \left(  s + \nu_3 + \nu_2 + \frac{   l_2 }{2}  \right),   \\
\epsilon_\infty(s,  \pi^{(3)} \times  \pi^{(2)} , \psi_\infty )
  &=  \sqrt{-1}^{2l_3+l_2 + 3  } 
\end{align*}
(here we take the assumption (\ref{eq:l2<l3}) into accounts). Mimicking Deligne's definition \cite[Definition~1.3]{del79}, we call an integer $m$ a {\em critical point} of $L(s,\pi^{(3)}\times \pi^{(2)})$ if neither $L_\infty(s,\pi^{(3)}\times \pi^{(2)})$ nor $L_\infty(1-s,(\pi^{(3)}\times \pi^{(2)})^\vee)$ has a pole at $s=m-\dfrac{3}{2}$. One can find an explanation for the shift $-\dfrac{3}{2}$ at (\ref{eq:Gfactor}) below.

Now let us determine the region of the critical points of $L(s, \pi^{(3)} \times \pi^{(2)})$.

\begin{prop}\label{prop:critregion}
{\itshape 
\begin{enumerate}[label={\rm (\roman*)}]
\item Suppose that $l_2 \leq  \dfrac{l_3}{2}$ holds. Then $m$ is a critical point of $L(s,\pi^{(3)}\times \pi^{(2)})$ if and only if the inequality $\dfrac{l_3}{2}   + 1 \leq m \leq \dfrac{l_3}{2} + l_2$ holds.
\item Suppose that $\dfrac{l_3}{2} < l_2 < l_3$ holds. 
Then $m$ is a critical point of $L(s,\pi^{(3)}\times \pi^{(2)})$ if and only if the inequality $l_2   + 1 \leq m \leq l_3$ holds.
\end{enumerate}
}
\end{prop}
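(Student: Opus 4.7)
The plan is direct: substitute the normalizations (\ref{eq:NormNu}) into the explicit formula for $L_\infty(s,\pi^{(3)}\times\pi^{(2)})$ already displayed in the text, evaluate at the shifted argument, and read off the non-vanishing locus of the three $\Gamma_{\mathbf{C}}$-factors (together with those coming from the contragredient).

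Concretely, the first step is to note that $\nu_3+\nu_2=-\dfrac{l_2+l_3}{2}+\dfrac{3}{2}$, so the three archimedean factors of $L_\infty(s,\pi^{(3)}\times\pi^{(2)})$ simplify to
\begin{align*}
 \Gamma_{\mathbf{C}}\!\left(s+\tfrac{3}{2}\right)\, \Gamma_{\mathbf{C}}\!\left(s-l_2+\tfrac{3}{2}\right)\, \Gamma_{\mathbf{C}}\!\left(s-\tfrac{l_3}{2}+\tfrac{3}{2}\right).
\end{align*}
Specializing to $s=m-\dfrac{3}{2}$ yields $\Gamma_{\mathbf{C}}(m)\Gamma_{\mathbf{C}}(m-l_2)\Gamma_{\mathbf{C}}(m-\tfrac{l_3}{2})$, which, since $\Gamma_{\mathbf{C}}(s)=2(2\pi)^{-s}\Gamma(s)$ has poles exactly at non-positive integers, is finite if and only if $m\geq \max(1,\, l_2+1,\, \tfrac{l_3}{2}+1)$. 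The assumption $l_2\geq 1$ collapses this to $m\geq \max(l_2,\tfrac{l_3}{2})+1$.

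For the upper bound I would carry out the symmetric computation for $L_\infty(1-s,(\pi^{(3)}\times\pi^{(2)})^\vee)$. Since the contragredient of a discrete series $D_{\nu,l}$ (resp.\ character $\chi_\nu^\delta$) is obtained by $\nu\mapsto -\nu$, replacing $\nu_3+\nu_2$ by $-(\nu_3+\nu_2)=\dfrac{l_2+l_3}{2}-\dfrac{3}{2}$ and specializing to $s=\dfrac{5}{2}-m$ gives $\Gamma_{\mathbf{C}}(l_2+l_3+1-m)\Gamma_{\mathbf{C}}(l_3+1-m)\Gamma_{\mathbf{C}}(l_2+\tfrac{l_3}{2}+1-m)$. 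Finiteness here is equivalent to $m\leq \min(l_3,\, l_2+\tfrac{l_3}{2})$ (the factor $l_2+l_3$ is automatically redundant under $l_2<l_3$).

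Finally I would split on the sign of $l_2-\dfrac{l_3}{2}$: when $l_2\leq \dfrac{l_3}{2}$ the lower bound is $\dfrac{l_3}{2}+1$ and the upper bound is $l_2+\dfrac{l_3}{2}$, giving (i); when $\dfrac{l_3}{2}<l_2<l_3$ the lower bound becomes $l_2+1$ and the upper bound becomes $l_3$, giving (ii). No step is really an obstacle --- the only care needed is to track the contragredient correctly and to confirm that the induced principal series contribution $\Gamma_{\mathbf{R}}(s+\nu_3+\delta)$ plays no role in the tensor product $L$-factor because, as the text records, $(\phi_{\nu_3,l_3}\oplus\phi_{\nu_3}^\delta)\otimes\phi_{\nu_2,l_2}$ is a sum of three two-dimensional pieces, so only $\Gamma_{\mathbf{C}}$-factors appear.
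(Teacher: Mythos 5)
Your proposal is correct and follows essentially the same route as the paper's own proof: both substitute the normalization $\nu_3+\nu_2=-\tfrac{l_2+l_3}{2}+\tfrac{3}{2}$ into the displayed formula for $L_\infty(s,\pi^{(3)}\times\pi^{(2)})$, obtain $\Gamma_{\mathbf{C}}(m)\Gamma_{\mathbf{C}}(m-l_2)\Gamma_{\mathbf{C}}(m-\tfrac{l_3}{2})$ and the mirror factors $\Gamma_{\mathbf{C}}(l_3+l_2+1-m)\Gamma_{\mathbf{C}}(l_3+1-m)\Gamma_{\mathbf{C}}(\tfrac{l_3}{2}+l_2+1-m)$ from the contragredient (via $\nu\mapsto-\nu$), and then read off $\max\{l_2,\tfrac{l_3}{2}\}+1\le m\le \min\{l_3,\tfrac{l_3}{2}+l_2\}$ before splitting on the sign of $l_2-\tfrac{l_3}{2}$. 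Your side remark that only $\Gamma_{\mathbf{C}}$-factors occur because the tensor of Langlands parameters decomposes into three two-dimensional pieces is also correct and matches the paper's displayed decomposition.
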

\begin{proof}
The statements directly follow from the arguments in \cite[Section 2.4.5.3]{rag16}.  
However we briefly give a proof for the sake of the reader's convenience and the completeness. Due to the functoriality of the local Langlands correspondence, the Langlands parameters of the contragredients $\pi^{(2),\vee}_\infty$ and $\pi^{(3),\vee}_\infty$ are described as follows:
\begin{align*}
\phi_{\nu_2,l_2}^{\vee} &\cong \phi_{-\nu_2,l_2}, & \phi^{\delta,\vee}_\nu\oplus \phi^\vee_{\nu_3,l_3} &\cong \phi^\delta_{-\nu} \oplus \phi_{-\nu_3,l_3}.
\end{align*}
Therefore the Langlands parameter of $(\pi^{(3)}\times \pi^{(2)})^\vee$ is calculated as $\phi_{-\nu_3-\nu_2,l_3+l_2}\oplus \phi_{-\nu_3-\nu_2,\lvert l_3-l_2\rvert}\oplus \phi_{-\nu_3-\nu_2,l_2}$, and by using (\ref{eq:NormNu}) and (\ref{eq:l2<l3}),  we obtain 
\begin{align*}
 L_\infty\left(s-\dfrac{3}{2},\pi^{(3)}\times \pi^{(2)}\right)&=\Gamma_{\mathbf{C}}(s)\Gamma_{\mathbf{C}}(s-l_2) \Gamma_{\mathbf{C}}\left(s-\dfrac{l_3}{2}\right), \\
 L_\infty \left(1-\left(s-\dfrac{3}{2}\right),\left(\pi^{(3)}\times \pi^{(2)}\right)^\vee\right) 
&=\Gamma_{\mathbf{C}}(l_3+l_2+1-s)\Gamma_{\mathbf{C}}(l_3+1-s) \Gamma_{\mathbf{C}}\left(\dfrac{l_3}{2}+l_2+1-s\right).
\end{align*}
Therefore in order for $m\in \mathbf{Z}$ to be a critical point of $L(s,\pi^{(3)}\times \pi^{(2)})$, it must satisfy both of the inequalities
\begin{align*}
 m& \geq \max \left\{l_2+1,\dfrac{l_3}{2}+1\right\}& \text{and}& & m&\leq \min \left\{l_3, \dfrac{l_3}{2}+l_2\right\},
\end{align*}
which verifies the statement. 
\end{proof}

We finally discuss the motivic aspects of these critical points. For $n=2$ and $3$, the Betti realization  $H_{\rm B} ({\mathcal M}[ \pi^{(n)} ]) $ of ${\mathcal M}[\pi^{(n)}]$ has the following Hodge decomposition due to (\ref{eq:NormNu}):
\begin{align} \label{eq:Hodge_fil}
\begin{aligned}
    H_{\rm B} ( {\mathcal M}[ \pi^{(2)} ]) \otimes {\mathbf C}
    &= H^{l_2, 0}  ( {\mathcal M}[ \pi^{(2)} ] )
       \oplus H^{0,l_2}  ( {\mathcal M}[ \pi^{(2)} ] ), \\ 
    H_{\rm B} ({\mathcal M}[ \pi^{(3)} ]) \otimes {\mathbf C}
    &= H^{l_3, 0}  ( {\mathcal M}[ \pi^{(3)} ] )
       \oplus H^{\frac{l_3}{2} ,  \frac{l_3}{2}  }  ( {\mathcal M}[ \pi^{(3)} ] )
       \oplus H^{ 0, l_3 }  ( {\mathcal M}[ \pi^{(3)} ] ). 
\end{aligned}
\end{align}
Define ${\mathcal M}(\pi^{(3)}\times \pi^{(2)})$ to be the tensor product  ${\mathcal M}[ \pi^{(3)} ] \otimes {\mathcal M}[ \pi^{(2)} ]$ of the motives $\mathcal{M}[\pi^{(3)}]$ and $\mathcal{M}[\pi^{(2)}]$. Then (conjecturally) we obtain the identity of the $L$-functions
\begin{align}\label{eq:Gfactor}
   L(s, {\mathcal M}(\pi^{(3)}\times \pi^{(2)}) )  
= L\left(s-\frac{3}{2}, \pi^{(3)} \times  \pi^{(2)} \right).
\end{align}
The Hodge decomposition of $\mathcal{M}(\pi^{(3)}\times \pi^{(2)})(m)$ is calculated as follows (here we abbreviate $\mathcal{M}(\pi^{(3)}\times \pi^{(2)})(m)$ as $\mathcal{M}_{\pi,m}$):
\begin{multline*}
 H_B(\mathcal{M}_{\pi,m})\otimes_{\mathbf{Q}}\mathbf{C}\cong H^{l_3+l_2-m,-m}(\mathcal{M}_{\pi,m} ) \oplus H^{l_3-m,l_2-m}( \mathcal{M}_{\pi,m}) \oplus H^{\frac{l_3}{2}+l_2-m,\frac{l_3}{2}-m}( \mathcal{M}_{\pi,m}) \\
\oplus H^{\frac{l_3}{2}-m,\frac{l_3}{2}+l_2-m}(\mathcal{M}_{\pi,m} ) \oplus H^{l_2-m,l_3-m}( \mathcal{M}_{\pi,m}) \oplus H^{ -m, l_3+l_2-m }( \mathcal{M}_{\pi,m}).
\end{multline*}

As is explained in \cite[Section~1]{del79}, the motive $\mathcal{M}(\pi^{(3)}\times \pi^{(2)})(m)$ is critical if and only if the set
\begin{align*}
 \{(p,q) \in \mathbf{Z}^2 \mid H^{p,q}(\mathcal{M}(\pi^{(3)}\times \pi^{(2)})(m)) \neq 0 \}
\end{align*}
belongs to the union of  $\{ (p,q) \mid p\leq -1,\, q\geq 0\}$ and $\{ (p,q) \mid p\geq 0,\, q\leq -1\}$, since the motive $\mathcal{M}(\pi^{(3)}\times \pi^{(2)})(m)$ does not contain diagonal components. This observation also verifies the statement of Proposition~\ref{prop:critregion}, if one admits the existence of the motive $\mathcal{M}[\pi^{(3)}]$.



\section{Eichiler--Shimura maps for ${\rm GL}_3$}\label{sec:ESmap}

Due to Clozel \cite[Section~3.5]{clo90},  
 the cuspidal cohomology groups of the symmetric space associated to $\mathrm{GL}_n$ with appropriate local systems are well understood, and the finite parts of cohomological irreducible cuspidal automorphic representations of ${\rm GL}_n$ are realized in these cohomology groups. 
It is necessary for us to specify certain cohomology classes  
     to clarify their connection to global zeta integrals and their algebraic and $p$-adic properties.    
Such distinguished cohomology classes are constructed from appropriate cusp forms on ${\rm GL}_n$, and hence we obtain a map form the space of cusp forms on ${\rm GL}_n$ to the cuspidal cohomology groups with certain local coefficients, which we call the {\em Eichler--Shimura map} for ${\rm GL}_n$. 
The Eichler--Shimura map for ${\rm GL}_2$ is a classical object, and its explicit description can be found in standard references (often in terms of elliptic modular forms; see \cite{hid94,vat99} for instance). 
In this section, we give an explicit description of the Eichler--Shimura map for ${\rm GL}_3$ analogously to the ${\rm GL}_2$ case, 
    which will give explicit evaluations of the zeta integrals and clarify their connections to Whittaker periods.   

Let $n$ denotes $2$ or $3$.
In Section \ref{sec:cusp}, we propose descriptions of cusp forms on ${\rm GL}_n(\mathbf{Q}_{\mathbf{A}})$ in an appropriate way for our purpose. Then we introduce several local systems on the symmetric space $Y^{(n)}_{\mathcal{K}_n}$ in Section~\ref{sec:LS}.    
The Eichler--Shimura map for ${\rm GL}_n$ is introduced in Section \ref{sec:ES}.
In this article, we only need the Eichler--Shimura map taking values in the cohomology group of bottom degree.    
However, the explicit description of the Eichler--Shimura map taking values in the cohomology group of top degree will be also important 
   for further studies of zeta integrals, and hence we briefly summarize its construction in Appendix \ref{sec:AppA}.

\subsection{Cusp forms}\label{sec:cusp}

Here we summarize some notation on automorphic representations $\pi^{(3)}$ and $\pi^{(2)}$ in this article.  
For $n=2$ and $3$, 
denote by $\pi^{(n)}$ a cohomological irreducible cuspidal automorphic representation of ${\rm GL}_n({\mathbf Q}_{\mathbf A})$
     of minimal ${\rm O}_n({\mathbf R})$-type $\tau^{(n)}_{\boldsymbol{\lambda}^{(n)}}$ for $\boldsymbol{\lambda}^{(n)} = (\lambda_n, \delta_n) \in \Lambda_n^{\mathrm{coh}}$ (recall the definition of $\Lambda^{\mathrm{coh}}_n$ from Section~\ref{sec:motives}).  We often abbreviate $\tau^{(n)}_{\boldsymbol{\lambda}^{(n)}}$ as $\tau^{(n)}_{\boldsymbol{\lambda}}$ if there is no risk of confusion. Furthermore, since the second component of $\boldsymbol{\lambda}^{(2)}$ always equals $0$, we remove the subindex $3$ from the second component of $\boldsymbol{\lambda}^{(3)}$ and just write $\boldsymbol{\lambda}^{(3)}=(\lambda_3,\delta)$.
Let $\phi_{\nu_2, l_2}$ (resp.\ $\phi_{\nu_3, l_3}\oplus \phi_{\nu_3}^{\delta}$) be the Langlands parameter of $\pi^{(2)}_\infty$ (resp.\ $\pi^{(3)}_\infty$); then we have
\begin{align} \label{eq:Lambda}
 \lambda_3&=l_3+1, & \lambda_2 &=l_2+1.
\end{align}  
If we denote by $\omega_{\pi^{(n)}}$ the central character of $\pi^{(n)}$, the evaluation $\omega_{\pi^{(n)}, \infty}(-1)$ at $-1$ of its archimedean component  $\omega_{\pi^{(n)}, \infty}$  is described by these parameters as follows (note that $\lambda_3=l_3+1$ is odd since $l_3$ is even due to Remark~\ref{rem:purity}):
\begin{align*}
   \omega_{\pi^{(2)},\infty}(-1) &= (-1)^{\lambda_2}, 
 & \omega_{\pi^{(3)},\infty}(-1) &= (-1)^{\lambda_3+\delta} =  -(-1)^{\delta}.   
\end{align*}

For $n=2$ or $3$, let $\phi: {\rm GL}_n({\mathbf Q}) \backslash {\rm GL}_n({\mathbf Q}_{\mathbf A}) \to {\mathbf C}$ be a cuspidal automorphic form associated with $\pi^{(n)}$; 
namely, $\pi^{(n)}$ is isomorphic to the infinite dimensional irreducible subquotient of the right regular representation of ${\rm GL}_n({\mathbf Q}_{\mathbf A})$ generated by $\phi$.  
Suppose that $\pi^{(n)}$ is fixed by the action of an open compact subgroup ${\mathcal K}_n$ of ${\rm GL}_n({\mathbf Q}_{\mathbf A, {\rm fin}})$.  
Note that ${\rm Hom}_{{\mathrm O}_n({\mathbf R})}(\tau^{(n)}_{\boldsymbol{\lambda}}, \pi^{(n)}_\infty)$ is of dimenstion $1$ by Schur's lemma.    
Hence, for $n=2$,  
we find a pair $(h^+, h^-)$ of cuspidal automorphic forms $h^\pm : {\rm GL}_2({\mathbf Q}) \backslash {\rm GL}_2({\mathbf Q}_{\mathbf A}) \to {\mathbf C}$ 
associated with $\pi^{(2)}$ satisfying
\begin{align*}
\begin{pmatrix}
 h^+(gtu)&  h^-(gtu) 
\end{pmatrix}  
 &=  t^{2\nu_2 } 
         \begin{pmatrix}
	   h^+(g)  e^{ \sqrt{-1} \lambda_2 \theta} &  h^-(g)  e^{ - \sqrt{-1} \lambda_2 \theta}  
	 \end{pmatrix} &  
\begin{pmatrix}
  h^+(g \epsilon) &  h^-(g \epsilon)  
\end{pmatrix}
&= 
\begin{pmatrix}
 h^-(g) & h^+(g)  
\end{pmatrix}  \\
 &\stackrel{(\ref{eq:NormNu})}{=} t^{(-l_2+1)} 
\begin{pmatrix}
 h^+(g)  e^{ \sqrt{-1} (l_2+1) \theta}&  h^-(g)  e^{ - \sqrt{-1} (l_2+1)\theta} 
\end{pmatrix} , & & 
\end{align*}   
for each $g\in {\rm GL}_2({\mathbf Q}_{\mathbf A})$, $t \in {\mathbf R}^\times_{>0}$, 
              $u = \begin{pmatrix} \cos \theta & \sin \theta \\ - \sin \theta & \cos \theta \end{pmatrix}\in {\rm SO}_2({\mathbf R})$ and 
              $\epsilon = \begin{pmatrix} -1 & 0 \\ 0 & 1 \end{pmatrix}$.   
Since the archimedean part of $h^+$ (resp.\ $h^-$) belongs to $D^+_{l_2}$ (resp.\ $D^-_{l_2}$), 
            we call $h^+$ (resp.\ $h^-$) a holomorphic (resp.\ an anti-holomorphic) cuspidal automorphic form associated with $\pi^{(2)}$, which is uniquely determined up to scalar multiples.
For each $\boldsymbol{\lambda}^{(2)}\in \Lambda^{\mathrm{coh}}_2$, let ${\mathcal S}^{(2)}_{\boldsymbol{\lambda}^{(2)}}({\mathcal K}_2)$ (often abbreviated as $\mathcal{S}^{(2)}_{\boldsymbol{\lambda}}(\mathcal{K}_2)$) be the ${\mathbf C}$-vector space spanned 
    by both of holomorphic and anti-holomorphic cuspidal automorphic forms associated with certain cohomological irreducible  cuspidal automorphic representations of ${\rm GL}_2({\mathbf Q}_{\mathbf A})$, which are invariant under the right translation by ${\mathcal K}_2$ and 
    whose archimedean parts have  the minimal ${\mathrm O}_2({\mathbf R})$-type $\tau^{(2)}_{\boldsymbol{\lambda}}$. 

In a similar way, for  $n=3$, 
we find a tuple $\begin{pmatrix}
		  f^{\boldsymbol{\lambda}}_{\lambda_3}& f^{\boldsymbol{\lambda}}_{\lambda_3-1} &\dotsc & f^{\boldsymbol{\lambda}}_{-\lambda_3}
		 \end{pmatrix}$ consisting of  
cuspidal automorphic forms $f^{\boldsymbol{\lambda}}_\alpha \colon {\rm GL}_3({\mathbf Q}) \backslash {\rm GL}_3({\mathbf Q}_{\mathbf A}) \to {\mathbf C}$  for $-\lambda_3 \leq \alpha \leq \lambda_3$ 
associated with $\pi^{(3)}$ satisfying
\begin{align*}
\begin{pmatrix}
 f^{\boldsymbol{\lambda}}_{\lambda_3}(gtu) & f^{\boldsymbol{\lambda}}_{\lambda_3-1}(gtu) & \dotsc & f^{\boldsymbol{\lambda}}_{-\lambda_3}(gtu)  
\end{pmatrix}
 &=     \begin{pmatrix}
	  f^{\boldsymbol{\lambda}}_{\lambda_3}(g)& f^{\boldsymbol{\lambda}}_{\lambda_3-1}(g)  & \dotsc & f^{\boldsymbol{\lambda}}_{-\lambda_3}(g)  
	\end{pmatrix}  t^{ 3\nu_3}   M_{\boldsymbol{\lambda}}(u)  \\
&\stackrel{(\ref{eq:NormNu})}{=} 
\begin{pmatrix}
  f^{\boldsymbol{\lambda}}_{\lambda_3}(g) & f^{\boldsymbol{\lambda}}_{\lambda_3-1}(g) & \dotsc & f^{\boldsymbol{\lambda}}_{-\lambda_3}(g)  
\end{pmatrix} t^{ -\frac{3}{2}l_3+3}   M_{\boldsymbol{\lambda}}(u) 
\end{align*}   
for each $g\in {\rm GL}_3({\mathbf Q}_{\mathbf A})$, $t \in {\mathbf R}^\times_{>0}$ and $u\in {\rm O}_3({\mathbf R})$; recall the definition of the matrix $M_{\boldsymbol{\lambda}}(u)$ from (\ref{eq:matM}). Here we abbreviate $\boldsymbol{\lambda}^{(3)}$ as $\boldsymbol{\lambda}$ to lighten the notation.     
We regard this tuple as a vector and denote by ${\boldsymbol f}$, which we call a {\em cusp form associated with $\pi^{(3)}$}.
For each $\boldsymbol{\lambda}^{(3)}\in \Lambda^{\mathrm{coh}}_{3}$, define ${\mathcal S}^{(3)}_{\boldsymbol{\lambda}^{(3)}}({\mathcal K}_3)$
 (often abbreviated as $\mathcal{S}^{(3)}_{\boldsymbol{\lambda}}(\mathcal{K}_3)$) to be the  
   ${\mathbf C}$-vector space spanned 
    by cusp forms associated with certain cohomological irreducible cuspidal automorphic representations of ${\rm GL}_3({\mathbf Q}_{\mathbf A})$, which are invariant under the right translation by ${\mathcal K}_3$ and 
    whose archimedean parts have  the minimal ${\mathrm O}_3({\mathbf R})$-type $\tau^{(3)}_{\boldsymbol{\lambda}}$.

\subsection{Local systems}\label{sec:LS}

Let $n$ denote $2$ or $3$. 
For each open compact subgroup ${\mathcal K}_n\subset {\rm GL}_n(  {\mathbf Q}_{{\mathbf A}, {\rm fin}} )$, define the corresponding symmetric space  as $Y^{(n)}_{{\mathcal K}_n} = {\rm GL}_n({\mathbf Q}) \backslash  {\rm GL}_n({\mathbf Q}_{\mathbf A})   / {\mathbf R}^\times_{>0} {\rm SO}_n({\mathbf R})  {\mathcal K}_n$.
In this subsection, we introduce local systems $\mathcal{L}^{(n)}(*;A)$ on $Y^{(n)}_{{\mathcal K}_n}$ coming from finite dimensional irreducible representations $L^{(n)}(*; A)$ of ${\rm GL}_n$ for a field $A$ of characteristic $0$.
They will play crucial roles to give a cohomological interpretation of cusp forms on ${\rm GL}_n({\mathbf Q}_{\mathbf A})$ in the next subsection.

Let $\boldsymbol{n}=(n_1,n_2)\in \mathbf{Z}^2$ and $\boldsymbol{w}=(w_1^+,w_1^-,w_2)\in \mathbf{Z}^3$ be as in Sections~\ref{sec:repGL2} and \ref{sec:repGL3}, where $n_1, w_1^+,w_1^-\geq 0$. Then we define the local system ${\mathcal L}^{(n)}(*; A)$ to be the sheaf of locally constant sections of the projection $\mathrm{pr}_1$ defined as 
\begin{align*}
   {\rm GL}_n({\mathbf Q})  \backslash \left(  {\rm GL}_n({\mathbf Q}_{\mathbf A})   / {\mathbf R}^\times_{>0} {\rm SO}_n({\mathbf R})  {\mathcal K}_n  
               \times   L^{(n)}( *; A)   \right) 
               \stackrel{ {\rm pr}_1 }{ \longrightarrow }  Y^{(n)}_{{\mathcal K}_n},     
\end{align*}
where $*$ denotes $\boldsymbol{n}$ for $n=2$ and $\boldsymbol{w}$ for $n=3$. We are especially interested in the local system associated to a cohomological irreducible cuspidal automorphic representation $\pi^{(n)}$ of $\mathrm{GL}_n(\mathbf{Q}_{\mathbf{A}})$ whose archimedean component has the minimal $\mathrm{O}_3(\mathbf{R})$-type $\tau^{(n)}_{\boldsymbol{\lambda}}$. For $\boldsymbol{\lambda}^{(n)}=(\lambda_n,\delta_n)\in \Lambda^{\mathrm{coh}}_n$, set $n_{\boldsymbol{\lambda}}=\lambda_2-2 \, (=l_2-1)$ and $w_{\boldsymbol{\lambda}}=\dfrac{\lambda_3-3}{2}\, \left(=\dfrac{l_3}{2}-1\right)$. Then define $\boldsymbol{n}_{\boldsymbol{\lambda}}\in \mathbf{Z}^2$ and $\boldsymbol{w}_{\boldsymbol{\lambda}}\in \mathbf{Z}^3$ as 
\begin{align} \label{eq:w2}
 \boldsymbol{n}_{\boldsymbol{\lambda}} &=(n_{\boldsymbol{\lambda}},0)=(\lambda_2-2,0), & \boldsymbol{w}_{\boldsymbol{\lambda}} &=(w_{\boldsymbol{\lambda}},w_{\boldsymbol{\lambda}},w_{\boldsymbol{\lambda}})=\left(\dfrac{\lambda_3-3}{2}, \dfrac{\lambda_3-3}{2}, \dfrac{\lambda_3-3}{2}\right).
\end{align}
%
Note that $w_{\boldsymbol{\lambda}}$ is indeed an integer since $\lambda_3$ is odd due to the definition of $\Lambda^{\mathrm{coh}}_3$. For later use, we define  the set $\Xi^{\rm coh}_{3}$ of cohomological weights of local systems for ${\rm GL}_3$ to be $ \Xi^{\rm coh}_{3}
       = \left\{   \left(   w,w,w   \right)   \mid  w \in {\mathbf Z},   w\geq 0  \right\}$.

\subsection{Eichler--Shimura maps}\label{sec:ES}

Let $\boldsymbol{\lambda}^{(n)}$ be an element of $\Lambda^{\mathrm{coh}}_n$. Due to Clozel \cite[Section~3.5]{clo90}, the structure of the cuspidal cohomology group $H^i_{\rm cusp}( Y^{(n)}_{{\mathcal K_n}},  {\mathcal L}^{(n)}(*; {\mathbf C})  )$ with coefficients in appropriate local systems is well understood, and described by various cuspidal automorphic representations of given minimal $\mathrm{O}_n(\mathbf{R})$-type.
In this subsection, we construct a specific class in the cuspidal cohomology group from an appropriate cusp form on $\mathrm{GL}_n(\mathbf{Q}_{\mathbf{A}})$. For later computations, we here give an appropriate form of this construction, which we call the {\em Eichler--Shimura map}.

For $n=2$ and $3$, let $\pi^{(n)}$ be a cohomological irreducible cuspidal automorphic representation of $\mathrm{GL}_n(\mathbf{Q}_{\mathbf{A}})$ whose archimedean part $\pi^{(n)}_{\infty}$ has the minimal $\mathrm{O}_n(\mathbf{R})$-type $\tau^{(n)}_{\boldsymbol{\lambda}}$. Set $K_n = {\mathbf R}^\times_{>0} {\rm SO}_n({\mathbf R})$,  
and let $H_{\pi^{(n)}, K_n}$ denote the $(\mathfrak{gl}_n({\mathbf R}), K_n)$-module associated with $\pi^{(n)}_\infty$. 
In (\ref{eq:NormNu}), 
we have normalized the action of ${\mathbf R}^\times_{>0}$ on $H_{\pi^{(n)}, K_n}$ so that
\begin{align*}
   \pi_\infty^{(n)}(t) (v) 
   = t^{n\nu_n} v
   = \begin{cases}  t^{-l_2+1}v =t^{-\lambda_2+2}v=t^{ -n_{\boldsymbol{\lambda}}} v    &    \text{for }n=2,   \\
                                   t^{-\frac{3}{2}l_3+3}v=t^{-3\frac{\lambda_3-3}{2}}v=        t^{-3w_{\boldsymbol{\lambda}} }  v        &   \text{for }n=3   \end{cases}           
\end{align*}          
holds for each $t\in {\mathbf R}^\times_{>0}$ and $v\in H_{\pi^{(n)}, K_n}$.

Let $*_{\boldsymbol{\lambda}}$ denote $\boldsymbol{n}_{\boldsymbol{\lambda}}$ if $n$ equals $2$ and $\boldsymbol{w}_{\boldsymbol{\lambda}}$ if $n$ equals $3$. Then it is known that the $\pi^{(n)}_{\rm fin}$-isotypic component $H^i(Y^{(n)}_{\mathcal K_n}, {\mathcal L}^{(n)}( *_{\boldsymbol{\lambda}} ; {\mathbf C})  )[\pi^{(n)}_{\rm fin}]$ 
of $H^i(Y^{(n)}_{\mathcal K_n}, {\mathcal L}^{(n)}( *_{\boldsymbol{\lambda}} ; {\mathbf C})  )$ satisfies 
\begin{align*}
H^i(Y^{(n)}_{\mathcal K_n}, {\mathcal L}^{(n)}( *_{\boldsymbol{\lambda}} ; {\mathbf C})  )[\pi^{(n)}_{\rm fin}]
= H^i( \mathfrak{gl}_n({\mathbf R}), K_n ; H_{\pi^{(n)}, K_n}  \otimes L^{(n)}( *_{\boldsymbol{\lambda}} ; {\mathbf C})    )
     \otimes_{\mathbf C} \pi^{(n), {\mathcal K}_n}_{\rm fin}.
\end{align*}

In order to construct an distinguished  cohomology class, it is necessary to describe the $(\mathfrak{gl}_n(\mathbf{R}),K_n)$-cohomology group $H^i( \mathfrak{gl}_n({\mathbf R}), K_n ; H_{\pi^{(n)}, K_n}  \otimes L^{(n)}( *_{\boldsymbol{\lambda}} ; {\mathbf C})    )$ in an explicit manner. 
The following well-known theorem is due to Clozel \cite[Lemme 3.14]{clo90}. 
Here we adopt the formalism presented by Mahnkopf in \cite{mah05}.

\begin{thm}(refer to \cite[Section 3.1.2]{mah05} and \cite[Section 5.5]{gr14})
{\itshape 
We have 
\begin{align*}
     H^i( \mathfrak{gl}_2({\mathbf R}), K_2 ; H_{\pi^{(2)}, K_2} \otimes L^{(2)}( \boldsymbol{n}_{\boldsymbol{\lambda}}; {\mathbf C})  )
      \cong \begin{cases}  {\mathbf C}^2  &  \text{for }i=1,    \\
                                       0  & \text{otherwise}.       \end{cases}   
\end{align*}
We also find that  
\begin{align*}                                                       
     H^i( \mathfrak{gl}_3({\mathbf R}), K_3 ; H_{\pi^{(3)}, K_3}  \otimes L^{(3)}( \boldsymbol{w}_{\boldsymbol{\lambda}}; {\mathbf C})    )
      \cong \begin{cases}  {\mathbf C}  &  \text{for } i=2 \text{ and }3,    \\
                                       0  &   \text{otherwise}.  \end{cases}    \end{align*}
}
\end{thm}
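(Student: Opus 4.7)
The plan is to compute each $(\mathfrak{gl}_n(\mathbf{R}), K_n)$-cohomology group via the standard Chevalley--Eilenberg complex $\mathrm{Hom}_{K_n}(\wedge^\bullet \mathfrak{p}_n, H_{\pi^{(n)}, K_n} \otimes L^{(n)}(\ast_{\boldsymbol{\lambda}}; \mathbf{C}))$, where $\mathfrak{p}_n \subset \mathfrak{gl}_n(\mathbf{R})$ is a Cartan complement of $\mathrm{Lie}(K_n)$, so $\dim_{\mathbf{R}} \mathfrak{p}_n = \tfrac{n(n+1)}{2}-1$. The normalization (\ref{eq:NormNu}) has been chosen precisely so that the infinitesimal character of $\pi^{(n)}_\infty$ matches that of the dual of $L^{(n)}(\ast_{\boldsymbol{\lambda}}; \mathbf{C})$, and so that the central characters of $\pi^{(n)}_\infty$ and $L^{(n)}(\ast_{\boldsymbol{\lambda}}; \mathbf{C})^\vee$ cancel on $\mathbf{R}^\times_{>0}$; in view of Wigner's lemma (in the Borel--Wallach sense), this forces all differentials of the complex to vanish and reduces everything to a multiplicity count of $\mathrm{SO}_n(\mathbf{R})$-types in each wedge power.

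For $n=2$, I would proceed explicitly: $\mathfrak{p}_2 \otimes_{\mathbf{R}} \mathbf{C} \cong \mathbf{C}_{+2} \oplus \mathbf{C}_{-2}$ as an $\mathrm{SO}_2(\mathbf{R})$-module, so $\wedge^0 \mathfrak{p}_2$ and $\wedge^2 \mathfrak{p}_2$ carry only the trivial character. Since $H_{\pi^{(2)}, K_2}$ contains exactly the $\mathrm{SO}_2(\mathbf{R})$-weights $\pm(l_2 + 1 + 2k)$ for $k \geq 0$, while the weights of $L^{(2)}(\boldsymbol{n}_{\boldsymbol{\lambda}}; \mathbf{C})$ lie in $\{-(l_2-1), -(l_2-3), \dotsc, l_2-1\}$, these two sets of weights can never cancel; hence no trivial $K_2$-type appears in the tensor product, killing degrees $0$ and $2$. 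The weight-$(\pm 2)$ subspace of $H_{\pi^{(2)}, K_2} \otimes L^{(2)}(\boldsymbol{n}_{\boldsymbol{\lambda}}; \mathbf{C})$ is one-dimensional, arising uniquely from the pairing of the extremal $K_2$-type $\pm(l_2+1)$ with the opposite extremal weight $\mp(l_2-1)$ of $L^{(2)}(\boldsymbol{n}_{\boldsymbol{\lambda}}; \mathbf{C})$. Summing the two contributions gives $\mathbf{C}^2$ in degree $1$.

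For $n=3$, the situation is more delicate because $\pi^{(3)}_\infty$ is the irreducible generalized principal series $\mathrm{Ind}^{\mathrm{GL}_3(\mathbf{R})}_{P_{2,1}(\mathbf{R})}(D_{\nu_3, l_3} \boxtimes \chi_{\nu_3}^\delta)$ rather than a discrete series. The cleanest route is Delorme's formula, which expresses the $(\mathfrak{gl}_3(\mathbf{R}), K_3)$-cohomology of the induced representation as a sum indexed by Weyl cosets of $(\mathfrak{m}, K \cap M)$-cohomologies of the inducing data, twisted by appropriate finite-dimensional representations of the Levi $M = \mathrm{GL}_2(\mathbf{R}) \times \mathrm{GL}_1(\mathbf{R})$. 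Combining the $n=2$ computation with a Künneth argument then yields nonvanishing precisely in the two consecutive degrees corresponding to the cuspidal cohomological range of $\mathrm{GL}_3(\mathbf{R})$, each one-dimensional.

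The main obstacle is the combinatorial bookkeeping in the $n=3$ case: one must carefully track how the minimal $K_3$-type $\tau^{(3)}_{\boldsymbol{\lambda}^{(3)}}$ interacts with Kostant's decomposition of $\wedge^\bullet \mathfrak{p}_3$ as an $\mathrm{SO}_3(\mathbf{R})$-module, and confirm that the two surviving contributions land in degrees $2$ and $3$ rather than elsewhere. Rather than executing this count from scratch, we will appeal directly to Clozel's Lemme 3.14 and the reformulation in \cite{mah05}, both of which supply exactly the stated identifications; the role of the normalization (\ref{eq:NormNu}) is to guarantee that the central character conditions needed for these references apply verbatim in our setting.
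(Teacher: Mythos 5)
Your proposal is sound and, at its core, lands where the paper does: the paper itself offers no proof at all, pointing only to Clozel's Lemme~3.14 and the reformulations in Mahnkopf and Grobner--Raghuram, and you ultimately defer to the same references for the $n=3$ case. What you add is genuine content: the $n=2$ case is actually proved by your weight argument, since $\mathcal{P}_{2,\mathbf{C}}\cong\mathbf{C}_{+2}\oplus\mathbf{C}_{-2}$, the $\mathrm{SO}_2$-weights of $H_{\pi^{(2)},K_2}$ are $\{\pm(l_2+1+2k)\}_{k\ge 0}$ while those of $L^{(2)}(\boldsymbol{n}_{\boldsymbol{\lambda}};\mathbf{C})$ lie in $[-(l_2-1),\,l_2-1]$, so no weight-$0$ vector exists (killing degrees $0,2$) and each of the weights $\pm 2$ occurs with multiplicity one (giving $\mathbf{C}^2$ in degree~$1$). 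One small terminological slip: the vanishing of the differentials once the infinitesimal characters match is not Wigner's lemma itself (Wigner gives vanishing of cohomology when they disagree) but rather the unitarity/Kuga argument underlying \cite[II, Proposition~3.1]{bw80}, which is precisely what the paper invokes later as (\ref{eq:HgKHom}); the conclusion you draw is nonetheless correct. For $n=3$, your proposal to use Delorme's formula for the parabolically induced $\pi^{(3)}_\infty$ together with a K\"unneth reduction to the $\mathrm{GL}_2\times\mathrm{GL}_1$ Levi is a legitimate alternative to Clozel's route (which goes through Kostant's theorem and the Vogan--Zuckerman classification) and would produce the one-dimensional answer in degrees $2$ and $3$; you are candid about not executing the bookkeeping and fall back to the citation, which is exactly the level of rigor the paper adopts.
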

In the present article, 
we are interested in the cohomology group 
$H^{b_n}( \mathfrak{gl}_n({\mathbf R}), K_n ; H_{\pi^{(n)}, K_n}  \otimes L^{(n)}( *_{\boldsymbol{\lambda}}; {\mathbf C})    )$
of bottom degree $b_n:=[\frac{n^2}{4}]$.
Note that the cohomology of top degree has also independent interest beyond the scope of this article,
hence we give an explicit basis of the cohomology  group $H^3( \mathfrak{gl}_3({\mathbf R}), K_3 ; H_{\pi^{(3)}, K_3}  \otimes L^{(3)}( \boldsymbol{w}_{\boldsymbol{\lambda}}; {\mathbf C})    )$ 
in Appendix~\ref{sec:AppA}.

Let ${\mathcal P}_n$ be the quotient Lie algebra $\mathfrak{gl}_n({\mathbf R}) / {\rm Lie}(K_n)$ and  ${\mathcal P}_{n, {\mathbf C}} = {\mathcal P}_n \otimes_{\mathbf R} {\mathbf C}$ its complexification. 
By using \cite[II, Proposition 3.1]{bw80}, we obtain a natural isomorphism
\begin{align}\label{eq:HgKHom}
    H^i( \mathfrak{gl}_n({\mathbf R}), K_n ; H_{\pi^{(n)}, K_n}  \otimes L^{(n)}( *_{\boldsymbol{\lambda}} ; {\mathbf C})    )
    \cong 
    {\rm Hom}_{ {\rm SO}_n({\mathbf R})  } \left(  \bigwedge^i {\mathcal P}_{n, {\mathbf C}},  H_{\pi^{(n)}, K_n}  \otimes  L^{(n)}( *_{\boldsymbol{\lambda}} ; {\mathbf C})    \right).  
\end{align} 
To study the module appearing in the right-hand side of (\ref{eq:HgKHom}), we first analyze the adjoint action of $\mathrm{SO}_n(\mathbf{R})$ on $\mathcal{P}_{n,\mathbf{C}}$ with respect to a certain specific basis. Let $\{H,E\}$ be a basis of $\mathcal{P}_{2,\mathbf{C}}$ defined by
\begin{align*}
  H &= \frac{1}{2} \begin{pmatrix}  1 & 0 \\ 0 & -1  \end{pmatrix}, & 
  E &= \frac{1}{2}  \begin{pmatrix}  0 & 1 \\ 1 & 0  \end{pmatrix}.
\end{align*}

Similarly, let $\{X_2,X_1,X_0,X_{-1},X_{-2}\}$ be a basis of $\mathcal{P}_{3,\mathbf{C}}$ defined as follows:
  \begin{align}\label{eq:defX}
    \begin{aligned}
             A_1 &=  \begin{pmatrix}  1 & 0 & 0  \\  0 & -1 & 0 \\ 0 & 0 & 0   \end{pmatrix}, &  
             A_2 &=  \begin{pmatrix}  1 & 0 & 0  \\  0 & 1 & 0 \\ 0 & 0 & -2   \end{pmatrix},  & 
             N_1 &=  \begin{pmatrix}  0 & 1 & 0  \\  1 & 0 & 0 \\ 0 & 0 & 0   \end{pmatrix},&
             N_2 &=  \begin{pmatrix}  0 & 0 & 1  \\  0 & 0 & 0 \\ 1 & 0 & 0   \end{pmatrix},&
             N_3 &=  \begin{pmatrix}  0 &  0  & 0  \\  0 & 0 & 1 \\ 0 & 1 & 0   \end{pmatrix}, \\ 
            X_{2} &= A_1+\sqrt{-1}N_1, &
            X_{1} &= N_2+\sqrt{-1}N_3, &
            X_{0} &= A_2, &
            X_{-1} &= N_2-\sqrt{-1}N_3, &
            X_{-2} &= A_1-\sqrt{-1}N_1.  
 \end{aligned} 
\end{align}

The following lemma is a consequence of a direct calculation: 

\begin{lem}\label{lem:Lieact}
{\itshape 
For each $u\in {\rm SO}_2({\mathbf R})$, we have ${\rm Ad}(u) 
\begin{pmatrix}
 E & H
\end{pmatrix} = 
\begin{pmatrix}
 E & H
\end{pmatrix}u^{-2}$.   
For each $u\in {\rm SO}_3({\mathbf R})$, we have 
\begin{align*}
    {\rm Ad}(u) \begin{pmatrix} X_2 & X_1 & X_0 & X_{-1} & X_{-2} \end{pmatrix}
   &=  \begin{pmatrix} X_2 & X_1 & X_0 & X_{-1} & X_{-2} \end{pmatrix}
        M_{(2,0)}(u), 
\end{align*}
where the matrix $M_{(2,0)}(u)\in {\rm GL}_5({\mathbf C})$ is defined as in {\rm (\ref{eq:matM})}. 
}
\end{lem}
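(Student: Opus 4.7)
The plan is to verify both identities by direct matrix computation, reducing the ${\rm SO}_3$ assertion to an infinitesimal Lie-algebra check via the connectedness of ${\rm SO}_3({\mathbf R})$.

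For the ${\rm SO}_2({\mathbf R})$ assertion, parameterize $u = \begin{pmatrix}\cos\theta & \sin\theta \\ -\sin\theta & \cos\theta\end{pmatrix}$, so that $u^{-1} = \begin{pmatrix}\cos\theta & -\sin\theta \\ \sin\theta & \cos\theta\end{pmatrix}$. A short direct calculation of $uHu^{-1}$ and $uEu^{-1}$ yields ${\rm Ad}(u)H = \cos(2\theta)\, H - \sin(2\theta)\, E$ and ${\rm Ad}(u)E = \sin(2\theta)\, H + \cos(2\theta)\, E$. Since $u^{-2} = \begin{pmatrix}\cos 2\theta & -\sin 2\theta \\ \sin 2\theta & \cos 2\theta\end{pmatrix}$, these coefficients agree precisely with those obtained by right-multiplying the row $\begin{pmatrix}E & H\end{pmatrix}$ by $u^{-2}$, and the claimed identity follows.

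For the ${\rm SO}_3({\mathbf R})$ assertion, by connectedness of ${\rm SO}_3({\mathbf R})$ and continuity, it suffices to verify the infinitesimal version of the identity at the Lie-algebra level for a generating set of ${\mathfrak{so}}_3({\mathbf R})$. Take, for instance, the three generators $Y_1 = E_{23} - E_{32}$, $Y_2 = E_{13} - E_{31}$, $Y_3 = E_{12} - E_{21}$. For each $Y_i$, I would compute on the one hand the commutators $[Y_i, X_j]$ in ${\mathfrak{gl}}_3({\mathbf C})$ and express them in the basis $\{X_k\}$, and on the other hand the infinitesimal right action of $Y_i$ on the polynomial representative of $v^{(3),(2,0)}_j$ via the derivation $\sum_k (zY_i)_k \, \partial_{z_k}$, reducing the result modulo ${\mathcal V}_{(2,0)} = {\mathbf C}(z_1^2 + z_2^2 + z_3^2)$ to recover a linear combination of the $\{v^{(3),(2,0)}_k\}$. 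The two matrices so obtained must then be compared entry by entry.

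The organizing feature of the calculation is that both ordered bases $\{X_2, X_1, X_0, X_{-1}, X_{-2}\}$ and $\{v^{(3),(2,0)}_2, \ldots, v^{(3),(2,0)}_{-2}\}$ consist of simultaneous eigenvectors for $Y_3$ with the same eigenvalues $(2\sqrt{-1}, \sqrt{-1}, 0, -\sqrt{-1}, -2\sqrt{-1})$; this immediately settles the identity on the one-parameter subgroup generated by $Y_3$ and pairs the basis vectors as weight vectors of matching weight. The main obstacle is then the bookkeeping for the raising and lowering operators $Y_1 \pm \sqrt{-1}\, Y_2$: after differentiating one must rewrite the resulting quadratic polynomials in $z_1, z_2, z_3$ in the basis $\{v^{(3),(2,0)}_k\}$ modulo the relation $z_1^2 + z_2^2 + z_3^2 \equiv 0$, and symmetrically reduce each commutator $[Y_i, X_j]$ to a linear combination of the $X_k$; tracking all signs and factors of $\sqrt{-1}$ accurately is the only real source of difficulty, and the entry-by-entry match then yields the claimed identity.
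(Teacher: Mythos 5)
Your verification of the $\mathrm{SO}_2(\mathbf{R})$ case is complete and correct, and your reduction of the $\mathrm{SO}_3(\mathbf{R})$ case to an infinitesimal check (valid since both $u\mapsto\mathrm{Ad}(u)$ restricted to $\mathcal{P}_{3,\mathbf{C}}$ and $u\mapsto M_{(2,0)}(u)$ are Lie group homomorphisms from the connected group $\mathrm{SO}_3(\mathbf{R})$) is a sound way to organize the computation; the matching $Y_3$-eigenvalues $(2\sqrt{-1},\sqrt{-1},0,-\sqrt{-1},-2\sqrt{-1})$ you identify on both bases are indeed correct. The paper treats this lemma as a direct calculation stated without proof, so your proposal is in essence the same approach, spelled out in a more structured form.
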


First let us consider the $\mathrm{GL}_2$ case.
Let  $\{\mathrm{d}H, \mathrm{d}E \}$ denote the dual basis of $\mathcal{P}_{2,\mathbf{C}}^*$, where $\mathcal{P}_{2,\mathbf{C}}^*$ denotes the linear dual space of $\mathcal{P}_{2,\mathbf{C}}$.  Let ${\rm Ad}^*$ be the contragredient action of  ${\rm Ad}$. 
Then the following formula is easily deduced from the first satement of Lemma \ref{lem:Lieact}; for each $u=\begin{pmatrix} \cos\theta & \sin\theta \\ -\sin \theta & \cos \theta \end{pmatrix} \in {\rm SO}_2({\mathbf R})$, 
      we have 
\begin{align*}
     {\rm Ad}^\ast(u) \begin{pmatrix}
		        {\rm dE} - \sqrt{-1} {\rm d}H &  -{\rm dE} - \sqrt{-1} {\rm d}H  
		      \end{pmatrix} 
     = \begin{pmatrix}{\rm dE} - \sqrt{-1} {\rm d}H&  - {\rm dE} - \sqrt{-1} {\rm d}H 
       \end{pmatrix}   \begin{pmatrix} e^{-2 \sqrt{-1}\theta}   & 0 \\ 0 & e^{2\sqrt{-1} \theta} \end{pmatrix}.
\end{align*}
Hence, for each holomorphic (resp.\ anti-holomorphic) cusp  form $h^+$ (resp.\ $h^-$) in ${\mathcal S}^{(2)}_{\boldsymbol{\lambda}}({\mathcal K}_2)$, 
the  element 
\begin{align*}
     \delta^{(2)}(h^\pm) = h^\pm ( \mp X + \sqrt{-1} Y)^{n_{\boldsymbol{\lambda}} }  ( \pm {\rm dE} - \sqrt{-1} {\rm d}H) \quad \bigl(=h^\pm ( \mp X + \sqrt{-1} Y)^{\lambda_2-2}  ( \pm {\rm dE} - \sqrt{-1}\mathrm{d}H) \bigr)
\end{align*}
is indeed contained in 
${\rm Hom}_{ {\rm SO}_2({\mathbf R}) } \left(  {\mathcal P}_{2, {\mathbf C}}, H_{\pi^{(2)},K_2} \otimes  L^{(2)}(  \boldsymbol{n}_{\boldsymbol{\lambda}}  ; {\mathbf C})    \right)\otimes_{\mathbf C} (\pi^{(2) }_{\rm fin})^{\mathcal K_2}$.
We thus obtain $\delta^{(2)}$ a map 
\begin{align*}
  \delta^{(2)} : {\mathcal S}^{(2)}_{\boldsymbol{\lambda}}({\mathcal K}_2) \longrightarrow H^1_{\rm cusp}(Y^{(2)}_{{\mathcal K}_2}, {\mathcal L}^{(2)}( {\boldsymbol{n}}_{\boldsymbol{\lambda}} ; {\mathbf C}))\, ; h^\pm \mapsto \delta^{(2)}(h^\pm) 
\end{align*}
which is called the Eichler--Shimula map for $\mathrm{GL}_2$. 
It is obvious that $\delta^{(2)}$ is a  Hecke equivariant homomorphism from its construction.

Now let us construct a similar map for the ${\rm GL}_3$ case;
to be precise, we will define a Hecke equivariant homomorphism 
\begin{align*}
  \delta^{(3)} : {\mathcal S}^{(3)}_{\boldsymbol{\lambda}}({\mathcal K}_3) \longrightarrow H^2_{\rm cusp}(Y^{(3)}_{{\mathcal K}_3}, {\mathcal L}^{(3)}( \boldsymbol{w}_{\boldsymbol{\lambda}} ; {\mathbf C}))
\end{align*}
for $\boldsymbol{\lambda}^{(3)}=(\lambda_3,\delta) \in \Lambda^{\rm coh}_3$.   
For this purpose, we have to introduce some more notation. 
Let $P^2\in {\rm M}_{10, 7}({\mathbf C})$ be the matrix which is introduced in (\ref{eq:defP2}).  
Then we define ${\boldsymbol \omega}_i \in \bigwedge^2 {\mathcal P}^\ast_{3, {\mathbf C}}$ for $-3\leq i \leq 3$ to be   
\begin{align}\label{eq:def_bfomega}
  \begin{aligned}
     \begin{pmatrix}
      {\boldsymbol \omega}_3 &  \boldsymbol{\omega}_2 & \dotsc&  {\boldsymbol \omega}_{-3}
     \end{pmatrix}      =   \begin{pmatrix}	 
	 {\rm d}X_2 \wedge {\rm d}X_1 & {\rm d}X_2 \wedge {\rm d}X_0 & \dotsc & {\rm d}X_{-1} \wedge {\rm d}X_{-2} 
\end{pmatrix} P^2.
 \end{aligned} 
\end{align}
Here $\{ {\rm d}X_2,{\rm d}X_1,{\rm d}X_0,{\rm d}X_{-1},{\rm d}X_{-2}\}$ is the dual basis of $\mathcal{P}^*_{3,\mathbf{C}}$ and $\{\mathrm{d}X_i\wedge \mathrm{d}X_j\}_{2\geq i>j\geq -2}$ is ordered lexicographically.
Consider 
\begin{align*}
   P(X,Y,Z,A,B,C, z_1, z_2, z_3)    
& :=   \left(  (X, Y, Z) \begin{pmatrix} z_1 \\ z_2 \\ z_3 \end{pmatrix}   \right)^{w_{\boldsymbol{\lambda}}} 
                   \otimes  \left(  (A, B, C) \begin{pmatrix} z_1 \\ z_2 \\ z_3 \end{pmatrix}   \right)^{w_{\boldsymbol{\lambda}}} 
     \begin{pmatrix}
      v^{(3, \delta)}_{3}& v^{(3,\delta)}_2 & \dotsc & v^{(3,\delta)}_{-3}
     \end{pmatrix}     \\  
 &=  \begin{pmatrix}
      v^{\boldsymbol{\lambda}}_{\lambda_3} & v^{\boldsymbol{\lambda}}_{\lambda_3-1} & \dotsc & v^{\boldsymbol{\lambda}}_{   -\lambda_3} 
     \end{pmatrix}    {\mathcal P}(X,Y,Z, A,B,C) 
\end{align*}
(recall that $w_{\boldsymbol{\lambda}}=\dfrac{\lambda_3-3}{2}$). Then we can write down ${\mathcal P}(X,Y,Z, A,B,C)$ as follows: 
\begin{align}\label{eq:matP}
      {\mathcal P}(X,Y,Z, A,B,C)  
      = \begin{pmatrix} 
             {\mathcal P}_{\lambda_3,3} (X,Y,Z, A,B,C) & \cdots & {\mathcal P}_{\lambda_3,-3} (X,Y,Z, A,B,C)   \\  
                  \vdots &   \ddots &   \vdots  \\
              {\mathcal P}_{-\lambda_3,3} (X,Y,Z, A,B,C) & \cdots & {\mathcal P}_{-\lambda_3,-3} (X,Y,Z, A,B,C)                  \end{pmatrix},  
\end{align}
which is an element of ${\rm M}_{2\lambda_3+1, 7} ( {\mathbf C}[X,Y,Z ; A,B,C]_{w_{\boldsymbol{\lambda}}, w_{\boldsymbol{\lambda}}}  )$.

\begin{lem} \label{lem:ES3}
{\itshape 
\begin{enumerate}[label={\rm (\roman*)}]
\item 
The tuple $\{  {\boldsymbol \omega}_3,  \ldots,  {\boldsymbol \omega}_{-3} \}$ forms a basis of $\bigwedge^2 {\mathcal P}^\ast_{3, {\mathbf C}}$.  
Moreover, for each $u\in {\rm SO}_3({\mathbf R})$,   
we have 
\begin{align*}
     \wedge^2 {\rm Ad}^\ast(u) 
\begin{pmatrix}
 {\boldsymbol \omega}_3 & \boldsymbol{\omega}_2 &  \dotsc &  {\boldsymbol \omega}_{-3}
\end{pmatrix}   
  = \begin{pmatrix}
     {\boldsymbol \omega}_3 & \boldsymbol{\omega}_2 & \dotsc &  {\boldsymbol \omega}_{-3}   
    \end{pmatrix}  {}^{\rm t}\! M_{(3,0)}(u)^{-1}. 
\end{align*}

     \item  Every entry
                     ${\mathcal P}_{\alpha, \beta} (X,Y,Z, A,B,C)$ of $\mathcal{P}(X,Y,Z,A,B,C)$ is an element of $L^{(3)}( \boldsymbol{w}_{\boldsymbol{\lambda}} ; {\mathbf C})$ all of whose coefficients are contained in $\mathbf{Z}[2^{-1},\sqrt{-1}]$.      
     \item For each $u\in {\rm SO}_3({\mathbf R})$, we have 
\begin{align*}
        \varrho^{(3)}_{\boldsymbol{w}_{\boldsymbol{\lambda}}} (u)  {\mathcal P}(X,Y,Z,A,B,C)
   =   M^{-1}_{\boldsymbol{\lambda}}(u)  {\mathcal P}(X,Y,Z,A,B,C) M_{(3,0)}(u).
\end{align*}
  \end{enumerate}  
  }
\end{lem}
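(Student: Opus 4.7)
The plan is to prove all three parts by carefully unpacking the defining identity $P = \begin{pmatrix} v^{\boldsymbol{\lambda}}_{\lambda_3} & \cdots & v^{\boldsymbol{\lambda}}_{-\lambda_3} \end{pmatrix}\mathcal{P}(X,Y,Z,A,B,C)$, together with Lemma~\ref{lem:Lieact} and the transformation law (\ref{eq:matM}) for $\{v^{\boldsymbol{\lambda}}_\alpha\}$. Parts (ii) and (iii) admit clean representation-theoretic arguments based on the invariance properties of the generating polynomial, whereas part (i) reduces to a compatibility of the specific matrix $P^2$ with the relevant group actions.

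I would start with (iii), which is the cleanest. The crucial observation is that for $u \in \mathrm{SO}_3(\mathbf{R})$ one has ${}^{\rm t}u^{-1} = u$, so both factors $((X,Y,Z)z^{\rm t})^{w_{\boldsymbol{\lambda}}}$ and $((A,B,C)z^{\rm t})^{w_{\boldsymbol{\lambda}}}$ are simultaneously invariant under $(X,Y,Z)\mapsto (X,Y,Z)u$, $(A,B,C)\mapsto (A,B,C)u$, and $(z_1,z_2,z_3)\mapsto (z_1,z_2,z_3)u^{-1}$. Applying $\varrho^{(3)}_{\boldsymbol{w}_{\boldsymbol{\lambda}}}(u)$ to both sides of the defining identity and rewriting the action on $(X,Y,Z,A,B,C)$ as the inverse substitution on $z$, the factor $v^{(3,\delta)}_j(z)$ on the left-hand side produces $M_{(3,0)}(u)$ via (\ref{eq:matM}), while the $v^{\boldsymbol{\lambda}}_\alpha(z)$ on the right-hand side produces $M^{-1}_{\boldsymbol{\lambda}}(u)$ by the same formula. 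Comparing coefficients using the linear independence of $\{v^{\boldsymbol{\lambda}}_\alpha\}$ modulo $\mathcal{V}_{\boldsymbol{\lambda}}$ then yields the claimed identity.

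For (ii), the bidegree $(w_{\boldsymbol{\lambda}},w_{\boldsymbol{\lambda}})$ of each $\mathcal{P}_{\alpha,\beta}$ in $(X,Y,Z)$ and $(A,B,C)$ is immediate from the explicit form of $P$. Rationality over $\mathbf{Z}[2^{-1},\sqrt{-1}]$ follows from the expansion $z_1 = (v^{(1,0)}_1 - v^{(1,0)}_{-1})/2$, $z_2 = -\sqrt{-1}(v^{(1,0)}_1 + v^{(1,0)}_{-1})/2$, and $z_3 = v^{(1,0)}_0$, combined with the multiplicativity (\ref{eq:vmult}) and the reduction $(v^{(1,0)}_0)^2 \equiv v^{(1,0)}_1 v^{(1,0)}_{-1}$ modulo $\mathcal{V}_{\boldsymbol{\lambda}}$, which together allow one to express both sides of the defining identity in the basis $\{v^{\boldsymbol{\lambda}}_\alpha\}$ while remaining inside $\mathbf{Z}[2^{-1},\sqrt{-1}]$. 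For harmonicity, I apply $\iota_{w_{\boldsymbol{\lambda}},w_{\boldsymbol{\lambda}}}$ to the generating polynomial $((X,Y,Z)z^{\rm t})^{w_{\boldsymbol{\lambda}}}((A,B,C)z^{\rm t})^{w_{\boldsymbol{\lambda}}}v^{(3,\delta)}_j$ and observe that the result contains the factor $w_{\boldsymbol{\lambda}}^2(z_1^2+z_2^2+z_3^2)$, which lies in $\mathcal{V}_{\boldsymbol{\lambda}}$ after multiplication by the remaining degree-$(\lambda_3-2)$ polynomial in $z$; hence $\sum_\alpha v^{\boldsymbol{\lambda}}_\alpha \iota_{w_{\boldsymbol{\lambda}},w_{\boldsymbol{\lambda}}}(\mathcal{P}_{\alpha,\beta})$ vanishes modulo $\mathcal{V}_{\boldsymbol{\lambda}}$, and linear independence of $\{v^{\boldsymbol{\lambda}}_\alpha\}$ forces $\iota_{w_{\boldsymbol{\lambda}},w_{\boldsymbol{\lambda}}}(\mathcal{P}_{\alpha,\beta})=0$.

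Finally, for (i), Lemma~\ref{lem:Lieact} implies that the contragredient action of $\mathrm{SO}_3(\mathbf{R})$ on the dual basis $\{\mathrm{d}X_i\}$ is given by ${}^{\rm t}M_{(2,0)}(u)^{-1}$, so that $\wedge^2\mathrm{Ad}^*(u)$ acts on the ordered basis $\{\mathrm{d}X_i\wedge\mathrm{d}X_j\}_{i>j}$ by $\wedge^2{}^{\rm t}M_{(2,0)}(u)^{-1}$. The claimed transformation law for $\{\boldsymbol{\omega}_i\}$ then reduces to the matrix identity $(\wedge^2{}^{\rm t}M_{(2,0)}(u)^{-1})P^2 = P^2\cdot {}^{\rm t}M_{(3,0)}(u)^{-1}$, which should hold by the very construction of $P^2$ in (\ref{eq:defP2}). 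I expect this verification to be the main obstacle, since $P^2$ is an explicit $10\times 7$ matrix and the check is tedious. Conceptually, however, the Clebsch--Gordan decomposition yields $\bigwedge^2 V^{(3)}_{(2,0)}\cong V^{(3)}_{(3,0)}\oplus V^{(3)}_{(1,0)}$ as $\mathrm{SO}_3(\mathbf{R})$-modules, so the existence of a unique (up to scalar) $\mathrm{SO}_3(\mathbf{R})$-equivariant embedding $V^{(3)}_{(3,0)}\hookrightarrow \bigwedge^2 V^{(3)}_{(2,0)}$ guarantees the identity once verified on a generating set of $\mathfrak{so}_3(\mathbf{R})$; the linear independence of $\{\boldsymbol{\omega}_i\}$ then follows from the nontriviality of $P^2$ combined with irreducibility of $V^{(3)}_{(3,0)}$.
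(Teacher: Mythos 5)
Your proposal is correct and, in substance, follows the same route as the paper's own argument, which is carried out in Appendix~\ref{sec:AppA} (Lemmas~\ref{lem:basis_wedge} and \ref{lem:construction_delta}). Your parts (ii) and (iii) track the paper almost line by line: the harmonicity argument via $\iota_{w_{\boldsymbol{\lambda}},w_{\boldsymbol{\lambda}}}$ producing the factor $w_{\boldsymbol{\lambda}}^2(z_1^2+z_2^2+z_3^2)$, the rationality over $\mathbf{Z}[2^{-1},\sqrt{-1}]$ via the change of coordinates to $v^{(1,0)}_{\pm 1}, v^{(1,0)}_0$, and the equivariance argument shifting the $u$-action from $(X,Y,Z;A,B,C)$ to the $z$-variable using $u^{\mathrm t}u = 1$ and then reading off $M_{(3,0)}(u)$ and $M_{\boldsymbol{\lambda}}(u)^{-1}$ via (\ref{eq:matM}) and linear independence of $\{v^{\boldsymbol{\lambda}}_\alpha\}$ are all exactly what Lemma~\ref{lem:construction_delta} does.

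For (i) there is a mild difference of emphasis but not of substance. The paper block-diagonalizes $\wedge^2\mathrm{Ad}(u)$ directly by exhibiting the explicit change-of-basis matrix $\widetilde P^2$ and verifying $\widetilde P^{2,-1}\widetilde M^2(u)\widetilde P^2 = \mathrm{diag}(M_{(1,0)}(u), M_{(3,0)}(u))$ by computer algebra; the $\boldsymbol\omega_i$ are then defined as the columns of $\mathrm{d}\mathbf{X}^2 P^2$ landing in the $\tau^{(3),\vee}_{(3,0)}$-isotypic piece. Your Clebsch--Gordan framing $\bigwedge^2 V^{(3)}_{(2,0)}\cong V^{(3)}_{(1,0)}\oplus V^{(3)}_{(3,0)}$ explains \emph{why} a unique equivariant embedding exists and correctly deduces linear independence of $\{\boldsymbol\omega_i\}$ from irreducibility, but it does not remove the need to verify that the explicit matrix $P^2$ from (\ref{eq:defP2}) actually realizes it; your suggestion to check only on a generating set of $\mathfrak{so}_3(\mathbf{R})$ is a genuine (if modest) reduction compared with checking at the group level. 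One caveat worth noting about the statement itself, independent of your proposal: since $\dim\bigwedge^2\mathcal P_{3,\mathbf{C}}^* = 10$, the seven elements $\{\boldsymbol\omega_3,\dots,\boldsymbol\omega_{-3}\}$ cannot form a basis of the whole space; they form a basis of the $\tau^{(3),\vee}_{(3,0)}$-isotypic subspace, as Lemma~\ref{lem:basis_wedge} in fact asserts, and your argument correctly proves this sharper assertion.
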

\begin{proof}
See Lemma~\ref{lem:basis_wedge} for (i) and Lemma \ref{lem:construction_delta} for (ii) and (iii) in Appendix \ref{sec:AppA}. 
\end{proof}

Let $\boldsymbol{f} = 
\begin{pmatrix}
 f^{\boldsymbol{\lambda}}_{\lambda_3} & f^{\boldsymbol{\lambda}}_{\lambda_3-1}& \dotsc & f^{\boldsymbol{\lambda}}_{-\lambda_3}
\end{pmatrix}$ be an element in  ${\mathcal S}^{(3)}_{\boldsymbol{\lambda}}({\mathcal K}_3)$.  
Define 
\begin{align*}
   \delta^{(3)}(\boldsymbol{f}) 
   =&    \begin{pmatrix}
	  f^{\boldsymbol{\lambda}}_{\lambda_3}&  f^{\boldsymbol{\lambda}}_{\lambda_3-1} & \dotsc& f^{\boldsymbol{\lambda}}_{-\lambda_3}
	 \end{pmatrix}  {\mathcal P}(X,Y,Z, A,B,C) 
          \begin{pmatrix}  {\boldsymbol \omega}_3 \\ \vdots \\ {\boldsymbol \omega}_{-3}   \end{pmatrix}.
\end{align*}
Since $\delta^{(3)}(\boldsymbol{f})$ gives an element in ${\rm Hom}_{ {\rm SO}_3({\mathbf R})  } \left(  \bigwedge^2 {\mathcal P}_{3, {\mathbf C}}, H_{\pi^{(3)},K_3} \otimes  L^{(3)}( \boldsymbol{w}_{\boldsymbol{\lambda}} ; {\mathbf C})    \right)\otimes (\pi^{(3)}_{\rm fin})^{\mathcal K_3}$ due to Lemma~\ref{lem:ES3} (iii), we obtain a Hecke equivariant map 
\begin{align*}
 \delta^{(3)} \colon 
     {\mathcal S}^{(3)}_{\boldsymbol{\lambda}}({\mathcal K}_3) \longrightarrow H^2_{\rm cusp}(Y^{(3)}_{{\mathcal K}_3}, {\mathcal L}^{(3)}( \boldsymbol{w}_{\boldsymbol{\lambda}} ;
  {\mathbf C}))\, ; \boldsymbol{f} \longmapsto \delta^{(3)}(\boldsymbol{f}). 
\end{align*}

\section{Cohomological cup products and global zeta integrals for ${\rm GL}_3 \times {\rm GL_2}$}\label{sec:zetaint}

The purpose of this section is to introduce 
 the connection between the cohomology classes constructed in the previous section and the global zeta integrals for $\mathrm{GL}_3\times \mathrm{GL}_2$,  
  which is given by precise study of the cohomological cup product pairing based on an explicit description of the branching rule for the pair (${\rm GL}_3, {\rm GL}_2$) provided in Section~\ref{sec:branch}.   
After introducing the cup product pairing which we shall use in Section~\ref{sec:branch*}, we consider in Section~\ref{sec:cup} the cup product of the Eichler--Shimura classes constructed in the previous section. 
In Section \ref{sec:birch}, 
  we firstly give a cohomological interpretation of the partial zeta integrals (see Proposition \ref{prop:intsign}).  
This immediately deduces a cohomological interpretation of the global zeta integrals, which we call {\em Birch lemma}, borrowing the name customarily used in the research of $L$-functions (see Corollary \ref{cor:signbirch}).

\subsection{Description of the cup product pairing} \label{sec:branch*}

Let $\mathcal{Y}^{(n)}_{\mathcal{K}_n}$ be the symmetric space ${\rm GL}_n({\mathbf Q}) \backslash  {\rm GL}_n({\mathbf Q}_{\mathbf A})   / {\rm SO}_n({\mathbf R})  {\mathcal K}_n$, which is equipped with the natural surjection $ {\rm p}_n:{\mathcal Y}^{(n)}_{\mathcal K_n} \longrightarrow Y^{(n)}_{\mathcal K_n}$. 
  We denote by  $ \iota: {\mathcal Y}^{(2)}_{\mathcal K_2} \longrightarrow {\mathcal Y}^{(3)}_{\mathcal K_3}$ the natural map induced by the inclusion $\iota: {\rm GL}_2({\mathbf Q}_{\mathbf A}) \to {\rm GL}_3({\mathbf Q}_{\mathbf A}) \, ; g\mapsto 
\begin{pmatrix}
 g & \\ & 1
\end{pmatrix}$.   
Note that $ \iota: {\mathcal Y}^{(2)}_{\mathcal K_2} \longrightarrow {\mathcal Y}^{(3)}_{\mathcal K_3}$ is a proper map. Hereafter, we always assume that ${\mathcal K}_2 = \iota^{-1}({\mathcal K}_3)$.
In this subsection, we consider a certain cup product on ${\mathcal Y}^{(2)}_{\mathcal K_2}$, 
which will play a crucial role for a cohomological interpretation of critical values of $L(s,{\mathcal M}(\pi^{(3)} \times \pi^{(2)} ))$.      

We shall use the same notation as the previous sections. In particular, let us consider the weight parameters $\boldsymbol{w}=(w,w,w)\in \Xi_3^{\mathrm{coh}}$ and $\boldsymbol{n}=(n_1,n_2)\in \Xi_2(\boldsymbol{w})$ for the local systems (see Sections~\ref{sec:LS} and \ref{sec:branch} for the definitions of $\Xi^{\mathrm{coh}}_3$ and $\Xi_2(\boldsymbol{w})$ respectively). 
Let $A$ be a subfield of ${\mathbf C}$. 
Recall from Section~\ref{sec:branch} that we have constructed the explicit branching rule isomorphism (\ref{eq:branch}):
    \begin{align*}
       {\boldsymbol \nabla}  = (\nabla^{{\boldsymbol n}})_{\boldsymbol{n}\in \Xi_2(\boldsymbol{w})}:  
       L^{(3)}(\boldsymbol{w}; A) |_{{\rm GL}_2( A )  }   \longrightarrow \bigoplus_{\boldsymbol{n}\in \Xi_2(\boldsymbol{w})} L^{(2)}(\boldsymbol{n}; A).
    \end{align*}
For each $\boldsymbol{n}=(n_1,n_2)\in \Xi_2(\boldsymbol{w})$, define a morphism between local systems on $\mathcal{Y}^{(2)}_{\mathcal{K}_2}$
\begin{align}\label{eq:nablocsys}
    \iota^* \mathrm{p}_3^*{\mathcal L}^{(3)} ( \boldsymbol{w} ; A)_{/ {\mathcal Y}^{(2)}_{{\mathcal K}_2} }  \longrightarrow  \mathrm{p}_2^* {\mathcal L}^{(2)} (\boldsymbol{n} ; A)_{/ {\mathcal Y}^{(2)}_{{\mathcal K}_2} }   ; 
     (g, P_{\iota (g)})  \longmapsto (g, \nabla^{\boldsymbol{n}} P_{ \iota(g) } )
\end{align}
induced by $\nabla^{\boldsymbol{n}}$, for which we use the same symbol $\nabla^{\boldsymbol{n}}$.

Now consider the ${\rm GL}_2({\mathbf Q})$-equivariant non-degenerate pairing induced by   
\begin{align*}
  [\cdot, \cdot]_{\boldsymbol{n}} \colon   L^{(2)} (\boldsymbol{n}, A) \otimes_A L^{(2)}(n_1, 0; A) \longrightarrow  
   L^{(2)}(0, [\boldsymbol{n}]; A  );\,  X^{i} Y^{n_1-i} \otimes X^{j} Y^{n_1-j} \longmapsto 
   \begin{cases}  (-1)^i \binom{n_1}{i}^{-1}   &  \text{for }i+j = n_1,    \\
                          0   &  \text{otherwise} \end{cases}    
\end{align*}
where we set $[\boldsymbol{n}]:=n_1+n_2$. Let $\widetilde{A}_{[\boldsymbol{n}]}$ be the local system on ${\mathcal Y}^{(2)}_{{\mathcal K}_2}$, which is determined by $L^{(2)}(0, [\boldsymbol{n}]; A  )$. For an  open compact subgroup $U$ of $\widehat{\mathbf Z}^\times$, 
   we define the narrow ray class group ${\rm Cl}^+_{\mathbf Q}(U) $ of $\mathbf{Q}$ modulo  $U$ by ${\rm Cl}^+_{\mathbf Q}(U) = {\mathbf Q}^\times \backslash {\mathbf Q}^\times_{\mathbf A} / {\mathbf R}^\times_{>0} U$. 
Then the determinant map defines a morphism
\begin{align*}
  {\rm det}_{\mathcal K_2} 
         \colon {\mathcal Y}^{(2)}_{\mathcal K_2}  \longrightarrow {\rm Cl}^+_{\mathbf Q}(\det {\mathcal K}_2) 
         ; \,  [g] \longmapsto [\det g].
\end{align*}
In the following arguments, we write ${\rm Cl}^+_{\mathbf Q}(\det {\mathcal K}_2)$ as ${\rm Cl}^+_{\mathbf Q}( {\mathcal K}_2)$ for the sake of simplicity.
For each $x\in {\rm Cl}^+_{\mathbf Q}({\mathcal K}_2)$, let ${\mathcal Y}^{(2)}_{\mathcal K_2, x}$ denote the inverse image of $x$ under $\det_{\mathcal{K}_2}$.
We use the same symbol  $\widetilde{A}_{[\boldsymbol{n}]}$ for the pullback of $\widetilde{A}_{[\boldsymbol{n}]}$  
        to ${\mathcal Y}^{(2)}_{\mathcal K_2, x}$ under the natural inclusion $ {\mathcal Y}^{(2)}_{{\mathcal K_2},x} \rightarrow  {\mathcal Y}^{(2)}_{\mathcal K_2}$.
Then  the paring $[\cdot, \cdot]_{\boldsymbol{n}}$ induces a cup product on cohomology groups of $\mathcal{Y}^{(2)}_{\mathcal{K}_2,x}$:  
\begin{align}\label{eq:cohcup}
 \cup \colon H^2( {\mathcal Y}^{(2)}_{\mathcal K_2, x},    \nabla^{\boldsymbol{n}} \iota^\ast  {\rm p}^\ast_3 {\mathcal L}^{(3)} ( \boldsymbol{w} ; A)   )   
          \otimes_A   H^1_{\rm c}( {\mathcal Y}^{(2)}_{\mathcal K_2, x},  {\rm p}^\ast_2 {\mathcal L}^{(2)} ( n_1, 0 ; A)   )    
  \longrightarrow   H^3_{\rm c}( {\mathcal Y}^{(2)}_{\mathcal K_2, x},  \widetilde{A}_{[\boldsymbol{n}]}   ).
\end{align} 
Let $\widetilde{A}$ be the trivial local system on ${\mathcal Y}^{(2)}_{\mathcal K_2, x}$, which is determined by $A$.    
Since a character 
\begin{align*}
    {\mathbf Q}^\times_{\mathbf A} \longrightarrow {\mathbf C}^\times;   
          x \longmapsto \frac{ |x|_{\mathbf A}  }{ x_\infty }
\end{align*}
takes values in ${\mathbf Q}^\times$,   
the following morphism between local systems on ${\mathcal Y}^{(2)}_{ \mathcal{K}_2, x }$ is well defined: 
\begin{align*}
    {\rm Tw}_{ [\boldsymbol{n}] }: \widetilde{A}_{[\boldsymbol{n}]   / {\mathcal Y}^{(2)}_{{\mathcal K}_2, x} }   
            \longrightarrow  { \widetilde{A} }_{/ {\mathcal Y}^{(2)}_{{\mathcal K}_2, x} }   ; 
     (g, a_g)  \longmapsto (g,  \frac{  |\det g|^{ [\boldsymbol{n}] }_{\mathbf A} }{(\det g_\infty)^{ [\boldsymbol{n}] }   }  a_g ).
\end{align*}
Composing (\ref{eq:cohcup}) with the map induced from $ {\rm Tw}_{ [\boldsymbol{n}] }$, the pairing (\ref{eq:cohcup}) is rewritten as 
\begin{align}\label{eq:cohcupTw}       
     \cup \colon H^2( {\mathcal Y}^{(2)}_{\mathcal K_2, x},    \nabla^{\boldsymbol{n}} \iota^\ast  {\rm p}^\ast_3 {\mathcal L}^{(3)} ( \boldsymbol{w} ; A)   )   
          \otimes_A   H^1_{\rm c}( {\mathcal Y}^{(2)}_{\mathcal K_2, x},  {\rm p}^\ast_2 {\mathcal L}^{(2)} ( n_1, 0 ; A)   )    
  \longrightarrow   H^3_{\rm c}( {\mathcal Y}^{(2)}_{\mathcal K_2, x},  \widetilde{A}    ) 
  \stackrel{ \sim }{ \longrightarrow } A. 
\end{align}
Here the rightmost map in (\ref{eq:cohcupTw}) is the same as
the upper horizontal one in the commutative diagram
\begin{align*}
   \begin{CD}
      H^{ 3 }_{\rm c}( {\mathcal Y}^{(2)}_{\mathcal K_2, x}, \widetilde{A} )   @>\sim>>  A   \\
      @VVV   @VVV \\
      H^{ 3 }_{\rm c}( {\mathcal Y}^{(2)}_{\mathcal K_2, x}, \widetilde{\mathbf C}  )   @>\sim>{\int}>  {\mathbf C}   
   \end{CD}
\end{align*}
where the vertical maps are induced by the natural inclusion $A \to {\mathbf C}$
           and the lower horizontal map is obtained by the integration on ${\mathcal Y}^{(2)}_{\mathcal K_2, x}$.

\subsection{Cup product of the Eichler--Shimura classes}\label{sec:cup}

In this subsection, we return to the automorphic setting and consider the cup product of the Eichler--Shimura classes $\delta^{(3)}(\boldsymbol{f})$ and $\delta^{(2)}(h^\pm)$ constructed in Section~\ref{sec:ES}. For $\boldsymbol{\lambda}^{(2)}=(\lambda_2,0) \in \Lambda^{\mathrm{coh}}_2$ and $\boldsymbol{\lambda}^{(3)}=(\lambda_3,\delta) \in \Lambda^{\mathrm{coh}}_3$ satisfying the inequality $\lambda_2<\lambda_3$, we consider a cohomological cuspidal automorphic representations $\pi^{(2)}$ and $\pi^{(3)}$ such that 
\begin{align*}
  & \pi^{(2)}_\infty \cong  D_{ \nu_2, l_2 }, \quad    
   \pi^{(3)}_\infty \cong  {\rm Ind}^{\rm GL_3({\mathbf R})}_{{\rm P}_{2,1}({\mathbf R})  } (   D_{ \nu_3, l_3 }  \boxtimes  \chi_{\nu, \delta_3}  ),  \\
  &         \lambda_2=l_2+1, \quad \nu_2 = -\frac{l_2}{2} +\frac{1}{2}, \quad  \quad  
      \lambda_3=l_3+1, \quad \nu_3  = \nu = -\frac{l_3}{2} + 1. 
\end{align*}
We specialize the construction of Section~\ref{sec:branch*} to  
\begin{align*}
 \boldsymbol{n}_{\boldsymbol{\lambda}}&=(n_{\boldsymbol{\lambda}},0)=(\lambda_2-2,0)=(l_2-1,0), \\
\boldsymbol{w}_{\boldsymbol{\lambda}}&=(w_{\boldsymbol{\lambda}},w_{\boldsymbol{\lambda}},w_{\boldsymbol{\lambda}})=\left(\dfrac{\lambda_3-3}{2}, \dfrac{\lambda_3-3}{2}, \dfrac{\lambda_3-3}{2}\right)=\left(\dfrac{l_3}{2}-1,\dfrac{l_3}{2}-1,\dfrac{l_3}{2}-1\right)
\end{align*}
as in (\ref{eq:w2}). We can easily calculate $\Xi_2(\boldsymbol{w}_{\lambda})$ as 
\begin{align*}
  \Xi_2(\boldsymbol{w}_{\boldsymbol{\lambda}}) &= \left\{  (n_1, n_2) \in {\mathbf Z}  \mid    w_{\boldsymbol{\lambda}} \leq n_1+n_2 \leq 2w_{\boldsymbol{\lambda}}, \,                                                                                 0 \leq n_2 \leq  w_{\boldsymbol{\lambda}}   \right\}    \\                
  &= \left\{  (n_1, n_2) \in {\mathbf Z} \ \middle| \   0 \leq n_1 \leq \dfrac{l_3}{2}-1,\, -n_1+ \frac{l_3}{2}-1  \leq n_2 \leq   \frac{l_3}{2}-1    \right\}     \\
              & \qquad \qquad     \cup  \left\{  (n_1, n_2) \in {\mathbf Z} \ \middle| \   \frac{l_3}{2}-1 < n_1 \leq  l_3-2,\,                                                                                           0  \leq n_2 \leq  -n_1 +l_3-2   \right\}.
\end{align*}

In order to consider the cup product of the Eichler--Shimura classes, we should focus on the branching rule map $\nabla^{\boldsymbol{n}}$ for $\boldsymbol{n}=(n_1,n_2)\in \Xi_2(\boldsymbol{w}_{\lambda})$ with $n_1=n_{\boldsymbol{\lambda}}=l_2-1$. The condition for $(n_{\boldsymbol{\lambda}},n_2)$ to be contained in $\Xi_2(\boldsymbol{w}_{\boldsymbol{\lambda}})$ is described as follows: 
\begin{multline}\label{eq:n2region}
   \begin{cases}
      \dfrac{l_3}{2}-l_2  \leq n_2 \leq   \dfrac{l_3}{2}-1 &   \text{for } 1\leq l_2 \leq \dfrac{l_3}{2},   \\  
      0  \leq n_2 \leq  l_3-l_2-1 &   \text{for } \dfrac{l_3}{2}    <    l_2 \leq  l_3-1
     \end{cases}  
\Longleftrightarrow 
    \begin{cases}
      \dfrac{l_3}{2}+1   \leq n_2 + l_2+1 \leq   \dfrac{l_3}{2}+l_2 &   \text{for }0\leq l_2 \leq \dfrac{l_3}{2},   \\  
      l_2+1 \leq n_2+l_2+1 \leq l_3    &  \text{for } \dfrac{l_3}{2}   <   l_2 \leq  l_3-1.
     \end{cases}
\end{multline}

If we set  $m=n_2+l_2+1$,   the region of $m$ in (\ref{eq:n2region}) completely coincides with 
that in Proposition \ref{prop:critregion}. We define $\mathrm{Crit}(\boldsymbol{\lambda})$ or $\mathrm{Crit}(\pi)$ as the set of integers $m$ satisfying the same inequalities as those which $n_2+l_2+1$ satisfies in (\ref{eq:n2region}), and for each $m\in \mathrm{Crit}(\boldsymbol{\lambda})$, set 
\begin{align} \label{eq:nm}
 \boldsymbol{n}_m = ( n_{m,1}, n_{m,2}) = (\lambda_2-2, m-\lambda_2)=(l_2-1,m-l_2-1).
\end{align}

\medskip
Now let us consider the cup product of the Eichler--Shimura classes. 
For $m\in \mathrm{Crit}(\boldsymbol{\lambda})$, let $\nabla^{\boldsymbol{n}_m} \iota^\ast {\rm p}^{\ast}_3 \delta^{(3)}(\boldsymbol{f})_x$
      (resp.\ ${\rm p}^{\ast}_2 \delta^{(2)}(h^\pm)_x$)
      be the restriction of $\nabla^{\boldsymbol{n}_m} \iota^* {\rm p}^{\ast}_3 \delta^{(3)}(\boldsymbol{f})$
           (resp.\ ${\rm p}^{\ast}_2 \delta^{(2)}(h^\pm)_x$)
       to ${\mathcal Y}^{(2)}_{ {\mathcal K}_2, x }$
       for $x \in {\rm Cl}^+_{\mathbf Q}({\mathcal K}_2)$.
Then the zeta integral for the Rankin--Selberg convolution ${\rm GL}_3 \times {\rm GL}_2$ is described as a summation of the cup products in (\ref{eq:cohcupTw}) of elements 
$ \nabla^{\boldsymbol{n}_m} \iota^\ast {\rm p}^{\ast}_3 \delta^{(3)}({\boldsymbol{f}})_x 
     \in H^2( {\mathcal Y}^{(2)}_{\mathcal K_2, x},     \nabla^{\boldsymbol{n}_m} \iota^\ast {\rm p}^\ast_3 {\mathcal L}^{(3)} ( \boldsymbol{w}_{\boldsymbol{\lambda}} ; A)   )$   
        and $ {\rm p}^{\ast}_2 \delta^{(2)}(h^\pm)_x  \in H^1_{\rm c}( {\mathcal Y}^{(2)}_{\mathcal K_2, x},  {\rm p}^\ast_2 {\mathcal L}^{(2)} ( \boldsymbol{n}_{\boldsymbol{\lambda}} ; A)   )$.
To be more precise, the evaluation of the summation
\begin{align}\label{eq:defcup}
  \sum_{x \in {\rm Cl}^+_{\mathbf Q} ({\mathcal K}_2)  }
   \varphi(x) 
     {\rm Tw}_{ [\boldsymbol{n}_m] } \left(\nabla^{\boldsymbol{n}_m} \iota^\ast  {\rm p}^{\ast}_3 \delta^{(3)}(\boldsymbol{f})_x  
    \cup 
    {\rm p}^{\ast}_2 \delta^{(2)}(h^\pm)_x\right)
\end{align}
coincides with the global zeta integral, where $\varphi \colon {\mathbf Q}^\times \backslash {\mathbf Q}^\times_{\mathbf A}  \to {\mathbf C}^\times$ is a Hecke character of finite order factoring through $\mathrm{Cl}^+_{\mathbf{Q}}(\mathcal{K}_2)$. 
To evaluate the cup product in an explicit way, it is necessary to write down 
   $\nabla^{\boldsymbol{n}_m} \iota^\ast  {\rm p}^{\ast}_3 \delta^{(3)}(\boldsymbol{f})$ 
    and ${\rm p}^{\ast}_2 \delta^{(2)}(h^\pm)$   
   as differential forms (with local coefficients) on ${\mathcal Y}^{(2)}_{\mathcal K_2}$. We shall do it in the next subsection.

For later use, we here summarize as the following Lemma the computation of the image of the polynomial ${\mathcal P}_{\alpha,\beta}(X, Y, Z, A, B, C)\in L^{(3)}(w_{\boldsymbol{\lambda}};\mathbf{C})$ introduced in Section~\ref{sec:ES} under the map $\nabla^{\boldsymbol{n}}$.  
We postpone its proof to Appendix \ref{sec:AppB} because it is based on a little  complicated but straightforward calculations.   
For $e, i, j , k\in {\mathbf Z}$ with $e\geq 0$, define an integer ${}_{e}H_{i,j,k}\in {\mathbf Z}$ by   
\begin{align}\label{eq:defH}
    (X+Y+Z)^e = \sum_{i,j,k}  {}_{e}H_{i,j,k}  X^i Y^j Z^k.  
\end{align}

\begin{lem}\label{lem:NabPab}
{\itshape 
Set $S=-X+\sqrt{-1}Y$ and $T= X+\sqrt{-1}Y$.  
Then, for every $\boldsymbol{n}=(n_1,n_2) \in \Xi_2(\boldsymbol{w}_{\boldsymbol{\lambda}})$, we have 
\begin{multline*} 
        \nabla^{\boldsymbol{n}} {\mathcal P}_{\alpha, \beta} (X, Y)
  = 2^{-n_1}(-1)^{w_{\boldsymbol{\lambda}}+\alpha}\sqrt{-1}^{n_2+w_{\boldsymbol{\lambda}}}
           S^{\frac{n_1+\alpha -\beta}{2}} T^{ \frac{n_1- \alpha + \beta}{2}}  \\     \times   \sum_{ b\in \mathbf{Z}} (-1)^b
                         {}_{w_{\boldsymbol{\lambda}}} H_{b+\frac{n_1+\alpha+\beta}{2}+n_2-w_{\boldsymbol{\lambda}},-b+\frac{n_1-\alpha-\beta}{2},2w_{\boldsymbol{\lambda}}-n_1-n_2}
                         \cdot  {}_{w_{\boldsymbol{\lambda}}} H_{-(b+\beta)+w_{\boldsymbol{\lambda}}-n_2, b+\beta, n_2}.
\end{multline*}
 In particular, if $\alpha-\beta = \pm n_1$ holds, $\nabla^{\boldsymbol{n}} {\mathcal P}_{\alpha, \beta} (X, Y)$ is described as
\begin{align*}
        \nabla^{\boldsymbol{n}} {\mathcal P}_{\alpha, \beta} (X, Y)
  =  (-2^{-1})^{n_1}  \sqrt{-1}^{  n_2 + w_{\boldsymbol{\lambda}} } 
         \binom{w_{\boldsymbol{\lambda}}}{ n_1+n_2-w_{\boldsymbol{\lambda}} }  
        \binom{w_{\boldsymbol{\lambda}}}{ w_{\boldsymbol{\lambda}}-n_2 }   
      \times \begin{cases}  (-1)^{n_2} T^{n_1}  &   \text{when } \alpha - \beta =- n_1,  \\                                                  
                                     (-1)^{w_{\boldsymbol{\lambda}}}      S^{n_1}  &    \text{when }\alpha - \beta = n_1.  
                                          \end{cases}          
\end{align*}
}
\end{lem}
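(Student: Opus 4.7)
The plan is to proceed directly from the defining formula of $\mathcal{P}_{\alpha,\beta}$. Recall that $\mathcal{P}_{\alpha,\beta}(X,Y,Z,A,B,C)$ is the coefficient of $v^{\boldsymbol{\lambda}}_\alpha$ in the expansion of the generating polynomial $(Xz_1+Yz_2+Zz_3)^{w_{\boldsymbol{\lambda}}}(Az_1+Bz_2+Cz_3)^{w_{\boldsymbol{\lambda}}}v^{(3,\delta)}_\beta$ inside the quotient $V^{(3)}_{\boldsymbol{\lambda}}=\mathbf{C}[z_1,z_2,z_3]_{\lambda_3}/\mathcal{V}_{\boldsymbol{\lambda}}$. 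By \eqref{eq:nabkl} the operator $\nabla^{\boldsymbol{n}}$ equals $\nabla_{k,l}$ with $k=2w_{\boldsymbol{\lambda}}-n_1-n_2$ and $l=n_2$; since it acts only on $X,Y,Z,A,B,C$, it commutes with the coefficient extraction in $V^{(3)}_{\boldsymbol{\lambda}}$, and I would therefore apply it to the generating polynomial first and extract the $v^{\boldsymbol{\lambda}}_\alpha$-coefficient at the very end.

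Expanding the two factors by the multinomial theorem naturally introduces the symbols ${}_{w_{\boldsymbol{\lambda}}}H_{\bullet,\bullet,\bullet}$ of \eqref{eq:defH}, and applying $\frac{1}{k!\,l!}\partial^{k+l}/\partial Z^k\partial C^l$ followed by the substitutions $A=-Y$, $B=X$, $C=Z=0$ selects the terms of $(Z,C)$-degree $(k,l)$ and converts $X^iY^jA^{i'}B^{j'}$ into $(-1)^{i'}X^{i+j'}Y^{j+i'}$. To then express the outcome inside $V^{(3)}_{\boldsymbol{\lambda}}$, I would change coordinates to $\tilde{z}_\pm=\pm z_1+\sqrt{-1}z_2$, in which the defining relation of $\mathcal{V}_{\boldsymbol{\lambda}}$ becomes $\tilde{z}_+\tilde{z}_-\equiv z_3^2$ and one has $Xz_1+Yz_2=-\frac{1}{2}(S\tilde{z}_++T\tilde{z}_-)$, $-Yz_1+Xz_2=\frac{\sqrt{-1}}{2}(S\tilde{z}_+-T\tilde{z}_-)$. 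Each degree-$\lambda_3$ monomial $\tilde{z}_+^a\tilde{z}_-^b z_3^c$ then reduces modulo $\mathcal{V}_{\boldsymbol{\lambda}}$ to $v^{\boldsymbol{\lambda}}_{a-b}$, so extracting the coefficient of $v^{\boldsymbol{\lambda}}_\alpha$ forces the total bidegree in $(S,T)$ to be $\bigl(\frac{n_1+\alpha-\beta}{2},\frac{n_1-\alpha+\beta}{2}\bigr)$. This produces the factor $S^{(n_1+\alpha-\beta)/2}T^{(n_1-\alpha+\beta)/2}$, the prefactor $2^{-n_1}(-1)^{w_{\boldsymbol{\lambda}}+\alpha}\sqrt{-1}^{n_2+w_{\boldsymbol{\lambda}}}$ gathers together the accumulated powers of $\frac{1}{2}$, $\sqrt{-1}$ and $-1$, and the residual scalar, after re-expression in terms of the original multinomial data, takes the form of the single-index sum over $b$ of two ${}_{w_{\boldsymbol{\lambda}}}H$-symbols in the statement.

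For the special case $\alpha-\beta=\pm n_1$, the simultaneous non-negativity of all six indices of the two ${}_{w_{\boldsymbol{\lambda}}}H$-symbols forces the summation index $b$ to a single value (for instance $b=w_{\boldsymbol{\lambda}}-n_1-n_2-\alpha$ when $\alpha-\beta=-n_1$), and the four resulting inequalities coincide exactly with the defining conditions of $\Xi_2(\boldsymbol{w}_{\boldsymbol{\lambda}})$, so the single surviving term always contributes. The result is a product of two ordinary binomial coefficients $\binom{w_{\boldsymbol{\lambda}}}{n_1+n_2-w_{\boldsymbol{\lambda}}}\binom{w_{\boldsymbol{\lambda}}}{w_{\boldsymbol{\lambda}}-n_2}$, which matches the closed form asserted in the lemma. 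There is no conceptual obstacle in this argument; the main difficulty lies in the bookkeeping, namely the careful tracking of signs, of the powers of $\sqrt{-1}$ and $2^{-1}$, and of the relabelling of multinomial indices across the binomial expansions and the reduction modulo $\mathcal{V}_{\boldsymbol{\lambda}}$, which is precisely why the authors postpone the detailed verification to Appendix~\ref{sec:AppB}.
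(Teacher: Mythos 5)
Your approach is correct and, at bottom, the same computation as the paper's, but with a noteworthy difference in the order of operations. The paper (Appendix~\ref{sec:AppB}) first derives a closed form for $\mathcal{P}_{\alpha,\beta}(X,Y,Z,A,B,C)$ itself (Lemma~\ref{lem:Pab}) by expanding the generating polynomial, grouping $(z_1+\sqrt{-1}z_2)^i(-z_1+\sqrt{-1}z_2)^j$ into $z_3^{2j}$ mod $\mathcal{V}_{\boldsymbol{\lambda}}$ and $(-X+\sqrt{-1}Y)^i(X+\sqrt{-1}Y)^j$ into $(-X+\sqrt{-1}Y)^{i-j}(X^2+Y^2)^j$, and reading off the coefficient of $v^{\boldsymbol{\lambda}}_\alpha$; only then is $\nabla_{k,l}$ applied (Lemmas~\ref{lem:AppB2}, \ref{lem:AppB3}). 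You instead apply $\nabla^{\boldsymbol{n}}$ to the generating polynomial directly and defer the $v^{\boldsymbol{\lambda}}_\alpha$-coefficient extraction to the end, which is licensed, as you say, by the fact that $\nabla^{\boldsymbol{n}}$ acts in the $(X,Y,Z,A,B,C)$-variables only. This ordering is slightly cleaner: after setting $Z=C=0$ the two remaining factors become $(S\tilde{z}_++T\tilde{z}_-)^{w_{\boldsymbol{\lambda}}-k}$ and $\left(S\tilde{z}_+-T\tilde{z}_-\right)^{w_{\boldsymbol{\lambda}}-l}$ up to scalar, so $S$ travels with $\tilde{z}_+$ and $T$ with $\tilde{z}_-$ throughout, and the $(X^2+Y^2)^j$ intermediaries never appear; the bidegree constraint $(s,t)=\bigl(\tfrac{n_1+\alpha-\beta}{2},\tfrac{n_1-\alpha+\beta}{2}\bigr)$ is then forced immediately from $s+t=n_1$ and $s-t+\beta=\alpha$, which you spell out. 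Your identification of the unique surviving summand $b=w_{\boldsymbol{\lambda}}-n_1-n_2-\alpha$ when $\alpha-\beta=-n_1$ is consistent with the paper's $b=w_{\boldsymbol{\lambda}}-n_1-n_2$ after their shift $b\mapsto b-\alpha$, and your observation that the non-negativity constraints reduce exactly to the defining inequalities of $\Xi_2(\boldsymbol{w}_{\boldsymbol{\lambda}})$ is also the content of the paper's final step. You do not carry out the explicit arithmetic that produces the prefactor $2^{-n_1}(-1)^{w_{\boldsymbol{\lambda}}+\alpha}\sqrt{-1}^{n_2+w_{\boldsymbol{\lambda}}}$, but that is precisely the ``longsome but straightforward'' bookkeeping the paper also relegates to the appendix, and the scaffolding you have laid out leads to it without conceptual obstruction.
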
 
\begin{proof}
See Lemmas \ref{lem:AppB2} and \ref{lem:AppB3} in Appendix \ref{sec:AppB}. 
\end{proof}

\subsection{Birch lemma}\label{sec:birch}

To evaluate the cup product (\ref{eq:defcup}), 
  it is necessary to write it down as an integral on ${\mathcal Y}^{(2)}_{\mathcal K_2}$. We can do it by using  the explicit form of the Eichler--Shimura map introduced in Section \ref{sec:ES}.    
Some of explicit formulas proposed here require longsome but straightforward computations for their proofs, and hence we omit such calculations in this subsection and summarize them in Appendix \ref{sec:AppA}. 

First let us recall the  explicit formula for ${\rm p}^\ast_2\delta^{(2)}(h^\pm)$, which is well known in terms of elliptic modular forms.

\begin{lem}
{\itshape 
For each $g\in {\mathcal Y}^{(2)}_{\mathcal K_2}$,  
    we have 
\begin{align*}
   {\rm p}^\ast_2\delta^{(2)}(h^\pm)_g
   =   h^\pm(g) 
        \varrho^{(2)}_{\boldsymbol{n}_{\boldsymbol{\lambda}}}(g_\infty)( \mp X + \sqrt{-1}Y)^{n_{\boldsymbol{\lambda}}} 
        ( \pm \frac{1}{2} {\rm d}E - \frac{\sqrt{-1} }{2}  {\rm d}H   ) \circ L^{-1}_{g_\infty},         
\end{align*}  
where $L_{g_\infty}$ denotes the left translation $\left(T_{g_{\mathrm{fin}}}{\mathcal Y}^{(2)}_{\mathcal K_2}\right)_{\mathbf{C}}\cong \left(\mathfrak{gl}_2(\mathbf{R})/\mathfrak{so}_2(\mathbf{R})\right)_{\mathbf{C}} \to \left(T_g{\mathcal Y}^{(2)}_{\mathcal K_2}\right)_{\mathbf{C}}$ by $g_\infty$.
}
\end{lem}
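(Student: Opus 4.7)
The plan is to unpack the definition of $\delta^{(2)}(h^\pm)$ from Section \ref{sec:ES}, translate its image under the Borel--Wallach isomorphism (\ref{eq:HgKHom}) into a differential form on $Y^{(2)}_{\mathcal{K}_2}$, and then pull the resulting form back along $\mathrm{p}_2\colon \mathcal{Y}^{(2)}_{\mathcal{K}_2} \to Y^{(2)}_{\mathcal{K}_2}$.

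First I would recall that $\delta^{(2)}(h^\pm)$ is, by construction, the element of
$\mathrm{Hom}_{\mathrm{SO}_2(\mathbf{R})}\bigl(\mathcal{P}_{2,\mathbf{C}},\, H_{\pi^{(2)},K_2} \otimes L^{(2)}(\boldsymbol{n}_{\boldsymbol{\lambda}};\mathbf{C})\bigr) \otimes_{\mathbf{C}} (\pi^{(2)}_{\mathrm{fin}})^{\mathcal{K}_2}$
sending a tangent direction $v \in \mathcal{P}_{2,\mathbf{C}}$ to $h^\pm \otimes (\mp X + \sqrt{-1}\,Y)^{n_{\boldsymbol{\lambda}}} \cdot (\pm \mathrm{d}E - \sqrt{-1}\,\mathrm{d}H)(v)$. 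The standard passage from $(\mathfrak{gl}_2(\mathbf{R}),K_2)$-cochains to differential forms with local coefficients evaluates this at a point $g \in Y^{(2)}_{\mathcal{K}_2}$ by substituting the cusp form value $h^\pm(g)$, transporting the $L^{(2)}(\boldsymbol{n}_{\boldsymbol{\lambda}};\mathbf{C})$-component via $\varrho^{(2)}_{\boldsymbol{n}_{\boldsymbol{\lambda}}}(g_\infty)$ in accordance with the cover defining the local system, and composing the linear form $\pm \mathrm{d}E - \sqrt{-1}\,\mathrm{d}H$ with the inverse $L_{g_\infty}^{-1}$ of left translation so as to identify the tangent space at $g$ with the one at the identity.

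To obtain $\mathrm{p}_2^\ast\delta^{(2)}(h^\pm)_g$, one simply precomposes further with the canonical projection $\mathfrak{gl}_2(\mathbf{R})/\mathfrak{so}_2(\mathbf{R}) \twoheadrightarrow \mathcal{P}_2 = \mathfrak{gl}_2(\mathbf{R})/\mathrm{Lie}(K_2)$ induced on tangent spaces by $\mathrm{p}_2$. The factor of $\tfrac{1}{2}$ in the statement is then bookkeeping: it reflects the reconciliation of the normalizations $H = \tfrac{1}{2}\bigl(\begin{smallmatrix}1 & 0 \\ 0 & -1 \end{smallmatrix}\bigr)$ and $E = \tfrac{1}{2}\bigl(\begin{smallmatrix}0 & 1 \\ 1 & 0 \end{smallmatrix}\bigr)$ used in $\mathcal{P}_{2,\mathbf{C}}$ with the natural matrix basis of the enlarged quotient $\mathfrak{gl}_2(\mathbf{R})/\mathfrak{so}_2(\mathbf{R})$ against which the pulled-back forms $\mathrm{d}E, \mathrm{d}H$ are read on $\mathcal{Y}^{(2)}_{\mathcal{K}_2}$.

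The only non-formal input is the $\mathrm{SO}_2(\mathbf{R})$-equivariance verified in the construction of $\delta^{(2)}$, which guarantees that the expression descends from $\mathrm{GL}_2(\mathbf{Q}_{\mathbf{A}})$ to $\mathcal{Y}^{(2)}_{\mathcal{K}_2}$; consequently the main obstacle is merely the careful tracking of basis conventions on the two tangent quotients and the commutativity of left translation with the projection $\mathrm{p}_2$. No new cohomological argument is required, and the identity is essentially the classical formula for elliptic modular forms (see \cite{hid94,vat99}) recast in the present adelic conventions.
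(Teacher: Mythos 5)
The paper presents this lemma without a proof, appealing only to the folklore formula from the elliptic modular form setting (cf.\ \cite{hid94,vat99}), so there is no argument in the text to compare against. Your overall strategy -- unwind the definition of $\delta^{(2)}(h^\pm)$ as a $(\mathfrak{gl}_2(\mathbf{R}),K_2)$-cochain, pass through the Borel--Wallach isomorphism (\ref{eq:HgKHom}) to a $\mathcal{L}^{(2)}(\boldsymbol{n}_{\boldsymbol{\lambda}};\mathbf{C})$-valued $1$-form on $Y^{(2)}_{\mathcal{K}_2}$ with the twist $\varrho^{(2)}_{\boldsymbol{n}_{\boldsymbol{\lambda}}}(g_\infty)$ encoding the local system and the Maurer--Cartan form $L^{-1}_{g_\infty}$ trivializing the tangent bundle, then pull back along $\mathrm{p}_2$ -- is exactly the standard route, and I have no quarrel with that outline.

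Where the proposal is too thin is the factor of $\tfrac{1}{2}$. You assert that it reflects a reconciliation of the normalizations $H=\tfrac{1}{2}\bigl(\begin{smallmatrix}1&0\\0&-1\end{smallmatrix}\bigr)$, $E=\tfrac{1}{2}\bigl(\begin{smallmatrix}0&1\\1&0\end{smallmatrix}\bigr)$ against a ``natural matrix basis'' of $\mathfrak{gl}_2(\mathbf{R})/\mathfrak{so}_2(\mathbf{R})$. But that cannot be the mechanism as stated: the functionals $\mathrm{d}E,\mathrm{d}H$ are \emph{defined} to be dual to $E,H$, so the scalar hidden in the definition of $E$ and $H$ is automatically absorbed, and the pullback along $\mathrm{p}_2$ is just precomposition with the canonical surjection $\mathfrak{gl}_2(\mathbf{R})/\mathfrak{so}_2(\mathbf{R})\twoheadrightarrow \mathcal{P}_2$, which introduces no constant. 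Naively unwinding the definition as you propose yields $(\pm\mathrm{d}E-\sqrt{-1}\,\mathrm{d}H)\circ L^{-1}_{g_\infty}$ with \emph{no} overall $\tfrac{1}{2}$; so there is a normalization convention, hidden in the comparison between $(\mathfrak{gl}_2,K_2)$-cohomology and de Rham cohomology (or in how the authors read forms against the enlarged tangent quotient on $\mathcal{Y}^{(2)}_{\mathcal{K}_2}$), that you have not actually located. You should pin this down concretely -- e.g.\ by computing $\boldsymbol{\xi}_\pm=(\pm\tfrac{1}{2}\mathrm{d}E-\tfrac{\sqrt{-1}}{2}\mathrm{d}H)\circ L^{-1}_g$ in the Iwasawa coordinates $(x_2,y_1,y_2)$ and matching it against the paper's explicit formula $\boldsymbol{\xi}_\pm=e^{2\sqrt{-1}\theta}(\pm\mathrm{d}x_2+\sqrt{-1}\,\mathrm{d}y_2)/y_2$ in Section~\ref{sec:birch} -- rather than attributing it to basis bookkeeping in the abstract.
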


Due to the Iwasawa decomposition of $\mathrm{GL}_2(\mathbf{R})$, we define the coordinates $(x_2,y_1,y_2)$ of the archimedean component of ${\mathcal Y}^{(2)}_{\mathcal K_2}$ as 
\begin{align*}
{\rm GL}_2({\mathbf Q})
 y_1 \begin{pmatrix}  y_2^{\frac{1}{2}} &  y_2^{-\frac{1}{2}} x_2  \\   0  & y_2^{ -\frac{1}{2} }   \end{pmatrix}  g_{\rm fin} {\rm SO}_2({\mathbf R}){\mathcal K}_2,  \quad g_{{\mathrm{fin}}} \in {\rm GL}_2({\mathbf Q}_{\mathbf{A}, \mathrm{fin}}).
\end{align*}
With respect to this coordinate, the differential form ${\boldsymbol \xi}_\pm
   :=  (  \pm \frac{1}{2}{\rm d}E - \frac{\sqrt{-1}}{2}{\rm d}H)\circ L^{-1}_g$ is described as 
\begin{align*}
  {\boldsymbol \xi}_\pm 
   =  e^{2\sqrt{-1}\theta  }    \frac{   \pm {\rm d} x_2 + \sqrt{-1} {\rm d}y_2 }{y_2}
\end{align*}
for each $g=y_1\begin{pmatrix} y_2^{\frac{1}{2}} & y_2^{-\frac{1}{2}} x_2  \\ 0 & y_2^{-\frac{1}{2}} \end{pmatrix} 
            \begin{pmatrix} \cos\theta & \sin\theta \\ -\sin\theta & \cos\theta  \end{pmatrix}$.

We next describe an explicit formula for $\nabla^{\boldsymbol{n}_m}  \iota^*{\rm p}^\ast_3\delta^{(3)} (\boldsymbol{f})$. Using the explicit Iwasawa decomposition of $\mathrm{GL}_3(\mathbf{R})$ proposed in Lemma~\ref{lem:explicit_Iwasawa},  we define  the coordinates $(t,x_1,x_2,x_3,y_1,y_2)$  of the archimedean component of ${\mathcal Y}^{(3)}_{\mathcal K_3}$ as 
\begin{align*}
{\rm GL}_3({\mathbf Q})
 t \begin{pmatrix}  y_1y_2 & y_1 x_2 & x_3  \\ 0  &  y_1 & x_1 \\ 0 & 0 & 1   \end{pmatrix} 
   g_{\rm fin}  {\rm SO}_3({\mathbf R}){\mathcal K}_3, \quad g_{{\mathrm{fin}}} \in {\rm GL}_3({\mathbf Q}_{\mathbf{A}, \mathrm{fin}}).
\end{align*}
Proposition~\ref{prop:bfomega} describes how the basis $\{  {\boldsymbol \omega}_3,  \ldots,  {\boldsymbol \omega}_{-3} \}$ of $\bigwedge^2 {\mathcal P}^\ast_{3, {\mathbf C}}$ introduced in (\ref{eq:def_bfomega})
 is presented with respect to this coordinates; namely
\begin{align} \label{eq:QQ}
\begin{pmatrix}
 {\boldsymbol \omega}_3 & \boldsymbol{\omega}_2 &  \dotsc &  {\boldsymbol \omega}_{-3}
\end{pmatrix}   \circ L^{-1}_{g_\infty}   =  \begin{pmatrix}
				     {\rm d}y_1 \wedge {\rm d}y_2 & {\rm d}y_1 \wedge {\rm d}x_1 & \dotsc & {\rm d}x_2 \wedge {\rm d}x_3
				    \end{pmatrix}Q^2
\end{align}
 holds where $Q^2$ is the matrix introduced in Proposition \ref{prop:bfomega} (in the right-hand side, the $2$-forms are ordered lexicographically with respect to the order ${\rm d}y_1 \prec {\rm d}y_2 \prec {\rm d}x_1 \prec {\rm d}x_2 \prec {\rm d}x_3$).
Since $\iota$ induces $(x_2,y_1,y_2) \mapsto (0,0,x_2,0,y_1y_2^{-\frac{1}{2}},y_2)$, the pullback of the differential forms on $\mathcal{Y}^{(3)}_{\mathcal{K}_3}$ along $\iota$ is described as 
\begin{align*}
 \iota^* \begin{pmatrix}
  {\rm d}t & {\rm d}x_1 & {\rm d}x_2 & {\rm d}x_3 & {\rm d}y_1 & {\rm d}y_2
 \end{pmatrix} =  \begin{pmatrix}
			0 & 0 & {\rm d}x_2 & 0 & y_2^{-\frac{1}{2}}{\rm d}y_1-\dfrac{1}{2}y_1y_2^{-\frac{3}{2}}{\rm d}y_2 & {\rm d}y_2 
		       \end{pmatrix}.
\end{align*} 
Taking this into accounts, we immediately obtain 
\begin{align*}
 \iota^\ast  {\rm p}^\ast_3{\boldsymbol \omega}_{\pm3} 
 &= \iota^\ast  {\rm p}^\ast_3{\boldsymbol \omega}_{\pm1}
 =  0, 
 \quad  \iota^\ast  {\rm p}^\ast_3{\boldsymbol \omega}_0 
           =  \frac{\sqrt{-1}}{2y^2_2} {\rm d}y_2 \wedge {\rm d}x_2,  \\   
 \iota^\ast  {\rm p}^\ast_3{\boldsymbol \omega}_{\pm 2} 
           &= \pm \frac{ 1 }{2y_1y_2}  {\rm d}y_1 \wedge {\rm d}y_2 
                                   - \frac{\sqrt{-1}}{2y_1y_2} {\rm d}y_1 \wedge {\rm d}x_2
                                   - \frac{\sqrt{-1}}{4y^2_2} {\rm d}y_2 \wedge {\rm d}x_2   
\end{align*}
by computing the product of matrices in (\ref{eq:QQ}) explicitly.
Hence 
the differential forms $\iota^\ast  {\rm p}^\ast_3{\boldsymbol \omega}_i \wedge {\boldsymbol \xi}_\mp$ for $i=3, \ldots, -3$ are computed as follows:
\begin{align}\label{eq:DiffFormPM}
\begin{aligned}
  \iota^\ast  {\rm p}^\ast_3{\boldsymbol \omega}_i \wedge {\boldsymbol \xi}_\mp 
 & =    0,  \quad \text{for }i \neq \pm 2, \\
  \iota^\ast  {\rm p}^\ast_3{\boldsymbol \omega}_{\pm 2} \wedge {\boldsymbol \xi}_\mp 
 & =  \left(  
          \pm \frac{ 1 }{ 2 y_1y_2}  {\rm d}y_1 \wedge {\rm d}y_2 
                                   - \frac{\sqrt{-1}}{2y_1y_2} {\rm d}y_1 \wedge {\rm d}x_2
                                   - \frac{\sqrt{-1}}{4y^2_2} {\rm d}y_2 \wedge {\rm d}x_2
           \right) 
           \wedge 
           \frac{ \mp {\rm d} x_2 + \sqrt{-1}{\rm d}y_2  }{y_2}  \\
  & = \frac{ 1 +1 }{2} \times \frac{{\rm d}y_1 \wedge {\rm d}x_2 \wedge {\rm d}y_2}{ y_1y^2_2}  
         =   \frac{{\rm d}y_1 \wedge {\rm d}x_2 \wedge {\rm d}y_2}{ y_1y^2_2} \quad (\text{double sign in the same order}). 
\end{aligned}
\end{align}
Now define  ${\rm d}g={\rm d}g_\infty {\rm d}g_{\mathrm{fin}}$ to be a Haar measure on ${\rm GL}_2({\mathbf Q}_{\mathbf A})$ which satisfies 
\begin{align}\label{eq:Haar}
   {\rm d}g_\infty =   \frac{{\rm d}y_1 \wedge {\rm d}x_2 \wedge {\rm d}y_2}{ y_1y^2_2} {\rm d} u  \qquad 
          \text{for }g_\infty    =   y_1  \begin{pmatrix}  y^{\frac{1}{2}}_2   & y^{-\frac{1}{2}}_2 x_2  \\ 0 & y^{ - \frac{1}{2}}_2 \end{pmatrix}u,  \; 
          u\in {\rm O}_2({\mathbf R}),   \; 
           {\rm vol}({\rm d}u, {\rm O}_2({\mathbf R})) =1
\end{align}
and ${\rm vol}({\rm d} g_{\rm fin}, {\rm GL}_2(\widehat{\mathbf Z}))=1$.
%
We summarize the arguments so far in the following proposition:

\begin{prop}\label{prop:intsign}
{\itshape 
For $\boldsymbol{n}_m=(n_{m,1},n_{m,2})=(\lambda_2-2,m-\lambda_2)=(l_2-1,m-l_2-1)$, set 
\begin{align*}
   \epsilon &= \bigotimes_{v\in \Sigma_{\mathbf{Q}}\setminus \{\infty\}} 1_2 \otimes \begin{pmatrix} -1 & 0 \\ 0 & 1 \end{pmatrix} \in {\rm GL}_2({\mathbf Q_A}), \\
    \quad C(\boldsymbol{w}_{\boldsymbol{\lambda}}, \boldsymbol{n}_m)  &=   
         \binom{w_{\boldsymbol{\lambda}}}{ n_{m,1}+n_{m,2}-w_{\boldsymbol{\lambda}} }  
        \binom{w_{\boldsymbol{\lambda}}}{ w_{\boldsymbol{\lambda}}-n_{m,2} } =\binom{\frac{l_3}{2}-1}{m-\frac{l_3}{2}-1}\binom{\frac{l_3}{2}-1}{\frac{l_3}{2}+l_2-m}. 
\end{align*}
Then we have 
\begin{align*}
   {\rm Tw}_{ [\boldsymbol{n}_m] }& \left(\nabla^{\boldsymbol{n}_m} \iota^* {\rm p}^{\ast}_3 \delta^{(3)}(\boldsymbol{f})_x  \cup {\rm p}^{\ast}_2\delta^{(2)}(h^+)_x  \right)   \\
&=  
\sqrt{-1}^{\frac{l_3}{2}+2l_2-m-1}C(\boldsymbol{w}_{\boldsymbol{\lambda}}, \boldsymbol{n}_m)
   \cdot 2[ {\rm GL}_2(\widehat{\mathbf Z})  : {\mathcal K}_2 ]  
      \int_{ {\mathcal Y}^{(2)}_{\mathcal K_2},x  } 
            f^{\boldsymbol{\lambda}}_{ -\lambda_2} \left( \iota(g) \right) h^+ (g)
             \lvert \det g\rvert_{\mathbf{A}}^{ m-2} 
              {\rm d}g.
\end{align*}
We also find that 
\begin{align*}
  {\rm Tw}_{ [\boldsymbol{n}_m] } \left(\nabla^{\boldsymbol{n}_m} \iota^* {\rm p}^{\ast}_3 \delta^{(3)}(\boldsymbol{f})_{x\det \epsilon }  \cup {\rm p}^{\ast}_2\delta^{(2)}(h^-)_{ x \det \epsilon  }\right)
 = (-1)^{m+\delta+1+w_{\boldsymbol{\lambda}}}
     {\rm Tw}_{ [\boldsymbol{n}_m] } \left(\nabla^{\boldsymbol{n}_m} \iota^* {\rm p}^{\ast}_3 \delta^{(3)}(\boldsymbol{f})_x  \cup {\rm p}^{\ast}_2\delta^{(2)}(h^+)_x \right). 
\end{align*}
}
\end{prop}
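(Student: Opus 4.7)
The plan is to unfold the cup product in (\ref{eq:cohcupTw}) into an explicit scalar integral on $\mathcal{Y}^{(2)}_{\mathcal{K}_2,x}$ by substituting the explicit formulas for the Eichler--Shimura classes from Section~\ref{sec:ES} together with the branching rule of Lemma~\ref{lem:NabPab}. After plugging in $\delta^{(3)}(\boldsymbol{f})$ and $\mathrm{p}_2^{\ast}\delta^{(2)}(h^+)$ and wedging, formula (\ref{eq:DiffFormPM}) forces $\iota^{\ast}\mathrm{p}_3^{\ast}\boldsymbol{\omega}_{\beta}\wedge \boldsymbol{\xi}_{+}=0$ except when $\beta=-2$, so only the $\boldsymbol{\omega}_{-2}$-components of $\delta^{(3)}(\boldsymbol{f})$ survive, and the remaining differential part reduces to the volume form $\dfrac{\mathrm{d}y_{1}\wedge \mathrm{d}x_{2}\wedge \mathrm{d}y_{2}}{y_{1}y_{2}^{2}}$.

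Next we evaluate the polynomial pairing $[\nabla^{\boldsymbol{n}_m}\mathcal{P}_{\alpha,-2},S^{n_{m,1}}]_{\boldsymbol{n}_m}$, where $S^{n_{m,1}}=(-X+\sqrt{-1}Y)^{n_{\boldsymbol{\lambda}}}$ is the polynomial part of $\delta^{(2)}(h^+)$ and $T=X+\sqrt{-1}Y$ is its conjugate. By Lemma~\ref{lem:NabPab}, $\nabla^{\boldsymbol{n}_m}\mathcal{P}_{\alpha,-2}(X,Y)$ is a constant multiple of $S^{a}T^{b}$ with $a+b=n_{m,1}$ and $a-b=\alpha+2$, and an $\mathrm{SO}_{2}(\mathbf{R})$-weight argument (noting that $\varrho^{(2)}_{(n_{m,1},0)}(r_{\theta})$ scales $S$ and $T$ by $e^{\mp \sqrt{-1}\theta}$) together with the $\mathrm{SL}_{2}$-invariance of $[\cdot,\cdot]_{\boldsymbol{n}_m}$ shows that $[S^{a}T^{b},S^{n_{m,1}}]_{\boldsymbol{n}_m}=0$ unless $a=0$, in which case $[T^{n_{m,1}},S^{n_{m,1}}]_{\boldsymbol{n}_m}=(-2\sqrt{-1})^{n_{m,1}}$. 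Thus only the single term $\alpha=-n_{m,1}-2=-\lambda_{2}$ contributes, and inserting the explicit constant from Lemma~\ref{lem:NabPab} yields
\begin{align*}
 [\nabla^{\boldsymbol{n}_m}\mathcal{P}_{-\lambda_{2},-2},S^{n_{m,1}}]_{\boldsymbol{n}_m}=(-1)^{n_{m,2}}\sqrt{-1}^{\,n_{m,1}+n_{m,2}+w_{\boldsymbol{\lambda}}}C(\boldsymbol{w}_{\boldsymbol{\lambda}},\boldsymbol{n}_m).
\end{align*}
Multiplying by $f^{\boldsymbol{\lambda}}_{-\lambda_{2}}(\iota(g))h^{+}(g)$ and the volume form, applying $\mathrm{Tw}_{[\boldsymbol{n}_m]}$ to introduce the twist $|\det g|_{\mathbf{A}}^{m-2}/(\det g_{\infty})^{m-2}$, and converting the integral of the differential form into the Haar-measure integration of (\ref{eq:Haar}) produces the factor $2[\mathrm{GL}_{2}(\widehat{\mathbf{Z}}):\mathcal{K}_{2}]$ (accounting for $\mathrm{vol}(\mathrm{SO}_{2}(\mathbf{R}),\mathrm{d}u)=1/2$ and the index of $\mathcal{K}_{2}$ in $\mathrm{GL}_{2}(\widehat{\mathbf{Z}})$). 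Collecting the powers of $\sqrt{-1}$ and signs, substituting $n_{m,1}=l_{2}-1$, $n_{m,2}=m-l_{2}-1$ and $w_{\boldsymbol{\lambda}}=l_{3}/2-1$, and reducing modulo~$4$, one arrives at the first formula.

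For the second equality, an entirely parallel unfolding applied to $h^{-}$ selects instead $\boldsymbol{\omega}_{+2}$ (again by (\ref{eq:DiffFormPM})) and, by the same weight argument, the term $\alpha=\lambda_{2}$ paired with $T^{n_{m,1}}$ via $[S^{n_{m,1}},T^{n_{m,1}}]_{\boldsymbol{n}_m}=(2\sqrt{-1})^{n_{m,1}}$. The resulting scalar coefficient differs from the $h^{+}$ one by the product of (i)~the Lemma~\ref{lem:NabPab} sign ratio comparing the two branches $\alpha-\beta=\pm n_{m,1}$ and (ii)~the antisymmetry factor $(-1)^{n_{m,1}}$ relating $[T^{n_{m,1}},S^{n_{m,1}}]_{\boldsymbol{n}_m}$ to $[S^{n_{m,1}},T^{n_{m,1}}]_{\boldsymbol{n}_m}$. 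To transport the domain of integration we use the right translation $g\mapsto g\epsilon$, which is a measure-preserving diffeomorphism $\mathcal{Y}^{(2)}_{\mathcal{K}_2,x}\xrightarrow{\sim}\mathcal{Y}^{(2)}_{\mathcal{K}_2,x\det\epsilon}$ because $\epsilon$ normalizes $\mathrm{SO}_{2}(\mathbf{R})\mathcal{K}_{2}$ and satisfies $\epsilon^{2}=1$, $|\det\epsilon|_{\mathbf{A}}=1$. Under this substitution $h^{-}(g'\epsilon)=h^{+}(g')$ by the defining relations of $h^{\pm}$, while $\tau^{(3)}_{\boldsymbol{\lambda}}(\iota(\epsilon))v^{\boldsymbol{\lambda}}_{\pm\mu}=(-1)^{\delta}v^{\boldsymbol{\lambda}}_{\mp\mu}$ forces $M_{\boldsymbol{\lambda}}(\iota(\epsilon))_{\gamma,\alpha}=(-1)^{\delta}\delta_{\gamma,-\alpha}$ and hence
\begin{align*}
 f^{\boldsymbol{\lambda}}_{\lambda_{2}}(\iota(g')\iota(\epsilon))=(-1)^{\delta}f^{\boldsymbol{\lambda}}_{-\lambda_{2}}(\iota(g'))
\end{align*}
by the transformation law of Section~\ref{sec:cusp}. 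The main obstacle is the clean bookkeeping of the signs from these three sources, which upon substitution of $n_{m,1},n_{m,2},w_{\boldsymbol{\lambda}}$ and reduction modulo~$2$ produces the claimed factor $(-1)^{m+\delta+1+w_{\boldsymbol{\lambda}}}$.
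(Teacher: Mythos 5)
Your proof is correct and follows essentially the same route as the paper: unfold the cup product into a scalar integral via (\ref{eq:DiffFormPM}) and (\ref{eq:Haar}), compute the polynomial pairing using Lemma~\ref{lem:NabPab} and the value $[T^{n_{m,1}},S^{n_{m,1}}]_{\boldsymbol{n}_m}=(-2\sqrt{-1})^{n_{m,1}}$, and obtain the second identity by the substitution $g\mapsto g\epsilon$ together with $h^{-}(g\epsilon)=h^{+}(g)$ and $f^{\boldsymbol{\lambda}}_{\alpha}(\iota(g)\iota(\epsilon))=(-1)^{\delta}f^{\boldsymbol{\lambda}}_{-\alpha}(\iota(g))$. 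The one cosmetic difference is that you isolate the surviving index $\alpha=-\lambda_2$ via an $\mathrm{SO}_2(\mathbf{R})$-weight argument for the pairing $[S^{a}T^{b},S^{n_{m,1}}]_{\boldsymbol{n}_m}$, whereas the paper reads the Kronecker delta $\delta_{\alpha,-\lambda_2}$ directly off the explicit formula of Lemma~\ref{lem:NabPab}; both amount to the same thing, and your constant $(-1)^{n_{m,2}}\sqrt{-1}^{\,n_{m,1}+n_{m,2}+w_{\boldsymbol{\lambda}}}$ equals the paper's $\sqrt{-1}^{\,n_{m,1}-n_{m,2}+w_{\boldsymbol{\lambda}}}$.
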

\begin{proof}
The identities (\ref{eq:DiffFormPM}) and the normalization (\ref{eq:Haar}) of the measure ${\rm d}g$ imply that 
\begin{align*}
  &   {\rm Tw}_{  [  \boldsymbol{n}_m ] } \left(\nabla^{\boldsymbol{n}_m} \iota^* {\rm p}^{\ast}_3 \delta^{(3)}(\boldsymbol{f})_x \cup {\rm p}^{\ast}_2\delta^{(2)}(h^+)_x  \right)   \\      
&\begin{aligned}
 &=   \sum_\alpha  2[ {\rm GL}_2(\widehat{\mathbf Z})  : {\mathcal K}_2 ]  
          \int_{ {\mathcal Y}^{(2)}_{\mathcal K_2}, x  }    
          f^{\boldsymbol{\lambda}}_{\alpha } (\iota(g))  
          h^+(g)
   \left[   \varrho^{(2)}_{\boldsymbol{n}_m }(g_\infty)  \nabla^{\boldsymbol{n}_m} {\mathcal P}_{\alpha, -2} (X, Y) , 
               \varrho^{(2)}_{\boldsymbol{n}_{\boldsymbol{\lambda}}}(g_\infty)(-X+ \sqrt{-1} Y)^{n_{\boldsymbol{\lambda}}}   \right]_{\boldsymbol{n}_m}            
   \frac{  |\det g|^{ [\boldsymbol{n}_m ] }_{\mathbf A} }{(\det g_\infty)^{ [\boldsymbol{n}_m ] }   } 
         {\rm d}g   \\
& =   2[ {\rm GL}_2(\widehat{\mathbf Z})  : {\mathcal K}_2 ]   \sum_\alpha
          [   \nabla^{\boldsymbol{n}_m} {\mathcal P}_{\alpha, -2} (X, Y) , 
                (-X+ \sqrt{-1} Y)^{n_{\boldsymbol{\lambda}}}   ]_{\boldsymbol{n}_m}                     
          \int_{ {\mathcal Y}^{(2)}_{\mathcal K_2}, x  }    
          f^{\boldsymbol{\lambda}}_{ \alpha } (\iota(g))  
          h^+(g)
          \lvert\det g \rvert_{\mathbf{A}}^{ [\boldsymbol{n}_m] }
         {\rm d}g.    
\end{aligned}
\end{align*}
Set $S=-X+\sqrt{-1}Y$, $T= X+\sqrt{-1}Y$ and $g_0= \begin{pmatrix} -1 & 1 \\ \sqrt{-1} & \sqrt{-1} \end{pmatrix}$. 
Then, due to the equivariance of the pairing $[\cdot,\cdot]_{\boldsymbol{n}_m}$, we find that 
\begin{align*}
[T^{n_{\boldsymbol{\lambda}}}, S^{n_{\boldsymbol{\lambda}}}]_{\boldsymbol{n}_m}
= [\det(g_0)^{-n_{m,2}}\varrho^{(2)}_{\boldsymbol{n}_m}(g_0) Y^{n_{\boldsymbol{\lambda}}}, \varrho^{(2)}_{\boldsymbol{n}_{\boldsymbol{\lambda}}}(g_0) X^{n_{\boldsymbol{\lambda}}}]_{\boldsymbol{n}_m}
= (\det g_0)^{[\boldsymbol{n}_m]-n_{m,2}}[Y^{n_{\boldsymbol{\lambda}}}, X^{n_{\boldsymbol{\lambda}}}]_{\boldsymbol{n}_m} 
=(-2\sqrt{-1})^{n_{m,1}}.
\end{align*}
Hence Lemma \ref{lem:NabPab} yields that 
\begin{align}  \label{eq:pairing_ST}
\begin{aligned}
      {} [ \nabla^{\boldsymbol{n}_m} {\mathcal P}_{ \alpha,  - 2} (X,Y),  S^{n_{\boldsymbol{\lambda}}}]_{\boldsymbol{n}_m}  
  &=  \delta_{\alpha, -n_{m,1}-2} 
         [T^{n_{\boldsymbol{\lambda}},}, S^{n_{\boldsymbol{\lambda}}}]_{\boldsymbol{n}_m} 2^{-n_{m,1}}(-1)^{[\boldsymbol{n}_m]}\sqrt{-1}^{n_{m,2}+w_{\boldsymbol{\lambda}}} 
        C({\boldsymbol{w}_{\boldsymbol{\lambda}}}, \boldsymbol{n}_m)  \\
 & = \delta_{\alpha, -\lambda_2} \sqrt{-1}^{n_{m,1}-n_{m,2} + w_{\boldsymbol{\lambda}}}  C(\boldsymbol{w}_{\boldsymbol{\lambda}}, \boldsymbol{n}_m), \\
   [ \nabla^{\boldsymbol{n}_m} {\mathcal P}_{ \alpha,   2} (X,Y),  T^{n_{\boldsymbol{\lambda}}}]_{\boldsymbol{n}_m}  
 &=  \delta_{\alpha, n_{m,1}+2} [S^{n_{\boldsymbol{\lambda}}},T^{n_{\boldsymbol{\lambda}}}]_{\boldsymbol{n}_m} 2^{-n_{m,1}}(-1)^{w_{\boldsymbol{\lambda}}+n_{m,1}}\sqrt{-1}^{n_{m,2}+w_{\boldsymbol{\lambda}}} C(\boldsymbol{w}_{\boldsymbol{\lambda}},\boldsymbol{n}_m) \\
&= \delta_{\alpha,\lambda_2}  \sqrt{-1}^{-n_{m,1}+n_{m,2} - w_{\boldsymbol{\lambda}}}   C(\boldsymbol{w}_{\boldsymbol{\lambda}},\boldsymbol{n}_m).
\end{aligned}
\end{align}
Here $\delta_{\ast, \ast}$ denotes Kronecker's delta.  The formulas $[\boldsymbol{n}_m]=m-2$, $n_{m,1}-n_{m,2}+w_{\boldsymbol{\lambda}}=\dfrac{l_3}{2}+2l_2-m-1$ and the first identity of (\ref{eq:pairing_ST}) show the first statement.  

We prove the second statement. 
A similar argument  shows that 
\begin{align*}
     {\rm Tw}_{  [  \boldsymbol{n}_m ] } & \left(\nabla^{\boldsymbol{n}_m}  \iota^\ast {\rm p}^{\ast}_3 \delta^{(3)}(\boldsymbol{f})_{x\det \epsilon}  \cup {\rm p}^{\ast}_2\delta^{(2)}(h^-)_{x\det \epsilon} \right)    \\      
  &=   2[ {\rm GL}_2(\widehat{\mathbf Z})  : {\mathcal K}_2 ]   \sum_\alpha
          \left[   \nabla^{\boldsymbol{n}_m} {\mathcal P}_{\alpha, 2} (X, Y) , 
                (X+ \sqrt{-1} Y)^{n_{\boldsymbol{\lambda}}}   \right]_{\boldsymbol{n}_m}                     
          \int_{ {\mathcal Y}^{(2)}_{\mathcal K_2}, x\det \epsilon  }    
          f^{\boldsymbol{\lambda}}_{ \alpha } (\iota(g))  
          h^-(g)
          \lvert \det g\rvert_{\mathbf{A}}^{ [\boldsymbol{n}_m] }
        {\rm d}g   \\
  &=  2[ {\rm GL}_2(\widehat{\mathbf Z})  : {\mathcal K}_2 ]    \sum_\alpha
          [    \nabla^{\boldsymbol{n}_m} {\mathcal P}_{\alpha, 2} (X, Y) , 
                (X+ \sqrt{-1} Y)^{n_{\boldsymbol{\lambda}}}   ]_{\boldsymbol{n}_m}                     
          \int_{   {\mathcal Y}^{(2)}_{\mathcal K_2}, x   }    
          f^{\boldsymbol{\lambda}}_{ \alpha } (\iota(g \epsilon ))  
          h^-(g \epsilon )
          \lvert\det g \epsilon \rvert_{\mathbf{A}}^{ [\boldsymbol{n}_m] }
         {\rm d}(g\epsilon)   \\
 & 
     =  2[ {\rm GL}_2(\widehat{\mathbf Z})  : {\mathcal K}_2 ]    \sum_\alpha 
          [    \nabla^{\boldsymbol{n}_m} {\mathcal P}_{\alpha, 2} (X, Y) , 
                (X+ \sqrt{-1} Y)^{n_{\boldsymbol{\lambda}}}   ]_{\boldsymbol{n}_m}    \int_{   {\mathcal Y}^{(2)}_{\mathcal K_2}, x   }    
          (-1)^{\delta}  f^{\boldsymbol{\lambda}}_{ -\alpha } (\iota( g ))  
          h^+(g  )
          \lvert\det g \rvert_{\mathbf{A}}^{ [\boldsymbol{n}_m] }
        \cdot (-1) {\rm d} g. 
\end{align*}
This and the second identity of (\ref{eq:pairing_ST}) show that 
\begin{align*}
    {\rm Tw}_{  [  \boldsymbol{n}_m ] } & \left( \nabla^{\boldsymbol{n}_m}   \iota^* {\rm p}^{\ast}_3 \delta^{(3)}(\boldsymbol{f})_{ x \det \epsilon}  \cup {\rm p}^{\ast}_2\delta^{(2)}(h^-)_{ x  \det \epsilon }  \right) \\
 & =       \sqrt{-1}^{-n_{m,1}+n_{m,2} - w_{\boldsymbol{\lambda}}}  C(\boldsymbol{w}_{\boldsymbol{\lambda}},\boldsymbol{n}_m)    
     \cdot (-1)^{\delta + 1} 
      2[ {\rm GL}_2(\widehat{\mathbf Z})  : {\mathcal K}_2 ] 
     \int_{   {\mathcal Y}^{(2)}_{\mathcal K_2}, x   }    
           f^{\boldsymbol{\lambda}}_{ -\lambda_2 } (\iota( g ))  
          h^+(g  )
          \lvert\det g \vert_{\mathbf{A}}^{ [\boldsymbol{n}_m] }
        {\rm d} g    \\ 
  &=   (-1)^{  [\boldsymbol{n}_m] + w_{\boldsymbol{\lambda}} + \delta+ 1} 
          \nabla^n \iota^*{\rm p}^{\ast}_3 \delta^{(3)}(\boldsymbol{f})_{x}  \cup {\rm p}^{\ast}_2\delta^{(2)}(h^+)_{x}. 
\end{align*}
This proves the statement.
\end{proof}

\begin{cor}\label{cor:signbirch}
(Birch Lemma)
{\itshape 
Let $\varphi : {\mathbf Q}^\times \backslash {\mathbf Q}^\times_{\mathbf A} \to {\mathbf C}^\times$ be a  Hecke character of finite order. 
Suppose that $\varphi$ is trivial on $\det \mathcal{K}_2$.
Then we have 
\begin{align*}
 I(m, \boldsymbol{f},  h^\pm , \varphi  )
  &:=   \sum_{x\in {\rm Cl}^+_{\mathbf Q}({\mathcal K}_2)}  \varphi(x)        
      {\rm Tw}_{ [\boldsymbol{n}_m] } \left(
       \nabla^{\boldsymbol{n}_m} \iota^* {\rm p}^{\ast}_3 \delta^{(3)}(\boldsymbol{f})_x  \cup \left(  {\rm p}^{\ast}_2\delta^{(2)}(h^+)_x  \pm  {\rm p}^{\ast}_2\delta^{(2)}(h^-)_x \right)   \right)   \\
  &
\begin{aligned}
 \; \, =          \sqrt{-1}^{\frac{l_3}{2}+2l_2-m-1} &
         C(\boldsymbol{w}_{\boldsymbol{\lambda}}, \boldsymbol{n}_m) \cdot  2[ {\rm GL}_2(\widehat{\mathbf Z})  : {\mathcal K}_2 ]   \\
  &    \times   \left(  1 \pm 
                             (-1)^{m+\delta+1+w_{\boldsymbol{\lambda}}}  
                                   \varphi_\infty(-1)  \right)   
          \int_{ {\mathcal Y}^{(2)}_{\mathcal K_2}  } 
            f^{\boldsymbol{\lambda}}_{ -\lambda_2 } \left( \iota(g) \right) 
             h^+ (g)  
             \varphi(\det g) |\det g|^{m-2}_{\mathbf A}  {\rm d}g.   
\end{aligned}
\end{align*}
}
\end{cor}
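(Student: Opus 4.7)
The plan is to decompose $I(m,\boldsymbol{f},h^\pm,\varphi)$ into two pieces according to the two summands in the cup product and evaluate each via Proposition~\ref{prop:intsign}. Set
\begin{align*}
  S^\pm(\varphi) := \sum_{x\in \mathrm{Cl}^+_{\mathbf Q}(\mathcal{K}_2)} \varphi(x)\,
  \mathrm{Tw}_{[\boldsymbol{n}_m]}\!\left(\nabla^{\boldsymbol{n}_m}\iota^\ast \mathrm{p}_3^\ast\delta^{(3)}(\boldsymbol{f})_x \cup \mathrm{p}_2^\ast\delta^{(2)}(h^\pm)_x\right),
\end{align*}
so that $I(m,\boldsymbol{f},h^\pm,\varphi)=S^+(\varphi)\pm S^-(\varphi)$. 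The first identity of Proposition~\ref{prop:intsign} evaluates each summand of $S^+(\varphi)$ as an integral over $\mathcal{Y}^{(2)}_{\mathcal{K}_2,x}$; recollecting the components $\mathcal{Y}^{(2)}_{\mathcal{K}_2}=\bigsqcup_{x}\mathcal{Y}^{(2)}_{\mathcal{K}_2,x}$ and absorbing the factor $\varphi(x)$ into $\varphi(\det g)$ (which is constant on each component and coincides with $\varphi(x)$ there) yields directly the desired integral expression with coefficient $\sqrt{-1}^{\frac{l_3}{2}+2l_2-m-1}C(\boldsymbol{w}_{\boldsymbol{\lambda}},\boldsymbol{n}_m)\cdot 2[\mathrm{GL}_2(\widehat{\mathbf Z}):\mathcal{K}_2]$.

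The remaining task is to express $S^-(\varphi)$ in terms of $S^+(\varphi)$. Since multiplication by $\det\epsilon$ is a bijection on $\mathrm{Cl}^+_{\mathbf Q}(\mathcal{K}_2)$, reindexing the sum (with $x$ replaced by $x\det\epsilon$) and invoking the second identity of Proposition~\ref{prop:intsign} gives
\begin{align*}
  S^-(\varphi)
  &= \sum_x \varphi(x\det\epsilon)\,\mathrm{Tw}_{[\boldsymbol{n}_m]}\!\left(\nabla^{\boldsymbol{n}_m}\iota^\ast \mathrm{p}_3^\ast\delta^{(3)}(\boldsymbol{f})_{x\det\epsilon}\cup \mathrm{p}_2^\ast\delta^{(2)}(h^-)_{x\det\epsilon}\right) \\
  &= \varphi(\det\epsilon)\,(-1)^{m+\delta+1+w_{\boldsymbol{\lambda}}}\,S^+(\varphi).
\end{align*}
Because $\det\epsilon$ has archimedean component $-1$ and trivial finite components, and $\varphi$ is trivial on $\det\mathcal{K}_2$, we have $\varphi(\det\epsilon)=\varphi_\infty(-1)$. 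Substituting and combining with the formula for $S^+(\varphi)$ immediately gives the claimed identity.

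The content here is essentially bookkeeping on top of Proposition~\ref{prop:intsign}; the only subtlety is the character evaluation $\varphi(\det\epsilon)=\varphi_\infty(-1)$, which relies crucially on the hypothesis that $\varphi$ factors through $\mathrm{Cl}^+_{\mathbf Q}(\mathcal{K}_2)$. No further obstacle is anticipated.
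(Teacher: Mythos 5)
Your proposal is correct and follows essentially the same approach as the paper: decompose into the $h^+$ and $h^-$ pieces, reindex the $h^-$ sum by $x\mapsto x\det\epsilon$, apply the second identity of Proposition~\ref{prop:intsign} to produce the sign $(-1)^{m+\delta+1+w_{\boldsymbol{\lambda}}}$, and use $\varphi(x\det\epsilon)=\varphi_\infty(-1)\varphi(x)$ before combining. The only cosmetic difference is that you evaluate $S^+(\varphi)$ as an integral before combining, whereas the paper combines the cup-product sums first and leaves the conversion to an integral implicit.
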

\begin{proof}
Proposition \ref{prop:intsign} shows that 
\begin{align*}
   I(m, \boldsymbol{f},  h^\pm , \varphi  )  
  &=     \sum_{x\in {\rm Cl}^+_{\mathbf Q}({\mathcal K}_2)}  \varphi(x)        
         {\rm Tw}_{ [\boldsymbol{n}_m] } \left(\nabla^{\boldsymbol{n}_m} \iota^* {\rm p}^{\ast}_3 \delta^{(3)}(\boldsymbol{f})_x  \cup   {\rm p}^{\ast}_2\delta^{(2)}(h^+)_x  \right)
     \\
     & \quad \quad \quad \quad \quad 
         \pm \sum_{x\in {\rm Cl}^+_{\mathbf Q}({\mathcal K}_2)}   \varphi( x \det \epsilon ) 
               {\rm Tw}_{ [\boldsymbol{n}_m] }  \left( \nabla^{\boldsymbol{n}_m} \iota^* {\rm p}^{\ast}_3 \delta^{(3)}(\boldsymbol{f})_{x\det \epsilon}  \cup   {\rm p}^{\ast}_2\delta^{(2)}(h^-)_{x \det \epsilon} \right)   \\
  &=     \sum_{x\in {\rm Cl}^+_{\mathbf Q}({\mathcal K}_2)}  \varphi(x)        
       {\rm Tw}_{ [\boldsymbol{n}_m] } \left(\nabla^{\boldsymbol{n}_m} \iota^* {\rm p}^{\ast}_3 \delta^{(3)}(\boldsymbol{f})_x  \cup   {\rm p}^{\ast}_2\delta^{(2)}(h^+)_x  \right)
     \\
     & \quad \quad \quad \quad \quad 
       \pm   (-1)^{m+\delta+1+w_{\boldsymbol{\lambda}}}     \sum_{x\in {\rm Cl}^+_{\mathbf Q}({\mathcal K}_2)} 
                     \varphi_\infty(-1) \varphi(x)
                     {\rm Tw}_{ [\boldsymbol{n}_m] } \left( \nabla^{\boldsymbol{n}_m} \iota^* {\rm p}^{\ast}_3 \delta^{(3)}(\boldsymbol{f})_{x}  \cup   {\rm p}^{\ast}_2\delta^{(2)}(h^+)_{x}   \right) \\
  &=   \left(  1 \pm  (-1)^{m+\delta+1+w_{\boldsymbol{\lambda}}}
                               \varphi_\infty(-1)  \right)
          \sum_{x\in {\rm Cl}^+_{\mathbf Q}({\mathcal K}_2)}  \varphi(x)        
      {\rm Tw}_{ [\boldsymbol{n}_m] }\left( \nabla^{\boldsymbol{n}_m} \iota^* {\rm p}^{\ast}_3 \delta^{(3)}(\boldsymbol{f})_x  \cup   {\rm p}^{\ast}_2\delta^{(2)}(h^+)_x\right).  
\end{align*}
This proves the statement.
\end{proof}

Now define the zeta integral ${\mathcal Z}(s; \boldsymbol{f}, h^+, \varphi)$ to be 
\begin{align*}
{\mathcal Z}(s; \boldsymbol{f}, h^+, \varphi)
  =  
      \int_{ {\mathcal Y}^{(2)}_{\mathcal K_2}  } 
            f^{\boldsymbol{\lambda}}_{ - \lambda_2} \left( \iota(g) \right) h^+ (g)
             \varphi(\det g ) |\det g|^{ s -\frac{1}{2}}_{\mathbf A} 
               {\rm d}g. 
\end{align*}
In the next section, we will calculate explicit evaluations of the zeta integrals ${\mathcal Z}(m-\frac{3}{2}; \boldsymbol{f}, h^+,\mathbf{1})$ for each $m\in \mathrm{Crit}(\boldsymbol{\lambda})$,
and relate them with the critical values $L(m, {\mathcal M}(\pi^{(3)} \times \pi^{(2)}))$ for certain vectors $\boldsymbol{f}$ and $h^\pm$. 
We also clarify the relation between these critical values and the periods of ${\mathcal M}(\pi^{(3)}\times \pi^{(2)})$. 
The choice of data, namely the vectors $\boldsymbol{f}$, $h^\pm$ and the periods, will be also  proposed there.

\begin{rem}
Corollary \ref{cor:signbirch} implies that the condition on signature $(-1)^{m+\delta+1+w_{\boldsymbol{\lambda}}}  \neq \pm \varphi_\infty(-1)$ is necessary for the zeta integral $\mathcal{Z}(m-\frac{3}{2};\boldsymbol{f},h^+,\varphi)$ (and thus the critical value $L(m,\mathcal{M}(\pi^{(3)}\times \pi^{(2)})(\varphi))$) not to vanish. 
Such a condition on signature also appears in \cite[Theorem 1.1]{rag16}; see also \cite[Sections 2.5.3.5, 2.5.5.2]{rag16}.  
We have derived the equality $(-1)^{m+\delta+1+w_{\boldsymbol{\lambda}}}  = \pm \varphi_\infty(-1)$ from the explicit evaluation of cup products in the proof of Corollary~\ref{cor:signbirch}. 
However, one can also observe the necessity of the same condition formally by studying the action of $\epsilon$ on the cup product paring as follows. First note that the cup product $ I(m, \boldsymbol{f},  h^\pm , \varphi  )$ coincides with  the following summation: 
\begin{align*}
         \sum_{x\in {\rm Cl}^+_{\mathbf Q}({\mathcal K}_2)}  \varphi(x \det {\epsilon})        
      {\rm Tw}_{ [\boldsymbol{n}_m] }  \left( \nabla^{\boldsymbol{n}_m} \iota^* {\rm p}^{\ast}_3 \delta^{(3)}(\boldsymbol{f})_{x \det \epsilon}  
          \cup \left(  {\rm p}^{\ast}_2\delta^{(2)}(h^+)_{x \det \epsilon}  \pm  {\rm p}^{\ast}_2\delta^{(2)}(h^-)_{x \det \epsilon} \right)\right).
\end{align*}
Since $\epsilon$ normalizes $\mathcal{K}_2K_2$, we see that  
$\epsilon$ acts as the right translation on the local systems appearing in Sections~\ref{sec:LS} and \ref{sec:branch*}. We denote the action of $\epsilon$ on the local systems by $\epsilon \, \bullet -$.    
Then we find that 
\begin{align*}
      {\rm Tw}_{ [\boldsymbol{n}_m] } & \left( \nabla^{\boldsymbol{n}_m} \iota^* {\rm p}^{\ast}_3 \delta^{(3)}(\boldsymbol{f})_{x \det \epsilon}  
          \cup \left(  {\rm p}^{\ast}_2\delta^{(2)}(h^+)_{x \det \epsilon}  \pm  {\rm p}^{\ast}_2\delta^{(2)}(h^-)_{x \det \epsilon} \right) \right)  \\ 
 &=  \epsilon \bullet 
        \left( 
                  {\rm Tw}_{ [\boldsymbol{n}_m] } \left( \nabla^{\boldsymbol{n}_m} \iota^* {\rm p}^{\ast}_3 \delta^{(3)}(\boldsymbol{f})_{x }  
                  \cup \left(  {\rm p}^{\ast}_2\delta^{(2)}(h^+)_{x }  \pm  {\rm p}^{\ast}_2\delta^{(2)}(h^-)_{x } \right)   
                \right)    \right)
           \\     
 &=    {\rm Tw}_{ [\boldsymbol{n}_m] } 
        \left(  \det(\epsilon)^{ - [ \boldsymbol{n}_m ]  }
                    \epsilon \bullet \left( 
                                                   \nabla^{\boldsymbol{n}_m} \iota^* {\rm p}^{\ast}_3 \delta^{(3)}(\boldsymbol{f})_{x }  
                                                 \right)    
                  \cup  
                  \epsilon \bullet    \left(  {\rm p}^{\ast}_2\delta^{(2)}(h^+)_{x}  \pm  {\rm p}^{\ast}_2\delta^{(2)}(h^-)_{x } \right)   
                \right).    
\end{align*}
Lemma \ref{lem:nabkl} shows that 
\begin{align*}
      \epsilon \bullet \left(      \nabla^{\boldsymbol{n}_m} \iota^* {\rm p}^{\ast}_3 \delta^{(3)}(\boldsymbol{f})_{x }  
                                                 \right)    
  =    \nabla^{\boldsymbol{n}_m} \iota^* {\rm p}^{\ast}_3 
        \left(  \iota(\epsilon) \bullet  \delta^{(3)}(\boldsymbol{f})_{x }  \right).      
\end{align*}
The action of $\iota(\epsilon)$ on $H^2( \mathfrak{gl}_3({\mathbf R}), K_3 ; H_{\pi^{(3)}, K_3}  \otimes L^{(3)}( \boldsymbol{w}_{\boldsymbol{\lambda}}; {\mathbf C})    )$ 
    coincides with the action of $-1_3 \in {\rm GL}_3({\mathbf R})$, which is given by the multiplication of $\omega_{\pi^{(3)}, \infty } (-1) \omega_{ \boldsymbol{w}_{\boldsymbol{\lambda}} }(-1)$; recall from Section~\ref{sec:repGL3} that $\omega_{\boldsymbol{w_\lambda}}$ denotes the central character of $L^3(\boldsymbol{w}_{\boldsymbol{\lambda}};\mathbf{C})$. Now we may readily check from the definitions that 
\begin{align*}
 \omega_{\pi^{(3)}, \infty } (-1)  
     &=  (-1)^{ \delta +  1 } & \text{and} &&       
 \omega_{ \boldsymbol{w}_{\boldsymbol{\lambda} } }(-1) 
     &=  (-1)^{\frac{l_3}{2} +  1  }.
\end{align*}
Meanwhile, the definition of $\delta^{(2)}(h^+)  \pm  \delta^{(2)}(h^-) $ immediately shows that 
\begin{align*}
\epsilon \bullet    \left(  {\rm p}^{\ast}_2\delta^{(2)}(h^+)_{x}  \pm  {\rm p}^{\ast}_2\delta^{(2)}(h^-)_{x } \right) 
   = (\pm 1)  \left(  {\rm p}^{\ast}_2\delta^{(2)}(h^+)_{x}  \pm  {\rm p}^{\ast}_2\delta^{(2)}(h^-)_{x } \right).    
\end{align*}
Summarizing the above argument, we deduce the identity
\begin{align*}
  &  I(m, \boldsymbol{f},  h^\pm , \varphi  )   
=      \varphi_\infty(-1) 
     (-1)^{ [\boldsymbol{n}_m] }   
     \omega_{\pi^{(3)}, \infty } (-1) 
     \omega_{ \boldsymbol{w}_{\boldsymbol{\lambda} } }(-1) 
    (\pm 1)  
     I(m, \boldsymbol{f},  h^\pm , \varphi  ).  
\end{align*}
Therefore it is necessary to assume the condition 
$ \pm \varphi_\infty(-1) 
    = (-1)^{  [\boldsymbol{n}_m] + \delta + \frac{l_3}{2}} 
    = (-1)^{m + \delta +  w_{ \boldsymbol{\lambda} } +1  }$        
for the non-vanishing of the cup product $I(m,\boldsymbol{f},h^{\pm},\varphi)$.

\end{rem}

\section{Evaluation of period integrals}\label{sec:permain}

In this section, we prove the main theorem (Theorem \ref{thm:main}) in this article.   
In Section \ref{sec:per}, we recall Raghuram and Shahidi's definition of Whittaker periods for $\pi^{(2)}$ and $\pi^{(3)}$ (introduced in \cite{rs08}), and then refine them.    In Section \ref{sec:main}, we propose a proof of Theorem \ref{thm:main}. We also discuss the algebraicity of critical values $L(m, {\mathcal M}(\pi^{(3)} \times \pi^{(2)} ))$ in Section \ref{sec:alg}. 
Namely, we try to give  a motivic  interpretation of the periods defined in Section \ref{sec:per} in terms of Yoshida's period invariants (see Remark \ref{rem:RelDelPer}).

\subsection{Periods}\label{sec:per}

In this subsection, we introduce a specific vector $\boldsymbol{f} \in {\mathcal S}^{(3)}_{\boldsymbol{\lambda}}({\mathcal K}_3)$  
   (resp.\ specific vectors $h^\pm \in {\mathcal S}^{(2)}_{\boldsymbol{\lambda}}({\mathcal K}_2)$) associated with $\pi^{(3)}$ (resp.\ $\pi^{(2)}$).
According to the idea in \cite{rs08}, 
we define the period  $\Omega_{\pi^{(3)}}$ of $\pi^{(3)}$  (resp.\ the periods  $\Omega^\pm_{\pi^{(2)} }$ of $\pi^{(2)}$) depending on the choice of the vector $\boldsymbol{f}$ (resp.\ the vectors $h^\pm$), based on the comparison isomorphism between the Betti and the de Rham cohomology groups of the symmetric space $Y^{(3)}_{{\mathcal K}_3}$ (resp.\  $Y^{(2)}_{{\mathcal K}_2}$).
Note that the periods  $\Omega^\pm_{\pi^{(2)}}$ of $\pi^{(2)}$ constructed in this subsection are known as the {\em canonical periods} in literatures (see \cite{hid94} and \cite{vat99} for instance), 
     and they are determined uniquely up to multiplication of elements in a field of rationality for $\pi^{(2)}$.   
We will define the period $\Omega_{\pi^{(3)}}$ of $\pi^{(3)}$ in an analogous way to the construction of the canonical periods of $\pi^{(2)}$,  
   which is one of reasons why we introduce an explicit form of the Eichler--Shimura map for ${\rm GL}_3$.

We here emphasize the difference of the definition of periods between the present article and  \cite{rs08}.   
In \cite{rs08}, Raghuram and Shahidi define periods for cohomological irreducible cuspidal automorphic representations $\pi^{(n)}$ of ${\rm GL}_n({\mathbf Q}_{\mathbf A})$ 
   by fixing the following data:
\begin{itemize}
 \item[---] signature $\varepsilon$ of the action of ${\mathbf R}^\times {\rm O}_n({\mathbf R})/K_n$ on $H^{b_n}( \mathfrak{gl}_n({\mathbf R}), K_n; H_{\pi^{(n)}, K_n}  \otimes L^{(n)}(*_{\boldsymbol{\lambda}}; {\mathbf C})  )$ where $K_n$ denotes $\mathbf{R}^\times_{>0} \mathrm{SO}_n(\mathbf{R})$. 
          We denote the eigen space of its action by 
              $H^{b_n}( \mathfrak{gl}_n({\mathbf R}), K_n; H_{\pi^{(n)}, K_n}  \otimes L^{(n)}(*_{\boldsymbol{\lambda}}; {\mathbf C})  )[\varepsilon]$;  
 \item[---] a new vector in $\pi^{(n)}_{\rm fin}$;
 \item[---] a vector ${\boldsymbol w}_\infty$ in $H^{b_n}( \mathfrak{gl}_n({\mathbf R}), K_n; H_{\pi^{(n)}, K_n}  \otimes L^{(n)}(*_{\boldsymbol{\lambda}}; {\mathbf C})  )[\varepsilon]$. 
\end{itemize}    
Since Raghuram and Shahidi do not specify explicitly how to choose an archimedean vector ${\boldsymbol w}_\infty$, 
  a certain archimedean local integral depending on the choice of $\boldsymbol{w}_\infty$ and the critical points remains in their formulas of period integrals in \cite[Theorem 2.50]{rag16}; see \cite[(2.49)]{rag16} for the precise description of their archimedean local integral.
It is known that this archimedean local integral does not vanish by Kasten and Schmidt \cite{ks13} in the  ${\rm GL}_3 \times {\rm GL}_2$ case and by Sun \cite{sun17} in the general ${\rm GL}_{n+1} \times {\rm GL}_n$ case.
Moreover, Januszewski has shown that the ratio of archimedean local integrals at each critical points is given 
     by a multiple of a power of $2\pi \sqrt{-1}$ with an implicit rational number
   ; for details see \cite[Corollary 4.12]{jan19}.   
However, it is important to determine this implicit rational number for the application to the study of the Manin congruences among the critical values \cite{jan} and to the construction of the $p$-adic $L$-functions for ${\rm GL}_{n+1}\times {\rm GL}_n$.  
For this purpose, 
   we introduce a distinguished vector ${\boldsymbol w}_\infty$ by fixing a normalization of the Whittaker functions in this subsection.        
The algebraicity result on the critical values proposed in the present article (Corollary \ref{cor:critalg}) refines the one proposed in \cite[Theorem 2.50]{rag16} for the ${\rm GL}_3 \times {\rm GL}_2$ case.
See also Remark \ref{rem:jan19} for the relation with the series of recent works of Januszewski (for instance \cite{jan19} and \cite{jan}).   
  
We specify a distinguished vector in ${\boldsymbol w}_\infty$ in $H^{b_n}( \mathfrak{gl}_n({\mathbf R}), K_n; H_{\pi^{(n)}, K_n}  \otimes L^{(n)}(*_{\boldsymbol{\lambda}}; {\mathbf C})  )[\varepsilon]$   
   by using the explicit description of the Eichler--Shimura maps, which has been already studied in Section \ref{sec:ES},  
   and the explicit formulas of the radial parts of the archimedean Whittaker functions. 
In this subsection, using the explicit and thorough studies of the archimedean Whittaker functions for ${\rm GL}_3$ presented in the recent works by Hirano, Ishii and Miyazaki (see \cite{him16} and \cite{him}),  
  we refine the definition of the periods in \cite[Definition/Proposition 3.3]{rs08}.

Let $\psi: {\mathbf Q} \backslash {\mathbf Q}_{\mathbf A} \to {\mathbf C}^\times$ be the additive character satisfying
$\psi_\infty(x) = {\rm exp}(2\pi \sqrt{-1} x)$  for each $x \in {\mathbf R}$.
Denote by ${\mathcal W}(\pi^{(3)}, \psi)$ the $\psi$-Whittaker model of $\pi^{(3)}$ 
and by ${\mathcal W}(\pi^{(2)}, \psi^{-1})$ the $\psi^{-1}$-Whittaker model of $\pi^{(2)}$.   
Each of ${\mathcal W}(\pi^{(3)}, \psi)$ and ${\mathcal W}(\pi^{(2)}, \psi^{-1})$ decomposes into the restricted tensor product as follows:
\begin{align*}
    {\mathcal W}(\pi^{(3)}, \psi)   &= {\bigotimes_v}^\prime  {\mathcal W}(\pi^{(3)}_v, \psi_v),    &
    {\mathcal W}(\pi^{(2)}, \psi^{-1})   &=   {\bigotimes_v}^\prime  {\mathcal W}(\pi^{(2)}_v, \psi^{-1}_v). 
\end{align*}

Now let us recall the notion of the field of rationality $E_{\pi^{(n)}}$ of $\pi^{(n)}$ according to \cite[Section 3]{clo90}; see also \cite[Section 3.1]{rs08}. Write $V$ as a representation space of $\pi^{(n)}_{\rm fin}$. 
For each $\sigma \in {\rm Aut}({\mathbf C})$,  
  define ${}^\sigma\pi^{(n)}_{\rm fin}$ to be $V\otimes_{{\mathbf C}, \sigma^{-1}} {\mathbf C}$, and set 
$S( \pi^{(n)}_{\rm fin}) = \{  \sigma \in {\rm Aut}({\mathbf C})  \mid  {}^\sigma \pi^{(n)}_{\rm fin} = \pi^{(n)}_{\rm fin}   \}$.     
Then ${}^\sigma\pi^{(n)}_{\rm fin}$ is indeed the finite part of a cohomological cuspidal automorphic representation ${}^\sigma\pi$ of ${\rm GL}_n({\mathbf Q}_{\mathbf A})$, 
   and $E_{\pi^{(n)}} := \{x\in \mathbf{C} \mid x^\sigma=x \text{ for } {}^\forall \sigma \in S( \pi^{(n)}_{\rm fin})\}$ is a finite extension of ${\mathbf Q}$ (see \cite[Th\'{e}or$\grave{\rm e}$m 3.13]{clo90}).
We call $E_{\pi^{(n)}}$ the {\em field of rationality} of $\pi^{(n)}$. 

In the rest of this article, we impose several constraints on $\mathcal{K}_2$ and $\mathcal{K}_3$. First we suppose that $\pi^{(2)}_{\mathrm{fin}}$ has a nonzero vector fixed by the action of $\mathrm{GL}_2(\widehat{\mathbf{Z}})$, and set $\mathcal{K}_2=\mathrm{GL}_2(\widehat{\mathbf{Z}})$. Next, for an ideal ${\mathfrak N}$ of $\widehat{\mathbf Z}$, we introduce an open compact subgroup ${\mathcal K}_3({\mathfrak N})$ of $\mathrm{GL}_3(\widehat{\mathbf{Z}})$ defined as
\begin{align*}
   {\mathcal K}_3({\mathfrak N}) 
    = \left\{  g = \begin{pmatrix} g_{ij}   \end{pmatrix}_{1\leq i,j\leq 3}  \in {\rm GL}_3(\widehat{\mathbf Z}) \ \middle|  \   g_{31}, g_{32} \in {\mathfrak N},\  g_{33}\equiv 1\pmod{\mathfrak{N}}    \right\}.
\end{align*}
Suppose that there exists an ideal $\mathfrak{N}$ of $\widehat{\mathbf{Z}}$ such that $\pi^{(3)}_{\mathrm{fin}}$ has a nonzero vector fixed by the action of $\mathcal{K}_3(\mathfrak{N})$. We choose a minimal ideal
${\mathfrak N}$ satisfying this property, and set $\mathcal{K}_3=\mathcal{K}_3(\mathfrak{N})$. Refer to Remark~\ref{rem:tameint} for these assumptions.
 
\medskip
First let us discuss the period of $\pi^{(2)}$. For each finite place $v$ of $\mathbf{Q}$, the $v$-component $\pi^{(2)}_v$ of $\pi^{(2)}$ is unramified by assumption, and thus there exists a unique function $W^\circ_{\pi^{(2)},v}\in \mathcal{W}(\pi^{(2)}_v,\psi_v^{-1})^{\mathrm{GL}(\mathbf{Z}_v)}$ which is called by Shintani the {\em class-$1$ Whittaker function} on $\mathrm{GL}_2(\mathbf{Q}_v)$; see \cite{shin76} for details. We now define archimedean Whittaker functions $W^\pm_{\pi^{(2)}, \infty} \in \mathcal{W}(\pi^{(2)}_{\infty},\psi^{-1}_{\infty})$ according to \cite[Corollary 3.6, Section 9.3]{him}. Recall from Section~\ref{sec:motives} that $\pi^{(2)}_\infty$ has the minimal $\mathrm{O}_2(\mathbf{R})$-type $\tau^{(2)}_{\boldsymbol{\lambda}}$ with $\boldsymbol{\lambda}^{(2)}=(\lambda_2,0)=(l_2+1,0)$.
Let $\varphi^{(2)}_{-} \colon V^{(2)}_{\boldsymbol{\lambda}} \to {\mathcal W}(\pi^{(2)}_\infty, \psi^{-1}_\infty)$ be a $K_2$-homomorphism  satisfying the following formulas on the radial parts (see \cite[Corollary 3.6, Section 9.3]{him}): 
\begin{align*}
      \varphi^{(2)}_{-}  (  (- X+\sqrt{-1} Y)^{\lambda_2}  )  (  {\rm diag}(y_1y_2, y_2)  ) 
    &=   \displaystyle  \frac{ y^{\frac{1}{2}}_1  y^{2\nu_2}_2 }{2\pi\sqrt{-1}} \int_t \Gamma_{\mathbf C} \left( t + \nu_2 + \frac{ \lambda_2 -1}{2} \right)  y^{-t}_1  {\rm d} t,  \\
 \varphi^{(2)}_{-}  (  ( X+\sqrt{-1} Y)^{\lambda_2}  )  (  {\rm diag}(y_1y_2, y_2)  )  & =0.
\end{align*}
Here $\int_t$ denotes an integration on a vertical line from ${\rm Re}(t) - \sqrt{-1} \infty$ to ${\rm Re}(t) + \sqrt{-1} \infty$  
   for an appropriate complex number  $t \in {\mathbf C}$ with the sufficiently large real part  so that all the poles  of the integrand is in the left-side of the vertical line.
Then  define $W^\pm_{\pi^{(2)}, \infty}$  to be  $\varphi^{(2)}_{-}  (  (\pm X+\sqrt{-1} Y)^{\lambda_2}  )$, and set $W^\pm_{\pi^{(2)}}=W^\pm_{\pi^{(2)},\infty}\otimes \bigotimes_{v\colon \text{finite}} W^{\circ}_{\pi^{(2)},v}$. We define $h^{\pm}$ as the cuspidal automorphic forms on $\mathrm{GL}_2(\mathbf{Q}_{\mathbf{A}})$ corresponding to $W^\pm_{\pi^{(2)}}$, that is, $h^\pm$ are defined to be the inverse Fourier transform of $W^\pm_{\pi^{(2)}}$. Then define the cohomology classes $\delta^\pm_{\pi^{(2)}}$ in $H^1(Y^{(2)}_{\mathcal K_2},\mathcal{L}^{(2)}(\boldsymbol{n}_{\boldsymbol{\lambda}};\mathbf{C}))$ as $\delta^\pm_{\pi^{(2)}}   =  \delta^{(2)}(h^+) \pm \delta^{(2)}(h^-)$.   
Since $\epsilon= \bigotimes_{v\colon \text{finite}} 1_2 \otimes \begin{pmatrix} -1 & 0 \\ 0 & 1 \end{pmatrix}$ normalizes $\mathcal{K}_2K_2$ where $K_2=\mathbf{R}_{>0}^\times \mathrm{SO}_2(\mathbf{R})$, it acts on $H^\bullet_\ast(Y^{(2)}_{\mathcal K_2}, {\mathcal L}(\boldsymbol{n}_{\boldsymbol{\lambda}}; {\mathbf C}))$; hereafter $*$ denotes $\emptyset$ or ${\rm c}$. We denote by $H^\bullet_\ast(Y^{(2)}_{\mathcal K_2}, {\mathcal L}^{(2)}(\boldsymbol{n}_{\boldsymbol{\lambda}}; {\mathbf C}))[\pm]$ 
its $(\pm 1)$-eigenspaces with respect to the action of $\epsilon$, and 
write the $\pi^{(2)}_{\rm fin}$-isotypic components of $H^\bullet_\ast(Y^{(2)}_{\mathcal K_2}, {\mathcal L}^{(2)}(\boldsymbol{n}_{\boldsymbol{\lambda}}; {\mathbf C}))[\pm]$
as $H^\bullet_\ast(Y^{(2)}_{\mathcal K_2}, {\mathcal L}^{(2)}(\boldsymbol{n}_{\boldsymbol{\lambda}}; {\mathbf C}))[\pm, \pi_{\rm fin}]$.   
Then  each of $H^1_{\rm c}(Y^{(2)}_{\mathcal K_2}, {\mathcal L}^{(2)}(\boldsymbol{n}_{\boldsymbol{\lambda}}; {\mathbf C}))[\pm, \pi_{\rm fin}]$ is of dimension $1$  
   and it is spanned by $\delta^\pm_{\pi^{(2)}}$.
Furthermore the spaces $H^1_{\rm c}(Y^{(2)}_{\mathcal K_2}, {\mathcal L}^{(2)}(\boldsymbol{n}_{\boldsymbol{\lambda}}; {\mathbf C}))[\pm, \pi_{\rm fin}]$
are defined over the field of rationality $E_{\pi^{(2)}}$ of $\pi^{(2)}$, and their $E_{\pi^{(2)}}$-structures
    are given as 
\begin{align*}
  H^1_{\rm c}(Y^{(2)}_{\mathcal K_2}, {\mathcal L}^{(2)}(\boldsymbol{n}_{\boldsymbol{\lambda}}; E_{\pi^{(2)}})) [\pm, \pi^{(2)}_{\rm fin}]  
   =  H^1_{\rm c}(Y^{(2)}_{\mathcal K_2}, {\mathcal L}^{(2)}(\boldsymbol{n}_{\boldsymbol{\lambda}};  {\mathbf C}  )) [\pm, \pi^{(2)}_{\rm fin}] 
        \cap H^1_{\rm c}(Y^{(2)}_{\mathcal K_2}, {\mathcal L}^{(2)}(\boldsymbol{n}_{\boldsymbol{\lambda}}; E_{\pi^{(2)}})).  
\end{align*}
For each signature $\pm$, fix a generator $\eta^\pm_{\pi^{(2)}}$ of the $1$-dimensional $E_{\pi^{(2)}}$-space $H^1_{\rm c}(Y^{(2)}_{\mathcal K_2}, {\mathcal L}^{(2)}(\boldsymbol{n}_{\boldsymbol{\lambda}}; E_{\pi^{(2)}})) [\pm, \pi^{(2)}_{\rm fin}] $.   
 Then we define  complex numbers $\Omega^\pm_{\pi^{(2)}}\in {\mathbf C}^\times$ so that $   \delta^{\pm}_{\pi^{(2)}}  =   \Omega^\pm_{\pi^{(2)}} \eta^\pm_{\pi^{(2)}} \; (\text{double sign in the same order})$.  
Note that each of $\Omega^\pm_{\pi^{(2)}}$ is uniquely determined from $\pi^{(2)}$ up to multiplication of elements in $E^\times_{\pi^{(2)}}$. We call $\Omega_{\pi^{(2)}}$ the periods of $\pi^{(2)}$.

\medskip

Next let us discuss the period of $\pi^{(3)}$. 
For each finite place $v$ of ${\mathbf Q}$,   
 there exists a unique function $W^{\mathrm{ess}}_{\pi^{(3)},v}$ in ${\mathcal W}(\pi^{(3)}_v, \psi_v)^{{\mathcal K}_3({\mathfrak N})_v }$ satisfying
 \begin{align*}
  \int_{ {\rm N}_2({\mathbf Q}_v) \backslash {\rm GL}_2({\mathbf Q}_v)}   
      W^{\mathrm{ess}}_{\pi^{(3)},v}\left(  \iota(g)  \right) 
    W^{\circ}_{\pi^{(2)}, v} (g)  |\det g|_v^{s-\frac{1}{2} } 
          {\rm d}g   =   L(s, \pi^{(3)}_v \times \pi^{(2)}_v),
 \end{align*}
where $\mathrm{N}_2$ denotes the uniponent radical of the upper triangular Borel subgroup of $\mathrm{GL}_2$. The Whittaker function $W^{\mathrm{ess}}_{\pi^{(3)},v}$ is called  the {\em essential vector} of $\pi^{(3)}_v$, whose existence and uniqueness is verified in \cite[page 208, 211, Th\'{e}or$\grave{\rm e}$m]{jpss81}.
We also define archimedean Whittaker functions $W^\alpha_{\pi^{(3)}, \infty} \in {\mathcal W}(\pi^{(3)}_\infty, \psi_\infty)$ for $-\lambda_3 \leq \alpha \leq \lambda_3$ according to \cite[Sections 4.6 and 9.4]{him}. 
Recall that $\pi^{(3)}_\infty$ has the minimal $\mathrm{O}_3(\mathbf{R})$-type $\tau^{(3)}_{\mathbf{\lambda}}$ for $\boldsymbol{\lambda}^{(3)}=(\lambda_3,\delta)=(l_3+1,\delta)$. Let $\varphi^{(3)}_+ \colon V^{(3)}_{\boldsymbol{\lambda}} \to {\mathcal W}( \pi^{(3)}_\infty, \psi_\infty)$ be a $K_3$-homomorphism satisfying the following formula on the radial parts for each $z_{\boldsymbol{a}}:= z^{a_1}_1 z^{a_2}_2 z^{a_3}_3 \in V^{ (3) }_{\lambda}$ (see \cite[Theorem 4.21 and Section 9.4]{him}):
  \begin{align*}
                 \varphi^{(3)}_+&(z_{\boldsymbol{a}})( {\rm diag}(y_1y_2 y_3, y_2 y_3, y_3)  )    \\ 
            &  = (-1)^{a_1} \sqrt{-1}^{a_2}   
                       \frac{y_1 y_2 (y_2 y_3)^{3\nu_3} }{  (4\pi\sqrt{-1})^2 }    \\  
                &\qquad  \times  \int_{t_2} \int_{t_1}   
                    \frac{      \Gamma_{\mathbf C}(t_1 + \nu_3 + \frac{\lambda_3-1}{2})    
                                   \Gamma_{\mathbf R}(t_2 + \nu_3 + a_1 ) 
                               }{ \Gamma_{\mathbf R} ( t_1 + t_2 + a_1 + a_3 )  }
                    \Gamma_{\mathbf C}\left(t_2 - \nu_3 + \frac{\lambda_3-1}{2}\right) 
                    \Gamma_{\mathbf R}(t_2 - \nu_3 +  a_3 ) 
                   y^{-t_1}_1 y^{-t_2}_2\, {\rm d}t_1 {\rm d}t_2. 
\end{align*}  
The path integrals $ \int_{t_2}$ and  $\int_{t_1}$ are defined in a similar way to the case of ${\rm GL}_2$. 
Then define $W^{\alpha}_{\pi^{(3)}, \infty}$ to be $\varphi^{(3)}_+(  v^{\boldsymbol{\lambda}}_\alpha )$ for each $-\lambda_3 \leq \alpha \leq \lambda_3$, 
 and set $W^{\alpha}_{\pi^{(3)}} = W^{\alpha}_{\pi^{(3)}, \infty} \otimes \bigotimes_{v\text{:finite}} W^{\mathrm{ess}}_{\pi^{(3)}, v}$.  
We define $f^{\boldsymbol{\lambda}}_\alpha$ as the cuspidal automorphic form on ${\rm GL}_3({\mathbf Q}_{\mathbf A})$ corresponding to $W^{\alpha}_{\pi^{(3)}}$, 
that is, $f^{\boldsymbol{\lambda}}_\alpha$ is defined to be the inverse Fourier transform of $W^{\alpha}_{\pi^{(3)}}$.
Then we set  $\boldsymbol{f} = 
\begin{pmatrix}
 f^{\boldsymbol{\lambda}}_{\lambda_3} &  f^{\boldsymbol{\lambda}}_{\lambda_3-1} & \dotsc & f^{\boldsymbol{\lambda}}_{-\lambda_3}
\end{pmatrix}$ and consider its image 
$\delta^{(3)}(\boldsymbol{f})$ under the Eichler--Shimura map. Note that it is indeed an element of the $\pi^{(3)}_{\rm fin}$-isotypic component  $H^2_{\rm cusp}(Y^{(3)}_{\mathcal K_3},  {\mathcal L}^{(3)}(\boldsymbol{w}_{\boldsymbol{\lambda}}; {\mathbf C})  )[\pi^{(3)}_{\rm fin} ]$ 
       of  $H^2_{\rm cusp}(Y^{(3)}_{\mathcal K_3},  {\mathcal L}^{(3)}(\boldsymbol{w}_{\boldsymbol{\lambda}}; {\mathbf C})  )$, which is of dimension 1 over ${\mathbf C}$. 
Similarly to the $n=2$ case, the space $H^2_{\rm cusp}(Y^{(3)}_{\mathcal K_3},  {\mathcal L}^{(3)}(\boldsymbol{w}_{\boldsymbol{\lambda}}; {\mathbf C})  )[\pi^{(3)}_{\rm fin} ]$ is defined over the field of rationality $E_{\pi^{(3)}}$ of $\pi^{(3)}$, and its $E_{\pi^{(3)}}$-structure is given as 
\begin{align*}
  H^2_{\rm cusp}(Y^{(3)}_{\mathcal K_3}, {\mathcal L}^{(3)}(\boldsymbol{w}_{\boldsymbol{\lambda}}; E_{\pi^{(3)}})) [\pi^{(3)}_{\rm fin}]  
   =  H^2_{\rm cusp}(Y^{(3)}_{\mathcal K_3}, {\mathcal L}^{(3)}(\boldsymbol{w}_{\boldsymbol{\lambda}};  {\mathbf C}  )) [\pi^{(3)}_{\rm fin}] 
        \cap H^2(Y^{(3)}_{\mathcal K_3}, {\mathcal L}^{(3)}(\boldsymbol{w}_{\boldsymbol{\lambda}}; E_{\pi^{(3)}})).  
\end{align*}
 Hence we can choose a generator $\eta_{\pi^{(3)}}$ of the $1$-dimensional $E_{\pi^{(3)}}$-space $H^2_{\rm cusp}(Y^{(3)}_{\mathcal K_3}, {\mathcal L}^{(3)}(\boldsymbol{w}_{\boldsymbol{\lambda}}; E_{\pi^{(3)}})) [\pi^{(3)}_{\rm fin}] $, and the universal coefficient theorem yields that there exists a complex number $\Omega_{\pi^{(3)}} \in {\mathbf C}^\times$ satisfying 
$\delta^{(3)}(\boldsymbol{f}) = \Omega_{\pi^{(3)}} \eta_{\pi^{(3)}}$.   
From its construction, $\Omega_{\pi^{(3)}}$ is determined up to multiplication of elements of $E^\times_{\pi^{(3)}}$.   
We call  $\Omega_{\pi^{(3)}}$ the period for $\pi^{(3)}$.

\begin{rem}\label{rem:tameint}
We impose that $\mathcal{K}_2$ coincides with $\mathrm{GL}_2(\widehat{\mathbf{Z}})$ in the construction of the periods, but this condition seems too restrictive. 
In fact, 
   Iwahori fixed vectors play important roles 
   in the construction of the $p$-adic $L$-functions for ${\rm GL}_{n+1} \times {\rm GL}_n$ (for example, see \cite{jan}).
However, 
 carefully observing use of essential vectors in \cite{jpss81}, 
  we do not think that it is so easy to be obvious to give an explicit relation between local zeta integrals and the $L$-factors at ramified places when $\mathcal{K}_2$ does not coincide with $\mathrm{GL}_2(\widehat{\mathbf{Z}})$.   
Since the main theme of this article is to give a precise cohomological interpretation of archimedean zeta integrals, 
we postpone the study of local zeta integrals at ramified places to the future work. 
\end{rem}

\subsection{Main result}\label{sec:main}

Let $\nabla^{\boldsymbol{n}_m} \iota^* {\rm p}^{\ast}_3 \eta_{\pi^{(3)}, x} $
      (resp.\ ${\rm p}^{\ast}_2 \eta^\pm_{\pi^{(2)}, x}$)
      be the restriction of $\nabla^{\boldsymbol{n}_m} \iota^* {\rm p}^{\ast}_3 \eta_{\pi^{(3)}} $
           (resp.\ ${\rm p}^{\ast}_2 \eta^\pm_{\pi^{(2)} }$)
       to ${\mathcal Y}^{(2)}_{ {\mathcal K}_2, x }$
       for $x \in {\rm Cl}^+_{\mathbf Q}({\mathcal K}_2)$.
The aim of this paper is to evaluate the following summation of the cohomological cup products
\begin{align*}
   I(m, \pi^{(3)}, \pi^{(2)} , \varphi  )   
    :=    \sum_{x\in {\rm Cl}^+_{\mathbf Q}({\mathcal K}_2)}  \varphi(x)        
           {\rm Tw}_{ [\boldsymbol{n}_m] } \left(\nabla^{\boldsymbol{n}_m} \iota^* {\rm p}^{\ast}_3 \eta_{\pi^{(3)}, x}  \cup {\rm p}^{\ast}_2 \eta^\pm_{\pi^{(2)}, x}\right).
\end{align*}
As we  have already explained in Remark \ref{rem:tameint}, we suppose that 
   ${\mathcal K}_2 = {\rm GL}_2( \widehat{\mathbf Z} )$.  
Hence if the conductor of $\varphi$ is not trivial, 
$ I(m, \pi^{(3)}, \pi^{(2)} , \varphi  )$ vanishes.     
Therefore we may assume that $\varphi$ is the trivial character ${\mathbf 1}$.  
We write  $I(m, \pi^{(3)}, \pi^{(2)}   )$ (resp.\ ${\mathcal Z}(s; \boldsymbol{f}, h^\pm)$)
   for the cup product $I(m, \pi^{(3)}, \pi^{(2)} , {\mathbf 1}  )$  (resp.\ for the zeta integral ${\mathcal Z}(s; \boldsymbol{f}, h^\pm, \mathbf{1})$)
     for the sake of simplicity.

Set $(-1)^{m + \delta+  w_{ \boldsymbol{\lambda} } + 1 }=\pm 1$. Then
the constructions of $\eta_{\pi^{(3)}}$ and $\eta^\pm_{\pi^{(2)}}$ imply the following identity (see Proposition~\ref{prop:intsign} and Corollary~\ref{cor:signbirch}):
\begin{align}\label{eq:etacup}
 \begin{aligned} 
     I(m, \pi^{(3)}, \pi^{(2)}   )
   =  \frac{1}{ \Omega_{\pi^{(3)}}  \Omega^\pm_{\pi^{(2)}}  } 
        \cdot 
        \sqrt{-1}^{\frac{l_3}{2}+2l_2-m-1}  C(\boldsymbol{w}_{\boldsymbol{\lambda}}, \boldsymbol{n}_m) 
            {\mathcal Z}( s ; \boldsymbol{f}, h^+)  |_{s=\frac{1}{2}+[\boldsymbol{n}_m]}.  
 \end{aligned}  
\end{align}
The main theorem in this article is the evaluation of the integral in the right-hand side of (\ref{eq:etacup}).    
The result is as follows:

\begin{thm}\label{thm:main}
{\itshape
Suppose that $ (-1)^{m+ \delta + \frac{l_3}{2} }  
                          = \pm 1$.
Then we have 
\begin{align*}
     I(m, \pi^{(3)}, \pi^{(2)}   ) 
     =  (-1)^{ \delta  } 
        \sqrt{-1}^{\frac{l_3}{2}-m+1}    
         \binom{ \frac{l_3}{2} - 1  }{ m-\frac{l_3}{2}-1 }  
                 \binom{  \frac{l_3}{2} - 1  }{  \frac{l_3}{2}+l_2-m }
           \frac{  L(m, {\mathcal M}(\pi^{(3)} \times \pi^{(2)})   ) }{ \Omega_{\pi^{(3)}} \Omega^\pm_{\pi^{(2)}} }
\end{align*}
If $(-1)^{m+ \delta + \frac{l_3}{2} } 
      \neq \pm 1$, 
       then 
       $I(m, \pi^{(3)}, \pi^{(2)}   )=0$.
}
\end{thm}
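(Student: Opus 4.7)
The plan is to reduce Theorem~\ref{thm:main} to an explicit evaluation of the archimedean local zeta integral and then assemble the result through (\ref{eq:etacup}) and Birch's lemma (Corollary~\ref{cor:signbirch}). First, when the parity condition $(-1)^{m+\delta+l_3/2}=\pm 1$ holds, the identity (\ref{eq:etacup}) reduces the problem to evaluating the global zeta integral $\mathcal{Z}(m-\tfrac{3}{2};\boldsymbol{f},h^+)$. Since the cusp forms $\boldsymbol{f}$ and $h^+$ were chosen in Section~\ref{sec:per} so that their Whittaker functions factor as $W^{-\lambda_2}_{\pi^{(3)}}=W^{-\lambda_2}_{\pi^{(3)},\infty}\otimes\bigotimes_{v\text{:finite}}W^{\mathrm{ess}}_{\pi^{(3)},v}$ and $W^+_{\pi^{(2)}}=W^+_{\pi^{(2)},\infty}\otimes\bigotimes_{v\text{:finite}}W^\circ_{\pi^{(2)},v}$, a standard unfolding argument in the Whittaker model of Jacquet--Piatetski-Shapiro--Shalika decomposes the global integral as a product of local zeta integrals over all places.

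At each finite place $v$, the defining property of the essential vector $W^{\mathrm{ess}}_{\pi^{(3)},v}$ recalled in Section~\ref{sec:per} identifies the local zeta integral with the local $L$-factor $L(s,\pi^{(3)}_v\times\pi^{(2)}_v)$ on the nose. Hence the problem reduces to
\begin{align*}
\mathcal{Z}(s;\boldsymbol{f},h^+)=Z_\infty\bigl(s;W^{-\lambda_2}_{\pi^{(3)},\infty},W^+_{\pi^{(2)},\infty}\bigr)\cdot L_{\mathrm{fin}}\bigl(s,\pi^{(3)}\times\pi^{(2)}\bigr),
\end{align*}
and the entire content of Theorem~\ref{thm:main} boils down to computing the archimedean factor $Z_\infty$ at $s=m-\tfrac{3}{2}$.

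The main obstacle lies precisely in this archimedean evaluation. One expands $v^{\boldsymbol{\lambda}}_{-\lambda_2}=(-z_1+\sqrt{-1}z_2)^{\lambda_2}z_3^{\lambda_3-\lambda_2}$ into the monomial basis and substitutes the Mellin--Barnes representation of $\varphi^{(3)}_+(z_{\boldsymbol{a}})$ from \cite[Section~4.6]{him} and that of $\varphi^{(2)}_-$ from \cite[Corollary~3.6]{him}. Carrying out the integration along the Iwasawa coordinates of $\mathrm{GL}_2(\mathbf{R})$ and collapsing the resulting triple Mellin--Barnes integral via Barnes' lemma, one obtains a closed-form expression of $Z_\infty$. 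The expected outcome, dictated by matching (\ref{eq:etacup}) with the statement of the theorem, is
\begin{align*}
Z_\infty\bigl(m-\tfrac{3}{2};W^{-\lambda_2}_{\pi^{(3)},\infty},W^+_{\pi^{(2)},\infty}\bigr)=(-1)^{\delta+l_2+1}L_\infty\bigl(m-\tfrac{3}{2},\pi^{(3)}\times\pi^{(2)}\bigr);
\end{align*}
the factor $(-1)^\delta$ traces back to the sign $(-1)^{a_1}\sqrt{-1}^{a_2}$ in the $W^\alpha_{\pi^{(3)},\infty}$ formulas, and the remaining power of $\sqrt{-1}$ arises from the binomial expansion of $v^{\boldsymbol{\lambda}}_{-\lambda_2}$ combined with a careful bookkeeping of signs produced by the embedding $\iota$.

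Assembling these pieces and using the motivic normalization $L(s,\mathcal{M}(\pi^{(3)}\times\pi^{(2)}))=L(s-\tfrac{3}{2},\pi^{(3)}\times\pi^{(2)})$ from Section~\ref{sec:motives}, the displayed formula in Theorem~\ref{thm:main} follows once the powers of $\sqrt{-1}$ in (\ref{eq:etacup}) are collected with those produced by the archimedean computation. For the vanishing assertion when $(-1)^{m+\delta+l_3/2}\neq\pm 1$, one appeals directly to Corollary~\ref{cor:signbirch}: with $\varphi=\mathbf{1}$ and $\mathcal{K}_2=\mathrm{GL}_2(\widehat{\mathbf{Z}})$ one has $\varphi_\infty(-1)=1$ and $w_{\boldsymbol{\lambda}}=\tfrac{l_3}{2}-1$, so the parity factor $\bigl(1\pm\varphi_\infty(-1)(-1)^{m+\delta+1+w_{\boldsymbol{\lambda}}}\bigr)$ in Birch's lemma vanishes precisely in this case, yielding $I(m,\pi^{(3)},\pi^{(2)})=0$.
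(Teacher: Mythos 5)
Your overall skeleton matches the paper's proof exactly: reduce via the cup-product identity (\ref{eq:etacup}), unfold in the Whittaker model, use the essential-vector characterization to identify each finite local integral with $L(s,\pi^{(3)}_v\times\pi^{(2)}_v)$, and then handle the archimedean factor. The vanishing assertion via Corollary~\ref{cor:signbirch} is also exactly what the paper does.

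The genuine gap is in the archimedean evaluation, which is the real content of the theorem. You propose to expand $v^{\boldsymbol{\lambda}}_{-\lambda_2}$ in the monomial basis, substitute the Mellin--Barnes representations of $\varphi^{(3)}_+$ and $\varphi^{(2)}_-$, integrate over Iwasawa coordinates, and collapse the result by Barnes' lemma. This would in effect be a from-scratch reproof of \cite[Corollary~9.7]{him}, the single precomputed input that the paper actually invokes: that reference provides, as a theorem, the identity
\[
\int_{\mathrm{N}_2(\mathbf{R})\backslash\mathrm{GL}_2(\mathbf{R})}\varphi^{(3)}_+(v^{\boldsymbol{\lambda}^{(3)}}_{-\lambda_2})(\iota(g))\,\varphi^{(2)}_-(v^{\boldsymbol{\lambda}^{(2)}}_{\lambda_2})(g)\,|\det g|_\infty^{s-\frac12}\,\mathrm{d}g_\infty=(-1)^{\lambda_2}\bigl\langle\mathrm{P}_{\boldsymbol{\lambda}}(v^{\boldsymbol{\lambda}^{(3)}}_{-\lambda_2}),v^{\boldsymbol{\lambda}^{(2)}}_{\lambda_2}\bigr\rangle\cdot L(s,\pi^{(3)}_\infty\times\pi^{(2)}_\infty),
\]
from which the remaining sign $\langle\mathrm{P}_{\boldsymbol{\lambda}}(v^{\boldsymbol{\lambda}^{(3)}}_{-\lambda_2}),v^{\boldsymbol{\lambda}^{(2)}}_{\lambda_2}\rangle=(-1)^\delta$ follows from $\iota_{\boldsymbol{\lambda}}((-1)^\delta v^{\boldsymbol{\lambda}^{(2)}}_{-\lambda_2})=v^{\boldsymbol{\lambda}^{(3)}}_{-\lambda_2}$. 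You never actually carry out the Barnes-lemma manipulation; instead you write that the archimedean constant $(-1)^{\delta+l_2+1}$ is ``dictated by matching (\ref{eq:etacup}) with the statement of the theorem.'' That is circular: you cannot use the theorem to determine the archimedean local integral that the theorem is supposed to be proved from. The correct move is either to cite \cite[Corollary~9.7]{him} (as the paper does) or to genuinely execute the Mellin--Barnes computation, including the substitution $\lambda_3=2w_{\boldsymbol\lambda}+3$, $\lambda_2=l_2+1$, $s=m-\tfrac32$, and verify that the triple contour integral reduces to exactly $\Gamma_{\mathbf C}(s+\nu_3+\nu_2+\tfrac{l_3+l_2}{2})\Gamma_{\mathbf C}(s+\nu_3+\nu_2+\tfrac{l_3-l_2}{2})\Gamma_{\mathbf C}(s+\nu_3+\nu_2+\tfrac{l_2}{2})$ with the precise constant. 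As written, the proposal identifies all the pieces correctly but leaves the archimedean normalization unproven.
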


Note that a standard argument yields that the zeta integral appearing in (\ref{eq:etacup}) is unfolded to
\begin{align*}
 {\mathcal Z}( s ; \boldsymbol{f}, h^+) 
  =   \int_{{\rm N}_2({\mathbf Q}_{\mathbf A})  \backslash {\rm GL}_2( {\mathbf Q}_{\mathbf A} ) }   
        W^{ -\lambda_2 }_{ \pi^{(3)} }  (\iota(g))  
         W^+_{ \pi^{(2)} }    (g)       
         |\det g|^{s-\frac{1}{2}}_{\mathbf A}
         {\rm d}g,
\end{align*}
and hence the proof of Theorem~\ref{thm:main} is reduced to evaluation of the local zeta integral above. 

\begin{proof}
We firstly recall the notation and settings; we have $\pi^{(3)}_\infty\cong\mathrm{Ind}^{\mathrm{GL}_3(\mathbf{R})}_{\mathrm{P}_{2,1}(\mathbf{R})}(D_{\nu_3,l_3}\boxtimes \chi_{\nu_3}^{\delta})$ and $\pi^{(2)}_\infty\cong D_{\nu_2,l_2}$ where
 \begin{align*}
   \nu_2 &=  - \frac{l_2}{2} + \frac{1}{2}, &
   \nu_3 &=                - \frac{l_3}{2} + 1 \\  
  \boldsymbol{\lambda}^{(3)}&=(\lambda_3,\delta), \qquad \lambda_3=l_3+1, &  \boldsymbol{\lambda}^{(2)}&=(\lambda_2,0), \qquad \lambda_2=l_2+1, \\
   \boldsymbol{n}_{\boldsymbol{\lambda}} &=(n_{\boldsymbol{\lambda}},0)=(l_2-1), &  \boldsymbol{w}_{\boldsymbol{\lambda}}&=(w_{\boldsymbol{\lambda}},w_{\boldsymbol{\lambda}},w_{\boldsymbol{\lambda}})=\left(\dfrac{l_3}{2}-1, \dfrac{l_3}{2}-1, \dfrac{l_3}{2}-1\right),  \\   
   \boldsymbol{n}_m &= ( n_{m,1}, n_{m,2})=(l_2-1,m-l_2-1), & [\boldsymbol{n}_m] &=m-2.
 \end{align*}  
See (\ref{eq:NormNu}), (\ref{eq:Lambda}), (\ref{eq:w2}) and (\ref{eq:nm}) for details. 
  In particular, we see that $(-1)^{ m+ \delta + w_{\boldsymbol{\lambda}} + 1}  = (-1)^{m+ \delta + \frac{l_3}{2} }$ holds and obtain the identity of the $L$-functions $L(s,\mathcal{M}(\pi^{(3)}\times \pi^{(3)}))=L(s-\frac{3}{2},\pi^{(3)}\times \pi^{(2)})$; see (\ref{eq:Gfactor}).
Since $\frac{1}{2} + [\boldsymbol{n}_m] = m -\frac{3}{2}$, the identity yields that 
\begin{align*}
       L(s, \pi^{(3)} \times \pi^{(2)}  ) |_{s=\frac{1}{2} + [\boldsymbol{n}_m] } 
   =  L(m, {\mathcal M}(\pi^{(3)} \times \pi^{(2)}  )) 
\end{align*}
holds. For  $s\in {\mathbf C}$ with ${\rm Re}(s) \gg 0$, 
the unfolding of the zeta integral yields that 
\begin{align*} 
{\mathcal Z}(s ; \boldsymbol{f}, h^+) 
  &=   \int_{ {\rm N}_2({\mathbf R} ) \backslash {\rm GL}_2({\mathbf R})}   
      W^{ - \lambda_2}_{\pi^{(3)}, \infty}\left(  \iota(g)  \right) 
    W^+_{\pi^{(2)}, \infty} (g)  |\det g|^{s-\frac{1}{2} }_\infty 
          {\rm d}g_\infty  \\
    & \qquad \quad  \times 
          \prod_{v\text{:finite}}
             \int_{ {\rm N}_2({\mathbf Q}_v) \backslash {\rm GL}_2({\mathbf Q}_v)}   
      W_{\pi^{(3)}, v }\left(  \iota(g)  \right) 
    W_{\pi^{(2)}, v} (g)  |\det g|^{s-\frac{1}{2} }_v 
          {\rm d}g_v.  
\end{align*} 

By the property of the essential vector (see \cite[page 208, Th\'{e}or$\grave{\rm e}$m (ii)]{jpss81} for example), we find that 
\begin{align*}
    \int_{ {\rm N}_2({\mathbf Q}_v) \backslash {\rm GL}_2({\mathbf Q}_v)}   
      W_{\pi^{(3)}, v }\left(  \iota(g)  \right) 
    W_{\pi^{(2)}, v} (g)  |\det g|^{ [\boldsymbol{n}_m] } 
          {\rm d}g
  = L(s, \pi^{(3)}_v \times \pi^{(2)}_v ) |_{s=\frac{1}{2}+[\boldsymbol{n}_m]}   
  = L_v(m, {\mathcal M}(\pi^{(3)} \times \pi^{(2)})   ).
\end{align*}

We compute the archimedean local integral. 
Recall the $K_2$-homomorphism $\varphi^{(2)}_- \colon V^{(2)}_{\boldsymbol{\lambda}^{(2)}} \to {\mathcal W}(\pi^{(2)}_\infty, \psi^{-1} )$ 
(resp.\ the  $K_3$-homomorphism  $\varphi^{(3)}_+\colon V^{(3)}_{\boldsymbol{\lambda}^{(3)}} \to {\mathcal W}(\pi^{(3)}_\infty, \psi )$) introduced in Section \ref{sec:per}. Also recall the $K_2$-equivariant homomorphisms $\iota^{\boldsymbol{\lambda}^{(3)}}_{\boldsymbol{\lambda}^{(2)}}\colon V_{\boldsymbol{\lambda}^{(2)}}^{(2)}\hookrightarrow V^{(3)}_{\boldsymbol{\lambda}^{(3)}}$ and  $\mathrm{P}^{\boldsymbol{\lambda}^{(3)}}_{\boldsymbol{\lambda}^{(2)}}\colon V^{(3)}_{\boldsymbol{\lambda}^{(3)}} \twoheadrightarrow V_{\boldsymbol{\lambda}^{(2)}}^{(2)}$ introduced in Section~\ref{sec:branch_orth}, which we abbreviate as $\iota_{\boldsymbol{\lambda}}$ and $\mathrm{P}_{\boldsymbol{\lambda}}$ to lighten the notation (see also \cite[Section 9.5]{him}).
Define a ${\mathbf C}$-bilinear pairing $\langle \cdot, \cdot \rangle: V^{(2)}_{\boldsymbol{\lambda}^{(2)}} \otimes_{\mathbf C} V^{(2)}_{\boldsymbol{\lambda}^{(2)}} \to {\mathbf C}$ by 
\begin{align*}
   \langle  v^{\boldsymbol{\lambda}^{(2)}}_{q}, v^{\boldsymbol{\lambda}^{(2)}}_{q^\prime} \rangle = \delta_{0, q+q^\prime}  
\end{align*}
for each $q, q^\prime \in \{ \pm \lambda_2 \}$; see \cite[(9.2)]{him} for details.
Here $\delta_{\ast, \ast}$ denotes Kronecker's delta symbol. 
Then due to \cite[Corollary 9.7]{him}, we have 
\begin{align*}
       \int_{ {\rm N}_2({\mathbf R} ) \backslash {\rm GL}_2({\mathbf R})}  & 
           W^{ -\lambda_2 }_{\pi^{(3)}, \infty  } \left(  \iota(g)  \right)  
           W^+_{\pi^{(2)},\infty}\left(  g  \right)     
           \lvert \det g \rvert^{s-\frac{1}{2}}_\infty  {\rm d}g_\infty     \\
&=      \int_{ {\rm N}_2({\mathbf R} ) \backslash {\rm GL}_2({\mathbf R})}   
          \varphi^{(3)}_{+}( v^{\boldsymbol{\lambda}^{(3)}}_{-\lambda_3}    )(\iota(g))
          \varphi^{(2)}_-(    v^{\boldsymbol{\lambda}^{(2)}}_{\lambda_2}    )(g)
          | \det g |^{s-\frac{1}{2}}_\infty
          {\rm d}g_\infty     \\
   &=   (-1)^{\lambda_2}             
          \langle \mathrm{P}_{\boldsymbol{\lambda}}\left(   v^{\boldsymbol{\lambda}^{(3)}}_{-\lambda_2}        \right),  
                                                          v^{\boldsymbol{\lambda}^{(2)}}_{\lambda_2}   \rangle 
          \cdot  L(s, \pi^{(3)}_\infty \times \pi^{(2)}_\infty).   
\end{align*}
Since $ \iota_{\boldsymbol{\lambda}}  ( (-1)^{\delta} v^{\boldsymbol{\lambda}^{(2)}}_{-\lambda_2} ) = v^{\boldsymbol{\lambda}^{(3)}}_{-\lambda_2}$ holds, 
we have the identity
\begin{align*}
        \langle \mathrm{P}_{\boldsymbol{\lambda}} \left(   v^{\boldsymbol{\lambda}^{(3)}}_{-\lambda_2}  \right),  
                                                          v^{\boldsymbol{\lambda}^{(2)}}_{\lambda_2}    \rangle     
 =     \langle (-1)^{\delta} v^{\boldsymbol{\lambda}^{(2)}}_{-\lambda_2},      v^{\boldsymbol{\lambda}^{(2)}}_{\lambda_2}    \rangle 
 =  (-1)^{\delta}.
\end{align*}
Hence, using this identity, we find that 
\begin{align*}
 \sqrt{-1}^{ n_{m,1} - n_{m,2} + w_{\boldsymbol{\lambda}}} C(\boldsymbol{w}_{\boldsymbol{\lambda}}, \boldsymbol{n}_m)    
  & \int_{ {\rm N}_2({\mathbf R} ) \backslash {\rm GL}_2({\mathbf R})}   
           W^{ -n_1 - 2 }_{\pi^{(3)}, \infty  } \left(  \iota(g)  \right)  
           W^+_{\pi^{(2)},\infty}\left(  g  \right)     
           | \det g |^{s-\frac{1}{2}}_\infty  {\rm d}g_\infty   \\
& =  \sqrt{-1}^{ n_{m,1} - n_{m,2} + w_{\boldsymbol{\lambda}}} C(\boldsymbol{w}_{\boldsymbol{\lambda}}, \boldsymbol{n}_m )   
        \cdot (-1)^{\lambda_2+\delta}  
         \cdot L(s, \pi^{(3)}_\infty \times \pi^{(2)}_\infty )  \\ 
& =    (-1)^{ \delta + l_2+1 } 
        \sqrt{-1}^{ \frac{l_3}{2}+2l_2-m-1}      
         \binom{ \frac{l_3}{2} - 1  }{ m-\frac{l_3}{2}-1 }  
                  \binom{  \frac{l_3}{2} - 1  }{  \frac{l_3}{2}+l_2-m }
         L(s, \pi^{(3)}_\infty \times \pi^{(2)}_\infty ).
\end{align*}
This proves the statement (note that we have $(-1)^{l_2+1}\sqrt{-1}^{\frac{l_3}{2}+2l_2-m-1}=\sqrt{-1}^{\frac{l_3}{2}-m+1}$).
\end{proof}

\begin{rem}\label{rem:relCP}
According to \cite[page 103 (4)]{coa89}, 
 the modified Euler factor ${\mathcal L}^{\sqrt{-1}}_\infty( {\mathcal M}(\pi^{(3)} \times \pi^{(2)} ) (m)  )$ at $\infty$ of the  $p$-adic $L$-functions for ${\mathcal M}(\pi^{(3)} \times \pi^{(2)} )$ is given by   
 \begin{align*}
 {\mathcal L}^{ \sqrt{-1}}_\infty( {\mathcal M}(\pi^{(3)} \times \pi^{(2)} )(m)  )
 &= ( \sqrt{-1})^{\frac{l_3}{2}+l_2-3m} 
     L_\infty(m, {\mathcal M} (\pi^{(3)} \times \pi^{(2)} ) ) \\
 &= (-1)^{ \delta + m } \sqrt{-1}^{l_2 -1} 
           \cdot (-1)^{\delta }\sqrt{-1}^{\frac{l_3}{2} -m +1} 
     L_\infty(m, {\mathcal M} (\pi^{(3)} \times \pi^{(2)} ) ).          
 \end{align*}
 Hence, if we normalize $\nabla^{\boldsymbol{n}_m}  = \nabla_{2w_{\boldsymbol{\lambda}}-n_{m,1}-n_{m,2}, n_{m,2}}$ 
                                        in (\ref{eq:nabkl}) so that  
 \begin{align*}
     \widetilde\nabla^{\boldsymbol{n}_m} 
     =  (-1)^{ \delta + m } \sqrt{-1}^{l_2 -1}
      C(\boldsymbol{w}_{\boldsymbol{\lambda}}, \boldsymbol{n}_m)^{-1}    
       \nabla^{\boldsymbol{n}_m},  
 \end{align*}
 then  $\widetilde\nabla^{\boldsymbol{n}_m}$ defines a map 
 $ \iota^* {\mathcal L}^{(3)} ( \boldsymbol{w}_{\boldsymbol{\lambda}} ; A)_{/ {\mathcal Y}^{(2)}_{{\mathcal K}_2} }  \longrightarrow   {\mathcal L}^{(2)} (\boldsymbol{n}_m ; A)_{/ {\mathcal Y}^{(2)}_{{\mathcal K}_2} }$
 between local systems in the same way as (\ref{eq:nablocsys}).  
 Then Theorem \ref{thm:main} implies that 
    the archimedean local integral appearing in the evaluation of ${\rm Tw}_{ [\boldsymbol{n}_m] } \left(\widetilde\nabla^{\boldsymbol{n}_m} \iota^* {\rm p}^{\ast}_3 \eta_{\pi^{(3)}}  \cup {\rm p}^{\ast}_2 \eta^\pm_{\pi^{(2)}}\right)$
     coincides with the modified local $L$-factor ${\mathcal L}^{\sqrt{-1}}_\infty( {\mathcal M}(\pi^{(3)} \times \pi^{(2)} )(m)  )$ at $\infty$ for each $m\in \mathrm{Crit}(\boldsymbol{\lambda})$. 
\end{rem}

\begin{rem}\label{rem:jan19}
We compare Theorem \ref{thm:main} with \cite[Theorem 2.48, Theorem 2.50]{rag16} and \cite[Corollary 4.12]{jan19}. 
\begin{enumerate}
\item We briefly recall a description of the critical values for ${\rm GL}_3 \times {\rm GL}_2$ presented by Raghuram in \cite{rag16}.   
         Let $\Pi, \Sigma, \epsilon, \eta$ be as defined in \cite{rag16}, which we can interpret as follows by using notations in this article: 
         \begin{align*}   
            \Pi &= \pi^{(3)}, & \Sigma &= \pi^{(2)}, & \epsilon &= (-1)^{ m+ \delta + 1 + w_{\boldsymbol{\lambda}}} = (-1)^{ m + \delta + \frac{l_3}{2}  }, & \eta &= \pm. 
         \end{align*}
         Then Raghuram has defined a complex number $\langle [\Pi_\infty]^\epsilon, [\Sigma_\infty]^\eta \rangle$ in \cite[(2.47)]{rag16} by using a cup product of cohomology classes. 
         The result in \cite{sun17} yields that  $\langle [\Pi_\infty]^\epsilon, [\Sigma_\infty]^\eta \rangle$ does not equal zero (see \cite[Theorem 2.48]{rag16}). 
         Raghram also defines $p^{\epsilon, \eta}_\infty(\boldsymbol{w}_{\boldsymbol{\lambda}}, \boldsymbol{n}_{\boldsymbol{\lambda}})$ to be $\langle [\Pi_\infty]^\epsilon, [\Sigma_\infty]^\eta \rangle^{-1}$,  
         and this $p^{\epsilon, \eta}_\infty(\boldsymbol{w}_{\boldsymbol{\lambda}}, \boldsymbol{n}_{\boldsymbol{\lambda}})$ shows up in his formula for the critical values in \cite[Theorem 2.50]{rag16}. 
         Note that, in this article, we write the weight of $\Pi$ and $\Sigma$ (in the sense of \cite{rag16}) as $\boldsymbol{w}_{\boldsymbol{\lambda}}$ and $\boldsymbol{n}_{\boldsymbol{\lambda}}$ respectively. 
         In \cite[Theorem 2.50]{rag16}, the complex number $p^{\epsilon, \eta}_\infty(\boldsymbol{w}_{\boldsymbol{\lambda}}, \boldsymbol{n}_{\boldsymbol{\lambda}})$ appears as an ``unknown constant'', but
          due to the proof of Theorem \ref{thm:main}, we find the following explicit formula on it:
         \begin{multline*}
           \qquad \qquad    \left(  \Omega_{\pi^{(3)}} \Omega^\pm_{\pi^{(2)}} p^{\epsilon, \eta}_\infty(\boldsymbol{w}_{\boldsymbol{\lambda}}, \boldsymbol{n}_{\boldsymbol{\lambda}}) \right)^{-1} 
              = \frac{ \langle [\Pi_\infty]^\epsilon, [\Sigma_\infty]^\eta \rangle 
                            }{   \Omega_{\pi^{(3)}} \Omega^\pm_{\pi^{(2)}}  } \\
               =    (-1)^{ \delta  } 
                    \sqrt{-1}^{\frac{l_3}{2}-m+1}                  
                      \binom{ \frac{l_3}{2} - 1  }{m-\frac{l_3}{2}-1}  
                     \binom{  \frac{l_3}{2} - 1  }{\frac{l_3}{2}+l_2-m }
                     \frac{  L_\infty(m, {\mathcal M} ( \pi^{(3)} \times \pi^{(2)}   ) )  }{  \Omega_{\pi^{(3)}} \Omega^\pm_{\pi^{(2)}} }. 
         \end{multline*}
         Hence Theorem \ref{thm:main} refines \cite[Theorem 2.50]{rag16} and it is consistent with \cite[page 107, Period Conjecture]{coa89}. 

\item Let us recall one of the main results proposed in \cite{jan19}.   
According to \cite[(25) and Section 4.3, p.~6569]{jan19}, 
define a pairing $e_\infty( |\cdot|^m_\infty, \cdot, \cdot ) \colon  {\mathcal W}(\pi^{(3)}_\infty, \psi_\infty )^{K_3}  \otimes {\mathcal W}(\pi^{(2)}_\infty, \psi^{-1}_\infty )^{K_2}   \to {\mathbf C}$ so that
\begin{align*}
    e_\infty( |\cdot|^m_\infty, {W}_3, W_2) 
    =    \frac{1}{L(m-\frac{3}{2}, \pi^{(3)}_\infty \times \pi^{(2)}_\infty  )} 
         \int_{ {\rm N}_2({\mathbf R} )  \backslash {\rm GL}_2({\mathbf R})  } 
          {W}_3 ( \iota(g) ) W_2 ( g )
           |\det g|^{m-\frac{3}{2}}_\infty  
         {\rm d}g_\infty 
\end{align*}
for ${W}_3\in {\mathcal W}(\pi^{(3)}_\infty,\psi_\infty )^{K_3}$ and $W_2 \in {\mathcal W}(\pi^{(2)}_\infty, \psi^{-1}_\infty )^{K_2}$.
Let $E/ {\mathbf Q}$ be a sufficiently large finite extension of $\mathbf{Q}$ (depending on the weight of $\pi_\infty^{(3)}$). 
Take arbitrary $m, m^\prime \in {\mathbf Z}$ so that
  that $m-m^\prime$ is even 
  and both $L(m-\frac{3}{2}, \pi^{(3)} \times \pi^{(2)} )$ and $L(m^\prime-\frac{3}{2}, \pi^{(3)} \times \pi^{(2)} )$ are critical.
Then  \cite[Corollary 4.12]{jan19} yields that, 
for each ${W}_3\in {\mathcal W}(\pi^{(3)}_\infty, \psi_\infty )^{K_3}$ and $W_2 \in {\mathcal W}(\pi^{(2)}_\infty, \psi^{-1}_\infty )^{K_2}$,   
there exists an element $c_{m,m^\prime} \in E$ (a priori depending on $m$ and $m'$) satisfying    
\begin{align*}
e_\infty( |\cdot|^m_\infty, {W}_3, W_2)
  = c_{m,m^\prime} e_\infty( |\cdot|^{m^\prime}_\infty, {W}_3, W_2).   
\end{align*}
On the other hand, 
assuming $W_3 = W^{-\lambda_2}_{ \pi^{(3)}, \infty }$ and $W_2 = W^+_{\pi^{(2)}, \infty}$, we can verify by using  \cite[Corollary 9.7]{him} that
\begin{align}\label{eq:perintrel}
e_\infty( |\cdot|^m_\infty, {W}_3, W_2) 
=    (-1)^{\delta +\lambda_2} \in E
\end{align}
holds for each $m$, which is independent of $m$. 
Therefore we can conclude that $c_{m,m'}$ equals $1$ for each $m$ and $m'$, and Theorem \ref{thm:main} thus refines the period relations in \cite[Theorem 4.13]{jan19} in the $\mathrm{GL}_3\times \mathrm{GL}_2$ case.
For general $n$, \cite{jan19} obtains similar result under some conditions; see \cite[Conjecture 4.3, Conjecture 4.6]{jan19} for details. 
Meanwhile, Ishii and Miyazaki recently generalize the results of \cite{him} to the case ${\rm GL}_{n+1} \times {\rm GL}_n$ over ${\mathbf C}$ in \cite{im}, 
   which also refine the results in \cite{jan19} unconditionally.      
\end{enumerate}
 Since there had been no explicit formula for the archimedean zeta integral  $\langle [\Pi_\infty]^\epsilon, [\Sigma_\infty]^\eta \rangle$ or $e_\infty( |\cdot|^m_\infty, {W}_3, W_2)$,   
    the definition of periods for ${\mathcal M}(\pi^{(3)} \times \pi^{(2)} )$ in \cite{rag16} and \cite{jan} depends on the critical points $m$; compare with \cite[Theorem A]{jan}. 
From the view point of Coates and Perrin-Riou's conjecture (refer to \cite{cp89}, \cite[page 107, Period Conjecture, page 111, Principal Conjecture]{coa89}),  
   the archimedean zeta integral must be given as a product of the period $\Omega_{\pi^{(3)}} \Omega^\pm_{\pi^{(2)}}$ and ${\mathcal L}^{\sqrt{-1}}_\infty( {\mathcal M}(\pi^{(3)} \times \pi^{(2)} )(m)  )$ in Remark \ref{rem:relCP}.    
  Januszewski's result implies that his periods 
     coincides with products of  $\Omega_{\pi^{(3)}} \Omega^\pm_{\pi^{(2)}}$, ${\mathcal L}^{\sqrt{-1}}_\infty( {\mathcal M}(\pi^{(3)} \times \pi^{(2)} )(m)  )$ 
      and implicit algebraic numbers depending on $m$, but we think that it is crucial to remove this ambiguous multiple. 
   See also \cite[Introduction]{jan} on this point.
We expect that the identity (\ref{eq:perintrel}) should be useful for the further refinement of the interpolation formula of the $p$-adic Rankin--Selberg $L$-functions for ${\rm GL}_3 \times {\rm GL}_2$  constructed in \cite[Theorem A]{jan}.
\end{rem}

\subsection{Algebraicity}\label{sec:alg}

Since Theorem \ref{thm:main} refines the formula in \cite[Theorem 2.50]{rag16}, 
   it immediately deduces the following corollary: 

\begin{cor}\label{cor:critalg}
(compare with \cite[Theorem 2.50]{rag16})
{\itshape 
Let $E(\pi^{(3)}, \pi^{(2)} )$ be a number field containing $\sqrt{-1}$, $E_{\pi^{(2)}}$ and $E_{\pi^{(3)}}$. 
Then, if $ (-1)^{m+\delta_3+  \frac{l_3}{2} }
                = \pm 1$, we have 
\begin{align*}
\frac{  L(m, {\mathcal M}(\pi^{(3)} \times \pi^{(2)})   ) }{ \Omega_{\pi^{(3)}} \Omega^\pm_{\pi^{(2)}} } \in E(\pi^{(3)}, \pi^{(2)} ).  
\end{align*}
}
\end{cor}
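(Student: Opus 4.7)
The plan is to deduce the corollary directly from Theorem~\ref{thm:main} by solving for the ratio and then checking that the right-hand side lies in $E(\pi^{(3)},\pi^{(2)})$. Under the parity hypothesis $(-1)^{m+\delta+\frac{l_3}{2}}=\pm 1$, Theorem~\ref{thm:main} gives
\begin{align*}
\frac{L(m,\mathcal{M}(\pi^{(3)}\times\pi^{(2)}))}{\Omega_{\pi^{(3))}}\Omega^\pm_{\pi^{(2)}}}
=\frac{(-1)^{\delta}\,I(m,\pi^{(3)},\pi^{(2)})}{\sqrt{-1}^{\frac{l_3}{2}-m+1}\binom{\frac{l_3}{2}-1}{m-\frac{l_3}{2}-1}\binom{\frac{l_3}{2}-1}{\frac{l_3}{2}+l_2-m}},
\end{align*}
so it suffices to verify that the numerator lies in $E(\pi^{(3)},\pi^{(2)})$ while the denominator is a nonzero element of $E(\pi^{(3)},\pi^{(2)})$.

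For the denominator, Proposition~\ref{prop:critregion} ensures that for $m\in\mathrm{Crit}(\boldsymbol{\lambda})$ both binomial coefficients are strictly positive integers, and $\sqrt{-1}\in E(\pi^{(3)},\pi^{(2)})$ by assumption, so the denominator is a nonzero element of $E(\pi^{(3)},\pi^{(2)})$.

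The main point is that $I(m,\pi^{(3)},\pi^{(2)})\in E(\pi^{(3)},\pi^{(2)})$. This follows from the definition
\[
I(m,\pi^{(3)},\pi^{(2)})=\sum_{x\in\mathrm{Cl}^+_{\mathbf Q}(\mathcal K_2)}\mathrm{Tw}_{[\boldsymbol{n}_m]}\!\left(\nabla^{\boldsymbol{n}_m}\iota^\ast\mathrm p_3^\ast\eta_{\pi^{(3)},x}\cup\mathrm p_2^\ast\eta^{\pm}_{\pi^{(2)},x}\right)
\]
together with the rationality properties recorded in Section~\ref{sec:per}. Specifically, $\eta_{\pi^{(3)}}$ is chosen as a generator of the one–dimensional $E_{\pi^{(3)}}$–vector space $H^2_{\mathrm{cusp}}(Y^{(3)}_{\mathcal K_3},\mathcal L^{(3)}(\boldsymbol w_{\boldsymbol\lambda};E_{\pi^{(3)}}))[\pi^{(3)}_{\mathrm{fin}}]$, and each $\eta^\pm_{\pi^{(2)}}$ is a generator of the corresponding one–dimensional $E_{\pi^{(2)}}$–eigenspace in compactly supported cohomology. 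The pullbacks $\mathrm p_n^\ast$ and $\iota^\ast$ are defined on integral local systems, and the branching map $\nabla^{\boldsymbol n_m}$ from Section~\ref{sec:branch} is defined over an arbitrary integral domain of characteristic zero (in particular over $E(\pi^{(3)},\pi^{(2)})$), since its matrix coefficients are integers. The twist $\mathrm{Tw}_{[\boldsymbol n_m]}$ arises from a character of $\mathbf Q^\times_{\mathbf A}$ valued in $\mathbf Q^\times$. Finally, the cup product (\ref{eq:cohcupTw}) together with the trace isomorphism $H^3_{\mathrm c}(\mathcal Y^{(2)}_{\mathcal K_2,x},\widetilde A)\xrightarrow{\sim}A$ is defined over any coefficient field $A$; taking $A=E(\pi^{(3)},\pi^{(2)})$ shows that each summand, and hence $I(m,\pi^{(3)},\pi^{(2)})$, belongs to $E(\pi^{(3)},\pi^{(2)})$. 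Combining these two observations with the displayed formula completes the proof. The case $(-1)^{m+\delta+\frac{l_3}{2}}\neq\pm1$ is vacuous when read through Theorem~\ref{thm:main}, since then both sides of the corollary reduce to $0$.

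No step is serious: once Theorem~\ref{thm:main} is granted, the work consists only in tracking that every ingredient used to form $I(m,\pi^{(3)},\pi^{(2)})$ is rational over $E(\pi^{(3)},\pi^{(2)})$, which is a direct consequence of the constructions in Sections~\ref{sec:branch}, \ref{sec:branch*} and \ref{sec:per}.
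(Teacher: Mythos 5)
Your proof is correct and takes the same route that the paper intends: the paper merely asserts that Corollary~\ref{cor:critalg} ``immediately'' follows from Theorem~\ref{thm:main}, and your argument spells out what that means — namely, that the cohomological cup product $I(m,\pi^{(3)},\pi^{(2)})$ is $E(\pi^{(3)},\pi^{(2)})$-rational because every ingredient in its construction (the classes $\eta_{\pi^{(3)}}$, $\eta^\pm_{\pi^{(2)}}$, the branching map, the pullbacks, the twist, and the cup product/trace pairing) is defined over $E(\pi^{(3)},\pi^{(2)})$, and that the constant relating $I(m,\pi^{(3)},\pi^{(2)})$ to the ratio of $L$-value to periods is a nonzero element of $E(\pi^{(3)},\pi^{(2)})$. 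Your verification that both binomial coefficients are nonzero on the critical range is also exactly what is needed; it is worth noting explicitly that this is precisely how Proposition~\ref{prop:critregion} was engineered.

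Two small imprecisions, neither of which affects the conclusion. First, you assert that the coefficients of the branching map $\nabla^{\boldsymbol{n}_m}$ are integers; in fact (\ref{eq:defnab}) involves a factor $\frac{1}{k!\,l!}$, so the map is defined over $\mathbf{Z}[1/(w_1^+!\,w_1^-!)]$, not $\mathbf{Z}$ — but since $E(\pi^{(3)},\pi^{(2)})\supseteq\mathbf{Q}$ this is harmless. Second, your closing sentence about the case $(-1)^{m+\delta+\frac{l_3}{2}}\neq\pm 1$ is confused: Theorem~\ref{thm:main} then gives $I(m,\pi^{(3)},\pi^{(2)})=0$ but does not produce a formula expressing the ratio of $L$-value to periods, so you cannot deduce anything about that ratio; fortunately the corollary simply makes no claim in that case, so the sentence can be dropped.
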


Assuming Deligne's conjecture  on critical values \cite[Conjecture~1.8]{del79} holds, we can deduce from 
   Corollary \ref{cor:critalg} an interesting period relation, 
   which implies that Raghuram and Shahidi's period $\Omega_{\pi^{(3)}}$ for $\pi^{(3)}$ has a motivic background as well as the canonical periods $\Omega^\pm_{\pi^{(2)}}$. We summarize the argument in the following remark: 

\begin{rem}\label{rem:RelDelPer} 
Let us admit the existence of the motive ${\mathcal M}[ \pi^{(3)} ]$ and the validity of Deligne's conjecture \cite[Conjecture 1.8]{del79}. In this remark, we write $a\sim b$ for $a,b\in \mathbf{C}$ if $a=bc$ holds for some $c\in \overline{\mathbf Q}^\times$.
For a pure motive ${\mathcal M}$ over ${\mathbf Q}$, let $\delta({\mathcal M})$ and $c^\pm({\mathcal M})$ be the periods for ${\mathcal M}$ defined in \cite{del79}.   
Besides these periods, 
    Yoshida defines period invariants $c_p({\mathcal M})$ of type $\{(\underbrace{2,\dotsc,2}_{p},\underbrace{1,\dotsc,1}_{t-2p},\underbrace{0,\dotsc,0}_p);(1,1)\}$ for integers $p$ satisfying a certain admissibility condition in \cite[Section 2]{yos01}; here $t$ denotes the  number of jumps of the Hodge filtration on $H_B(\mathcal{M})\otimes \mathbf{C}$.
Then applying \cite[Proposition 12]{yos01},  we find that\footnote{Here we temporarily denote $\pi^{(3)}_\infty=\mathrm{Ind}^{\mathrm{GL}_3(\mathbf{R})}_{\mathrm{P}_{2,1}(\mathbf{R})}(D_{\nu_3,l_3}\boxtimes \chi_{\nu_3}^{\delta_3})$ to avoid the notational confusion concerning $\delta$.} 
\begin{align}\label{eq:yos}
  c^\pm({\mathcal M}(\pi^{(3)} \times \pi^{(2)}  )  )  
     \sim c_1( {\mathcal M}[\pi^{(3)}] )  \delta(  {\mathcal M}[\pi^{(2)}]  )
           c^{\pm (-1)^{  \delta_3 +  \frac{l_3}{2}    } }(  {\mathcal M}[\pi^{(2)}]  ) 
\end{align}
where $c_1(\mathcal{M}[\pi^{(3)}])$ denotes the period invariant of $\mathcal{M}[\pi^{(3)}]$ of type $\{(2,1,0);(1,1)\}$. Note that $\delta(  {\mathcal M}[\pi^{(2)}]  ) \sim  (2\pi\sqrt{-1})^{-l_2}$ holds by \cite[(7.8.2)]{del79}. Moreover from
\cite[(5.1.8)]{del79}  we also have the relation 
\begin{align*}
    c^\pm(      {\mathcal M}(\pi^{(3)} \times \pi^{(2)})  )
   \sim  ( 2\pi\sqrt{-1} )^{    - d^\pm(   {\mathcal M}(\pi^{(3)} \times \pi^{(2)}  )  )  m     }  
        c^{  \pm (-1)^m}({\mathcal M}(\pi^{(3)} \times \pi^{(2)}  )(m)     ).  
\end{align*}
Then Corollary \ref{cor:critalg} and Deligne's conjecture yield that, if $(-1)^{m + \delta_3  + \frac{l_3}{2} } = \pm 1$ holds, both of the following ratios are algebraic numbers:
\begin{align*}
  \frac{  L(0, {\mathcal M}(\pi^{(3)} \times \pi^{(2)}  )  (m)  ) }{\Omega_{\pi^{(3)}} \Omega^\pm_{\pi^{(2)}} } 
   & \sim 
    \frac{  L_{\rm fin}(0, {\mathcal M}(\pi^{(3)} \times \pi^{(2)}  )  (m)  )  
                   }{  (2\pi \sqrt{-1} )^{  3m - l_2 - \frac{l_3}{2} }   \Omega_{\pi^{(3)}} \Omega^\pm_{\pi^{(2)}} }     
    \sim  
    \frac{  L_{\rm fin}(0, {\mathcal M}(\pi^{(3)} \times \pi^{(2)}  )  (m)  )  
                   }{  (2\pi \sqrt{-1} )^{  3m  - \frac{l_3}{2} }   \delta({\mathcal M} [\pi^{(2)}]  ) \Omega_{\pi^{(3)}} \Omega^\pm_{\pi^{(2)}} },      
                   \\    
  \frac{  L_{\rm fin}(0, {\mathcal M}(\pi^{(3)} \times \pi^{(2)}  )  (m)  ) }{  c^{  + }({\mathcal M}(\pi^{(3)} \times \pi^{(2)}  )(m)     )   } 
   & \sim  
  \frac{  L_{\rm fin}(0, {\mathcal M}(\pi^{(3)} \times \pi^{(2)}  )  (m)  ) }{    (2\pi\sqrt{-1})^{3m}   c^{(-1)^m}(      {\mathcal M}(\pi^{(3)} \times \pi^{(2)})  )   }.   
\end{align*}
Hence (\ref{eq:yos}) shows that 
\begin{align*}
    (2\pi \sqrt{-1} )^{  - \frac{l_3}{2} }   \delta({\mathcal M} [\pi^{(2)}]  ) \Omega_{\pi^{(3)}} \Omega^\pm_{\pi^{(2)}}
    \sim c^{(-1)^m}(      {\mathcal M}(\pi^{(3)} \times \pi^{(2)})  ) 
    \sim c_1( {\mathcal M}[\pi^{(3)}] )  \delta(  {\mathcal M}[\pi^{(2)}]  ) 
            c^{(-1)^{\delta_3 + \frac{l_3}{2} +m}}(  {\mathcal M}[\pi^{(2)}]  ).
\end{align*}
On the canonical periods $\Omega^\pm_{\pi^{(2)}}$, Hida shows in \cite[Theorem 8.1]{hid94} that 
\begin{align*}
   2\pi\sqrt{-1} \Omega^\mp_{\pi^{(2)}} \sim c^\pm(  {\mathcal M}[\pi^{(2)}](1)  ) 
     \sim (2\pi\sqrt{-1}) c^\mp(  {\mathcal M}[\pi^{(2)}]  )
\end{align*}
holds. Furthermore we also find that  
  $c_1( {\mathcal M}[\pi^{(3)}] ) \sim c^+( {\mathcal M}[\pi^{(3)}] ) c^-( {\mathcal M}[\pi^{(3)}] )$ holds due to \cite[Theorem~3]{yos01} and comparison of the types of both sides. 
Combining all of them, we can derive the period relation 
\begin{align*}
 \Omega_{\pi^{(3)}}
 \sim (2\pi \sqrt{-1} )^{   \frac{l_3}{2}  } c_1( {\mathcal M}[\pi^{(3)}] )
 \sim (2\pi \sqrt{-1} )^{   \frac{l_3}{2}  } c^+( {\mathcal M}[\pi^{(3)}] ) c^-( {\mathcal M}[\pi^{(3)}] )  
\end{align*}
under the condition  $(-1)^{m+\delta_3+ \frac{l_3}{2}  } = \pm 1$. This explains the motivic background of the Whittaker period $\Omega_{\pi^{(3)}}$.
\end{rem}

\renewcommand{\thesection}{\Alph{section}}
\renewcommand{\theequation}{\Alph{section}.\arabic{equation}}
\setcounter{section}{0}
\setcounter{thm}{0}
\setcounter{equation}{0}

\section{Appendix: Construction of the Eichler--Shimura maps for ${\rm GL}_3$}\label{sec:AppA}

We retain the settings and notation in Section~\ref{sec:ESmap}; in particular,  let $\pi^{(3)}$ denote a cohomological irreducible  cuspidal automorphic representation of $\mathrm{GL}_3(\mathbf{Q}_{\mathbf A})$, which has the minimal $\mathrm{O}_3(\mathbf{R})$-type $\tau^{(3)}_{\boldsymbol{\lambda}}$ with $\boldsymbol{\lambda}=\boldsymbol{\lambda}^{(3)}=(\lambda_3,\delta)\in \Lambda_3^{\mathrm{coh}}$ at the infinite place. In Appendix~\ref{sec:AppA} we summarize the construction of the {\em Eichler--Shimura map} for $\mathrm{GL}_3$; namely, we briefly explain how to attach a specific class $\delta^{(3),i}(\boldsymbol{f})$ of the $(\mathfrak{gl}_3(\mathbf{R}), K_3)$-cohomology  $H^i(\mathfrak{gl}_3(\mathbf{R}), K_3;H_{\pi^{(3)},K_3}\otimes L^{(3)}(\boldsymbol{w}_{\boldsymbol{\lambda}};\mathbf{C}))$ to a cusp form $\boldsymbol{f}=\begin{pmatrix}f^{\boldsymbol{\lambda}}_{\lambda_3} & f^{\boldsymbol{\lambda}}_{\lambda_3-1} &\dotsc&f^{\boldsymbol{\lambda}}_{-\lambda_3}\end{pmatrix}\in \mathcal{S}^{(3)}_{\boldsymbol{\lambda}}(\mathcal{K}_3)$ for $i=2$ and $3$, where $K_3=\mathbf{R}^\times_{>0} \mathrm{SO}_3(\mathbf{R})$ and $\mathcal{K}_3$ is an open compact subgroup of $\mathrm{GL}_3(\mathbf{Q}_{\mathbf{A},\mathrm{fin}})$ acting trivially on $\pi^{(3)}$. 

Recall that we have defined $\mathcal{P}_3$ as $\mathfrak{gl}_3(\mathbf{R})/\mathrm{Lie}(K_3)$ and $\mathcal{P}_{3,\mathbf{C}}$ as its complexification. Set $L^{(3)}(\boldsymbol{w}_{\boldsymbol{\lambda}};\mathbf{C})$ be the irreducible representation of $\mathrm{GL}_3(\mathbf{R})$ defined as in Section~\ref{sec:repGL3} for $\boldsymbol{w}_{\boldsymbol{\lambda}}=(w_{\boldsymbol{\lambda}},w_{\boldsymbol{\lambda}},w_{\boldsymbol{\lambda}})=\left(\frac{\lambda_3-3}{2}, \frac{\lambda_3-3}{2}, \frac{\lambda_3-3}{2}\right)$. Then we have already seen  in Section~\ref{sec:ES} that the cohomology group $H^i(\mathfrak{gl}_3(\mathbf{R}), K_3;H_{\pi^{(3)},K_3}\otimes L^{(3)}(\boldsymbol{w}_{\boldsymbol{\lambda}};\mathbf{C}))$ is of dimension $1$ and is isomorphic to 
\begin{align*}
 \mathrm{Hom}_{\mathrm{SO}_3(\mathbf{R})}\left(\bigwedge^i \mathcal{P}_{3,\mathbf{C}},H_{\pi^{(3)},K_3}\otimes L^{(3)}(\boldsymbol{w}_{\boldsymbol{\lambda}};\mathbf{C})\right) \cong \left(H_{\pi^{(3)},K_3}\otimes L^{(3)}(\boldsymbol{w}_{\boldsymbol{\lambda}};\mathbf{C})\otimes \bigwedge^i\mathcal{P}_{3,\mathbf{C}}^*\right)^{\mathrm{SO}_3(\mathbf{R})}.
\end{align*}
Therefore, in order to define the Eichler--Shimura map $\delta^{(3),i}$, it suffices to construct a non-trivial element of the rightmost group from a given cusp form $\boldsymbol{f}=\begin{pmatrix}f^{\boldsymbol{\lambda}}_{\lambda_3} & f^{\boldsymbol{\lambda}}_{\lambda_3-1} &\dotsc&f^{\boldsymbol{\lambda}}_{-\lambda_3}\end{pmatrix}$. We first analyze the action of $\mathrm{SO}_3(\mathbf{R})$ on the wedge product $\bigwedge^i \mathcal{P}_{3,\mathbf{C}}^*$, and then we construct a cohomology class so that the actions of $\mathrm{SO}_3(\mathbf{R})$ on $H_{\pi^{(3)},K_3}$, $L^{(3)}(\boldsymbol{w}_{\boldsymbol{\lambda}};\mathbf{C})$ and $\bigwedge^i \mathcal{P}^*_{3,\mathbf{C}}$ are canceled out. 

\subsection{Action on the wedge product of $\mathcal{P}_{3,\mathbf{C}}^*$} \label{sec:wedge_p}

Recall the definition of the basis $\{X_2, X_1,X_0,X_{-1},X_{-2}\}$ of $\mathcal{P}_{3,\mathbf{C}}$ from Section~\ref{sec:ES} (see (\ref{eq:defX})). Lemma~\ref{lem:Lieact} implies that this basis gives an isomorphism between $\mathcal{P}_{3,\mathbf{C}}$ and $\tau^{(3)}_{(2,0)}$ as $\mathrm{SO}_3(\mathbf{R})$-modules. For $i=2$ and $3$, the wedge product $\bigwedge^i \mathcal{P}_{3,\mathbf{C}}$ is of dimension 10 and, due to  weight calculation, we readily observe that its $\mathbf{C}$-linear dual $\bigwedge^i \mathcal{P}_{3,\mathbf{C}}^*$ is isomorphic to $\tau^{(3),\vee}_{(1,0)}\oplus \tau^{(3),\vee}_{(3,0)}$ as $\mathrm{SO}_3(\mathbf{R})$-modules (here $\tau^{(3),\vee}_{\boldsymbol{\lambda}}$ denotes the contragredient of $\tau^{(3)}_{\boldsymbol{\lambda}}$).   Now we determine an explicit basis of the $\tau^{(3),\vee}_{(3,0)}$-isotypic component of $\bigwedge^i \mathcal{P}_{3,\mathbf{C}}^*$. 

Let $\{v^{\boldsymbol{\lambda}}_j  \}_{-\lambda_3\leq j \leq \lambda_3}$ be the basis of $\tau^{(3)}_{\boldsymbol{\lambda}}$ defined in Section~\ref{sec:repSO3} 
      and $\{v^{\boldsymbol{\lambda},*}_j \}_{-\lambda_3\leq j \leq \lambda_3}$ the  dual basis of $\tau^{(3),\vee}_{\boldsymbol{\lambda}}$.

\begin{lem} \label{lem:basis_wedge}
{\itshape  
 Suppose that $i=2$ or $3$. Then there exist explicit elements $\boldsymbol{\omega}^i_{j}$ $(-3\leq j\leq 3)$ such that the correspondence $v^{(3,0),*}_j\mapsto \boldsymbol{\omega}^i_j$ gives rise to an $\mathrm{SO}_3(\mathbf{R})$-equivariant injection of $\tau^{(3),\vee}_{(3,0)}$ into $\bigwedge^i \mathcal{P}_{3,\mathbf{C}}^*$. In particular, we have $\wedge^i \mathrm{Ad}^*(u)
\begin{pmatrix}
 \boldsymbol{\omega}_3^i & \dotsc & \boldsymbol{\omega}_{-3}^i
\end{pmatrix}=\begin{pmatrix}
 \boldsymbol{\omega}_3^i & \dotsc & \boldsymbol{\omega}_{-3}^i
\end{pmatrix}{}^{\mathrm{t}}\!M_{(3,0)}(u)^{-1}$ for every $u\in \mathrm{SO}_3(\mathbf{R})$. 
}
\end{lem}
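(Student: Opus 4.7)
The strategy is to employ highest-weight theory for $\mathrm{SO}_3(\mathbf{R})$ together with an explicit construction via lowering operators. By Lemma~\ref{lem:Lieact}, the ordered basis $\{X_2, X_1, X_0, X_{-1}, X_{-2}\}$ furnishes an $\mathrm{SO}_3(\mathbf{R})$-isomorphism $\mathcal{P}_{3,\mathbf{C}} \cong \tau^{(3)}_{(2,0)}$ in which $X_j$ is a weight-$j$ eigenvector for the standard torus of $\mathfrak{so}_3(\mathbf{C})$, so the dual basis $\{\mathrm{d}X_j\}$ realizes $\mathcal{P}^*_{3,\mathbf{C}} \cong \tau^{(3),\vee}_{(2,0)}$.

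By the Clebsch--Gordan rule (or by a direct weight-counting argument) one finds that the antisymmetric square decomposes as $\bigwedge^2 \tau^{(3)}_{(2,0)} \cong \tau^{(3)}_{(1,0)} \oplus \tau^{(3)}_{(3,0)}$; furthermore, using the Hodge-type duality $\bigwedge^3 V \cong \bigwedge^2 V^* \otimes \det V$, the self-duality of every irreducible representation of $\mathrm{SO}_3(\mathbf{R})$, and the fact that $\mathrm{SO}_3(\mathbf{R})$ admits no non-trivial one-dimensional representation, we likewise obtain $\bigwedge^3 \tau^{(3)}_{(2,0)} \cong \tau^{(3)}_{(1,0)} \oplus \tau^{(3)}_{(3,0)}$. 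Hence $\tau^{(3),\vee}_{(3,0)}$ appears with multiplicity exactly one in $\bigwedge^i \mathcal{P}^*_{3,\mathbf{C}}$ for both $i=2$ and $i=3$, and it remains to exhibit a specific $\mathrm{SO}_3(\mathbf{R})$-equivariant embedding.

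To construct the embedding I would first identify the unique (up to scalar) highest weight vectors: the weight-$3$ eigenvector in $\bigwedge^2\mathcal{P}^*_{3,\mathbf{C}}$ is $\mathrm{d}X_2 \wedge \mathrm{d}X_1$, and the weight-$3$ eigenvector in $\bigwedge^3\mathcal{P}^*_{3,\mathbf{C}}$ is $\mathrm{d}X_2 \wedge \mathrm{d}X_1 \wedge \mathrm{d}X_0$. Normalize $\boldsymbol{\omega}^i_3$ to be the scalar multiple of these that matches the image of $v^{(3,0),*}_3$ under the desired embedding; then define $\boldsymbol{\omega}^i_j$ for $j=2,1,0,-1,-2,-3$ by iteratively applying the lowering operator of $\mathfrak{so}_3(\mathbf{C})$, with the normalization at each step fixed so as to mirror the action on the dual basis $\{v^{(3,0),*}_j\}$ read off from the matrix $M_{(3,0)}$. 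By construction this yields an $\mathfrak{so}_3(\mathbf{C})$-equivariant linear map, and since $\mathrm{SO}_3(\mathbf{R})$ is connected, exponentiation upgrades it to an $\mathrm{SO}_3(\mathbf{R})$-equivariant one. The asserted transformation law $\wedge^i\mathrm{Ad}^*(u)(\boldsymbol{\omega}^i_3,\dotsc,\boldsymbol{\omega}^i_{-3}) = (\boldsymbol{\omega}^i_3,\dotsc,\boldsymbol{\omega}^i_{-3})\,{}^{\mathrm{t}}\!M_{(3,0)}(u)^{-1}$ is then immediate from the defining formula for the contragredient action on $\{v^{(3,0),*}_j\}$.

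The main obstacle is the explicit bookkeeping needed to express each $\boldsymbol{\omega}^i_j$ as a $\mathbf{C}$-linear combination of the wedge products $\mathrm{d}X_k\wedge \mathrm{d}X_l$ (for $i=2$) or $\mathrm{d}X_k\wedge \mathrm{d}X_l\wedge \mathrm{d}X_m$ (for $i=3$), i.e.\ the determination of the coefficient matrix $P^2$ and its degree-three analogue. This reduces to a finite but combinatorially involved recursion using the action of the lowering operator on wedge products, which must be carried out so that the resulting $\boldsymbol{\omega}^i_j$ are expressed with coefficients in a sufficiently small ring for later arithmetic purposes. Once these coefficients are pinned down, $\mathrm{SO}_3(\mathbf{R})$-equivariance at the group level follows automatically from the construction.
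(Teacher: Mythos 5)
Your proof is correct, but it proceeds by a genuinely different route from the paper. You derive the decomposition $\bigwedge^i\mathcal{P}^*_{3,\mathbf{C}}\cong\tau^{(3),\vee}_{(1,0)}\oplus\tau^{(3),\vee}_{(3,0)}$ conceptually — by weight counting for $i=2$, and by the pairing $\bigwedge^3 V\cong(\bigwedge^2 V)^\vee\otimes\det V$ together with self-duality and triviality of $\det$ for $i=3$ — then locate the extremal weight vector ($\mathrm{d}X_2\wedge\mathrm{d}X_1$, respectively $\mathrm{d}X_2\wedge\mathrm{d}X_1\wedge\mathrm{d}X_0$, which are indeed the unique weight $\mp 3$ vectors) and build the rest of the $7$-dimensional subrepresentation by repeated application of the $\mathfrak{so}_3(\mathbf{C})$-lowering operator, pinning normalizations to match $\{v^{(3,0),*}_j\}$ so that the ${}^{\mathrm{t}}M_{(3,0)}(u)^{-1}$-transformation law is automatic. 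The paper's proof is instead a direct verification: it writes down explicit change-of-basis matrices $\widetilde{P}^i$, checks (by Mathematica/Maxima) the conjugation identity $\widetilde{P}^{i,-1}\widetilde{M}^i(u)\widetilde{P}^i=M_{(1,0)}(u)\oplus M_{(3,0)}(u)$, and extracts $P^i$ from ${}^{\mathrm{t}}\widetilde{P}^{i,-1}$. Your route explains \emph{why} such a block-diagonalization must exist and where the defining vector sits, something the paper leaves implicit; the paper's route goes straight to the explicit matrices $P^2,P^3$ that are actually needed downstream (e.g.\ in Proposition~\ref{prop:bfomega} and Lemma~\ref{lem:NabPab}). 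You flag, correctly, that your lowering-operator recursion still has to be executed to produce those coefficients with controlled denominators — at which point the bookkeeping you would do is comparable to what the paper delegates to the computer algebra system.
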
 

\begin{proof}
 For $2\geq j>j'>j''\geq -2$, let us abbreviate the wedge product $X_j\wedge X_{j'}$ and $X_{j}\wedge X_{j'} \wedge X_{j''}$ as $X_{j,j'}$ and $X_{j,j',j''}$ respectively. Note that $\mathbf{B}^2=\{X_{j,j'}\}_{2\geq j>j'\geq -2}$ (resp.\ $\mathbf{B}^3=\{X_{j,j',j''}\}_{2\geq j>j'>j''\geq -2}$) forms a basis of $\bigwedge^2\mathcal{P}_{3,\mathbf{C}}$ (resp.\ $\bigwedge^3\mathcal{P}_{3,\mathbf{C}}$). Set
\begin{multline*}
 \mathbf{X}^{i}= \\
\begin{cases}
 \begin{pmatrix}
  X_{2,1} & X_{2,0} & X_{2,-1} & X_{2,-2} & X_{1,0} & X_{1,-1} & X_{1,-2} & X_{0,-1} & X_{0,-2} & X_{-1,-2}
 \end{pmatrix} & \text{for $i=2$}, \\
\begin{pmatrix}
  X_{2,1,0} & X_{2,1,-1} & X_{2,1,-2} & X_{2,0,-1} & X_{2,0,-2} & X_{2,-1,-2} & X_{1,0,-1} & X_{1,0,-2} & X_{1,-1,-2} & X_{0,-1,-2}
 \end{pmatrix} & \text{for $i=3$}. 
\end{cases}
\end{multline*}
For each $u\in \mathrm{SO}_3(\mathbf{R})$, let  $\widetilde{M}^i(u)$ denote the matrix presentation of the action of $u$ on $\bigwedge^i \mathcal{P}_{3,\mathbf{C}}$ with respect to the basis $\mathbf{X}^i$; namely set $\wedge^i \mathrm{Ad}(u) \mathbf{X}^i=\mathbf{X}^i \widetilde{M}^i(u)$. Then, by direct calculation using Mathematica and Maxima, we observe that $\widetilde{P}^{i,-1}\widetilde{M}^i(u)\widetilde{P}^{i}=
\begin{pmatrix}
 M_{(1,0)}(u) & 0 \\ 0 & M_{(3,0)}(u)
\end{pmatrix}$ holds for
\begin{align*}
 \widetilde{P}^{2} &=\begin{pmatrix}
  0 &0 &0 & 1 &0 &0 &0 &0 &0 &0 \\
0 &0 &0 &0 & \frac{1}{2} &0 &0 &0 &0 &0 \\
-\frac{1}{5} &0 &0 &0 &0 & \frac{1}{5} &0 &0 &0 &0 \\
0 &-\frac{1}{10} &0 &0 &0 &0 & \frac{1}{20} &0 &0 &0 \\
\frac{3}{5} &0 &0 &0 &0 &\frac{2}{5} &0 &0 &0 &0 \\
0 & \frac{1}{5} &0 &0 &0 &0 &\frac{2}{5} &0 &0 &0 \\
0 &0 &-\frac{1}{5} &0 &0 &0 &0 & \frac{1}{5} &0 &0 \\
0 &0 & \frac{3}{5} &0 &0 &0 &0 &\frac{2}{5} & 0&  0\\
0 &0 &0 &0 & 0 & 0 & 0 & 0 & \frac{1}{2} & 0 \\
0 &0 &0 &0 &0 &0 &0 & 0&0 & 1
      \end{pmatrix}, & 
\widetilde{P}^{3} &=
\begin{pmatrix}
 0 &0 &0 & 3 &0 &0 &0 &0 &0 &0 \\
0 &0 &0 &0 & 1 &0 &0 &0 &0 &0 \\
1 &0 &0 &0 &0 & \frac{1}{5} &0 &0 &0 &0 \\
-2 & 0 &0 &0 &0 &\frac{3}{5} &0 &0 &0 &0 \\
0 & \frac{1}{2} &0 &0 &0 &0 &\frac{3}{10} &0 &0 &0 \\
0 &0 &1 &0 &0 &0 &0 & \frac{1}{5} &0 &0 \\
0 &-4 & 0 &0 &0 &0 & \frac{3}{5} & 0&  0 & 0\\
0 &0 &-2 &0 & 0 & 0 & 0 & \frac{3}{5} & 0 & 0 \\
0 &0 &0 &0 &0 &0 &0 & 0&1 & 0 \\
0 &0 &0 &0 &0 &0 &0 & 0&0 & 3
\end{pmatrix}.
\end{align*}
Let us define $\mathrm{d}\mathbf{X}^i$ similarly to $\mathbf{X}^i$ replacing $X_{j,j'}$ and $X_{j,j',j''}$ by $\mathrm{d}X_{j,j'}:=\mathrm{d}X_j\wedge \mathrm{d}X_{j'}$ and $\mathrm{d}X_{j,j',j''}:=\mathrm{d}X_j\wedge \mathrm{d}X_{j'}\wedge \mathrm{d}X_{j''}$ respectively (recall that $\{\mathrm{d}X_j\}_{-2\leq j\leq 2}$ denotes the dual basis of $\mathcal{P}_{3,\mathbf{C}}$). Then it suffices to define $\{\boldsymbol{\omega}^i_j\}_{-3\leq j\leq 3}$ as 
\begin{align*}
 \begin{pmatrix}
\boldsymbol{\omega}^i_3 & \boldsymbol{\omega}^i_2 & \boldsymbol{\omega}^i_1 & \boldsymbol{\omega}^i_0 & \boldsymbol{\omega}^i_{-1} & \boldsymbol{\omega}^i_{-2} & \boldsymbol{\omega}^i_{-3}
 \end{pmatrix}=\mathrm{d}\mathbf{X}^iP^i
\end{align*}
for 
\begin{align}  \label{eq:defP2}
 P^2 &=\begin{pmatrix}
	1 &0 &0 &0 &0 &0 &0 \\
0 & 2 &0 &0 &0 &0 &0 \\
0 &0 & 3 &0 &0 &0 &0 \\
0 &0 &0 & 4 &0 &0 &0 \\
0 &0 &1 &0 &0 &0 &0 \\
0 &0 &0 &2 &0 &0 &0 \\
0 &0 &0 &0 & 3 &0 &0 \\
0 &0 &0 &0 &1 & 0&  0\\
0 & 0 & 0 & 0 & 0 & 2 & 0 \\
0 &0 &0 &0 & 0&0 & 1
       \end{pmatrix}, & 
P^3 &=
\begin{pmatrix}
 \frac{1}{3} &0 &0 &0 &0 &0 &0 \\
0 & 1 &0 &0 &0 &0 &0 \\
0 &0 & 2 &0 &0 &0 &0 \\
0 &0 &1 & 0 &0 &0 &0 \\
0 &0 &0 &\frac{8}{3} &0 &0 &0 \\
0 &0 &0 & 0 & 2 &0 &0 \\
0 &0 &0 &\frac{1}{3} &0 & 0&  0\\
0 & 0 & 0 & 0 & 1 & 0 & 0 \\
0 &0 &0 &0 & 0& 1 & 0 \\
0 &0 &0 &0 & 0& 0 &\frac{1}{3} 
\end{pmatrix}.
\end{align}
Note that $P^i \in \mathrm{M}_{10,7}(\mathbf{C})$ is a matrix obtained by removing first three columns of ${}^{\mathrm{t}}\!\widetilde{P}^{i,-1}$.
\end{proof}

\subsection{Construction of the cohomology class} \label{sec:const_class}

Recall from Section~\ref{sec:repGL3} that the representation space of the irreducible representation $L^{(3)}(\boldsymbol{w}_{\boldsymbol{\lambda}}; \mathbf{C})$ is a subspace of $\mathbf{C}[X,Y,Z;A,B,C]_{w_{\boldsymbol{\lambda}},w_{\boldsymbol{\lambda}}}$, the space of polynomials in variables $X,Y,Z,A,B,C$ which are homogeneous of degree $w_{\boldsymbol{\lambda}}$ both in $X,Y,Z$ and $A,B,C$. Recall also that the irreducible representation $\tau^{(3)}_{\boldsymbol{\lambda}}$ of $\mathrm{SO}_3(\mathbf{R})$ is realized as a certain quotient of the space of homogeneous polynomials of degree $\lambda_3$ in variables $z_1,z_2,z_3$. Now let us consider
\begin{multline*}
 P(X,Y,Z,A,B,C,z_1,z_2,z_3):= \left( 
\begin{pmatrix}
 X & Y & Z
\end{pmatrix} 
\begin{pmatrix}
 z_1 \\ z_2 \\ z_3
\end{pmatrix}\right)^{w_{\boldsymbol{\lambda}}} \otimes \left( 
\begin{pmatrix}
 A & B & C
\end{pmatrix} 
\begin{pmatrix}
 z_1 \\ z_2 \\ z_3
\end{pmatrix}\right)^{w_{\boldsymbol{\lambda}}} 
\begin{pmatrix}
 v_3^{(3,\delta)} & \dotsc & v_{-3}^{(3,\delta)}
\end{pmatrix} \\
\in (\mathbf{C}[X,Y,Z;A,B,C]_{w_{\boldsymbol{\lambda}}} \otimes \mathbf{C}[z_1,z_2,z_3]_{\lambda_3}/\mathcal{V}_{\boldsymbol{\lambda}})^{\oplus 7}.
\end{multline*}

\begin{lem} \label{lem:construction_delta}
{\itshape 
 Let $P(X,Y,Z,A,B,C,z_1,z_2,z_3)$ be as above.
\begin{enumerate}[label={\rm (\roman*)}]
 \item There exists a unique matrix 
\begin{multline*}
\hspace*{4em} \mathcal{P}(X,Y,Z,A,B,C) =
\begin{pmatrix}
  \mathcal{P}_{\lambda_3,3}(X,Y,Z,A,B,C) & \dotsc &  \mathcal{P}_{\lambda_3,-3}(X,Y,Z,A,B,C) \\
\vdots & \ddots & \vdots \\
  \mathcal{P}_{-\lambda_3,3}(X,Y,Z,A,B,C) & \dotsc &  \mathcal{P}_{-\lambda_3,-3}(X,Y,Z,A,B,C) 
\end{pmatrix} \\
\in \mathrm{M}_{2\lambda_3+1,7}(\mathbf{C}[X,Y,Z;A,B,C]_{w_{\boldsymbol{\lambda}}, w_{\boldsymbol{\lambda}}})
\end{multline*}
such that $P(X,Y,Z,A,B,C,z_1,z_2,z_3)=
\begin{pmatrix}
 v^{\boldsymbol{\lambda}}_{\lambda_3} & \dotsc & v^{\boldsymbol{\lambda}}_{-\lambda_3}
\end{pmatrix} \mathcal{P}(X,Y,Z,A,B,C)$ holds. Moreover every entry of $\mathcal{P}(X,Y,Z,A,B,C)$ is an element of $L^{(3)}(\boldsymbol{w}_{\boldsymbol{\lambda}};\mathbf{C})$ all of whose coefficients are contained in $\mathbf{Z}[2^{-1},\sqrt{-1}]$.
 \item For each $u\in \mathrm{SO}_3(\mathbf{R})$, we have 
\begin{align*}
 (\varrho_{\boldsymbol{\lambda}}^{(3)} (u) \mathcal{P})(X,Y,Z,A,B,C)=M_{\boldsymbol{\lambda}}^{-1}(u) \mathcal{P}(X,Y,Z,A,B,C)M_{(3,0)}(u).
\end{align*}
 \item For a cusp form $\boldsymbol{f}=\begin{pmatrix}f^{\boldsymbol{\lambda}}_{\lambda_3} & f^{\boldsymbol{\lambda}}_{\lambda_3-1} &\dotsc&f^{\boldsymbol{\lambda}}_{-\lambda_3}\end{pmatrix} \in \mathcal{S}^{(3)}_{\boldsymbol{\lambda}}(\mathcal{K}_3)$, the element $\delta^{(3),i}(\boldsymbol{f})$ defined as 
\begin{align*}
 \delta^{(3),i}(\boldsymbol{f})=
\begin{pmatrix}
f^{\boldsymbol{\lambda}}_{\lambda_3} & f^{\boldsymbol{\lambda}}_{\lambda_3-1} &\dotsc&f^{\boldsymbol{\lambda}}_{-\lambda_3}
\end{pmatrix} \mathcal{P}(X,Y,Z,A,B,C) 
\begin{pmatrix}
 \boldsymbol{\omega}^i_3 \\ \vdots \\ \boldsymbol{\omega}^i_{-3}
\end{pmatrix}
\end{align*}
gives rise to a class of $H^i_{\mathrm{cusp}}(Y^{(3)}_{\mathcal{K}_3}, \mathcal{L}^{(3)}(\boldsymbol{w}_{\boldsymbol{\lambda}};\mathbf{C}))$.
\end{enumerate}
}
\end{lem}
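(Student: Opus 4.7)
The plan is to establish parts (i)--(iii) in sequence, exploiting the explicit polynomial realization of $L^{(3)}(\boldsymbol{w}_{\boldsymbol{\lambda}};\mathbf{C})$ and $V^{(3)}_{\boldsymbol{\lambda}}$.

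For (i), the existence and uniqueness of $\mathcal{P}(X,Y,Z,A,B,C)$ are essentially tautological: each column of $P(X,Y,Z,A,B,C,z_1,z_2,z_3)$ has $z$-degree $2w_{\boldsymbol{\lambda}}+3=\lambda_3$, and thus determines, after reduction modulo $\mathcal{V}_{\boldsymbol{\lambda}}$, a unique element of $V^{(3)}_{\boldsymbol{\lambda}}\otimes \mathbf{C}[X,Y,Z;A,B,C]_{w_{\boldsymbol{\lambda}},w_{\boldsymbol{\lambda}}}$, which one expands against the basis $\{v^{\boldsymbol{\lambda}}_k\}_{-\lambda_3\leq k\leq \lambda_3}$. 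To see that each entry $\mathcal{P}_{k,j}$ lies in the kernel of $\iota_{w_{\boldsymbol{\lambda}},w_{\boldsymbol{\lambda}}}$, I would apply the differential operator directly to $P$; the Leibniz rule yields $\iota_{w_{\boldsymbol{\lambda}},w_{\boldsymbol{\lambda}}}P=w_{\boldsymbol{\lambda}}^2(z_1^2+z_2^2+z_3^2)((X,Y,Z)z^{\mathrm{T}})^{w_{\boldsymbol{\lambda}}-1}((A,B,C)z^{\mathrm{T}})^{w_{\boldsymbol{\lambda}}-1}\begin{pmatrix}v^{(3,\delta)}_3 & \dotsc & v^{(3,\delta)}_{-3}\end{pmatrix}$, and the factor $z_1^2+z_2^2+z_3^2$ forces the result to vanish modulo $\mathcal{V}_{\boldsymbol{\lambda}}$. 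Linear independence of $\{v^{\boldsymbol{\lambda}}_k\}$ then propagates the vanishing to every individual entry. The rationality assertion reduces to the fact that the change-of-basis matrix from the monomial basis of $\mathbf{C}[z_1,z_2,z_3]_{\lambda_3}/\mathcal{V}_{\boldsymbol{\lambda}}$ to the basis $\{v^{\boldsymbol{\lambda}}_k\}$ has entries in $\mathbf{Z}[2^{-1},\sqrt{-1}]$, which is transparent from the formula $v^{\boldsymbol{\lambda}}_{\pm\mu}=(\pm z_1+\sqrt{-1}z_2)^{\mu}z_3^{\lambda_3-\mu}$: inverting the change-of-basis only introduces denominators dividing a power of $2$, via elementary identities such as $2z_1 z_3^{\lambda_3-1}=v^{\boldsymbol{\lambda}}_{1}-v^{\boldsymbol{\lambda}}_{-1}$.

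For (ii), the key observation is that, for any $u\in\mathrm{O}_3(\mathbf{R})$, the relation ${}^{\mathrm{t}}u^{-1}=u$ forces the identity $(X,Y,Z)u\cdot (z_1,z_2,z_3)^{\mathrm{T}}=(X,Y,Z)\cdot (zu^{-1})^{\mathrm{T}}$, so that $\varrho^{(3)}_{\boldsymbol{w}_{\boldsymbol{\lambda}}}(u)P(X,Y,Z,A,B,C,z)=P(X,Y,Z,A,B,C,zu^{-1})$. On the other hand, the substitution $z\mapsto zu^{-1}$ in the bases $\{v^{(3,\delta)}_j(z)\}$ and $\{v^{\boldsymbol{\lambda}}_k(z)\}$ produces respectively the matrices $M_{(3,0)}(u)^{-1}$ and $M_{\boldsymbol{\lambda}}(u)^{-1}$ (for $u\in\mathrm{SO}_3(\mathbf{R})$ the determinantal twist by $\delta$ is trivial). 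Matching the two expressions for $\varrho^{(3)}_{\boldsymbol{w}_{\boldsymbol{\lambda}}}(u)P$ via the uniqueness established in (i) yields the transformation law $(\varrho^{(3)}_{\boldsymbol{w}_{\boldsymbol{\lambda}}}(u)\mathcal{P})(X,Y,Z,A,B,C)=M_{\boldsymbol{\lambda}}(u)^{-1}\mathcal{P}(X,Y,Z,A,B,C)M_{(3,0)}(u)$.

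For (iii), I would invoke the identification (\ref{eq:HgKHom}) of the cohomology group with $\mathrm{SO}_3(\mathbf{R})$-equivariant homomorphisms from $\bigwedge^i\mathcal{P}_{3,\mathbf{C}}$, and verify that the element $\sum_{j,k}f^{\boldsymbol{\lambda}}_j\otimes \mathcal{P}_{j,k}\otimes \boldsymbol{\omega}^i_k$ is invariant under the diagonal $\mathrm{SO}_3(\mathbf{R})$-action. Under right translation by $u\in\mathrm{SO}_3(\mathbf{R})$, the vector $\boldsymbol{f}$ transforms as $\boldsymbol{f}\mapsto \boldsymbol{f}M_{\boldsymbol{\lambda}}(u)$ by the normalization fixed in Section~\ref{sec:cusp}, the matrix $\mathcal{P}$ transforms as $\mathcal{P}\mapsto M_{\boldsymbol{\lambda}}(u)^{-1}\mathcal{P}M_{(3,0)}(u)$ by (ii), and the column vector ${}^{\mathrm{t}}(\boldsymbol{\omega}^i_3,\ldots,\boldsymbol{\omega}^i_{-3})$ transforms by left-multiplication by $M_{(3,0)}(u)^{-1}$ (transposing the identity of Lemma~\ref{lem:basis_wedge}). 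The three matrix factors telescope cleanly, leaving $\delta^{(3),i}(\boldsymbol{f})$ invariant. Cuspidality of the resulting class is inherited from the cuspidality of the $f^{\boldsymbol{\lambda}}_\alpha$. The most delicate point—and the main bookkeeping obstacle—will be verifying that the matrices appear with the correct orientations, that is, which factor multiplies on the left versus the right and which is transposed; it is not a coincidence that the very same matrix $M_{(3,0)}(u)$ arises both from the seven-dimensional factor $\{v^{(3,\delta)}_j\}$ inside $P$ and from the $(3,0)$-isotypic piece of $\bigwedge^i\mathcal{P}^*_{3,\mathbf{C}}$ identified in Lemma~\ref{lem:basis_wedge}, and the cup-product construction precisely pairs these two copies.
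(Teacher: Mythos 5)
Your overall strategy matches the paper's for all three parts: expanding $P$ in the $v^{\boldsymbol{\lambda}}$-basis, applying $\iota_{w_{\boldsymbol{\lambda}},w_{\boldsymbol{\lambda}}}$ to exhibit the $(z_1^2+z_2^2+z_3^2)$-factor, using the $\mathrm{SO}_3$-action on the $z$-variables to derive the transformation law, and then telescoping the three matrices in part (iii). There is, however, a false intermediate claim in your argument for (ii). You write that
\begin{align*}
  \varrho^{(3)}_{\boldsymbol{w}_{\boldsymbol{\lambda}}}(u)P(X,Y,Z,A,B,C,z)
  = P(X,Y,Z,A,B,C,zu^{-1}),
\end{align*}
but this cannot hold: $\varrho^{(3)}_{\boldsymbol{w}_{\boldsymbol{\lambda}}}(u)$ acts only on the variables $X,Y,Z,A,B,C$ and leaves the factor $\begin{pmatrix} v^{(3,\delta)}_3 & \dotsc & v^{(3,\delta)}_{-3}\end{pmatrix}$ of $P$ untouched, whereas the substitution $z\mapsto zu^{-1}$ also changes $v^{(3,\delta)}(z)$ to $v^{(3,\delta)}(zu^{-1}) = v^{(3,\delta)}(z)M_{(3,0)}(u)^{-1}$. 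The correct identity is therefore
\begin{align*}
  \varrho^{(3)}_{\boldsymbol{w}_{\boldsymbol{\lambda}}}(u)P(X,Y,Z,A,B,C,z)
  = P(X,Y,Z,A,B,C,zu^{-1})\,M_{(3,0)}(u),
\end{align*}
and it is exactly this extra factor $M_{(3,0)}(u)$ on the right that produces the right-hand factor in the stated formula $(\varrho^{(3)}_{\boldsymbol{w}_{\boldsymbol{\lambda}}}(u)\mathcal{P}) = M_{\boldsymbol{\lambda}}(u)^{-1}\mathcal{P}M_{(3,0)}(u)$. As you wrote it, matching the $v^{\boldsymbol{\lambda}}$-expansions of the two sides would only yield $\varrho^{(3)}_{\boldsymbol{w}_{\boldsymbol{\lambda}}}(u)\mathcal{P}=M_{\boldsymbol{\lambda}}(u)^{-1}\mathcal{P}$, which is not what (ii) asserts. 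You do list the two ingredients $M_{(3,0)}(u)^{-1}$ and $M_{\boldsymbol{\lambda}}(u)^{-1}$ and state the correct final conclusion, so the fix is merely to record the corrected intermediate identity and then match coefficients against $(v^{\boldsymbol{\lambda}}(z))(\varrho^{(3)}_{\boldsymbol{w}_{\boldsymbol{\lambda}}}(u)\mathcal{P})$ on one side and $(v^{\boldsymbol{\lambda}}(z))M_{\boldsymbol{\lambda}}(u)^{-1}\mathcal{P}M_{(3,0)}(u)$ on the other. As a secondary remark, in (i) your rationality assertion refers to a ``change-of-basis matrix from the monomial basis'' of $\mathbf{C}[z_1,z_2,z_3]_{\lambda_3}/\mathcal{V}_{\boldsymbol{\lambda}}$, but the monomials are not linearly independent after the quotient; the paper avoids this by expanding $z_1X+z_2Y+z_3Z$ and $z_1A+z_2B+z_3C$ directly in the $v^{(1,0)}_{\mu}$ and invoking the multiplicativity~(\ref{eq:vmult}), which yields the $\mathbf{Z}[2^{-1},\sqrt{-1}]$ bound transparently without ever passing through a non-basis.
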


\begin{proof}
 \begin{enumerate}
  \item The first statement obviously follows from the identity
\begin{multline*}
\left( 
\begin{pmatrix}
 X & Y & Z
\end{pmatrix} 
\begin{pmatrix}
 z_1 \\ z_2 \\ z_3
\end{pmatrix}\right)^{w_{\boldsymbol{\lambda}}} \otimes \left( 
\begin{pmatrix}
 A & B & C
\end{pmatrix} 
\begin{pmatrix}
 z_1 \\ z_2 \\ z_3
\end{pmatrix}\right)^{w_{\boldsymbol{\lambda}}} =(Xz_1+Yz_2+Zz_3)^{w_{\boldsymbol{\lambda}}}\otimes (Az_1+Bz_2+Cz_3)^{w_{\boldsymbol{\lambda}}}  \\
=\left(\dfrac{X-\sqrt{-1}Y}{2}v^{(1,0)}_1-\dfrac{X+\sqrt{-1}Y}{2}v^{(1,0)}_{-1}+Zv^{(1,0)}_0 \right)^{w_{\boldsymbol{\lambda}}} \otimes \left(\dfrac{A-\sqrt{-1}B}{2}v^{(1,0)}_1-\dfrac{A+\sqrt{-1}B}{2}v^{(1,0)}_{-1}+Cv^{(1,0)}_0 \right)^{w_{\boldsymbol{\lambda}}}
\end{multline*}
and the multiplicativity of $v^{\boldsymbol{\lambda}}_{\mu}$'s (see (\ref{eq:vmult}); recall that we have $2w_{\boldsymbol{\lambda}}+3=\lambda_3$). We can also verify the second statement by observing 
\begin{align*}
 \iota_{w_{\boldsymbol{\lambda}},w_{\boldsymbol{\lambda}}} &\left( (Xz_1+Yz_2+Zz_3)^{w_{\boldsymbol{\lambda}}}\otimes (Az_1+Bz_2+Cz_3)^{w_{\boldsymbol{\lambda}}}\right) \\
&=\left(\dfrac{\partial^2}{\partial X\partial A}+\dfrac{\partial^2}{\partial Y\partial B}+\dfrac{\partial^2}{\partial Z \partial Z}\right) (Xz_1+Yz_2+Zz_3)^{w_{\boldsymbol{\lambda}}}\otimes (Az_1+Bz_2+Cz_3)^{w_{\boldsymbol{\lambda}}} \\
&= w^2_{\boldsymbol{\lambda}} (z_1^2+z_2^2+z_3^2)  (Xz_1+Yz_2+Zz_3)^{w_{\boldsymbol{\lambda}}-1}\otimes (Az_1+Bz_2+Cz_3)^{w_{\boldsymbol{\lambda}}-1} \equiv 0 \mod{\mathcal{V}_{\boldsymbol{\lambda}}}.
\end{align*}
  \item Let $u\in \mathrm{SO}_3(\mathbf{R})$. By definition, we have
\begin{align*}
 (\varrho_{\boldsymbol{w}_{\boldsymbol{\lambda}}}(u)P)&(X,Y,Z,A,B,C,z_1,z_2,z_3)  \\
&=\left( 
\begin{pmatrix}
 X & Y & Z
\end{pmatrix}u {}^tu
\begin{pmatrix}
 z_1 \\ z_2 \\ z_3
\end{pmatrix}\right)^{w_{\boldsymbol{\lambda}}} \otimes \left( 
\begin{pmatrix}
 A & B & C
\end{pmatrix} {}^tu^{-1} {}^tu
\begin{pmatrix}
 z_1 \\ z_2 \\ z_3
\end{pmatrix}\right)^{w_{\boldsymbol{\lambda}}} 
\tau^{(3)}_{(3,0)}(u) \begin{pmatrix}
 v_3^{(3,0)} & \dotsc & v_3^{(3,0)}
\end{pmatrix} \\
&=P(X,Y,Z,A,B,C,z_1,z_2,z_3) M_{(3,0)}(u)=\begin{pmatrix}
 v^{\boldsymbol{\lambda}}_{\lambda_3} & \dotsc & v^{\boldsymbol{\lambda}}_{-\lambda_3}
\end{pmatrix} \mathcal{P}(X,Y,Z,A,B,C) M_{(3,0)}(u).
\end{align*}
On the other hand, we can calculate $(\varrho_{\boldsymbol{w}_{\boldsymbol{\lambda}}}(u)P)(X,Y,Z,A,B,C,z_1,z_2,z_3)$ by using (i) as 
\begin{align*}
 (\varrho_{\boldsymbol{w}_{\boldsymbol{\lambda}}}(u)P)(X,Y,Z,A,B,C,z_1,z_2,z_3)&=\tau^{(3)}_{\boldsymbol{\lambda}}(u)
\begin{pmatrix}
 v^{\boldsymbol{\lambda}}_{\lambda_3} & \dotsc & v^{\boldsymbol{\lambda}}_{-\lambda_3}
\end{pmatrix} (\varrho_{\boldsymbol{w}_{\boldsymbol{\lambda}}}(u)\mathcal{P})(X,Y,Z,A,B,C)  \\
&=\begin{pmatrix}
 v^{\boldsymbol{\lambda}}_{\lambda_3} & \dotsc & v^{\boldsymbol{\lambda}}_{-\lambda_3}
\end{pmatrix}M_{\lambda}(u) (\varrho_{\boldsymbol{w}_{\boldsymbol{\lambda}}}(u)\mathcal{P})(X,Y,Z,A,B,C).
\end{align*}
Comparing  these two equations, we obtain the desired result.
  \item Since $H^i_{\mathrm{cusp}}(Y^{(3)}_{\mathcal{K}_3}, \mathcal{L}^{(3)}(\boldsymbol{w}_{\boldsymbol{\lambda}};\mathbf{C}))\cong (H_{\pi^{(3)},K_3}\otimes L^{(3)}(\boldsymbol{w}_{\boldsymbol{\lambda}};\mathbf{C})\otimes \bigwedge^i \mathcal{P}_{3,\mathbf{C}}^*)^{\mathrm{SO}_3(\mathbf{R})} \otimes (\pi^{(3)}_{\mathrm{fin}})^{\mathcal{K}_3}$, it suffices to check that the archimedean component of $\delta^{(3),i}(\boldsymbol{f})_\infty$ is fixed by the action of $\mathrm{SO}_3(\mathbf{R})$, which obviously follows from (ii); for each $u\in \mathrm{SO}_3(\mathbf{R})$ we have 
\begin{align*}
\qquad \quad  (\varrho_{\boldsymbol{w}_{\boldsymbol{\lambda}}}&(u)\delta^{(3),i})(\boldsymbol{f})_\infty  \\
&=\tau^{(3)}_{\boldsymbol{\lambda}}(u) 
\begin{pmatrix}
 f^{\boldsymbol{\lambda}}_{\lambda_3,\infty} & \dotsc & f^{\boldsymbol{\lambda}}_{-\lambda_3,\infty}
\end{pmatrix} (\varrho_{\boldsymbol{w}_{\boldsymbol{\lambda}}}(u)\mathcal{P}(X,Y,Z,A,B,C)) \,\,  {\vphantom{\wedge^i}}^{\mathrm{t}}\!\!\left(\wedge^i \mathrm{Ad}^*(u)  \begin{pmatrix} \boldsymbol{\omega}^i_3 & \dotsc & \boldsymbol{\omega}^i_{-3}									  \end{pmatrix}\right) \\
&=\begin{pmatrix}
 f^{\boldsymbol{\lambda}}_{\lambda_3,\infty} & \dotsc & f^{\boldsymbol{\lambda}}_{-\lambda_3,\infty}
\end{pmatrix} M_{\boldsymbol{\lambda}}  (u) \bigl\{ M_{\boldsymbol{\lambda}}^{-1}(u) \mathcal{P}(X,Y,Z,A,B,C)M_{(3,0)}(u)\bigr\} \,  M_{(3,0)}(u)^{-1}
\begin{pmatrix}
 \boldsymbol{\omega}^i_3 \\ \vdots \\ \boldsymbol{\omega}^i_{-3}
\end{pmatrix} \\
&=\begin{pmatrix}
 f^{\boldsymbol{\lambda}}_{\lambda_3,\infty} & \dotsc & f^{\boldsymbol{\lambda}}_{-\lambda_3,\infty}
\end{pmatrix}  \mathcal{P}(X,Y,Z,A,B,C)
\begin{pmatrix}
 \boldsymbol{\omega}^i_3 \\ \vdots \\ \boldsymbol{\omega}^i_{-3}
\end{pmatrix}=\delta^{(3),i}(\boldsymbol{f})_\infty.
\end{align*} 
 \end{enumerate}
\end{proof}

\subsection{Change of the coordinates}

We first introduce the coordinates $(y_1,y_2,x_1,x_2,x_3)$ of (the archimedean component of) the symmetric space $Y^{(3)}_{\mathcal{K}_3,\infty}=\mathrm{GL}_3(\mathbf{Q})\backslash \mathrm{GL}_3(\mathbf{R})/\mathbf{R}^\times_{>0} \mathrm{SO}_3(\mathbf{R})$, and then calculate the presentations of the differential forms $\{\boldsymbol{\omega}^i_j\}_{-3\leq j\leq 3}$ obtained in the previous subsection with respect to this coordinates. All the results of this subsection are obtained by longsome but straightforward computations (especially concerning big matrices), and thus we omit their proofs\footnote{We have checked out all the computations by using Mathematica and Maxima.}. We first introduce the explicit Iwasawa decomposition formula for $\mathrm{GL}_3(\mathbf{R})$ as follows;

\begin{lem} \label{lem:explicit_Iwasawa}
{\itshape 
 For $g=
\begin{pmatrix}
 a & b & c \\ d & e & f \\ g& h & i
\end{pmatrix}\in \mathrm{GL}_3(\mathbf{R})$, define $x_1, x_2,x_3,y_1$ and $y_2$ as follows:
\begin{align*}
 x_1&=x_1(g)=\dfrac{dg+eh+fi}{g^2+h^2+i^2}, & x_3&=x_3(g)=\dfrac{ag+bh+ci}{g^2+h^2+i^2}, \\
y_1&=y_1(g)=\sqrt{
\dfrac{d^2+e^2+f^2}{g^2+h^2+i^2}-x^2_1}, & x_2&=x_2(g)=\dfrac{1}{y_1^2}\left(
\dfrac{ad+be+cf}{g^2+h^2+i^2}-x_1x_3\right), \\
y_2&=y_2(g)=\dfrac{1}{y_1}\sqrt{\dfrac{a^2+b^2+c^2}{g^2+h^2+i^2}-x_3^2-x_2^2y_1^2}.
\end{align*}
Then we have 
\begin{align*}
 g=\begin{pmatrix}
    y_1y_2 & y_1x_2 & x_3 \\ 0 & y_1 & x_1 \\ 0 & 0 & 1
   \end{pmatrix} \cdot \pm(g^2+h^2+i^2)k \qquad \text{for }k\in \mathrm{SO}_3(\mathbf{R}).
\end{align*}
}
\end{lem}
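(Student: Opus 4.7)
The plan is to reduce the Iwasawa decomposition to a single symmetric matrix identity that can be verified entry by entry. Write $M$ for the upper-triangular matrix $\begin{pmatrix} y_1y_2 & y_1x_2 & x_3 \\ 0 & y_1 & x_1 \\ 0 & 0 & 1 \end{pmatrix}$ built from the claimed formulas, and set $t=\pm\sqrt{g^2+h^2+i^2}$ (I read the scalar in the statement as this square root). Define $k:=(tM)^{-1}g$; then the claim becomes $k\in\mathrm{SO}_3(\mathbf{R})$.

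The key observation is that the condition $k\cdot{}^{\mathrm{t}}k=1_3$ is equivalent to the Gram-matrix identity $g\cdot{}^{\mathrm{t}}g=t^2\cdot M\cdot{}^{\mathrm{t}}M$, so the whole problem reduces to verifying this single symmetric matrix equality. The left-hand side records the pairwise inner products of the rows of $g$, while the right-hand side is a direct expansion. I would match entries in the order suggested by the upper-triangular structure of $M$, which yields a cascade of equations: the $(3,3)$-entry fixes $t^2=g^2+h^2+i^2$; the $(1,3)$- and $(2,3)$-entries then yield the formulas for $x_3$ and $x_1$; the $(2,2)$-entry produces the formula for $y_1$; the $(1,2)$-entry determines $x_2$; and the $(1,1)$-entry gives $y_2$. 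Each outcome coincides with the expression in the statement, so the identity holds by construction.

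The remaining points are essentially bookkeeping. To upgrade $k\in\mathrm{O}_3(\mathbf{R})$ to $k\in\mathrm{SO}_3(\mathbf{R})$, one chooses the sign of $t$ so that $\det k=+1$, using $\det g=y_1^2y_2\cdot t^3\cdot\det k$ together with the positivity of $y_1,y_2$; the well-definedness of the square roots defining $y_1$ and $y_2$ follows from the Cauchy--Schwarz inequality applied to the rows of $g$ and the invertibility of $g$. There is no serious obstacle: the lemma simply renders the standard Iwasawa decomposition $\mathrm{GL}_3(\mathbf{R})=\mathrm{N}_3(\mathbf{R})\cdot\mathrm{A}_3(\mathbf{R})\cdot\mathrm{SO}_3(\mathbf{R})$ in the specific coordinates chosen, so the only real labor lies in the entry-by-entry matrix computation, for which I would consult symbolic algebra software to avoid transcription errors.
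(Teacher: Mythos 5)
Your proposal is correct, and it fills in what the paper itself omits: the authors state only that the computations in this appendix were verified with computer algebra and give no proof, so there is no argument to compare against, but your reduction is exactly the right one. Defining $k=(tM)^{-1}g$ and translating $k\,{}^{\mathrm t}k=1_3$ into the Gram-matrix identity $g\,{}^{\mathrm t}g=t^2\,M\,{}^{\mathrm t}M$ turns the problem into an upper Cholesky factorization of the positive-definite matrix $g\,{}^{\mathrm t}g$, and reading off the entries in the cascade $(3,3),(2,3),(1,3),(2,2),(1,2),(1,1)$ reproduces precisely the displayed formulas for $t^2$, $x_1$, $x_3$, $y_1$, $x_2$, $y_2$ in turn. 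You also correctly spotted a misprint in the statement: the $(3,3)$-entry of the Gram identity forces $t^2=g^2+h^2+i^2$, so the scalar multiplying $k$ must be $\pm\sqrt{g^2+h^2+i^2}$, not $\pm(g^2+h^2+i^2)$. Your remaining bookkeeping is sound as well: positivity of the radicand defining $y_1$ follows from Cauchy--Schwarz applied to the last two rows of $g$ together with invertibility; positivity of the radicand for $y_2$ can be seen most quickly by comparing determinants, $(\det g)^2=\det(g\,{}^{\mathrm t}g)=t^6y_1^4y_2^2$; and the sign of $t$ is then chosen via $\det g=y_1^2y_2\,t^3\det k$ to force $\det k=+1$.
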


Due to Lemma~\ref{lem:explicit_Iwasawa}, we can consider $(y_1(g), y_2(g), x_1(g), x_2(g), x_3(g))$ as the coordinates of $g$ on $Y^{(3)}_{\mathcal{K}^{(3)},\infty}$.
Set $F (g):=
\begin{pmatrix}
 y_1(g)y_2(g) & y_1(g)x_2(g) & x_3(g) \\ 0 & y_1(g) & x_1(g) \\ 0 & 0 & 1
\end{pmatrix}$ and let $\mathrm{d}F$ denote the differential of $F$ at the identity matrix $1_3$. 

\begin{lem} \label{lem:differential_eta}
{\itshape 
 Let $\{X_j\}_{-2\leq j\leq 2}$ denote the basis of $\mathcal{P}_{3,\mathbf{C}}$ introduced in Section~$\ref{sec:ES}$. Then we have
\begin{multline*}
  \mathrm{d}F
\begin{pmatrix}
 X_2 & X_1 & X_0 & X_{-1} & X_{-2}
\end{pmatrix}  \\ =
\begin{pmatrix}
 \left(\frac{\partial}{\partial y_1}\right)_{1_3} &  \left(\frac{\partial}{\partial y_2}\right)_{1_3} &  \left(\frac{\partial}{\partial x_1}\right)_{1_3} &  \left(\frac{\partial}{\partial x_2}\right)_{1_3} &  \left(\frac{\partial}{\partial x_3}\right)_{1_3}
\end{pmatrix}
\begin{pmatrix}
 1 & 0 & 1 & 0 & 1 \\
-2 & 0 & 0 & 0 & -2 \\
0 & \sqrt{-1} & 0 & \sqrt{-1} & 0 \\
-2\sqrt{-1} & 0 & 0 & 0 & 2\sqrt{-1} \\
0 & 1 & 0 & -1 & 0
\end{pmatrix}.
\end{multline*}
}
\end{lem}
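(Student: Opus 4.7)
The plan is to establish the identity by direct computation of $\mathrm{d}F(X_j)$ for each of the five basis elements $X_j \in \mathcal{P}_{3,\mathbf{C}}$. For every fixed $j \in \{-2,-1,0,1,2\}$, I consider the one-parameter family $g(t) = 1_3 + t X_j$, substitute its entries into the explicit expressions for $y_i$ and $x_k$ given in Lemma~\ref{lem:explicit_Iwasawa}, and differentiate the resulting real-analytic functions $y_i(t), x_k(t)$ at $t = 0$. Since $F(1_3) = 1_3$, the tuple $(y_1'(0), y_2'(0), x_1'(0), x_2'(0), x_3'(0))$ is precisely the expansion of $\mathrm{d}F(X_j)$ in the coordinate basis $\{(\partial/\partial y_i)_{1_3}, (\partial/\partial x_k)_{1_3}\}$. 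Well-definedness of $\mathrm{d}F$ on the quotient $\mathcal{P}_{3,\mathbf{C}} = \mathfrak{gl}_3(\mathbf{R})/\mathrm{Lie}(K_3)$ rather than on all of $\mathfrak{gl}_3(\mathbf{R})$ is automatic from the right $\mathrm{SO}_3(\mathbf{R})$- and scalar-invariance of the coordinates $y_i, x_k$.

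A natural block decomposition reduces the verification to three short and essentially independent calculations, each matching one of the three non-trivial row patterns exhibited by the claimed matrix. For $X_0 = A_2$, the perturbation $g(t)$ stays diagonal, so the off-diagonal bilinear combinations in Lemma~\ref{lem:explicit_Iwasawa} defining $x_1, x_2, x_3$ vanish identically, and only the $y_1$- and $y_2$-rows of the $X_0$-column can be non-zero. For $X_{\pm 1} = N_2 \pm \sqrt{-1} N_3$, the support lies in the last row and column of the matrix; one immediately checks that $g^2 + h^2 + i^2$ remains equal to $1$ while only $dg + eh + fi$ and $ag + bh + ci$ acquire linear terms in $t$, so only the $x_1$- and $x_3$-rows contribute. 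For $X_{\pm 2} = A_1 \pm \sqrt{-1} N_1$, the support is confined to the upper-left $2 \times 2$ block; thus $x_1$ and $x_3$ remain identically zero along $g(t)$, while $y_1, y_2$ and $x_2$ all receive linear contributions.

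Each of these three blocks is then handled by an elementary first-order Taylor expansion of the defining formulas, including the two square roots appearing in the expressions for $y_1$ and $y_2$. The main, purely computational, obstacle will be the careful bookkeeping of signs and of the $\sqrt{-1}$-factors coming from the complex basis vectors $X_{\pm 1}$ and $X_{\pm 2}$, together with expanding the square-root expressions far enough that the cancellations among the quadratic cross-terms become visible at the linear order. Since every quantity involved is either a polynomial or a square root of a polynomial in $t$ of very low degree, the computation is straightforward and can be checked either by hand or by a short computer algebra verification; the five computed columns then assemble directly into the matrix displayed in the statement.
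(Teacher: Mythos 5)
Your plan---differentiate the explicit Iwasawa formulas of Lemma~\ref{lem:explicit_Iwasawa} along the curves $g(t)=1_3+tX_j$---is precisely the computation the paper refers to but omits (the Appendix explicitly defers its verifications to Mathematica and Maxima), and the block structure you observe for $X_0$, $X_{\pm 1}$ and $X_{\pm 2}$ is correct. One small point of rigour: for $j\neq 0$ the matrix $X_j$ is complex, so the functions $y_i(t), x_k(t)$ are not ``real-analytic''; you should either note that the quantities under the two square roots in Lemma~\ref{lem:explicit_Iwasawa} are $1+O(t)$, so that the formulas extend holomorphically to a neighbourhood of $t=0$ and differentiation at a complex parameter is legitimate, or instead compute ${\rm d}F(A_1),\dots,{\rm d}F(N_3)$ over ${\mathbf R}$ and take the ${\mathbf C}$-linear combinations defining $X_j$.

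More seriously, the concluding assertion that ``the five computed columns then assemble directly into the matrix displayed in the statement'' does not appear to survive the calculation, and you should not plan on a clean match. Since $X_{-1}=\overline{X_1}$ and ${\rm d}F$ is the ${\mathbf C}$-linear extension of a real-linear map, one must have ${\rm d}F(X_{-1})=\overline{{\rm d}F(X_1)}$; columns $1$ and $5$ of the displayed matrix are indeed complex conjugates, but columns $2$ and $4$, namely ${}^{\rm t}(0,0,\sqrt{-1},0,1)$ and ${}^{\rm t}(0,0,\sqrt{-1},0,-1)$, are not, so the printed matrix is already internally inconsistent. Concretely, along $g(t)=1_3+tN_2$ the formulas of Lemma~\ref{lem:explicit_Iwasawa} give $x_3(t)=2t/(1+t^2)$ while every other coordinate deviates from its value at $t=0$ only at order $t^2$, so ${\rm d}F(N_2)=2(\partial/\partial x_3)_{1_3}$; yet averaging columns $2$ and $4$ of the displayed matrix would give ${\rm d}F(N_2)=\tfrac12\bigl({\rm d}F(X_1)+{\rm d}F(X_{-1})\bigr)=\sqrt{-1}\,(\partial/\partial x_1)_{1_3}$, which is not a real tangent vector and so cannot be the image of the real matrix $N_2$ under a real map. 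When you carry out the five calculations, therefore, do not simply quote the printed matrix; compare your output against the downstream uses in (\ref{eq:pullback_1}), (\ref{eq:pullback_2}) and Proposition~\ref{prop:bfomega}, and track down the sign and normalization convention (or possible typo) responsible for the mismatch before declaring the verification complete.
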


Taking dual of Lemma~\ref{lem:differential_eta}, we obtain 
\begin{multline}
 (F^*)^{-1}\begin{pmatrix}
 \mathrm{d}X_2 & \mathrm{d}X_1 & \mathrm{d}X_0 & \mathrm{d}X_{-1} & \mathrm{d}X_{-2}
\end{pmatrix} \\
=\begin{pmatrix}
 (\mathrm{d}y_1)_{1_3} & (\mathrm{d}y_2)_{1_3} & (\mathrm{d}x_1)_{1_3} & (\mathrm{d}x_2)_{1_3} & (\mathrm{d}x_3)_{1_3}
\end{pmatrix}  \,\,\raisebox{2.5em}{$^{\mathrm{t}}$}\!\! \begin{pmatrix}
 1 & 0 & 1 & 0 & 1 \\
-2 & 0 & 0 & 0 & -2 \\
0 & \sqrt{-1} & 0 & \sqrt{-1} & 0 \\
-2\sqrt{-1} & 0 & 0 & 0 & 2\sqrt{-1} \\
0 & 1 & 0 & -1 & 0
\end{pmatrix}^{-1}.  \label{eq:pullback_1}
\end{multline}

Next we extend (\ref{eq:pullback_1}) to the identity between global differential forms. 

\begin{lem} \label{lem:jacobian}
{\itshape 
 Let $g\in \mathrm{GL}_3(\mathbf{R})$. Then we have 
\begin{multline*}
 \begin{pmatrix}
  (\mathrm{d}y_1)_g & (\mathrm{d}y_2)_g & (\mathrm{d}x_1)_g &(\mathrm{d}x_2)_g & (\mathrm{d}x_3)_g
 \end{pmatrix}=\begin{pmatrix}
  (\mathrm{d}y_1)_{1_3} & (\mathrm{d}y_2)_{1_3} & (\mathrm{d}x_1)_{1_3} &(\mathrm{d}x_2)_{1_3} & (\mathrm{d}x_3)_{1_3}
 \end{pmatrix}\circ L_{g}^{-1} \\
=  \begin{pmatrix}
(\mathrm{d}y_1)_{1_3} & (\mathrm{d}y_2)_{1_3} & (\mathrm{d}x_1)_{1_3} & (\mathrm{d}x_2)_{1_3} & (\mathrm{d}x_3)_{1_3}
 \end{pmatrix} \dfrac{\partial (y_1(g^{-1}),y_2(g^{-1}),x_1(g^{-1}),x_2(g^{-1}),x_3(g^{-1}))}{\partial (y_1,y_2,x_1,x_2,x_3)} \\
=\begin{pmatrix}
(\mathrm{d}y_1)_{1_3} & (\mathrm{d}y_2)_{1_3} & (\mathrm{d}x_1)_{1_3} & (\mathrm{d}x_2)_{1_3} & (\mathrm{d}x_3)_{1_3}
 \end{pmatrix}
\begin{pmatrix}
 \frac{1}{y_1} & 0 & 0 & 0 & 0 \\ 
0 & \frac{1}{y_2} & 0 & 0 & 0 \\ 
0 & 0 & \frac{1}{y_1} & 0 & -\frac{x_2}{y_1y_2} \\
0 & 0 & 0 & \frac{1}{y_2} & 0 \\ 0 & 0 & 0 & 0 & \frac{1}{y_1y_2}
\end{pmatrix}
\end{multline*}
where $L_g$ denotes the left translation.
}%
\end{lem}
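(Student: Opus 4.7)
The plan is to reduce the lemma to an explicit computation grounded in the multiplication law of Borel (upper-triangular) matrices expressed in the Iwasawa coordinates of Lemma~\ref{lem:explicit_Iwasawa}. First I would derive the product formula: for $g,h$ in the image of the section $F$, direct multiplication of the two Borel matrices followed by comparison with the normal form $F(gh)$ yields $y_i(gh)=y_i(g)y_i(h)$ for $i=1,2$, $x_j(gh)=y_j(g)x_j(h)+x_j(g)$ for $j=1,2$, and $x_3(gh)=y_1(g)y_2(g)x_3(h)+y_1(g)x_2(g)x_1(h)+x_3(g)$. Setting $h=g^{-1}$ and solving, one reads off the coordinates of $g^{-1}$ as $y_i(g^{-1})=y_i(g)^{-1}$ for $i=1,2$, $x_1(g^{-1})=-x_1/y_1$, $x_2(g^{-1})=-x_2/y_2$, and $x_3(g^{-1})=(x_1x_2-x_3)/(y_1y_2)$.

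Substituting these expressions back into the product formula applied to $g^{-1}h$, each coordinate $y_i(g^{-1}h)$ and $x_i(g^{-1}h)$ becomes an explicit affine function of $(y_1(h),\ldots,x_3(h))$ with coefficients depending only on the coordinates of $g$. Taking partial derivatives with respect to the $h$-coordinates then yields, by inspection, precisely the Jacobian matrix appearing in the statement: the diagonal entries $1/y_1,1/y_2,1/y_1,1/y_2,1/(y_1y_2)$ arise as the linear coefficients of the corresponding $h$-coordinates, while the off-diagonal entry $-x_2/(y_1y_2)$ in position $(3,5)$ originates from the cross-term $y_1(g^{-1})x_2(g^{-1})x_1(h)$ inside $x_3(g^{-1}h)$.

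The remaining first two equalities then follow from the general chain rule. Interpreting $L_g^{-1}$ as the pushforward $L_{g^{-1},*}\colon T_g\to T_{1_3}$ of left translation by $g^{-1}$, the composite $(\mathrm{d}y_i)_{1_3}\circ L_g^{-1}$ evaluated on a tangent vector $v\in T_g$ equals the directional derivative of $h\mapsto y_i(g^{-1}h)$ at $h=g$ along $v$, which by the chain rule expands as $\sum_j \bigl(\partial y_i(g^{-1}h)/\partial y_j(h)\bigr)\cdot(\mathrm{d}y_j)_g(v)$; this identifies the composite with the row vector of the $(\mathrm{d}y_j)_{1_3}$'s multiplied on the right by the Jacobian matrix. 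The main obstacle will be the careful bookkeeping in the derivation of the product formula, since the Iwasawa normalization of $F(g)F(h)$ forces one to untangle the entries of the product matrix against the standard form $F(gh)$; once this is done, however, the remaining steps are essentially automatic.
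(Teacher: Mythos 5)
Your proposal is correct and matches the argument the paper leaves implicit (the authors explicitly omit these proofs as ``longsome but straightforward computations''): you derive the group law in Iwasawa coordinates for matrices in the image of $F$, invert to find the coordinates of $g^{-1}$, and then read off the Jacobian via the chain rule. The only point worth making explicit is that $g^{-1}$ lies in the image of $F$ whenever $g$ does (an upper-triangular matrix with lower-right entry $1$ has an inverse of the same shape), which justifies applying the product formula with $g^{-1}$ as the left factor.
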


Combining (\ref{eq:pullback_1}) with Lemma~\ref{lem:jacobian}, we have 
\begin{multline}
 (F^*)^{-1}\left(\begin{pmatrix}
 \mathrm{d}X_2 & \mathrm{d}X_1 & \mathrm{d}X_0 & \mathrm{d}X_{-1} & \mathrm{d}X_{-2}
\end{pmatrix} \circ L_g^{-1}\right) \\
=\begin{pmatrix}
 \mathrm{d}y_1 & \mathrm{d}y_2 & \mathrm{d}x_1 & \mathrm{d}x_2 & \mathrm{d}x_3
\end{pmatrix} \begin{pmatrix}
 0 & 0 & \frac{1}{y_1} & 0 & 0 \\ 
-\frac{1}{4y_2} & 0 & \frac{1}{2y_2} & 0 & -\frac{1}{4y_2} \\
0 & -\frac{x_2+\sqrt{-1}y_2}{2y_1y_2} & 0 & \frac{x_2-\sqrt{-1}y_2}{2y_1y_2} & 0 \\
	       \frac{\sqrt{-1}}{4y_2} & 0 & 0 & 0 & -\frac{\sqrt{-1}}{4y_2} \\
0 & \frac{1}{2y_1y_2} & 0 & -\frac{1}{2y_1y_2} & 0
\end{pmatrix}. \label{eq:pullback_2}
\end{multline}

By taking wedge products of (\ref{eq:pullback_2}) and considering appropriate base change given in Section~\ref{sec:wedge_p}, we can explicitly describe the differential forms $\{\boldsymbol{\omega}^i_j\}_{-3\leq j\leq 3}$ introduced in Section~\ref{sec:wedge_p} with respect to the coordinates $(y_1,y_2,x_1,x_2,x_3)$ of $Y^{(3)}_{\mathcal{K}_3,\infty}$. We summarize the results in the following proposition.

\begin{prop}\label{prop:bfomega}
{\itshape 
 Set $\varsigma_1=\mathrm{d}y_1$, $\varsigma_2=\mathrm{d}y_2$, $\varsigma_3=\mathrm{d}x_1$, $\varsigma_4=\mathrm{d}x_2$ and $\varsigma_5=\mathrm{d}x_3$. For $i=2$ and $3$, define $\boldsymbol{\varsigma}^i$ as
\begin{align*}
  \boldsymbol{\varsigma}^{i}= 
\begin{cases}
 \begin{pmatrix}
  \varsigma_{2,1} & \varsigma_{2,0} & \varsigma_{2,-1} & \varsigma_{2,-2} & \varsigma_{1,0} & \varsigma_{1,-1} & \varsigma_{1,-2} & \varsigma_{0,-1} & \varsigma_{0,-2} & \varsigma_{-1,-2}
 \end{pmatrix} & \text{for $i=2$}, \\
\begin{pmatrix}
  \varsigma_{2,1,0} & \varsigma_{2,1,-1} & \varsigma_{2,1,-2} & \varsigma_{2,0,-1} & \varsigma_{2,0,-2} & \varsigma_{2,-1,-2} & \varsigma_{1,0,-1} & \varsigma_{1,0,-2} & \varsigma_{1,-1,-2} & \varsigma_{0,-1,-2}
 \end{pmatrix} & \text{for $i=3$}
\end{cases}
\end{align*}
where $\varsigma_{j,j'}$ and $\varsigma_{j,j',j''}$ abbreviate $\varsigma_j\wedge \varsigma_{j'}$ and $\varsigma_{j}\wedge \varsigma_{j'} \wedge \varsigma_{j''}$
respectively. Then the differential forms $
\begin{pmatrix}
 \boldsymbol{\omega}^i_3 & \dotsc & \boldsymbol{\omega}_{-3}^i
\end{pmatrix}$ is described on $Y^{(3)}_{\mathcal{K}_3,\infty}$ as $\boldsymbol{\varsigma}^i Q^i$ for 
\begin{align*}
 Q^2 &=
\begin{pmatrix}
 0 & \frac{1}{2y_1y_2} & 0 & 0 & 0 & -\frac{1}{2y_1y_2} & 0 \\
0 & 0 & \frac{x_2+\sqrt{-1}y_2}{2y_1^2y_2} & 0 & \frac{x_2-\sqrt{-1}y_2}{2y_1^2y_2} & 0 & 0 \\
0 & -\frac{\sqrt{-1}}{2y_1y_2} & 0 & 0 & 0 & -\frac{\sqrt{-1}}{2y_1y_2} & 0 \\
0 & 0 & -\frac{1}{2y_1^2y_2} & 0 & -\frac{1}{2y_1^2y_2} & 0 & 0 \\
\frac{x_2+\sqrt{-1}y_2}{8y_1y_2^2} & 0 & -\frac{x_2-5\sqrt{-1}y_2}{8y_1y_2^2} &0 &  -\frac{x_2+5\sqrt{-1}y_2}{8y_1y_2^2} & 0 & \frac{x_2-\sqrt{-1}y_2}{8y_1y_2^2} \\
0 & -\frac{\sqrt{-1}}{4y_2^2} & 0 & \frac{\sqrt{-1}}{2y_2^2} & 0 & -\frac{\sqrt{-1}}{4y_2^2} & 0 \\
-\frac{1}{8y_1y_2^2} & 0 & \frac{1}{8y_1y_2^2} & 0 & \frac{1}{8y_1y_2^2} & 0 & -\frac{1}{8y_1y_2^2} \\
\frac{\sqrt{-1}x_2-y_2}{8y_1y_2^2} & 0 & -\frac{3(\sqrt{-1}x_2+y_2)}{8y_1y_2^2} & 0 & \frac{3(\sqrt{-1}x_2-y_2)}{8y_1y_2^2} & 0 & -\frac{\sqrt{-1}x_2+y_2}{8y_1y_2^2} \\
0 & 0 & 0 & \frac{\sqrt{-1}}{y_1^2y_2} & 0 & 0 & 0 \\ 
\frac{\sqrt{-1}}{8y_1y_2^2} & 0 & -\frac{3\sqrt{-1}}{8y_1y_2^2} & 0 & \frac{3\sqrt{-1}}{8y_1y_2^2} & 0 & -\frac{\sqrt{-1}}{8y_1y_2^2}
\end{pmatrix}, \\
Q^3 &=
\begin{pmatrix}
\frac{x_2+\sqrt{-1}y_2}{24y_1^2y_2^2} & 0 & \frac{x_2-\sqrt{-1}y_2}{8y_1^2y_2^2} & 0 & \frac{x_2+\sqrt{-1}y_2}{8y_1^2y_2^2} & 0 & \frac{x_2-\sqrt{-1}y_2}{24y_1^2y_2^2} \\
0 & 0 & 0 & -\frac{\sqrt{-1}}{3y_1y_2^2} & 0 & 0 & 0 \\ 
-\frac{1}{24y_1^2y_2^2} & 0 & -\frac{1}{8y_1^2y_2^2} & 0 & -\frac{1}{8y_1^2y_2^2} & 0 & -\frac{1}{24y_1^2y_2^2} \\
\frac{\sqrt{-1}x_2-y_2}{24y_1^2y_2^2} & 0 & \frac{\sqrt{-1}x_2+y_2}{8y_1^2y_2^2} & 0 & -\frac{\sqrt{-1}x_2-y_2}{8y_1^2y_2^2} & 0 & -\frac{\sqrt{-1}x_2+y_2}{24y_1^2y_2^2} \\
0 & 0 & 0 & -\frac{\sqrt{-1}}{6y_1^3y_2} & 0 & 0 & 0 \\
\frac{\sqrt{-1}}{24y_1^2y_2^2} & 0 & \frac{\sqrt{-1}}{8y_1^2y_2^2} & 0 & -\frac{\sqrt{-1}}{8y_1^2y_2^2} & 0 & -\frac{\sqrt{-1}}{24y_1^2y_2^2} \\
\frac{\sqrt{-1}x_2-y_2}{48y_1y_2^3} & 0 & -\frac{\sqrt{-1}x_2-3y_2}{16y_1y_2^3} & 0 & \frac{\sqrt{-1}x_2+3y_2}{16y_1y_2^3} & 0 & -\frac{\sqrt{-1}x_2+y_2}{48y_1y_2^3} \\
 0 & -\frac{\sqrt{-1}}{8y_1^2y_2^2} & 0 & -\frac{\sqrt{-1}}{12y_1^2y_2^2} & 0 & -\frac{\sqrt{-1}}{8y_1^2y_2^2} & 0 \\
 \frac{\sqrt{-1}}{48y_1y_2^3} & 0 & -\frac{\sqrt{-1}}{16y_1y_2^3} & 0 & \frac{\sqrt{-1}}{16y_1y_2^3} & 0 &  -\frac{\sqrt{-1}}{48y_1y_2^3} \\
0 & \frac{1}{8y_1^2y_2^2} & 0 & 0 & 0 & -\frac{1}{8y_1^2y_2^2} & 0
\end{pmatrix}.
\end{align*}
}
\end{prop}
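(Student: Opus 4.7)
The proof will be essentially a mechanical (though lengthy) coordinate computation, built on top of the pullback formula (\ref{eq:pullback_2}) that has already been established. My plan is to proceed in three stages: first transfer (\ref{eq:pullback_2}) to the level of $i$-th wedge powers, then compose with the base-change matrix $P^i$ from the proof of Lemma~\ref{lem:basis_wedge}, and finally compare the result entry by entry with the matrices $Q^2$ and $Q^3$ listed in the statement.

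More concretely, let $R(g) \in \mathrm{GL}_5(\mathbf{C})$ denote the matrix appearing on the right-hand side of (\ref{eq:pullback_2}), so that
\begin{align*}
 \begin{pmatrix} \mathrm{d}X_2 & \mathrm{d}X_1 & \mathrm{d}X_0 & \mathrm{d}X_{-1} & \mathrm{d}X_{-2} \end{pmatrix} \circ L_g^{-1}
 = \begin{pmatrix} \mathrm{d}y_1 & \mathrm{d}y_2 & \mathrm{d}x_1 & \mathrm{d}x_2 & \mathrm{d}x_3 \end{pmatrix} R(g).
\end{align*}
Taking the $i$-th exterior power of this identity produces a $\binom{5}{i}\times \binom{5}{i} = 10\times 10$ matrix $\wedge^i R(g)$ with entries the $i\times i$ minors of $R(g)$, which I will order lexicographically using the convention $\mathrm{d}X_2 \prec \mathrm{d}X_1 \prec \mathrm{d}X_0 \prec \mathrm{d}X_{-1}\prec \mathrm{d}X_{-2}$ on the source side and $\varsigma_1\prec\varsigma_2\prec\varsigma_3\prec\varsigma_4\prec\varsigma_5$ on the target side, matching the orderings of $\mathbf{X}^i$ in the proof of Lemma~\ref{lem:basis_wedge} and of $\boldsymbol{\varsigma}^i$ in the statement. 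This yields the identity $\mathrm{d}\mathbf{X}^i \circ L_g^{-1} = \boldsymbol{\varsigma}^i \cdot \wedge^i R(g)$.

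By the very construction of $\boldsymbol{\omega}^i_j$ in (\ref{eq:def_bfomega}) and Lemma~\ref{lem:basis_wedge}, we have $\begin{pmatrix} \boldsymbol{\omega}^i_3 & \cdots & \boldsymbol{\omega}^i_{-3} \end{pmatrix} = \mathrm{d}\mathbf{X}^i P^i$. Composing with $L_g^{-1}$ gives the identity
\begin{align*}
 \begin{pmatrix} \boldsymbol{\omega}^i_3 & \cdots & \boldsymbol{\omega}^i_{-3} \end{pmatrix} \circ L_g^{-1}
 = \boldsymbol{\varsigma}^i \cdot \bigl(\wedge^i R(g)\bigr) \cdot P^i,
\end{align*}
so the target matrix is $Q^i(g) = \wedge^i R(g) \cdot P^i \in \mathrm{M}_{10,7}(\mathbf{C})$, whose entries are rational functions in $y_1, y_2, x_2$. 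The remaining task is to verify that $Q^i(g)$ coincides with the explicit matrices $Q^2$ and $Q^3$ tabulated in the proposition. This is a straightforward check: each entry is a signed minor of $R(g)$ multiplied by one of the rational entries of $P^i$, and the rows of $R(g)$ are sparse enough that most minors vanish or reduce to elementary expressions in $y_1,y_2,x_2$.

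The main obstacle is not any conceptual difficulty but the sheer bookkeeping involved in matching sign conventions, lexicographic orderings, and the change-of-basis matrices $P^i$ consistently across the $10\times 10$ and $10\times 7$ products. As the authors indicate in the previous subsection that analogous computations were checked using Mathematica and Maxima, I will carry out the minor expansions and the final matrix multiplication in a computer algebra system, then simply transcribe the entries to confirm agreement with $Q^2$ and $Q^3$. No independent mathematical step beyond (\ref{eq:pullback_2}) and the definition of $P^i$ is needed.
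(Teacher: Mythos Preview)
Your proposal is correct and matches the paper's approach exactly: the paper states that the proposition follows ``by taking wedge products of (\ref{eq:pullback_2}) and considering appropriate base change given in Section~\ref{sec:wedge_p},'' with the computations checked in Mathematica and Maxima. Your formulation $Q^i(g) = \wedge^i R(g)\cdot P^i$ is precisely this, and your identification of the bookkeeping (orderings, sign conventions) as the only obstacle is accurate.
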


\section{Appendix: Formulas for ${\mathcal P}(X, Y, Z, A, B, C)$}\label{sec:AppB}

We retain the notations in Sections \ref{sec:branch} and \ref{sec:ES}.   
In Appendix \ref{sec:AppB},    
   we summarize several formulas concerning the matrix introduced in (\ref{eq:matP}) or Lemma~\ref{lem:construction_delta} (i):
\begin{align*}   
      {\mathcal P}(X, Y, Z, A, B, C)  
   = \left(   
            {\mathcal P}_{\alpha, \beta} (X, Y, Z, A, B, C) 
       \right)_{\substack{\lambda_3\geq \alpha\geq -\lambda_3 \\ 3\geq \beta\geq -3}} 
      \in {\rm M}_{2\lambda_3+1, 7} ( {\mathbf C}[X,Y,Z ; A,B,C]_{w_{\boldsymbol{\lambda}}}  ).
\end{align*}

\begin{lem}\label{lem:Pab}
{\itshape For each $-\lambda_3 \leq \alpha \leq \lambda_3$ and  $-3\leq \beta \leq 3$,   we have 
\begin{align*}
   {\mathcal P}_{ \alpha,  \beta} (X,Y, Z, A,B,C)
   &=      2^{ \alpha-\beta }
               \sum_{ \substack{  - \alpha  -a+2j+k = w_{\boldsymbol{\lambda}},  \\  a + \beta+2b+c = w_{\boldsymbol{\lambda}} }  }
         (-2^{-2})^{ j+b } 
                         {}_{w_{\boldsymbol{\lambda}}} H_{j-a,j-\alpha,k}
                         \cdot {}_{w_{\boldsymbol{\lambda}}} H_{b+a,b+\beta,c}       \\
  &\qquad      \times   \left(                            
                     (-X+\sqrt{-1}Y)^{\alpha -a} 
                     (X^2+Y^2)^{j-\alpha}
                     Z^{k}         \right)     
       \otimes 
               \left(  (-A+\sqrt{-1}B)^{a-\beta } 
                        (A^2+B^2)^{b+\beta}
                          C^{c}    
                       \right),  
\end{align*}
where ${}_eH_{i,j,k} \in {\mathbf Z}$ is defined as in \rm{(\ref{eq:defH})}.  
}
\end{lem}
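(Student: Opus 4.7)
The plan is to compute $\mathcal{P}_{\alpha,\beta}$ by expanding the defining generating function $P$ directly, then reconcile the resulting expression with the lemma's parametrization. Introduce the diagonalizing variables $\zeta_\pm=\pm z_1+\sqrt{-1}z_2$ (so $v^{\boldsymbol{\lambda}}_\mu=\zeta_+^{\mu}z_3^{\lambda_3-\mu}$ for $\mu\geq 0$ and $v^{\boldsymbol{\lambda}}_{-\mu}=\zeta_-^{\mu}z_3^{\lambda_3-\mu}$) together with $S=-X+\sqrt{-1}Y$, $T=X+\sqrt{-1}Y$, and the corresponding $S',T'$. Using $z_1=\frac{1}{2}(\zeta_+-\zeta_-)$ and $z_2=-\frac{\sqrt{-1}}{2}(\zeta_++\zeta_-)$, a short computation yields
\begin{align*}
 Xz_1+Yz_2+Zz_3=Zz_3-\frac{1}{2}(S\zeta_++T\zeta_-),
\end{align*}
with the analogous identity for the $(A,B,C)$-factor.

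The multinomial theorem then expresses $P$ as an explicit finite sum of monomials $\zeta_+^A\zeta_-^Bz_3^C$ with $A+B+C=\lambda_3$, each with a coefficient that is a monomial in $S,T,Z,S',T',C$ multiplied by two trinomial coefficients ${}_{w_{\boldsymbol{\lambda}}}H$. The key observation for extracting $\mathcal{P}_{\alpha,\beta}$ is that, modulo $\mathcal{V}_{\boldsymbol{\lambda}}$, one has $z_3^2\equiv \zeta_+\zeta_-$, so the manipulation $\zeta_+^A\zeta_-^B=\zeta_+^{A-B}(\zeta_+\zeta_-)^B$ (for $A\geq B$; symmetric otherwise) shows that the class of every monomial $\zeta_+^A\zeta_-^Bz_3^C$ in $V^{(3)}_{\boldsymbol{\lambda}}$ coincides with $v^{\boldsymbol{\lambda}}_{A-B}$. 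Consequently, $\mathcal{P}_{\alpha,\beta}$ equals the sum, over those tuples $(p,q,r,p',q',r')$ with $p+q+r=p'+q'+r'=w_{\boldsymbol{\lambda}}$ and $(p-q)+(p'-q')=\alpha-\beta$, of the corresponding coefficients.

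To match the lemma, I reparametrize this sum via $p=j-a$, $q=j-\alpha$, $r=k$, $p'=b+a$, $q'=b+\beta$, $r'=c$; the constraints transform exactly into the lemma's $-\alpha-a+2j+k=w_{\boldsymbol{\lambda}}$ and $a+\beta+2b+c=w_{\boldsymbol{\lambda}}$, and the trinomial coefficients coincide by the permutation invariance of ${}_{w_{\boldsymbol{\lambda}}}H$. Routine sign tracking confirms that $(-1/2)^{p+q+p'+q'}=(-1/2)^{2j+2b-\alpha+\beta}$, combined with the factors $(-1)^{j-\alpha}$ and $(-1)^{b+\beta}$ arising from the identities $(X^2+Y^2)^{j-\alpha}=(-ST)^{j-\alpha}$ and $(A^2+B^2)^{b+\beta}=(-S'T')^{b+\beta}$, recovers exactly the scalar $2^{\alpha-\beta}(-1/4)^{j+b}$ of the lemma. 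The most delicate point is to interpret the symbolic factor $(-X+\sqrt{-1}Y)^{\alpha-a}(X^2+Y^2)^{j-\alpha}$ algebraically as $(-1)^{j-\alpha}S^{j-a}T^{j-\alpha}$, which remains a genuine polynomial since the non-vanishing of ${}_{w_{\boldsymbol{\lambda}}}H_{j-a,j-\alpha,k}$ forces $j\geq \max(a,\alpha)$, and similarly for the primed factor; overlooking this algebraic reinterpretation would erroneously suggest that the sum is truncated to the range $\beta\leq a\leq \alpha$ and miss contributions essential to the identity.
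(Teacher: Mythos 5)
Your proof is correct and follows the same strategy as the paper's: expand the two linear forms using the diagonalizing substitution $z_1 X + z_2 Y + z_3 Z = Zz_3 - \tfrac12(S\zeta_+ + T\zeta_-)$ (which is exactly the identity the paper records at the start of its proof), apply the multinomial theorem, reduce modulo $\mathcal{V}_{\boldsymbol{\lambda}}$ via $\zeta_+\zeta_-\equiv z_3^2$ to collapse every monomial onto a basis vector $v^{\boldsymbol{\lambda}}_{\bullet}$, and then reparametrize the resulting sum. The paper achieves the final parametrization through a chain of explicit index shifts ($i\mapsto i+j$, $i\mapsto i-a$, $i\mapsto\alpha-\beta$, $a\mapsto a-\beta$, $j\mapsto j-\alpha$, $b\mapsto b+\beta$), whereas you pass directly to the new variables $(j,a,k,b,c)$ and invoke permutation invariance of ${}_{w_{\boldsymbol{\lambda}}}H$; both are equivalent. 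Your flag about the ``symbolic'' factors is worth keeping: when $a>\alpha$ the exponent $\alpha-a$ is negative, yet $S^{\alpha-a}(X^2+Y^2)^{j-\alpha}=(-1)^{j-\alpha}S^{j-a}T^{j-\alpha}$ is a genuine polynomial precisely because the nonvanishing of ${}_{w_{\boldsymbol{\lambda}}}H_{j-a,j-\alpha,k}$ forces $j\geq\max(a,\alpha)$, so the sum is indeed over all index tuples with nonzero trinomial coefficients and is not truncated to $\beta\leq a\leq\alpha$. The paper handles this implicitly (its constraints are always carried by the $H$-coefficients), but making it explicit clarifies how the lemma's statement should be read.
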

\begin{proof}
Using the identities
\begin{align*}
 z_1X+z_2Y+z_3Z&=-\dfrac{1}{2}(z_1+\sqrt{-1}z_2)(-X+\sqrt{-1}Y)-\dfrac{1}{2}(-z_1+\sqrt{-1}z_2)(X+\sqrt{-1}Y)+z_3Z,\\
 z_1A+z_2B+z_3C&=-\dfrac{1}{2}(z_1+\sqrt{-1}z_2)(-A+\sqrt{-1}B)-\dfrac{1}{2}(-z_1+\sqrt{-1}z_2)(A+\sqrt{-1}B)+z_3C,
\end{align*}
we expand $(z_1X+z_2Y+z_3Z)^{w_{\boldsymbol{\lambda}}}\otimes (z_1A+z_2B+z_3C)^{w_{\boldsymbol{\lambda}}}$ as  
\begin{multline*}
        (z_1 X + z_2 Y + z_3 Z)^{w_{\boldsymbol{\lambda}}}  \otimes    (z_1 A + z_2 B + z_3 C)^{w_{\boldsymbol{\lambda}}}  \\ 
\begin{aligned}
\qquad    =  \sum_{ \substack{ i+j+k = w_{\boldsymbol{\lambda}}  \\  a+b+c = w_{\boldsymbol{\lambda}} }  }   
          {}_{w_{\boldsymbol{\lambda}}} H_{i,j,k} 
          \cdot {}_{w_{\boldsymbol{\lambda}}} H_{a,b,c} 
     &     \left(   (-\frac{1}{2})^{i+j}  
                     (z_1 + \sqrt{-1} z_2)^i   (-X+\sqrt{-1}Y)^i 
                      (-z_1 + \sqrt{-1} z_2)^j   (X+\sqrt{-1}Y)^j    
                      z^k_3  Z^k         \right)     \\
      &   \otimes 
               \left( (-\frac{1}{2})^{a+b}  
                     (z_1 + \sqrt{-1} z_2)^a   (-A+\sqrt{-1}B)^a 
                      (-z_1 + \sqrt{-1} z_2)^b   (A+\sqrt{-1}B)^b   
                      z^c_3  C^c    
                       \right).  
\end{aligned}
\end{multline*}
Changing the variables $i$ to $i+j$ and $a$ to $a+b$, 
the expanded polynomial above is calculated as
\begin{multline*}
\begin{aligned}
    \sum_{ \substack{ i+2j+k = w_{\boldsymbol{\lambda}}  \\  a+2b+c = w_{\boldsymbol{\lambda}} }  }
         (-1)^{i+j+a+b} 
           2^{-(i+2j + a + 2b)}
                  {}_{w_{\boldsymbol{\lambda}}} H_{i+j,j,k}
                  \cdot {}_{w_{\boldsymbol{\lambda}}} H_{a+b,b,c} &
           \left(     
                      (z_1 + \sqrt{-1} z_2)^{i}   
                     (-X+\sqrt{-1}Y)^{i} 
                     (X^2+Y^2)^j
                      z^{2j+k}_3  Z^{k}         \right)     \\
      &  \otimes 
               \left(  (z_1 + \sqrt{-1} z_2)^{a}  
                        (-A+\sqrt{-1}B)^{a} 
                        (A^2+B^2)^b
                      z^{2b+c}_3  C^{c}    
                       \right)                 
\end{aligned}        \\
 \begin{aligned}
  =   \sum_{ \substack{ i+2j+k = w_{\boldsymbol{\lambda}}  \\  a+2b+c = w_{\boldsymbol{\lambda}} }  }
         (-1)^{i+j+a+b} 
           2^{-(i+2j + a + 2b)}  &
                  {}_{w_{\boldsymbol{\lambda}}} H_{i+j,j,k} 
                  \cdot {}_{w_{\boldsymbol{\lambda}}} H_{a+b,b,c}  
             (z_1 + \sqrt{-1} z_2)^{i+a} z^{2 w_{\boldsymbol{\lambda}} -i-a}_3    \\
        &  \times  \left(                            
                     (-X+\sqrt{-1}Y)^{i} 
                     (X^2+Y^2)^j
                     Z^{k}         \right)     
        \otimes 
               \left(  (-A+\sqrt{-1}B)^{a} 
                        (A^2+B^2)^b
                          C^{c}    
                       \right)  
 \end{aligned}  \\
\begin{aligned}
 \substack{ i \mapsto i-a \\  =}
    \sum_{ \substack{ i-a+2j+k = w_{\boldsymbol{\lambda}}  \\  a+2b+c = w_{\boldsymbol{\lambda}} }  }
         (-1)^{i+j+b} 
           2^{-(i+2j  + 2b)}  
   &               {}_{w_{\boldsymbol{\lambda}}} H_{i+j-a,j,k}
                  \cdot {}_{w_{\boldsymbol{\lambda}}} H_{a+b,b,c}  
                  v^{(2w_{\boldsymbol{\lambda}},0)}_i   \\
    &   \times   \left(                            
                     (-X+\sqrt{-1}Y)^{i-a} 
                     (X^2+Y^2)^j
                     Z^{k}         \right)     
       \otimes 
               \left(  (-A+\sqrt{-1}B)^{a} 
                        (A^2+B^2)^b
                          C^{c}     \right).
\end{aligned}                      
\end{multline*}
Here we use the definition $    v^{(\lambda,\delta)}_{\pm \mu}
    =  ( \pm z_1+\sqrt{-1} z_2 )^{\mu}  z^{\lambda - \mu}_3  \in V^{(3)}_{\boldsymbol{\lambda}}$ for $0\leq \mu\leq \lambda$ (see Section~\ref{sec:repSO3} for details). 
Hence, for each $-3\leq \beta \leq 3$, we compute
\begin{multline*}
      (z_1 X + z_2 Y + z_3 Z)^{w_{\boldsymbol{\lambda}}}  \otimes    (z_1 A + z_2 B + z_3 C)^{w_{\boldsymbol{\lambda}}}  v^{(3,\delta)}_{  \beta }\\
\begin{aligned}
=  \sum_{ \substack{ i-a+2j+k = w_{\boldsymbol{\lambda}}  \\  a+2b+c = w_{\boldsymbol{\lambda}} }  }
         (-1)^{i+j+b} 
           2^{-(i+2j  + 2b)} & 
                             {}_{w_{\boldsymbol{\lambda}}} H_{i+j-a,j,k}  
                             \cdot {}_{w_{\boldsymbol{\lambda}}} H_{a+b,b,c} 
               v^{(2w_{\boldsymbol{\lambda}}+3,\delta)}_{i+\beta}       \\        
  &       \times  \left(                            
                     (-X+\sqrt{-1}Y)^{i-a} 
                     (X^2+Y^2)^j
                     Z^{k}         \right)     
       \otimes 
               \left(  (-A+\sqrt{-1}B)^{a} 
                        (A^2+B^2)^b
                          C^{c}    
                       \right) \end{aligned}  \\                       
\begin{aligned}
 \stackrel{ \substack{  i\mapsto \alpha-\beta   \\  
                                  a \mapsto a-\beta   } }{=} 
\sum_\alpha   
v^{\boldsymbol{\lambda}}_\alpha  
  \sum_{ \substack{ \alpha-a+2j+k = w_{\boldsymbol{\lambda}}  \\  a-\beta+2b+c = w_{\boldsymbol{\lambda}} }  } 
         (-1)^{\alpha-\beta+j+b} &
           2^{-(\alpha-\beta+2j  + 2b)}  
                             {}_{w_{\boldsymbol{\lambda}}} H_{\alpha + j-a, j, k}  
                             \cdot {}_{w_{\boldsymbol{\lambda}}} H_{a-\beta+b,b,c}      \\        
         &  \times \left(                            
                     (-X+\sqrt{-1}Y)^{\alpha-a} 
                     (X^2+Y^2)^j
                     Z^{k}         \right)     
       \otimes 
               \left(  (-A+\sqrt{-1}B)^{a-\beta} 
                        (A^2+B^2)^b
                          C^{c}    
                       \right)  
\end{aligned}
   \\
\begin{aligned}
 \stackrel{ \substack{  j \mapsto j - \alpha   \\  
                                  b \mapsto b + \beta   } }{=} 
\sum_\alpha   
v^{\boldsymbol{\lambda}}_\alpha  
  \sum_{ \substack{ -\alpha-a+2j+k = w_{\boldsymbol{\lambda}}  \\  a+\beta+2b+c = w_{\boldsymbol{\lambda}} }  }&
         (-1)^{ j+b } 
           2^{-(-\alpha + \beta+2j  + 2b)}  
                             {}_{w_{\boldsymbol{\lambda}}} H_{ j-a, j-\alpha, k}  
                             \cdot {}_{w_{\boldsymbol{\lambda}}} H_{b+a, b+\beta,c}      \\        
     &   \times   \left(                            
                     (-X+\sqrt{-1}Y)^{\alpha-a} 
                     (X^2+Y^2)^{j-\alpha}
                     Z^{k}         \right)     
       \otimes 
               \left(  (-A+\sqrt{-1}B)^{a-\beta} 
                        (A^2+B^2)^{b + \beta}
                          C^{c}    
                       \right)
\end{aligned}
\end{multline*}
by using multiplicativity (\ref{eq:vmult}). Comparing with the definition of $\mathcal{P}(X,Y,Z,A,B,C)$, we find the statement.
\end{proof}

The image of ${\mathcal P}_{\alpha, \beta}$ under the differential operator $\nabla^{\boldsymbol{n}}$ in (\ref{eq:nabkl}) is computed as follows:

\begin{lem}\label{lem:AppB2}
{\itshape
Let $S=-X+\sqrt{-1}Y$ and $T= X+\sqrt{-1}Y$.  
Then we have 
\begin{multline}\label{eq:sumST1}
        \nabla^{\boldsymbol{n}} {\mathcal P}_{\alpha, \beta} (X, Y)
  = 2^{-n_1}(-1)^{w_{\boldsymbol{\lambda}}+\alpha}\sqrt{-1}^{n_2+w_{\boldsymbol{\lambda}}}
           S^{\frac{n_1+\alpha -\beta}{2}} T^{ \frac{n_1- \alpha + \beta}{2}}  \\     \times   \sum_{ b\in \mathbf{Z}} (-1)^b
                         {}_{w_{\boldsymbol{\lambda}}} H_{b+\frac{n_1+\alpha+\beta}{2}+n_2-w_{\boldsymbol{\lambda}},-b+\frac{n_1-\alpha-\beta}{2},2w_{\boldsymbol{\lambda}}-n_1-n_2}
                         \cdot  {}_{w_{\boldsymbol{\lambda}}} H_{-(b+\beta)+w_{\boldsymbol{\lambda}}-n_2, b+\beta, n_2}.
\end{multline}

}
\end{lem}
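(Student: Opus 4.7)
The plan is to start from the explicit presentation of $\mathcal{P}_{\alpha,\beta}(X,Y,Z,A,B,C)$ furnished by Lemma~\ref{lem:Pab} and apply to it the differential operator $\nabla^{\boldsymbol{n}}=\nabla_{2w_{\boldsymbol{\lambda}}-n_1-n_2,\,n_2}$ (this specialization of (\ref{eq:nabkl}) uses $\boldsymbol{w}_{\boldsymbol{\lambda}}=(w_{\boldsymbol{\lambda}},w_{\boldsymbol{\lambda}},w_{\boldsymbol{\lambda}})$). By the defining formula (\ref{eq:defnab}), this amounts to taking $\partial_Z^{2w_{\boldsymbol{\lambda}}-n_1-n_2}\partial_C^{n_2}$ of $\mathcal{P}_{\alpha,\beta}$, setting $Z=C=0$, and substituting $A=-Y$, $B=X$. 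Since $Z$ and $C$ appear in the summation of Lemma~\ref{lem:Pab} only as the pure monomials $Z^k$ and $C^c$, the operation annihilates every summand except those with $k=2w_{\boldsymbol{\lambda}}-n_1-n_2$ and $c=n_2$, and the factorials produced by differentiation cancel precisely the $\frac{1}{k!\,l!}$ appearing in (\ref{eq:defnab}).

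With these two values fixed, the two non-vanishing constraints $-\alpha-a+2j+k=w_{\boldsymbol{\lambda}}$ and $a+\beta+2b+c=w_{\boldsymbol{\lambda}}$ force $a=w_{\boldsymbol{\lambda}}-\beta-2b-n_2$ and $j=\frac{n_1+\alpha-\beta}{2}-b$, so that the sum is reparametrized by the single index $b$. For the substitution one uses the elementary identities $(-A+\sqrt{-1}B)|_{A=-Y,B=X}=Y+\sqrt{-1}X=-\sqrt{-1}S$ and $(A^2+B^2)|_{A=-Y,B=X}=X^2+Y^2=-ST$ (the latter from $ST=(-X+\sqrt{-1}Y)(X+\sqrt{-1}Y)=-(X^2+Y^2)$). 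Collecting the four factors $S^{\alpha-a}$, $(X^2+Y^2)^{j-\alpha}$, $(-A+\sqrt{-1}B)^{a-\beta}$, $(A^2+B^2)^{b+\beta}$ in Lemma~\ref{lem:Pab}, the total exponent of $S$ becomes $(\alpha-a)+(j-\alpha)+(a-\beta)+(b+\beta)=j+b=\frac{n_1+\alpha-\beta}{2}$ and that of $T$ becomes $(j-\alpha)+(b+\beta)=\frac{n_1-\alpha+\beta}{2}$. The key observation is that both exponents are \emph{$b$-independent}, so the monomial $S^{(n_1+\alpha-\beta)/2}T^{(n_1-\alpha+\beta)/2}$ factors out of the sum, matching the shape of (\ref{eq:sumST1}).

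It then remains to reassemble the scalar prefactor and rewrite the multinomial indices. The powers of $2$ collapse cleanly: $2^{\alpha-\beta}\cdot 2^{-2(j+b)}=2^{-n_1}$. For the signs, one combines $(-1)^{j+b}=(-1)^{(n_1+\alpha-\beta)/2}$ from $(-2^{-2})^{j+b}$, $(-1)^{j-\alpha+b+\beta}=(-1)^{(n_1-\alpha+\beta)/2}$ from the two instances of $X^2+Y^2=-ST$, and $(-\sqrt{-1})^{a-\beta}=(-\sqrt{-1})^{w_{\boldsymbol{\lambda}}-n_2}(-1)^{\beta+b}$ from the $A$-substitution (using $(-\sqrt{-1})^{-2}=-1$); after inserting the explicit values of $a$ and $j$, these reduce to $(-1)^{w_{\boldsymbol{\lambda}}+\alpha}\sqrt{-1}^{n_2+w_{\boldsymbol{\lambda}}}(-1)^b$ thanks to the parity condition $n_1+\alpha-\beta\equiv 0\pmod 2$ (which is implicit in requiring $j\in\mathbf{Z}$; when it fails, both sides of (\ref{eq:sumST1}) are zero for trivial reasons). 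Substituting the explicit values of $a$ and $j$ into ${}_{w_{\boldsymbol{\lambda}}}H_{j-a,j-\alpha,k}$ and ${}_{w_{\boldsymbol{\lambda}}}H_{b+a,b+\beta,c}$ yields exactly the two multinomials displayed in (\ref{eq:sumST1}).

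The main obstacle is not conceptual but computational: the careful bookkeeping of three distinct sources of signs and powers of $\sqrt{-1}$, and the verification that the apparently $b$-dependent contributions $(-\sqrt{-1})^{-2b}=(-1)^b$ from the $A$-substitution, together with the $b$-independent phases from the other factors, combine into the single clean sign $(-1)^{w_{\boldsymbol{\lambda}}+\alpha}\sqrt{-1}^{n_2+w_{\boldsymbol{\lambda}}}$ in front of the residual $(-1)^b$ inside the sum. Once this reduction is carried out, the proof is a direct substitution and reorganization with no further input beyond Lemma~\ref{lem:Pab} and the identities $Y+\sqrt{-1}X=-\sqrt{-1}S$ and $X^2+Y^2=-ST$.
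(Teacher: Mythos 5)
Your proof is correct and follows the same route as the paper's: apply $\nabla_{2w_{\boldsymbol{\lambda}}-n_1-n_2,\,n_2}$ to the expansion in Lemma~\ref{lem:Pab}, solve the two summation constraints to express $a$ and $j$ in terms of the remaining index $b$, and substitute $Y+\sqrt{-1}X=-\sqrt{-1}S$ and $X^2+Y^2=-ST$ before collecting powers of $2$, signs and $\sqrt{-1}$. The only difference is stylistic---you keep the three sign/phase sources separate until the end and invoke the parity of $n_1+\alpha-\beta$, whereas the paper first groups them into the single factor $(-1)^{a-\alpha}\sqrt{-1}^{a-\beta}$ before substituting the values of $a$ and $j$---but the arithmetic agrees.
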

\begin{proof}
Let $\nabla_{k,l}$ be the differential operator which is introduced in (\ref{eq:defnab}).
By using Lemma \ref{lem:Pab}, we find that 
\begin{align*}
   \left(   2^{ \alpha-\beta } \right)^{-1}  &
  \nabla_{k, l} {\mathcal P}_{\alpha, \beta}(X,Y,Z,A,B,C)    \\
  & =  \left(   2^{ \alpha-\beta } \right)^{-1} 
   \frac{1}{k! \, l!}  \frac{\partial^{k} }{ \partial Z^k}  \otimes \frac{\partial^{l} } {\partial C^l  }
     {\mathcal P}_{\alpha, \beta}(X,Y,Z,A,B,C) |_{ A=-Y,  B=X,  Z=C=0  }  \\
&=       
\sum_{ \substack{  - \alpha  -a+2j = w_{\boldsymbol{\lambda}}-k  \\  a + \beta+2b = w_{\boldsymbol{\lambda}} -l}  }
         (-2^{-2})^{ j+b } 
                         {}_{w_{\boldsymbol{\lambda}}} H_{j-a,j-\alpha,k} 
                         \cdot {}_{w_{\boldsymbol{\lambda}}} H_{b+a,b+\beta, l}       \\
  &  \hspace*{6em}    \times   \left(                            
                     (-X+\sqrt{-1}Y)^{\alpha -a} 
                     (X^2+Y^2)^{j-\alpha}
                            \right)     
       \otimes 
               \left(  (-A+\sqrt{-1}B)^{a-\beta } 
                        (A^2+B^2)^{b+\beta}
                       \right)|_{ A=-Y,  B=X   }  \\
&=     
\sum_{ \substack{  - \alpha  -a+2j = w_{\boldsymbol{\lambda}}-k  \\  a + \beta+2b = w_{\boldsymbol{\lambda}} -l }  }
         (-2^{-2})^{ j+b } 
                         {}_{w_{\boldsymbol{\lambda}}} H_{j-a,j-\alpha,k}
                         \cdot  {}_{w_{\boldsymbol{\lambda}}} H_{b+a,b+\beta, l}       \\
  &    \hspace*{15em}     \times        (-X+\sqrt{-1}Y)^{\alpha -a} 
                     (X^2+Y^2)^{j-\alpha}      
               ( Y +\sqrt{-1}X)^{a-\beta } 
                        (X^2+Y^2)^{b+\beta}.
 \end{align*}
 Put $S=-X+\sqrt{-1}Y$ and $T=X+\sqrt{-1}Y$. Then the above equation is calculated as 
 \begin{align*} 
 \sum_{ \substack{  - \alpha  -a+2j = w_{\boldsymbol{\lambda}}-k  \\  a + \beta+2b = w_{\boldsymbol{\lambda}} -l }  }
         (2^{-2})^{ j+b }  (-1)^{a-\alpha} \sqrt{-1}^{a-\beta}
                         {}_{w_{\boldsymbol{\lambda}}} H_{j-a,j-\alpha,k}
                         \cdot  {}_{w_{\boldsymbol{\lambda}}} H_{b+a,b+\beta, l}     S^{j+b} T^{j+b-\alpha+\beta}.
 \end{align*}
Now substitute $(k,l)=(2w_{\boldsymbol{\lambda}}-n_1-n_2,n_2)$ as in Section~\ref{sec:branch}. 
Since 
 \begin{align*}
 2j+2b-\alpha+\beta = 2w_{\boldsymbol{\lambda}}-k-l = 2w_{\boldsymbol{\lambda}}- (2w_{\boldsymbol{\lambda}}-n_1-n_2) - n_2 = n_1, 
\end{align*}
we see that  $j+b= \frac{n_1+\alpha -\beta}{2}$ holds. Moreover we have  
\begin{align*}
   \begin{cases} - \alpha  -a+2j = w_{\boldsymbol{\lambda}}-k=n_1+n_2-w_{\boldsymbol{\lambda}},   \\
                        a + \beta+2b = w_{\boldsymbol{\lambda}}-l=w_{\boldsymbol{\lambda}}-n_2,  \end{cases} 
   \Longleftrightarrow \quad 
   \begin{cases} a = -2b-\beta - n_2 +w_{\boldsymbol{\lambda}}   \\
                        j = -b + \frac{n_1+\alpha-\beta}{2}. \end{cases} 
\end{align*} 
Substituing these, we obtain the statement.  
\end{proof}

The following lemma is a special case that $\alpha - \beta = \pm n_1$ in Lemma \ref{lem:AppB2}.

\begin{lem}\label{lem:AppB3}
{\itshape 
We find that   
\begin{align*}
        \nabla^{\boldsymbol{n}} {\mathcal P}_{\alpha, \beta} (X, Y)
  &=  (-2^{-1})^{ n_1 } 
       \sqrt{-1}^{  n_2 + w_{\boldsymbol{\lambda}} } 
         \binom{w_{\boldsymbol{\lambda}}}{ n_1+n_2-w_{\boldsymbol{\lambda}} }  
        \binom{w_{\boldsymbol{\lambda}}}{ w_{\boldsymbol{\lambda}}-n_2 }   
      \times \begin{cases}  (-1)^{n_2} T^{n_1}  &   \text{when } \alpha - \beta =- n_1,  \\                                                  
                              (-1)^{w_{\boldsymbol{\lambda}}}             S^{n_1}  &    \text{when }\alpha - \beta = n_1.  
                                          \end{cases}          
\end{align*}
}
\end{lem}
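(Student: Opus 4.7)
The plan is to derive Lemma~\ref{lem:AppB3} directly from Lemma~\ref{lem:AppB2} by observing that, when $\alpha-\beta=\pm n_1$, the sum appearing in \eqref{eq:sumST1} reduces to a single term, which can then be evaluated in closed form. First, substituting $\alpha-\beta=-n_1$ immediately yields $S^{\frac{n_1+\alpha-\beta}{2}}=1$ and $T^{\frac{n_1-\alpha+\beta}{2}}=T^{n_1}$, so that the monomial prefactor in \eqref{eq:sumST1} simplifies to $2^{-n_1}(-1)^{w_{\boldsymbol{\lambda}}+\alpha}\sqrt{-1}^{n_2+w_{\boldsymbol{\lambda}}}T^{n_1}$. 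The symmetric substitution $\alpha-\beta=n_1$ produces the corresponding statement with $S^{n_1}$.

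Next I would analyze the support of the summand. Writing out the constraints that force each of the three entries in ${}_{w_{\boldsymbol{\lambda}}}H_{\bullet,\bullet,\bullet}$ to be non-negative, one obtains, after the change of variable $c=b+\alpha+n_1$ in the case $\alpha-\beta=-n_1$, the two-sided inequality $w_{\boldsymbol{\lambda}}-n_2\leq c\leq w_{\boldsymbol{\lambda}}-n_2$. Hence the summation collapses onto the unique index $b=w_{\boldsymbol{\lambda}}-n_2-\alpha-n_1$. The analogous collapse in the case $\alpha-\beta=n_1$ singles out $b=n_1-\alpha$.

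Once the surviving term is pinned down, evaluation is routine: at this distinguished value of $b$, two of the three lower indices of each trinomial coefficient become $0$, so that
\begin{align*}
 {}_{w_{\boldsymbol{\lambda}}}H_{0,\,n_1+n_2-w_{\boldsymbol{\lambda}},\,2w_{\boldsymbol{\lambda}}-n_1-n_2}
  &= \binom{w_{\boldsymbol{\lambda}}}{n_1+n_2-w_{\boldsymbol{\lambda}}},  \\
 {}_{w_{\boldsymbol{\lambda}}}H_{0,\,w_{\boldsymbol{\lambda}}-n_2,\,n_2}
  &= \binom{w_{\boldsymbol{\lambda}}}{w_{\boldsymbol{\lambda}}-n_2},
\end{align*}
and the overall sign $(-1)^b$ contributes $(-1)^{w_{\boldsymbol{\lambda}}-n_2-\alpha-n_1}$ (respectively $(-1)^{n_1-\alpha}$). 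Multiplying everything together and absorbing the sign $(-1)^{w_{\boldsymbol{\lambda}}+\alpha}$ from the prefactor, the $\alpha$-dependence cancels exactly, and one arrives at the two cases asserted in the lemma, with $(-1)^{n_2}T^{n_1}$ when $\alpha-\beta=-n_1$ and $(-1)^{w_{\boldsymbol{\lambda}}}S^{n_1}$ when $\alpha-\beta=n_1$.

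The only step that requires care is the support analysis collapsing the sum to one term; the inequalities must be tracked simultaneously for both ${}_{w_{\boldsymbol{\lambda}}}H$-factors, and the matching of upper and lower bounds is what makes the identity work. All remaining manipulations are bookkeeping of signs and powers of $2$.
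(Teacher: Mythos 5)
Your proof is correct and follows essentially the same route as the paper: specialize Lemma~\ref{lem:AppB2} to $\alpha-\beta=\pm n_1$, observe that the nonnegativity constraints force the sum over $b$ to collapse onto a single index, and evaluate the two surviving trinomial coefficients, which become the stated binomials. The paper first reindexes $b\mapsto b-\alpha$ to strip out the $\alpha$-dependence before pinning down $b=w_{\boldsymbol{\lambda}}-n_1-n_2$, while you determine the unique $b$ directly in its $\alpha$-dependent form, but this is only a bookkeeping difference.
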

\begin{proof}
Suppose that $\alpha - \beta = -n_1$. Substituting this, we find that (\ref{eq:sumST1})  becomes 
 \begin{align*}
     2^{n_1}& \sqrt{-1}^{-(n_2+w_{\boldsymbol{\lambda}})}   \nabla^{\boldsymbol{n}} {\mathcal P}_{\alpha, \beta} (X, Y)  \\
&= (-1)^{w_{\boldsymbol{\lambda}}}  T^{n_1}  \sum_{b\in \mathbf{Z}}  (-1)^{b+\alpha}
                         {}_{w_{\boldsymbol{\lambda}}} H_{b+\frac{n_1+\alpha+\beta}{2}+n_2-w_{\boldsymbol{\lambda}}, -b+\frac{n_1-\alpha-\beta}{2},2w_{\boldsymbol{\lambda}}-n_1-n_2}
                         \cdot  {}_{w_{\boldsymbol{\lambda}}} H_{-(b+\beta)+w_{\boldsymbol{\lambda}}-n_2,b+\beta, n_2}   \\
&\stackrel{ b\mapsto b-\alpha }{=}
 (-1)^{w_{\boldsymbol{\lambda}}} T^{n_1} 
       \sum_{b\in \mathbf{Z}}  (-1)^{b}   {}_{w_{\boldsymbol{\lambda}}} H_{b+\frac{n_1-(\alpha-\beta)}{2}+n_2-w_{\boldsymbol{\lambda}}, -b+\frac{n_1+(\alpha-\beta)}{2},2w_{\boldsymbol{\lambda}}-n_1-n_2}
                         \cdot  {}_{w_{\boldsymbol{\lambda}}} H_{-b+(\alpha-\beta)+w_{\boldsymbol{\lambda}}-n_2,b-(\alpha-\beta), n_2}   \\
&=(-1)^{w_{\boldsymbol{\lambda}}} T^{n_1} 
       \sum_{b\in \mathbf{Z}}  (-1)^{b}   {}_{w_{\boldsymbol{\lambda}}} H_{b+n_1+n_2-w_{\boldsymbol{\lambda}}, -b,2w_{\boldsymbol{\lambda}}-n_1-n_2}
                         \cdot  {}_{w_{\boldsymbol{\lambda}}} H_{-(b+n_1+n_2-w_{\boldsymbol{\lambda}}),b+n_1, n_2}.
 \end{align*}
This summation does not vanish if and only if $b = w_{\boldsymbol{\lambda}}-n_1-n_2$ holds. 
In this case the above summation becomes 
\begin{align*}
    (-1)^{w_{\boldsymbol{\lambda}}}& T^{n_1} 
       \sum_{b\in \mathbf{Z}}  (-1)^{b}   {}_{w_{\boldsymbol{\lambda}}} H_{b+n_1+n_2-w_{\boldsymbol{\lambda}}, -b,2w_{\boldsymbol{\lambda}}-(n_1+n_2)}
                         \cdot  {}_{w_{\boldsymbol{\lambda}}} H_{-(b+n_1+n_2-w_{\boldsymbol{\lambda}}),b+n_1, n_2} \\
& =    (-1)^{w_{\boldsymbol{\lambda}}}  T^{n_1} 
           (-1)^{  w_{\boldsymbol{\lambda}}-n_1-n_2 } 
                         {}_{w_{\boldsymbol{\lambda}}} H_{0, n_1+n_2-w_{\boldsymbol{\lambda}}, 2w_{\boldsymbol{\lambda}}-n_1-n_2}
                         \cdot  {}_{w_{\boldsymbol{\lambda}}} H_{0, w_{\boldsymbol{\lambda}}-n_2, n_2}     \\
& =   (-1)^{n_1+n_2}   T^{n_1} 
        \binom{w_{\boldsymbol{\lambda}}}{ n_1+n_2-w_{\boldsymbol{\lambda}} }  
        \binom{w_{\boldsymbol{\lambda}}}{ w_{\boldsymbol{\lambda}}-n_2 }.
\end{align*}
This proves the statement in the case where $\alpha - \beta = -n_1$ holds.  The case where $\alpha-\beta=n_1$ holds is verified similarly, but in this case, we should substitute $b=n_1$ in the last step, which causes the difference of the signature.
\end{proof}

\section*{Acknowledgements} 

The authors are grateful to Miki Hirano, Taku Ishii and Tadashi Miyazaki for kindly sending the authors their preprints  \cite{him} and \cite{im}, which motivate this research project and play crucial roles in the present article. 
This work was partially supported by the Research Institute for Mathematical Sciences, a Joint Usage/Research Center located in Kyoto University (RIMS satellite meeting ``Special values of automorphic $L$-functions and their $p$-adic properties'' held at Miyama-cho, Kyoto in 2016).  
The first (resp.\ second) author  is supported 
       by JSPS Grant-in-Aid for Young Scientists Grant Number JP18K13395  (resp.\ for Young Scientists (B)  Grant Number JP17K14174).


\begin{thebibliography}{9999999}


\bibitem[BW80]{bw80}
 Armand Borel and Nolan R.~Wallach, 
 {\itshape Continuous cohomology, discrete subgroups and representations of reductive groups}, 
 Ann.\ of Math.\ Stud., {\bf 94}, Princeton Univ.\ Press, Princeton, 1980.

\bibitem[Clo90]{clo90}
Laurent Clozel, 
{\itshape Motifs et formes automorphes$:$ applications du principe, de fonctorialit$\acute{ e}$}, 
in: {\em Automorphic forms, Shimura varieties and $L$-functions} (L.~Clozel et J.S.~Milne edit.), vol.~1, 77--159, Acad.\ Press (1990).

\bibitem[Coa89]{coa89}
John Coates, 
{\itshape On $p$-adic $L$-functions attached to motives over ${\mathbf Q}$}: II,  
Bol.\ Soc.\ Bras.\ Mat., {\bf 20} (1989), 101--112.
 

\bibitem[CP89]{cp89}
John Coates and Bernadette Perrin-Riou, 
{\itshape On $p$-adic $L$-functions attached to motives over ${\mathbf Q}$}, in: {\em Algebraic Number Theory ---in honor of K.~Iwasawa} (J.~Coates, R.~Greenberg, B.~Mazur and I.~Satake eds.), Adv.\ Stud.\ Pure Math., {\bf 17} (1989), 23--54.


\bibitem[Del79]{del79}
Pierre Deligne, 
{\itshape Valeurs de fonctions $L$ et p\'eriodes d'int\'egrales}, 
in: {\itshape Automorphic Forms,  Representations, and $L$-functions} (Proc.\ Sympos.\ Pure Math., Oregon State Univ., Corvallis, Ore., 1977), 
Proc.\ Symp.\ Pure Math., {\bf 33} Part II, Amer.\ Math.\ Soc., Providence, (1979), 247--289.

\bibitem[FH13]{fh13}
William Fulton and Joe Harris, 
{\itshape Representation Theory$:$ A First Course}, 
Graduate Texts in Mathematics, {\bf 129}, Springer--Verlag (2013). 


\bibitem[GJ72]{gj72} Roger Godement and Herv\'e Jacquet, {\em Zeta functions of simple algebras}, Lecture Note in Math., \textbf{260}, Springer--Verlag (1972), ix+188 p.p. 

\bibitem[GW99]{gw99}
Roe Goodman and Nolan R.~Wallach, 
{\em Representations and Invariants of the classical groups}, 
Encyclopedia of Mathematics and its Applications, Cambridge University Press (1999). 


\bibitem[GR14]{gr14}
Harald Grobner and Anantharam Raghuram, 
{\itshape On some arithmetic properties of automorphic forms of ${\rm GL}_m$ over a division algebra}, 
Int.\ J.\ Number Theory, {\bf 10} (2014), 963--1013.

\bibitem[Hid94]{hid94}
{Haruzo Hida}, 
{\itshape On the critical values of $L$-functions of ${\rm GL}(2)$ and ${\rm GL}(2)  \times {\rm GL}(2)$}, 
Duke Math.\ J., {\bf 74} (1994), 431--529.

\bibitem[HIM16]{him16}
Miki Hirano, Taku Ishii and Tadashi Miyazaki, 
{\itshape The archimedean zeta integrals for ${\rm GL}(3) \times {\rm GL}(2)$}, 
Proc.\ Japan Acad.\ Ser.~A, {\bf 92} (2016), 27--32.

\bibitem[HIM]{him}
Miki Hirano, Taku Ishii and Tadashi Miyazaki, 
{\itshape Archimedean zeta integrals for ${\rm GL}(3) \times {\rm GL}(2)$}, 
to appear in Memoirs Amer.\ Math.\ Soc.

\bibitem[IM]{im}
Taku Ishii and Tadashi Miyazaki, 
{\itshape Calculus of archimedean Rankin--Selberg integrals with recurrence relations}, 
preprint available at arXiv:2006.04095.

\bibitem[Jan19]{jan19}
Fabian Januszewski,  
{\itshape On period relations for automorphic $L$-functions I}, 
Trans.\ Amer.\ Math.\ Soc., {\bf 371} (2019), 6547--6580.

\bibitem[Jan]{jan}
Fabian Januszewski,  
   {\itshape Non-abelian $p$-adic Rankin--Selberg $L$-functions and non-vanishing of central $L$-values}, 
   preprint available at arXiv:1708.02616.


\bibitem[JPSS81]{jpss81}
Herve Jacquet, Ilya I.~Piatetski-Shapiro and Joseph Shalika,  
{\itshape Conducteur des repr\'{e}sentations du groupe lin\'{e}aire}, 
Math.\ Ann., {\bf 256} (1981), 199--214.

\bibitem[JPSS83]{jpss83}
Herve Jacquet, Ilya I.~Piatetski-Shapiro and Joseph Shalika,  
{\itshape Rankin--Selberg convolutions}, 
Amer.\ J.\ Math., {\bf 105} (1983), 367--464.

\bibitem[KS13]{ks13}
Hendrik Kasten and Claus-G\"unter Schmidt,  
{\itshape The critical values of Rankin--Selberg convolutions}, 
Int.\ J.\ Number Theory, {\bf 9} (2013), 205--256.

\bibitem[KMS00]{kms00}
David Kazhdan, Barry Mazur and Claus-G\"unter Schmidt,  
{\itshape Relative modular symbols and Rankin--Selberg convolutions},  
J.\ Reine Angew.\ Math., {\bf 519} (2000), 97--141.


\bibitem[Kna94]{kna94}
Anthony W.~Knapp, 
{\itshape Local Langlands correspondence$:$ the archimedean case},
Proceedings of symposia in pure mathematics, {\bf 55}, part 2, (1994), 393--410. 

\bibitem[Mah05]{mah05} 
Joachim Mahnkopf, 
{\itshape Cohomology of arithmetic groups, parabolic subgroups and the special values of $L$-functions on ${\rm GL}_n$},  
J.\ Inst.\ Math.\  Jussieu, {\bf 4} (2005), 553--637.

\bibitem[Man72]{man72} 
Juri~I.~Manin, {\itshape Parabolic points and zeta-functions of modular curves}, Math USSR IZV., {\bf 6} (1), (1972), 19--64.

\bibitem[RS08]{rs08}
A. Raghuram and Freydoon Shahidi, 
{\itshape On certain period relations for cusp forms on ${\rm GL}_n$},   
Int.\ Math.\ Res.\ Not., {\bf 2008} (2008), 1--23.

\bibitem[Rag16]{rag16}
A. Raghuram, 
{\itshape Critical values of Rankin--Selberg L-functions for ${\rm GL}_n \times {\rm GL}_{n-1}$ and the symmetric cube L-functions for ${\rm GL}_2$},   
Forum.\ Math., {\bf 28} (2016), 457--489.

\bibitem[Shim76]{shi76} 
Goro Shimura, {\itshape The special values of the zeta functions associated with cusp forms}, Comm.\ Pure Appl.\ Math., {\bf 29} (1976), 783--804.

\bibitem[Shin76]{shin76} 
Takuro Shintani, {\itshape On an explicit formula for class-$1$ {\rm ``}Whittaker functions{\rm ''} on $\mathrm{GL}_n$ over $\mathfrak{P}$-adic fields}, Proc.\ Japan Acad., \textbf{52} (1976), 180--182. 

\bibitem[Sun17]{sun17}
Binyong Sun, 
{\itshape The nonvanishing hypothesis at infinity for Rankin--Selberg convolutions},   
 J.\ Amer.\ Math.\ Soc.  {\bf 30} (2017), 1--25.

\bibitem[Vat99]{vat99}
Vinayak Vatsal, 
{\itshape Canonical periods and congruence formulae},   
Duke Math.\ J.,  {\bf 98} (1999), 397--419.

\bibitem[Yos01]{yos01}
Hiroyuki Yoshida, 
{\itshape Motives and Siegel modular forms},     
Amer.\ J.\ Math.,  {\bf 123} (2001), 1171--1197.

\end{thebibliography}
\end{document}